\documentclass[a4paper,12pt]{report}

\usepackage{newtxtext,newtxmath}

\usepackage{latexsym}
\usepackage{amsmath}
\usepackage[dvipsnames]{xcolor}

\newcommand{\bref}[1]{(\ref{#1})}
\newcommand{\ucontents}[2]{\addcontentsline{toc}{#1}{\numberline{}{#2}}}

\newcommand{\ovln}[1]{\overline{#1}}
\newcommand{\twid}[1]{\widetilde{#1}}
\newcommand{\such}{:}
\newcommand{\without}{\setminus}
\newcommand{\epsln}{\varepsilon}
\newcommand{\id}{\mathrm{id}}

\usepackage{stmaryrd}
\usepackage{graphicx}

\newcommand{\oppairu}{\rightleftarrows}
\newcommand{\demphcolor}{\color[rgb]{0,0,0.8}}
\newcommand{\demph}[1]{\textbf{\textup{\demphcolor#1}}}
\newcommand{\dem}[1]{{\demphcolor#1}}
\newcommand{\iso}{\cong}
\newcommand{\of}{\circ}
\newcommand{\sub}{\subseteq}
\newcommand{\cell}[4]{\put(#1,#2){\makebox(0,0)[#3]{#4}}}

\newcommand{\rowofstars}{%
\bigskip
\hfill $*$ \hfill $*$ \hfill $*$ \hfill
\bigskip}

\newcommand{\toby}[1]{\stackrel{#1}{\longrightarrow}}
\newcommand{\GL}[2]{\mathrm{GL}_{#1}(#2)}
\DeclareMathOperator{\im}{im}
\newcommand{\from}{\colon}
\DeclareMathOperator{\spn}{span}
\DeclareMathOperator{\chr}{char}
\newcommand{\C}{\mathbb{C}}
\newcommand{\R}{\mathbb{R}}
\newcommand{\Z}{\mathbb{Z}}
\newcommand{\Q}{\mathbb{Q}}
\DeclareMathOperator{\ev}{ev}
\newcommand{\vc}[1]{\mathbf{#1}} 
\renewcommand{\emptyset}{\varnothing}

\usepackage{framed}

\colorlet{shadecolor}{blue}

\newenvironment{preex}[1]{%
  \colorlet{shadecolor}{LimeGreen!20}%
  \begin{snugshade}%
  \begin{list}{\hspace*{-10mm}%
  \setlength{\unitlength}{1mm}%
  \begin{picture}(10,0)%
    \cell{0}{2.5}{tl}{\includegraphics[width=10mm]{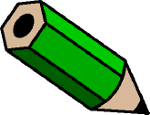}}%
  \end{picture}}%
  {\setlength{\leftmargin}{2\leftmargin}%
   \setlength{\labelsep}{0.25\leftmargin}}\item #1%
}%
{%
  \end{list}%
  \end{snugshade}%
}

\newenvironment{ex}[1]{%
  \begin{preex}{\textbf{Exercise \refstepcounter{thm}\thethm\label{#1}}}
}%
{%
  \end{preex}%
}

\newenvironment{prewarning}[1]{%
  \colorlet{shadecolor}{Red!20}%
  \begin{snugshade}%
  \begin{list}{\hspace*{-10mm}%
  \setlength{\unitlength}{1mm}%
  \begin{picture}(9,0)%
    \cell{0}{2.5}{tl}{\includegraphics[width=9mm]{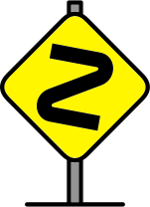}}%
  \end{picture}}%
  {\setlength{\leftmargin}{2\leftmargin}%
   \setlength{\labelsep}{0.25\leftmargin}}\item #1%
}%
{%
  \end{list}%
  \end{snugshade}%
}

\newenvironment{warning}[1]{%
  \begin{prewarning}{\textbf{Warning \refstepcounter{thm}\thethm\label{#1}}}
}%
{%
  \end{prewarning}%
}

\newenvironment{predigression}[1]{%
  \colorlet{shadecolor}{SkyBlue!20}%
  \begin{snugshade}%
  \begin{list}{\hspace*{-12mm}%
  \setlength{\unitlength}{1mm}%
  \begin{picture}(12,0)%
    \cell{0}{2.5}{tl}{\includegraphics[width=12mm]{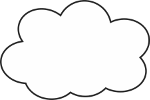}}%
  \end{picture}}%
  {\setlength{\leftmargin}{2\leftmargin}%
   \setlength{\labelsep}{0.25\leftmargin}\small}\item #1%
}%
{%
  \end{list}%
  \end{snugshade}%
}

\newenvironment{digression}[1]{%
  \begin{predigression}{\textbf{Digression \refstepcounter{thm}\thethm\label{#1}}}
}%
{%
  \end{predigression}%
}

\usepackage{mdframed}

\newcommand{\idl}{\mathbin{\trianglelefteqslant}}
\newcommand{\nsub}{\mathbin{\trianglelefteqslant}}

\renewcommand{\phi}{\varphi}
\newcommand{\idlgen}[1]{\langle #1 \rangle}
\newcommand{\gpgen}[1]{\langle #1 \rangle}
\newcommand{\dvd}{\mathrel{\mid}}
\newcommand{\ndvd}{\mathrel{\not\mid}}
\def\mathcal{\mathscr}
\newcommand{\F}{\mathbb{F}}

\newcommand{\lengths}{\setlength{\unitlength}{1mm}%
\setlength{\fboxsep}{0mm}}

\DeclareMathOperator{\Gal}{Gal}
\DeclareMathOperator{\codeg}{codeg}
\newcommand{\csuch}{\mathrel{\colon}}
\DeclareMathOperator{\SF}{SF}
\DeclareMathOperator{\Sym}{Sym}
\DeclareMathOperator{\Fix}{Fix}
\DeclareMathOperator{\Eq}{Eq}
\DeclareMathOperator{\Aut}{Aut}

\newcommand{\FF}{\mathcal{F}}
\newcommand{\GG}{\mathcal{G}}
\newcommand{\rad}{{\operatorname{rad}}}
\newcommand{\sol}{{\operatorname{sol}}}

\reversemarginpar
\setlength{\marginparwidth}{30mm}
\newcommand{\video}[1]{%
\marginpar{%
\href{https://www.maths.ed.ac.uk/~tl/galois}{%
\colorbox{CadetBlue!0}{%
\parbox{28mm}{%
\centering
\includegraphics[width=12mm]{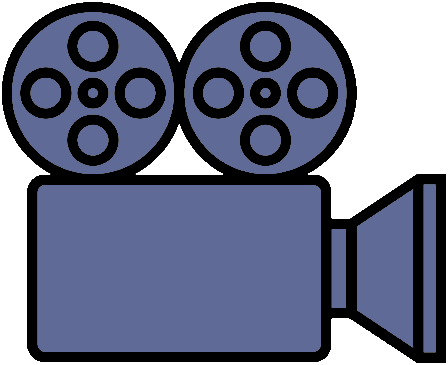}\\ 
\footnotesize\textit{\textcolor{black}{#1}}%
}%
}%
}%
}%
}

\usepackage[amsmath,thmmarks]{ntheorem}

\newtheorem{thm}{Theorem}[section]
\newtheorem{propn}[thm]{Proposition}
\newtheorem{lemma}[thm]{Lemma}
\newtheorem{cor}[thm]{Corollary}

\newmdtheoremenv[%
linewidth=0mm,
backgroundcolor=Goldenrod!30,
ntheorem=true]%
{bigthm}[thm]{Theorem}

\newmdtheoremenv[%
linecolor=Red,
linewidth=.5mm,
backgroundcolor=Goldenrod!50,
ntheorem=true]%
{megathm}[thm]{Theorem}

\theorembodyfont{\normalfont}

\newtheorem{defn}[thm]{Definition}
\newtheorem{example}[thm]{Example}
\newtheorem{examples}[thm]{Examples}
\newtheorem{remark}[thm]{Remark}

\theoremstyle{nonumberplain}
\theoremsymbol{\ensuremath{\square}}
\qedsymbol{\ensuremath{\square}}

\newtheorem{proof}{Proof}

\newcommand{\theoremtobeproved}{}
\newtheorem{pfoftheorem}{Proof of \theoremtobeproved}
\newenvironment{pfof}[1]
{
\renewcommand{\theoremtobeproved}{#1}
\begin{pfoftheorem}
}
{\end{pfoftheorem}}

\definecolor{myurlcolor}{rgb}{0.7,0,0}
\definecolor{mylinkcolor}{rgb}{0,0.4,0}
\usepackage{hyperref}
\hypersetup{%
colorlinks,
linkcolor=mylinkcolor,
citecolor=black,
urlcolor=mylinkcolor}

\usepackage{tocloft}
\setlength{\cftbeforechapskip}{1.1mm}
\setlength{\cftbeforesecskip}{0ex}

\setlength{\cftbeforetoctitleskip}{-45mm}
\setlength{\cftaftertoctitleskip}{2mm}

\usepackage[all]{xy}
\xyoption{arc}
\xyoption{rotate}

\begin{document}
\sloppy

\tableofcontents

\chapter*{Note to the reader}
\ucontents{chapter}{Note to the reader}

These are the course notes for Galois Theory, University of Edinburgh,
2022--23. For this arXiv version, I have made a
\href{https://www.maths.ed.ac.uk/~tl/galois}{web page} containing
additional resources such as videos and problem sheets.

\paragraph{Structure} Each chapter corresponds to one week of the
semester. You are expected to read Chapter $n$ before the lectures in Week
$n$, except for Chapter~1. I may make small changes to these notes as we go
along (e.g.\ to correct errors), so I recommend that you download a fresh
copy before you start each week's reading.

\begin{preex}{\textbf{Exercises}} 
looking like this are sprinkled through the notes. The idea is that you
try them immediately, before you continue reading. 

Most of them are meant to be quick and easy, much easier than assignment or
workshop questions. If you can do them, you can take it as a sign that
you're following successfully. For those that defeat you, talk with others
in the class, ask on Piazza, or ask me.

I promise you that if you make a habit of trying every exercise, you'll
enjoy the course more and understand it better than if you don't.
\end{preex}

\begin{predigression}{\textbf{Digressions}}
like this are optional and not examinable, but might interest you.
They're usually on points that \emph{I} find 
interesting, and often describe connections between Galois theory and
other parts of mathematics.
\end{predigression}
\video{Here you'll see titles of relevant videos, made two years ago when
the class was online. They are entirely optional but may help your
understanding.}

References to theorem numbers, page numbers, etc., are clickable links.

\paragraph{What to prioritize} You know by now that the most
important things in almost any course are the \emph{definitions} and the
results called \emph{Theorem}. But I also want to emphasize the
\emph{proofs}. This course presents a wonderful body of theory, and the
idea is that you learn it all, including the proofs that are its beating
heart. 

Less idealistically, the exam will test not only that you know the proofs,
but also something harder: that you \emph{understand} them. So the proofs
will need your attention and energy.

\paragraph{Compulsory prerequisites} To take this course, you must have already taken
these two courses:
\begin{itemize}
\item 
\href{http://www.drps.ed.ac.uk/22-23/dpt/cxmath10069.htm}{Honours
Algebra}: We'll need some abstract linear algebra, corresponding to
Chapter~1 of that course. We'll also need everything from Honours Algebra
about rings and polynomials (Chapter~3 there), including ideals, quotient
rings (factor rings), the universal property of quotient rings, and the
first isomorphism theorem for rings.

\item
\href{http://www.drps.ed.ac.uk/22-23/dpt/cxmath10079.htm}{Group Theory}:
From that course, we'll need fundamentals such as normal subgroups,
quotient groups, the universal property of quotient groups, and the first
isomorphism theorem for groups. You should know lots about the symmetric
groups $S_n$, alternating groups $A_n$, and cyclic groups $C_n$, as well as
a little about the dihedral groups $D_n$, and I hope you can list all of
the groups of order $< 8$ without having to think too hard.

Chapter~8 of Group Theory, on solvable groups, will be crucial for us. For
example, you'll need to understand what it means that $S_4$ is solvable but
$A_5$ is not.

We won't need anything on free groups, the Sylow theorems, or the
Jordan--H\"older theorem.
\end{itemize}
If you're a visiting or MSc student and didn't take those courses, please
contact me so that we can decide whether your background is suitable.

\paragraph{Mistakes} I'll be grateful to hear of mistakes 
in these notes (Tom.Leinster\mbox{@}\linebreak[5]ed.ac.uk), even if it's
something very small and even if you're not sure.

\chapter{Overview of Galois theory}
\label{ch:overview}

This chapter stands apart from all the others, 

Modern treatments of Galois theory take advantage of several well-developed
branches of algebra: the theories of groups, rings, fields, and vector
spaces. This is as it should be! However, assembling all the algebraic
apparatus will take us several weeks, during which it's easy to lose sight of
what it's all for.
\video{Introduction to Week 1}

Galois theory came from two basic insights:
\begin{itemize}
\item 
every polynomial has a symmetry group;

\item
this group determines whether the polynomial can be solved by radicals (in
a sense I'll define).
\end{itemize}
In this chapter, I'll explain these two ideas in as short and low-tech a
way as I can manage. In Chapter~\ref{ch:garf} we'll start again, beginning
the modern approach that will take up the rest of the course.  But I hope
that all through that long build-up, you'll keep in mind the fundamental
ideas you learn in this chapter.

\section{The view of $\C$ from $\Q$}

Imagine you lived several centuries ago, before the discovery of complex
numbers. Your whole mathematical world is the real numbers, and there is no
square root of $-1$. This situation frustrates you, and you decide to do
something about it.

So, you invent a new symbol $i$ (for `imaginary') and decree that $i^2 =
-1$. You still want to be able to do all the usual arithmetic operations
($+$, $\times$, etc.), and you want to keep all the rules that govern them
(associativity, commutativity, etc.). So you're also forced to introduce
new numbers such as $2 + 3\times i$, and you end up with what today
we call the complex numbers.

So far, so good. But then you notice something strange. When you invented
the complex numbers, you only intended to introduce one square root of
$-1$. But accidentally, you introduced a second one at the same time:
$-i$. (You wait centuries for a square root of $-1$, then two come along at
once.) Maybe that's not so strange in itself; after all, positive reals
have two square roots too. But then you realize something genuinely weird:
\begin{quote}
\emph{There's nothing you can do to distinguish $i$ from $-i$.}
\end{quote}
Try as you might, you can't find any reasonable statement that's true
for $i$ but not $-i$. For example, you notice that $i$ is a solution of
\[
z^3 - 3z^2 - 16z - 3 = \frac{17}{z},
\]
but then you realize that $-i$ is too. 

Of course, there are \emph{unreasonable} statements that are true for $i$
but not $-i$, such as `$z = i$'. We should restrict to statements that only
refer to the known world of real numbers. More precisely, let's consider
statements of the form
\[
\frac{p_1(z)}{p_2(z)} = \frac{p_3(z)}{p_4(z)},
\]
where $p_1, p_2, p_3, p_4$ are polynomials with \emph{real}
coefficients. Any such equation can be rearranged to give
\[
p(z) = 0,
\]
where again $p$ is a polynomial with real coefficients, so we might as well
just consider statements of that form. The point is that if $p(i) = 0$ then
$p(-i) = 0$. 

Let's make this formal. We could say that two complex numbers are
`indistinguishable when seen from $\R$' if they satisfy the same
polynomials over $\R$. But the official term is `conjugate':

\begin{defn}
\label{defn:indist-R}
Two complex numbers $z$ and $z'$ are \demph{conjugate over $\R$} if for
all polynomials $p$ with coefficients in $\R$,
\[
p(z) = 0 \iff p(z') = 0.
\]
\end{defn}

For example, $i$ and $-i$ are conjugate over $\R$. This follows from a more
general result, stating that conjugacy in this new sense is closely
related to complex conjugacy:

\begin{lemma}
\label{lemma:indist-R}
Let $z, z' \in \C$. Then $z$ and $z'$ are conjugate over $\R$ if and only
if $z' = z$ or $z' = \ovln{z}$.
\end{lemma}

\begin{proof}
`Only if': suppose that $z$ and $z'$ are conjugate over $\R$. Write $z = x
+ iy$ with $x, y \in \R$. Then $(z - x)^2 + y^2 = 0$. Since $x$ and $y$ are
real, conjugacy implies that $(z' - x)^2 + y^2 = 0$, so $z' - x = \pm iy$, so
$z' = x \pm iy$.

`If': obviously $z$ is conjugate to itself, so it's enough to prove that
$z$ is conjugate to $\ovln{z}$. I'll give two proofs. Each one teaches us a
lesson that will be valuable later.

\emph{First proof:} recall that complex conjugation satisfies
\[
\ovln{w_1 + w_2} = \ovln{w_1} + \ovln{w_2},
\qquad
\ovln{w_1 \cdot w_2} = \ovln{w_1} \cdot \ovln{w_2}
\]
for all $w_1, w_2 \in \C$. Also, $\ovln{a} = a$ for all $a \in \R$. It follows
by induction that for any polynomial $p$ over $\R$,
\[
\ovln{p(w)} = p(\ovln{w})
\]
for all $w \in \C$. So 
\[
p(z) = 0 \iff 
\ovln{p(z)} = \ovln{0} \iff
p(\ovln{z}) = 0.
\]

\emph{Second proof:} write $z = x + iy$ with $x, y \in \R$. Let $p$ be a
polynomial over $\R$ such that $p(z) = 0$. We will prove that $p(\ovln{z})
= 0$. This is trivial if $y = 0$, so suppose that $y \neq 0$.

Consider the real polynomial $m(t) = (t - x)^2 + y^2$. Then $m(z) =
0$. You know from Honours Algebra that 
\begin{align}
\label{eq:indist-div}
p(t) = m(t)q(t) + r(t)
\end{align}
for some real polynomials $q$ and $r$ with $\deg(r) < \deg(m) = 2$ (so $r$
is either a constant or of degree $1$). Putting $t = z$
in~\eqref{eq:indist-div} gives $r(z) = 0$. It's easy to see that this is
impossible unless $r$ is the zero polynomial (using the assumption that $y
\neq 0$). So $p(t) = m(t)q(t)$. But $m(\ovln{z}) = 0$, so $p(\ovln{z}) =
0$, as required.

We have just shown that for all polynomials $p$ over $\R$, if $p(z) = 0$
then $p(\ovln{z}) = 0$. Exchanging the roles of $z$ and $\ovln{z}$ proves
the converse. Hence $z$ and $\ovln{z}$ are conjugate over $\R$.
\end{proof}

\begin{ex}{ex:poly-ind}
Both proofs of `if' contain little gaps: `It follows by induction' in the
first proof, and `it's easy to see' in the second. Fill them.
\end{ex}

\begin{digression}{dig:pwr-series}
With complex analysis in mind, we could imagine a stricter definition of
conjugacy in which polynomials are replaced by arbitrary
convergent power series (still with coefficients in $\R$). This would allow
functions such as $\exp$, $\cos$ and $\sin$, and equations such
as $\exp(i\pi) = -1$.

But this apparently different definition of conjugacy is, in fact,
equivalent. A complex number is still conjugate to exactly itself and its
complex conjugate. (For example, $\exp(-i\pi) = -1$ too.) Do you see why?
\end{digression}

Lemma~\ref{lemma:indist-R} tells us that conjugacy over $\R$ is
rather simple. But the same idea becomes much more interesting if we
replace $\R$ by $\Q$. And in this course, we will mainly focus on
polynomials over $\Q$.

Define \demph{conjugacy over $\Q$} by replacing $\R$ by $\Q$ in
Definition~\ref{defn:indist-R}. Again, when you see the words `conjugate
over $\Q$', you can think to yourself `indistinguishable when seen from
$\Q$'. From now on, I will usually just say `conjugate', dropping the
`over $\Q$'.

\begin{example}
\label{eg:sqrt-2-indist}
I claim that $\sqrt{2}$ and $-\sqrt{2}$ are conjugate.  And I'll give you
two different proofs, closely analogous to the two proofs of the `if' part
of Lemma~\ref{lemma:indist-R}.

\emph{First proof:} write
\[
\Q(\sqrt{2})
=
\{ a + b\sqrt{2} \such a, b \in \Q\}.
\]
For $w \in \Q(\sqrt{2})$, there are \emph{unique} $a, b \in \Q$ such that
$w = a + b\sqrt{2}$, because $\sqrt{2}$ is irrational. So it is logically
valid to define 
\video{Example~\ref{eg:sqrt-2-indist}}
\[
\twid{w} = a - b\sqrt{2} \in \Q(\sqrt{2}).
\]
(Question: what did the uniqueness of $a$ and $b$ have to do with the
logical validity of that definition?) Now, $\Q(\sqrt{2})$ is closed under
addition and multiplication, and it is straightforward to check that
\[
\twid{w_1 + w_2} = \twid{w_1} + \twid{w_2},
\qquad
\twid{w_1 \cdot w_2} = \twid{w_1} \cdot \twid{w_2}
\]
for all $w_1, w_2 \in \Q(\sqrt{2})$. Also, $\twid{a} = a$ for all $a
\in \Q$. So just as in the proof of Lemma~\ref{lemma:indist-R}, it follows
that $w$ and $\twid{w}$ are conjugate for every $w \in \Q(\sqrt{2})$. In
particular, $\sqrt{2}$ is conjugate to (`indistinguishable from')
$-\sqrt{2}$.

\emph{Second proof:} let $p = p(t)$ be a polynomial with coefficients in
$\Q$ such that $p(\sqrt{2}) = 0$. You know from Honours Algebra that
\[
p(t) = (t^2 - 2)q(t) + r(t)
\]
for some polynomials $q(t)$ and $r(t)$ over $\Q$ with $\deg r <
2$. Putting $t = \sqrt{2}$ gives $r(\sqrt{2}) = 0$. But $\sqrt{2}$ is
irrational and $r(t)$ is of the form $at + b$ with $a, b \in \Q$, so $r$
must be the zero polynomial. Hence $p(t) = (t^2 - 2)q(t)$, giving
$p(-\sqrt{2}) = 0$. 

We have just shown that for all polynomials $p$ over $\Q$, if $p(\sqrt{2})
= 0$ then $p(-\sqrt{2}) = 0$. The same argument with the roles of
$\sqrt{2}$ and $-\sqrt{2}$ reversed proves the converse. Hence
$\pm\sqrt{2}$ are conjugate.
\end{example}

\begin{ex}{ex:rat-dist}
Let $z \in \Q$. Show that $z$ is not conjugate to $z'$ for any
complex number $z' \neq z$.
\end{ex}

One thing that makes conjugacy more subtle over $\Q$ than over
$\R$ is that over $\Q$, more than two numbers can be conjugate:

\begin{figure}
\centering
\lengths%
\begin{picture}(136,40)
\cell{68}{20}{c}{\includegraphics[height=40\unitlength]{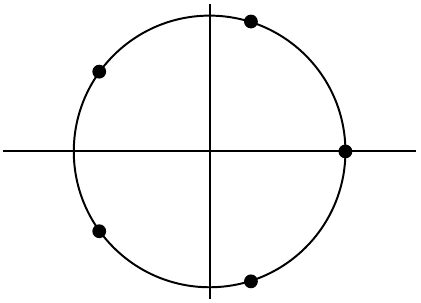}}
\cell{88}{22}{c}{$1$}
\cell{77}{39}{c}{$\omega$}
\cell{50}{32}{c}{$\omega^2$}
\cell{50}{8}{c}{$\omega^3$}
\cell{78}{2}{c}{$\omega^4$}
\end{picture}%
\caption{The $5$th roots of unity.}
\label{fig:fifth}
\end{figure}

\begin{example}
\label{eg:indist-fifth}
The $5$th roots of unity are 
\[
1, \omega, \omega^2, \omega^3, \omega^4,
\]
where $\omega = e^{2\pi i/5}$ (Figure~\ref{fig:fifth}). Now $1$ is not
conjugate to any of the rest, since it is a root of the
polynomial $t - 1$ and the others are not. (See also
Exercise~\ref{ex:rat-dist}.) But it turns out that $\omega, \omega^2,
\omega^3, \omega^4$ are all conjugate to each other.

Complex conjugate numbers are conjugate over $\R$, so they're certainly
conjugate over $\Q$. (If you've got a pair of complex numbers that you can't
tell apart using only the reals, you certainly can't tell them apart
using only the rationals.) Since $\omega^4 = 1/\omega = \ovln{\omega}$, it
follows that $\omega$ and $\omega^4$ are conjugate over $\Q$. By the same
argument, $\omega^2$ and $\omega^3$ are conjugate. What's not so obvious is
that $\omega$ and $\omega^2$ are conjugate. I know two proofs, which are
like the two proofs of Lemma~\ref{lemma:indist-R} and
Example~\ref{eg:sqrt-2-indist}. But we're not equipped to do either yet.
\end{example}

\begin{example}
More generally, let $p$ be any prime and put $\omega = e^{2\pi i/p}$. Then
$\omega, \omega^2, \ldots, \omega^{p - 1}$ are all conjugate to one another.
\end{example}

So far, we have asked when \emph{one} complex number can be distinguished
from another, using only polynomials over $\Q$. But what about more than
one?

\begin{defn}
\label{defn:indist-tuple}
Let $k \geq 0$ and let $(z_1, \ldots, z_k)$ and $(z'_1, \ldots, z'_k)$ be
$k$-tuples of complex numbers. Then $(z_1, \ldots, z_k)$ and $(z'_1,
\ldots, z'_k)$ are \demph{conjugate over $\Q$} if for all polynomials
$p(t_1, \ldots, t_k)$ over $\Q$ in $k$ variables,
\[
p(z_1, \ldots, z_k) = 0 
\iff
p(z'_1, \ldots, z'_k) = 0.
\]
\end{defn}

When $k = 1$, this is just the earlier definition of conjugacy.

\begin{ex}{ex:indist-many-one}
Suppose that $(z_1, \ldots, z_k)$ and $(z'_1, \ldots, z'_k)$ are
conjugate. Show that $z_i$ and $z'_i$ are conjugate, for each $i \in \{1,
\ldots, k\}$.
\end{ex}

\begin{example}
\label{eg:indist-conj}
For any $z_1, \ldots, z_k \in \C$, the $k$-tuples $(z_1, \ldots, z_k)$ and
$(\ovln{z_1}, \ldots, \ovln{z_k})$ are conjugate. For
let $p(t_1, \ldots, t_k)$ be a polynomial over $\Q$. Then
\[
\ovln{p(z_1, \ldots, z_k)} = p(\ovln{z_1}, \ldots, \ovln{z_k})
\]
since the coefficients of $p$ are real, by a similar argument to the one in
the first proof of Lemma~\ref{lemma:indist-R}. Hence
\[
p(z_1, \ldots, z_k) = 0
\iff
p(\ovln{z_1}, \ldots, \ovln{z_k}) = 0,
\]
which is what we had to prove.
\end{example}

\begin{example}
\label{eg:indist-4-5}
Let $\omega = e^{2\pi i/5}$, as in Example~\ref{eg:indist-fifth}. Then 
\[
(\omega, \omega^2, \omega^3, \omega^4)
\quad\text{and}\quad
(\omega^4, \omega^3, \omega^2, \omega)
\]
are conjugate, by Example~\ref{eg:indist-conj}. It can also be shown
that 
\[
(\omega, \omega^2, \omega^3, \omega^4)
\quad\text{and}\quad 
(\omega^2, \omega^4, \omega, \omega^3)
\]
are conjugate, although the proof is beyond us for now. But
\begin{align}
\label{eq:dist-four}
(\omega, \omega^2, \omega^3, \omega^4)
\quad\text{and}\quad
(\omega^2, \omega, \omega^3, \omega^4)
\end{align}
are \emph{not} conjugate, since if we put $p(t_1, t_2, t_3, t_4) = t_2 -
t_1^2$ then 
\[
p(\omega, \omega^2, \omega^3, \omega^4)
=
0
\neq
p(\omega^2, \omega, \omega^3, \omega^4).
\]
\end{example}

\begin{warning}{wg:conj-many}
The converse of Exercise~\ref{ex:indist-many-one} is false: just because
$z_i$ and $z'_i$ are conjugate for all $i$, it doesn't follow that
$(z_1, \ldots, z_k)$ and $(z'_1, \ldots, z'_k)$ are conjugate. For
we saw in Example~\ref{eg:indist-fifth} that $\omega$, $\omega^2$,
$\omega^3$ and $\omega^4$ are all conjugate to each other, but we
just saw that the 4-tuples~\eqref{eq:dist-four} are not conjugate.
\end{warning}

\section{Every polynomial has a symmetry group\ldots}

We are now ready to describe the first main idea of Galois theory: every
polynomial has a symmetry group. 

\begin{defn}
\label{defn:gg-conc}
Let $f$ be a polynomial with coefficients in $\Q$. Write $\alpha_1, \ldots,
\alpha_k$ for its distinct roots in $\C$. The \demph{Galois group} of $f$
is 
\[
\dem{\Gal(f)}
=
\{ \sigma \in S_k \such 
(\alpha_1, \ldots, \alpha_k) 
\text{ and }
(\alpha_{\sigma(1)}, \ldots, \alpha_{\sigma(k)})
\text{ are conjugate}
\}.
\]
\end{defn}

`Distinct roots' means that we ignore any repetition of roots: e.g.\ if
$f(t) = t^5(t - 1)^9$ then $k = 2$ and $\{\alpha_1, \alpha_2\} = \{0,
1\}$. 

\begin{ex}{ex:indist-subgp}
Show that $\Gal(f)$ is a subgroup of $S_k$. (This one is harder. Hint: if
you permute the variables of a polynomial, you get another polynomial.)
\end{ex}
\video{Exercise~\ref{ex:indist-subgp}}

\begin{digression}{dig:indist-Gal}
I brushed something under the carpet. The definition of $\Gal(f)$ depends
on the order in which the roots are listed. Different orderings gives
different subgroups of $S_k$. However, these subgroups are all
\emph{conjugate} to each other (conjugacy in the sense of group theory!),
and therefore isomorphic as abstract groups. So $\Gal(f)$ is well-defined
as an abstract group, independently of the choice of ordering.
\end{digression}

\begin{example}
Let $f$ be a polynomial over $\Q$ whose complex roots $\alpha_1,
\ldots, \alpha_k$ are all rational. If $\sigma \in \Gal(f)$ then
$\alpha_{\sigma(i)}$ and $\alpha_i$ are conjugate for each $i$, by
Exercise~\ref{ex:indist-many-one}. But since they are rational, that forces
$\alpha_{\sigma(i)} = \alpha_i$ (by Exercise~\ref{ex:rat-dist}), and since
$\alpha_1, \ldots, \alpha_k$ are distinct, $\sigma(i) = i$.  Hence $\sigma
= \id$. So the Galois group of $f$ is trivial.
\end{example}

\begin{example}
\label{eg:gg-conc-quad}
Let $f$ be a quadratic over $\Q$. If $f$ has rational roots then as we
have just seen, $\Gal(f)$ is trivial. If $f$ has two non-real roots then
they are complex conjugate, so $\Gal(f) = S_2$ by
Example~\ref{eg:indist-conj}. The remaining case is where $f$ has two distinct
roots that are real but not rational, and it can be shown that in that case
too, $\Gal(f) = S_2$.
\end{example}

\begin{warning}{wg:non-real}
On terminology: note that just now I said `non-real'. Sometimes people
casually say `complex' to mean `not real'. But try not to do this
yourself. It makes as little sense as saying `real' to mean `irrational',
or `rational' to mean `not an integer'.
\end{warning}

\begin{example}
Let $f(t) = t^4 + t^3 + t^2 + t + 1$. Then $(t - 1)f(t) = t^5 - 1$, so $f$
has roots $\omega, \omega^2, \omega^3, \omega^4$ where $\omega = e^{2\pi
i/5}$. We saw in Example~\ref{eg:indist-4-5} that
\[
\begin{pmatrix}
1       &2      &3      &4      \\
4       &3      &2      &1
\end{pmatrix},
\begin{pmatrix}
1       &2      &3      &4      \\
2       &4      &1      &3
\end{pmatrix}
\in 
\Gal(f),
\qquad
\begin{pmatrix}
1       &2      &3      &4      \\
2       &1      &3      &4
\end{pmatrix}
\not\in
\Gal(f).
\]
In fact, it can be shown that
\[
\Gal(f) 
=
\biggl\langle 
\begin{pmatrix}
1       &2      &3      &4      \\
2       &4      &1      &3
\end{pmatrix}
\biggr\rangle
\iso
C_4.
\]
\end{example}

\begin{example}
Let $f(t) = t^3 + bt^2 + ct + d$ be a cubic over $\Q$ with no rational
roots. Then
\[
\Gal(f)
\iso
\begin{cases}
A_3     &
\text{if }
\sqrt{-27d^2 + 18bcd - 4c^3 - 4b^3d + b^2c^2} \in \Q,   \\
S_3     &
\text{otherwise}.
\end{cases}
\]
This appears as Proposition~22.4 in Stewart, but is way beyond us for now.
Calculating Galois groups is hard.
\end{example}
\video{Galois groups, intuitively}

\section{\ldots which determines whether it can be solved}

Here we meet the second main idea of Galois theory: the Galois group of a
polynomial determines whether it can be solved. More exactly, it determines
whether the polynomial can be `solved by radicals'.

To explain what this means, let's begin with the quadratic formula. The
roots of a quadratic $at^2 + bt + c$ are
\[
\frac{-b \pm \sqrt{b^2 - 4ac}}{2a}.
\]
After much struggling, it was discovered that there is a similar formula
for cubics $at^3 + bt^2 + ct + d$: the roots are given by
\[
\hspace*{-19mm}
\frac{
\scriptstyle
\sqrt[3]{-27a^2d + 9abc - 2b^3 + 
3a 
\sqrt{3(27a^2d^2 - 18abcd + 4ac^3 + 4b^3d - b^2c^2)}}
\ +\ 
\sqrt[3]{-27a^2d + 9abc - 2b^3 - 
3a 
\sqrt{3(27a^2d^2 - 18abcd + 4ac^3 + 4b^3d - b^2c^2)}}
}
{
\scriptstyle
3\sqrt[3]{2}a
}.
\]
(No, you don't need to memorize that!) This is a complicated formula, and
there's also something strange about it. Any nonzero complex number has
three cube roots, and there are two $\sqrt[3]{}$ signs in the formula
(ignoring the $\sqrt[3]{2}$ in the denominator), so it looks as if the
formula gives \emph{nine} roots for the cubic. But a cubic can only have
three roots. What's going on?

It turns out that some of the nine aren't roots of
the cubic at all. You have to choose your cube roots carefully.
Section~1.4 of Stewart's book has much more on this point, as well as an
explanation of how the cubic formula was obtained. We won't be going into
this ourselves.

As Stewart also explains, there is a similar but even
more complicated formula for quartics (polynomials of degree $4$). 

\begin{digression}{dig:cubic-quartic}
Stewart doesn't actually write out the explicit formula for the cubic, let
alone the much worse one for the quartic. He just describes algorithms by
which they can be solved. But if you unwind the algorithm for the cubic,
you get the formula above. I have done this exercise once and do not
recommend it.
\end{digression}

Once mathematicians discovered how to solve quartics, they naturally looked
for a formula for quintics (polynomials of degree $5$). But it was
eventually proved by Abel and Ruffini, in the early 19th century, that
there is \emph{no} formula like the quadratic, cubic or quartic formula for
polynomials of degree $\geq 5$. A bit more precisely, there is no
formula for the roots in terms of the coefficients that uses only the usual
arithmetic operations ($+$, $-$, $\times$, $\div$) and $k$th roots (for
integers $k$).

Spectacular as this result was, Galois went further---and so will we.

Informally, let us say that a complex number is
\demph{radical}\label{p:radical} if it can be obtained from the rationals
using only the usual arithmetic operations and $k$th roots.
For example,
\[
\frac{
\frac{1}{2} + \sqrt[3]{\sqrt[7]{2} - \sqrt[2]{7}}
}
{
\sqrt[4]{6 + \sqrt[5]{\frac{2}{3}}}
}
\]
is radical, whichever square root, cube root, etc., we choose. A polynomial
over $\Q$ is \demph{solvable (or soluble) by radicals} if all of its
complex roots are radical.

\begin{example}
\label{eg:sr-2}
Every quadratic over $\Q$ is solvable by radicals. This follows from the
quadratic formula: $(-b \pm \sqrt{b^2 - 4ac})/2a$ is visibly a radical
number.
\end{example}

\begin{example}
\label{eg:sr-34}
Similarly, the cubic formula shows that every cubic over $\Q$ is solvable
by radicals. The same goes for quartics.
\end{example}

\begin{example}
\label{eg:sr-5}
\emph{Some} quintics are solvable by radicals. For instance, 
\[
(t - 1)(t - 2)(t - 3)(t - 4)(t - 5)
\]
is solvable by radicals, since all its roots are rational and, therefore,
radical. A bit less trivially, $(t - 123)^5 + 456$ is solvable by radicals,
since its roots are the five complex numbers $123 + \sqrt[5]{-456}$, which
are all radical.
\end{example}

What determines whether a polynomial is solvable by radicals? Galois's
amazing achievement was to answer this question completely:

\begin{bigthm}[Galois]
\label{thm:solv-conc}
Let $f$ be a polynomial over $\Q$. Then
\[
f \text{ is solvable by radicals}
\iff
\Gal(f) \text{ is a solvable group}.
\]
\end{bigthm}

\begin{example}
Definition~\ref{defn:gg-conc} implies that if $f$ has degree $n$ then
$\Gal(f)$ is isomorphic to a subgroup of $S_n$. You saw in Group Theory
that $S_4$ is solvable, and that every subgroup of a solvable group is
solvable. Hence the Galois group of any polynomial of degree $\leq 4$ is
solvable. It follows from Theorem~\ref{thm:solv-conc} that every polynomial
of degree $\leq 4$ is solvable by radicals.
\end{example}

\begin{example}
\label{eg:solv-conc-not}
Put $f(t) = t^5 - 6t + 3$. Later we'll show that $\Gal(f) = S_5$. You saw
in Group Theory that $S_5$ is \emph{not} solvable. Hence $f$ is not
solvable by radicals.
\end{example}

If there was a quintic formula then \emph{all} quintics would be solvable
by radicals, for the same reason as in Examples~\ref{eg:sr-2}
and~\ref{eg:sr-34}. But since this is not the case, there is no quintic
formula.

Galois's result is much sharper than Abel and Ruffini's. They proved that
there is no formula providing a solution by radicals of \emph{every}
quintic, whereas Galois found a way of determining \emph{which}
quintics (and higher) can be solved by radicals and which cannot.

\begin{digression}{dig:radical-numerics}
From the point of view of modern numerical computation, this is all a bit
odd. Computationally speaking, there is probably not much difference
between solving $t^5 + 3 = 0$ to $100$ decimal places (that is, finding
$\sqrt[5]{-3}$) and solving $t^5 -6t + 3 = 0$ to $100$ decimal places (that
is, solving a polynomial that isn't solvable by radicals).  Numerical
computation and abstract algebra have different ideas about what is easy
and what is hard!
\end{digression}

\rowofstars

This completes our overview of Galois theory. What's next?

Mathematics increasingly emphasizes \emph{abstraction} over
\emph{calculation}. Individual mathematicians' tastes vary, but the
historical trend is clear. In the case of Galois theory, this means dealing
with \emph{abstract algebraic structures}, principally fields, instead of
manipulating \emph{explicit algebraic expressions} such as polynomials. The
cubic formula already gave you a taste of how hairy that can get.

Developing Galois theory using abstract algebraic structures helps us to
see its connections to other parts of mathematics, and also has some fringe
benefits. For example, we'll solve some notorious geometry problems that
perplexed the ancient Greeks and remained unsolved for millennia. For that
and many other things, we'll need some of the theory of groups, rings and
fields---and that's what's next.

\chapter{Group actions, rings and fields}
\label{ch:garf}

We now start again. This chapter is a mixture of revision and material that
is likely to be new to you.  The revision is from Fundamentals of Pure
Mathematics, Honours Algebra, and Introduction to Number Theory (if you took
it, which I won't assume). Because much of it is revision, it's a longer
chapter than usual.

\video{Introduction to Week 2}

\section{Group actions}
\label{sec:actions}

Let's begin with a definition from Fundamentals of Pure Mathematics
(Figure~\ref{fig:action}).

\begin{figure}
\centering
\includegraphics[width=110mm]{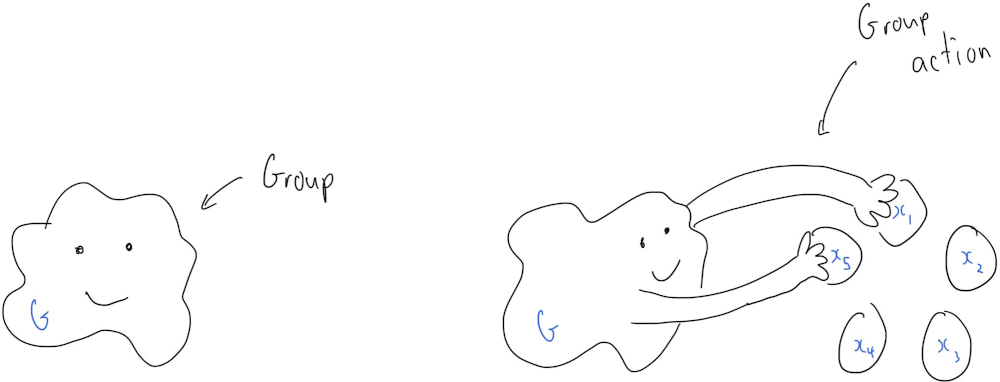}
\caption{Action of a group $G$ on a set $X$. (Image adapted from
\href{https://twitter.com/rowvector/status/1337174208284041221}{@rowvector}.)}
\label{fig:action}
\end{figure}

\begin{defn}
\label{defn:gp-action}
Let $G$ be a group and $X$ a set. An \demph{action} of $G$ on $X$ is a
function $G \times X \to X$, written as $(g, x) \mapsto gx$, such
that
\[
(gh)x = g(hx)
\]
for all $g, h \in G$ and $x \in X$, and
\[
1x = x
\]
for all $x \in X$. Here $1$ denotes the identity element of $G$.
\end{defn}

\begin{examples}
\label{egs:gp-actions}
\begin{enumerate}
\item 
\label{eg:ga-sym}
Let $X$ be a set. There is a group \demph{$\Sym(X)$} whose elements are the
bijections $X \to X$, with composition as the group operation and the
identity function $\id_X\from X \to X$ as the identity of the group. When
$X = \{1, \ldots, n\}$, this group is nothing but $S_n$.

There is an action of $\Sym(X)$ on $X$ defined by
\[
\begin{array}{ccc}
\Sym(X) \times X        &\to            &X      \\
(g, x)                  &\mapsto        &g(x).
\end{array}
\]
Acting on $X$ is what $\Sym(X)$ was born to do!

\item
\label{eg:ga-aut}
Similar examples can be given for many kinds of mathematical object, not
just sets. Generally, an \demph{automorphism} of an object $X$ is an
isomorphism $X \to X$ (preserving whatever structure $X$ has), and the
automorphisms of $X$ form a group \demph{$\Aut(X)$} under composition. It
acts on $X$ just as in~\bref{eg:ga-sym}: $gx = g(x)$, for $g \in \Aut(X)$
and $x \in X$.

For instance, when $X$ is a real vector space, the linear automorphisms
form a group $\Aut(X)$ which acts on the vector space $X$. When $X$ is
finite-dimensional, we can describe this action in more concrete
terms. Writing $n = \dim X$, the vector space $X$ is isomorphic to $\R^n$,
whose elements we will view as column vectors. The group $\Aut(X)$ is
isomorphic to the group of $n \times n$ real invertible matrices under
multiplication, usually called $\GL{n}{\R}$ (`general linear' group). Under
these isomorphisms, the action of $\Aut(X)$ on $X$ becomes
\[
\begin{array}{ccc}
\GL{n}{\R} \times \R^n    &\to            &\R^n   \\
(M, \vc{v})             &\mapsto        &M\vc{v},
\end{array}
\]
where $M\vc{v}$ is the usual matrix product.

\item
\label{eg:ga-cube}
Let $G$ be the 48-element group of isometries (rotations and reflections)
of a cube. Then $G$ acts on the 6-element set of faces of the cube: any
isometry maps faces to faces. It also acts in a similar way on the
12-element set of edges, the 8-element set of vertices, and a little less
obviously, the 4-element set of long diagonals. (The \demph{long diagonals}
are the lines between a vertex and its opposite, furthest-away, vertex.)

\item
\label{eg:ga-triv}
For any group $G$ and set $X$, the \demph{trivial action} of $G$ on $X$ is
given by $gx = x$ for all $g$ and $x$. Nothing moves anything!
\end{enumerate}
\end{examples}

Take an action of a group $G$ on a set $X$. Every group element $g$ gives
rise to a function
\[
\dem{\bar{g}} \from X \to X
\]
defined by
\[
\bar{g}(x) = gx.
\]
In fact, $\bar{g}$ is a bijection, because $\ovln{g^{-1}}$ is the inverse
function of $\bar{g}$. So $\bar{g} \in \Sym(X)$ for each $g \in
G$. For instance, consider the usual action of the isometry group $G$ of
the cube on the set $X$ of faces
(Example~\ref{egs:gp-actions}\bref{eg:ga-cube}). If $g$ is a particular
isometry, then $\bar{g}$ is whatever permutation of the set of faces 
the isometry induces.

We have just seen that whenever $G$ acts on $X$, every element $g$ of the
group $G$ gives rise to an element $\bar{g}$ of the group $\Sym(X)$. So, we
have defined a function
\[
\begin{array}{cccc}
\dem{\Sigma}\from     
                &G      &\to            &\Sym(X)        \\
                &g      &\mapsto        &\bar{g}.
\end{array}
\]
You can check that $\Sigma$ is a group homomorphism. 

\begin{ex}{ex:action-homm}
Check that $\bar{g}$ is a bijection for each $g \in G$. Also check that
$\Sigma$ is a homomorphism.
\end{ex}

In summary: any action of a group $G$ on $X$ gives rise
to a homomorphism $G \to \Sym(X)$, in a natural way.

\begin{examples}
\label{egs:act-homm}
\begin{enumerate}
\item 
\label{eg:ah-sym}
Let $X$ be a set, and consider the action of $\Sym(X)$ on $X$ described in
Example~\ref{egs:gp-actions}\bref{eg:ga-sym}. For each $g \in \Sym(X)$, the
function $\bar{g}\from X \to X$ is just $g$ itself. Hence the homomorphism
$\Sigma\from \Sym(X) \to \Sym(X)$ is the identity.

\item
\label{eg:ah-aut}
Similarly, take a real vector space $X$ and consider the action of
$\Aut(X)$ on $X$ described in
Example~\ref{egs:gp-actions}\bref{eg:ga-aut}. The resulting
homomorphism $\Sigma\from \Aut(X) \to \Sym(X)$ is the inclusion; that is,
$\Sigma(g) = g$ for all $g \in \Aut(X)$. (The domain of $\Sigma$ is the
group of \emph{linear} bijections $X \to X$, whereas the codomain is the
group of \emph{all} bijections $X \to X$.)

\item 
\label{eg:ah-cube}
Consider the usual action of the isometry group $G$ of the cube on the set
$X$ of edges (Example~\ref{egs:gp-actions}\bref{eg:ga-cube}). Since $X$ has
12 elements, $\Sym(X) \iso S_{12}$, and $\Sigma$ amounts to a homomorphism
$G \to S_{12}$.

\item
\label{eg:ah-triv}
The trivial action of a group $G$ on a set $X$
(Example~\ref{egs:gp-actions}\bref{eg:ga-triv}) corresponds to the trivial
homomorphism $G \to \Sym(X)$.
\end{enumerate}
\end{examples}

\begin{remark}
\label{rmk:act-homm-indices}
When $X$ is finite, we often choose an ordering of its elements, writing $X =
\{x_1, \ldots, x_k\}$. Then $\Sym(X) \iso S_k$ (assuming the $x_i$s are all
distinct). For each $g \in G$ and $i \in \{1, \ldots, k\}$, the element
$gx_i$ of $X$ must be equal to $x_j$ for some $j$. Write that $j$ as
$\sigma_g(i)$, so that
\[
gx_i = x_{\sigma_g(i)}.
\]
Then $\sigma_g \in S_k$, and the composite homomorphism
\[
G \toby{\Sigma} \Sym(X) \iso S_k
\]
is $g \mapsto \sigma_g$.
\end{remark}

\begin{digression}{dig:action-homm}
In fact, an action of $G$ on $X$ is \emph{the same thing as} a homomorphism
$G \to \Sym(X)$. What I mean is that there is a natural one-to-one
correspondence between actions of $G$ on $X$ and homomorphisms $G \to
\Sym(X)$. Some books even \emph{define} an action of $G$ on $X$ to be a
homomorphism $G \to \Sym(X)$.

In detail: we've just seen how an action of $G$ on $X$ gives rise to a
homomorphism $\Sigma\from G \to \Sym(X)$. In the other direction, take any
homomorphism $\Sigma\from G \to \Sym(X)$. Define a function $G \times X \to
X$ by 
\[
(g, x) \mapsto \bigl(\Sigma(g)\bigr)(x).
\]
(To make sense of the right-hand side: $\Sigma(g)$ is an element of the
group $\Sym(X)$, which is the set of bijections $X \to X$, so we can apply
the function $\Sigma(g)$ to the element $x$ to obtain another element
$(\Sigma(g))(x)$ of $X$.) You can check that this function $G \times X \to
X$ is an action of $G$ on $X$. So, we've now seen how to convert an action
into a homomorphism and vice versa. These two processes are mutually
inverse. Hence actions of $G$ on $X$ correspond one-to-one with homomorphisms
$G \to \Sym(X)$.

At the purely set-theoretic level (ignoring the group structures), the key
is that for any sets $A$, $B$ and $C$, there's a natural bijection
\[
C^{A \times B} \iso (C^B)^A.
\]
Here \demph{$C^B$} means the set of functions $B \to C$. The general proof
is very similar to what we've just done (where $A = G$ and $B = C = X$). In
words, a function $A \times B \to C$ can be seen as a way of assigning to
each element of $A$ a function $B \to C$. In a picture:
\[
\includegraphics[width=30mm]{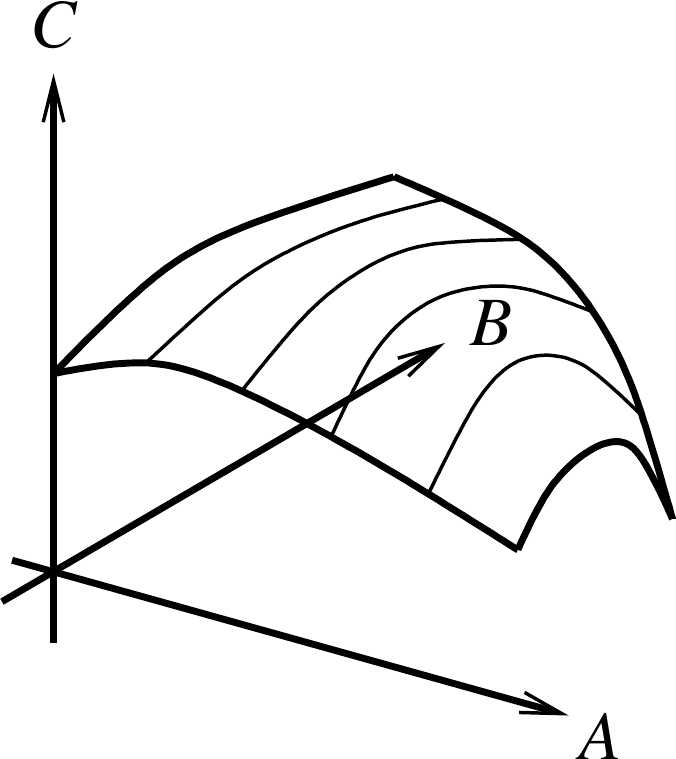}
\]
Here $A = B = C = \R$. By slicing up the surface as shown, a function $\R^2 \to
\R$ can be seen as a function from $\R$ to $\{\text{functions } \R \to \R\}$.
\end{digression}

\begin{defn}
An action of a group $G$ on a set $X$ is \demph{faithful} if for $g, h \in
G$,
\[
gx = hx \text{ for all } x \in X
\implies
g = h.
\]
\end{defn}

Faithfulness means that if two elements of the group \emph{do} the same,
they \emph{are} the same. Here are some other ways to express it.

\begin{lemma}
\label{lemma:ff-tfae}
For an action of a group $G$ on a set $X$, the following are equivalent:
\begin{enumerate}
\item 
\label{part:fft-f}
the action is faithful;

\item
\label{part:fft-g}
for $g \in G$, if $gx = x$ for all $x \in X$ then $g = 1$;

\item
\label{part:fft-inj}
the homomorphism $\Sigma\from G \to \Sym(X)$ is injective;

\item
\label{part:fft-ker}
$\ker \Sigma$ is trivial.
\end{enumerate}
\end{lemma}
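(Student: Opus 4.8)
The plan is to prove the chain of equivalences $\text{\ref{part:fft-f}} \Leftrightarrow \text{\ref{part:fft-g}} \Leftrightarrow \text{\ref{part:fft-inj}} \Leftrightarrow \text{\ref{part:fft-ker}}$, since each link is short and mostly a matter of unwinding definitions together with standard facts about group homomorphisms. I would organize it as a cycle or as separate biconditionals; the cleanest route is probably $\text{\ref{part:fft-f}} \Leftrightarrow \text{\ref{part:fft-g}}$, then $\text{\ref{part:fft-g}} \Leftrightarrow \text{\ref{part:fft-inj}}$, then $\text{\ref{part:fft-inj}} \Leftrightarrow \text{\ref{part:fft-ker}}$.

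For $\text{\ref{part:fft-f}} \Rightarrow \text{\ref{part:fft-g}}$: if $gx = x$ for all $x$, then $gx = 1x$ for all $x$ (using $1x = x$), so faithfulness gives $g = 1$. For $\text{\ref{part:fft-g}} \Rightarrow \text{\ref{part:fft-f}}$: suppose $gx = hx$ for all $x$; I would act by $h^{-1}$ on both sides, using the action axioms $(h^{-1}g)x = h^{-1}(gx) = h^{-1}(hx) = (h^{-1}h)x = 1x = x$, so by \ref{part:fft-g} applied to $h^{-1}g$ we get $h^{-1}g = 1$, i.e.\ $g = h$. This is the one spot with a genuine (if tiny) computation, and it is the only place the full action axioms get used rather than just the definition of $\Sigma$.

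For $\text{\ref{part:fft-g}} \Leftrightarrow \text{\ref{part:fft-inj}}$: recall $\Sigma(g) = \bar{g}$ where $\bar{g}(x) = gx$. The condition ``$gx = x$ for all $x$'' says exactly that $\bar{g} = \id_X$, i.e.\ $\Sigma(g) = 1$ in $\Sym(X)$. So \ref{part:fft-g} says: $\Sigma(g) = 1 \implies g = 1$, which is precisely the statement that $\ker\Sigma$ is trivial, and this in turn is equivalent to $\Sigma$ being injective by the standard fact that a group homomorphism is injective iff its kernel is trivial. That same standard fact delivers $\text{\ref{part:fft-inj}} \Leftrightarrow \text{\ref{part:fft-ker}}$ directly, so parts \ref{part:fft-g}, \ref{part:fft-inj}, \ref{part:fft-ker} all collapse together.

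I don't anticipate a real obstacle here; the ``hard part'' is purely expository — deciding how much of the homomorphism-kernel lemma to invoke as known (it is a compulsory prerequisite) versus spell out, and making sure the reader sees that $\Sigma(g) = 1$ unpacks to $gx = x$ for all $x$. I would state that unpacking explicitly once and then let the three algebraic conditions be identified with each other in a single sentence.
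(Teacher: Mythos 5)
Your proof is correct, but it takes a genuinely different decomposition from the paper's. The paper observes that faithfulness, once you unpack ``$gx = hx$ for all $x$'' as $\Sigma(g) = \Sigma(h)$, is \emph{literally} the statement that $\Sigma$ is injective, so \ref{part:fft-f}$\iff$\ref{part:fft-inj} is pure definition-unwinding with no computation at all; similarly \ref{part:fft-g}$\iff$\ref{part:fft-ker} is pure unwinding; and the only real content is the standard fact \ref{part:fft-inj}$\iff$\ref{part:fft-ker}. You instead prove \ref{part:fft-f}$\iff$\ref{part:fft-g}$\iff$\ref{part:fft-inj}$\iff$\ref{part:fft-ker}, and your \ref{part:fft-g}$\Rightarrow$\ref{part:fft-f} step requires a direct computation with the action axioms ($h^{-1}(gx) = (h^{-1}g)x$, etc.). That computation is fine, but it is really a re-proof of the fact that a group homomorphism with trivial kernel is injective, specialized to $\Sigma$ — you end up invoking the standard kernel fact anyway for \ref{part:fft-inj}$\iff$\ref{part:fft-ker}, so you've duplicated work. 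The paper's chaining buys a shorter proof by routing everything through that one standard fact once; yours buys a self-contained proof of \ref{part:fft-f}$\iff$\ref{part:fft-g} that never mentions $\Sigma$, which some readers might find more directly illuminating about what faithfulness means. Both are valid.
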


\begin{proof}
Faithfulness states that whenever $g, h \in G$ with $\bar{g} = \bar{h}$,
then $g = h$. But $\Sigma(g) = \bar{g}$, so
\bref{part:fft-f}$\iff$\bref{part:fft-inj}. Similarly,
\bref{part:fft-g}$\iff$\bref{part:fft-ker}. Finally, it is a standard fact
that a homomorphism is injective if and only if its kernel
is trivial, so \bref{part:fft-inj}$\iff$\bref{part:fft-ker}. 
\end{proof}

Many common actions are faithful:

\begin{examples}
\label{egs:faithful}
\begin{enumerate}
\item 
The natural action of $\Sym(X)$ on a set $X$
(Examples~\ref{egs:gp-actions}\bref{eg:ga-sym}
and~\ref{egs:act-homm}\bref{eg:ah-sym}) is faithful, since the
corresponding homomorphism $\id\from \Sym(X) \to \Sym(X)$ is injective.

\item
Similarly, the natural action of $\Aut(X)$ on a vector space
(Examples~\ref{egs:gp-actions}\bref{eg:ga-aut} 
and~\ref{egs:act-homm}\bref{eg:ah-aut}) is faithful, since the
corresponding homomorphism $\Aut(X) \to \Sym(X)$ is injective.

\item
\label{eg:faithful-cube}
The action of the isometry group $G$ of the cube on the set of faces
(Examples~\ref{egs:gp-actions}\bref{eg:ga-cube}
and~\ref{egs:act-homm}\bref{eg:ah-cube})
is faithful, since an isometry is determined by its effect on faces. The
same is true for edges and vertices. 

But the action of $G$ on the $4$-element set $X$ of long diagonals is not
faithful: for $G$ has $48$ elements, whereas $\Sym(X)$ has only $4! = 24$
elements, so the homomorphism $\Sigma\from G \to \Sym(X)$ cannot be
injective.

\item
The trivial action of a group $G$ on a set $X$ is never faithful unless $G$
itself is trivial, since $gx = x$ for all $g \in G$ and $x \in X$.
\end{enumerate}
\end{examples}

\begin{ex}{ex:cube-faithful}
Example~\ref{egs:faithful}\bref{eg:faithful-cube} shows that the
action of the isometry cube $G$ of the cube on the set $X$ of long
diagonals is not faithful. By Lemma~\ref{lemma:ff-tfae}, there must be some
non-identity isometry of the cube that fixes all four long diagonals. In
fact, there is exactly one. What is it?
\end{ex}

When a group $G$ acts faithfully on a set $X$, there is a copy of $G$
sitting inside $\Sym(X)$ as a subgroup (a `faithful representation' of
$G$):

\begin{lemma}
\label{lemma:faith-rep}
Let $G$ be a group acting faithfully on a set $X$. Then $G$ is isomorphic
to the subgroup
\[
\im\Sigma = \{ \bar{g} \such g \in G\}
\]
of $\Sym(X)$, where $\Sigma\from G \to \Sym(X)$ and $\bar{g}$ are defined
as above.
\end{lemma}

\begin{proof}
By Lemma~\ref{lemma:ff-tfae}, $\Sigma$ is injective, and it is a general
group-theoretic fact that any injective homomorphism $\phi \from G \to H$
induces an isomorphism between $G$ and $\im\phi$. 
\end{proof}

\begin{example}
Consider the usual action of the isometry group $G$ of the cube on the
$8$-element set $X$ of vertices. As we have seen, this action is
faithful. Hence the associated homomorphism
\[
\begin{array}{cccc}
\Sigma\from     &G       &\to           &\Sym(X)       \\
                &g       &\mapsto       &\bar{g}
\end{array}
\]
induces an isomorphism between $G$ and the subgroup $\{ \bar{g} \such g \in
G\}$ of $\Sym(X)$. The subgroup consists of all permutations of the set of
vertices that come from some isometry. For instance, there is no isometry
that exchanges two vertices but leaves the rest fixed, so this subgroup
contains no 2-cycles.
\end{example}

\begin{remark}
\label{rmk:faith-indices}
How does Lemma~\ref{lemma:faith-rep} look when $X$ is a finite set with
elements $x_1, \ldots, x_k$? Then $\Sym(X) \iso S_k$, and as in
Remark~\ref{rmk:act-homm-indices}, we can write $gx_i =
x_{\sigma_g(i)}$. It follows from that lemma and remark
that $G$ is isomorphic to the subgroup $\{\sigma_g \such g \in G\}$ of
$S_k$ (which \emph{is} a subgroup). The isomorphism is given by $g \mapsto
\sigma_g$. 
\end{remark}

Faithfulness is about which elements of the group fix everything in the
set. We can also ask which elements of the set are fixed by everything in
the group---or more generally, by some prescribed set $S$ of group
elements. 

\begin{defn}
\label{defn:fixed-set}
Let $G$ be a group acting on a set $X$. Let $S \sub G$. The \demph{fixed
set} of $S$ is 
\[
\dem{\Fix(S)} = \{x \in X \such sx = x \text{ for all } s \in S\}.
\]
\end{defn}

Later, we'll need the following lemma. 

\begin{lemma}
\label{lemma:act-fix-conj}
Let $G$ be a group acting on a set $X$, let $S \sub G$, and let $g \in
G$. Then $\Fix(gSg^{-1}) = g\Fix(S)$.
\end{lemma}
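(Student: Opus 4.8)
The plan is to prove the two set inclusions $\Fix(gSg^{-1}) \subseteq g\Fix(S)$ and $g\Fix(S) \subseteq \Fix(gSg^{-1})$ separately, each by a direct chase through the definition of the fixed set. Everything reduces to the single algebraic identity $(gsg^{-1})(gx) = g(sx)$, which holds for all $s \in S$ and $x \in X$ by the action axioms (associativity of the action, plus $g^{-1}g = 1$ acting trivially). Once that identity is in hand, both inclusions fall out essentially formally.

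First I would prove $g\Fix(S) \subseteq \Fix(gSg^{-1})$. Take $y \in g\Fix(S)$, so $y = gx$ for some $x \in X$ with $sx = x$ for all $s \in S$. For an arbitrary element of $gSg^{-1}$, write it as $gsg^{-1}$ with $s \in S$; then $(gsg^{-1})y = (gsg^{-1})(gx) = g(sx) = gx = y$, using the identity above and $sx = x$. Hence $y$ is fixed by every element of $gSg^{-1}$, i.e.\ $y \in \Fix(gSg^{-1})$.

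For the reverse inclusion I would apply the one just proved with $g$ replaced by $g^{-1}$ and $S$ replaced by $gSg^{-1}$. That gives $g^{-1}\Fix\bigl(g^{-1}(gSg^{-1})g\bigr) \subseteq \Fix(g^{-1}(gSg^{-1})g)$ — wait, more cleanly: the inclusion already shown says $h\Fix(T) \subseteq \Fix(hTh^{-1})$ for all $h$ and $T$. Setting $h = g^{-1}$ and $T = gSg^{-1}$ yields $g^{-1}\Fix(gSg^{-1}) \subseteq \Fix(S)$, and applying $g$ to both sides gives $\Fix(gSg^{-1}) \subseteq g\Fix(S)$. Combining the two inclusions gives the claimed equality.

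I do not expect any real obstacle here; the only thing to be careful about is the bookkeeping in the symmetry argument (making sure $g^{-1}(gSg^{-1})g$ really simplifies to $S$), and making sure the action identity $(gsg^{-1})(gx) = g(sx)$ is justified by invoking Definition~\ref{defn:gp-action} rather than assumed. If one preferred to avoid the symmetry trick, the reverse inclusion can instead be done by a direct chase: given $z \in \Fix(gSg^{-1})$, set $x = g^{-1}z$ and check $sx = x$ for all $s \in S$ by writing $sx = s g^{-1} z = g^{-1}(gsg^{-1})z = g^{-1}z = x$.
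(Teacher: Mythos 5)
Your proposal is correct and rests on exactly the same computation as the paper's proof; the paper simply packages the argument as a single chain of equivalences ($x \in \Fix(gSg^{-1}) \iff g^{-1}x \in \Fix(S) \iff x \in g\Fix(S)$) rather than two separate inclusions with a symmetry trick. Your closing "direct chase" alternative for the reverse inclusion is precisely one direction of that chain, so there is no substantive difference.
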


Here $gSg^{-1} = \{gsg^{-1} \such s \in S\}$ and $g\Fix(S) = \{gx \such x
\in \Fix(S)\}$. 

\begin{proof}
For $x \in X$, we have
\begin{align*}
x \in \Fix(gSg^{-1})    &
\iff
gsg^{-1}x = x \text{ for all } s \in S        \\
&
\iff
sg^{-1}x = g^{-1}x \text{ for all } s \in S     \\
&
\iff
g^{-1}x \in \Fix(S)     \\
&
\iff
x \in g\Fix(S). 
\end{align*}
\end{proof}

\section{Rings}
\label{sec:rings}

We'll begin this part with some stuff you know---but with a twist.

In this course, the word \demph{ring} means commutative ring with $1$
(multiplicative identity). Noncommutative rings and rings without $1$ are
important in some parts of mathematics, but since we'll be focusing on
commutative rings with $1$, it will be easier to just call them `rings'.

\begin{example}
There are many ways of building new rings from old. One of the most
fundamental is that from any ring $R$, we can build the ring \demph{$R[t]$}
of polynomials over $R$.  We will define $R[t]$ formally and study it in
detail in Chapter~\ref{ch:polys}.
\end{example}

Given rings $R$ and $S$, a \demph{homomorphism} from $R$ to $S$ is a
function $\phi \from R \to S$ satisfying the equations
\begin{align*}
\phi(r + r')  &= \phi(r) + \phi(r'),      &
\phi(0)       &= 0,   &
\phi(-r)      &= -\phi(r),  \\
\phi(r r')    &= \phi(r) \phi(r'),        &
\phi(1)       &= 1 \text{ \color{Red!100} (note this!)}
\end{align*}
for all $r, r' \in R$. For example, complex conjugation is a homomorphism
$\C \to \C$. It is a very useful lemma that if 
\begin{align*}
\phi(r + r')  = \phi(r) + \phi(r'),    
\qquad
\phi(r r')    = \phi(r) \phi(r'),       
\qquad
\phi(1)       = 1
\end{align*}
for all $r, r' \in R$ then $\phi$ is a homomorphism. In other words, to
show that $\phi$ is a homomorphism, you only need to check it preserves
$+$, $\cdot$ and $1$; preservation of $0$ and negatives then comes for
free. But you \emph{do} need to check it preserves $1$. That doesn't follow
from the other conditions.

A \demph{subring} of a ring $R$ is a subset $S \sub R$ that contains $0$
{\color{Red!100}and $1$} and is closed under addition, multiplication and
negatives. Whenever $S$ is a subring of $R$, the inclusion $\iota \from S
\to R$ (defined by $\iota(s) = s$) is a homomorphism.

\begin{warning}{wg:1}
In Honours Algebra, rings had $1$s but homomorphisms were \emph{not}
required to preserve $1$. Similarly, subrings of $R$ had to have a $1$, but
it was \emph{not} required to be the same as the $1$ of $R$.

For example, take the ring $\C$, the noncommutative ring $M$ of $2 \times
2$ matrices over $\C$, and the function $\phi \from \C \to M$ defined by
\[
\phi(z) =
\begin{pmatrix}
z       &0      \\
0       &0
\end{pmatrix}.
\]
In the terminology of Honours Algebra, $\phi$ is a homomorphism and its
image $\im \phi$ is a subring of $M$. But in our terminology, $\phi$ is not
a homomorphism (as $\phi(1) \neq I$) and $\im \phi$ is not a subring of $M$
(as $I \not\in \im \phi$).
\end{warning}

\begin{lemma}
\label{lemma:int-subrings}
Let $R$ be a ring and let $\mathcal{S}$ be any set (perhaps infinite) of
subrings of $R$. Then their intersection $\bigcap_{S \in \mathcal{S}} S$ is
also a subring of $R$.
\end{lemma}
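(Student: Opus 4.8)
The plan is simply to check, clause by clause, that the intersection satisfies the definition of a subring. Set $T = \bigcap_{S \in \mathcal{S}} S$. Recall that being a subring of $R$ means: containing $0$ and $1$, and being closed under addition, multiplication and negation. So I have five things to verify for $T$, and each will follow immediately from the fact that every $S \in \mathcal{S}$ has the corresponding property.

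First I would dispose of the degenerate case: if $\mathcal{S} = \emptyset$, then by the usual convention $T = R$, which is a subring of itself, so there is nothing to prove; hence I may assume $\mathcal{S} \neq \emptyset$ (though in fact the argument below does not use this). Next, since $0 \in S$ and $1 \in S$ for every $S \in \mathcal{S}$, we get $0 \in T$ and $1 \in T$. Finally, for closure, take $x, y \in T$; then $x, y \in S$ for every $S \in \mathcal{S}$, and since each such $S$ is a subring, $x + y$, $xy$ and $-x$ all lie in $S$. As this holds for every $S \in \mathcal{S}$, we conclude that $x+y$, $xy$ and $-x$ lie in $T$. That covers every clause of the definition, so $T$ is a subring of $R$.

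There is no real obstacle here; the only points requiring the slightest care are the empty-index-set convention and the routine observation that the universal quantifier over $\mathcal{S}$ passes through the finitely many subring axioms. This is the standard ``an intersection of sub-objects is a sub-object'' argument, exactly parallel to the corresponding facts for subgroups, subfields, and so on.
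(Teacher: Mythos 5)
Your proof is correct and follows essentially the same route as the paper's: a clause-by-clause verification that $0$, $1$, sums, products and negatives all survive the intersection because each $S \in \mathcal{S}$ is a subring. The brief remark about the empty-$\mathcal{S}$ convention is a harmless extra the paper omits.
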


In contrast, in the Honours Algebra setup, even the intersection of
\emph{two} subrings need not be a subring.

\begin{proof}
Write $T = \bigcap_{S \in \mathcal{S}} S$. 

For each $S \in \mathcal{S}$, we have $0 \in S$ since $S$ is a
subring. Hence $0 \in T$ by definition of intersection.

Let $r, s \in T$. For each $S \in \mathcal{S}$, we have $r, s \in S$ by
definition of intersection, so $r + s \in S$ since $S$ is a subring. Hence
$r + s \in T$ by definition of intersection.

Similar arguments show that $r \in T \implies -r \in T$, that $1 \in T$,
and that $r, s \in T \implies rs \in T$.
\end{proof}

\begin{example}
\label{eg:Z-initial}
For any ring $R$, there is exactly one homomorphism $\Z \to R$. Here is a
sketch of the proof. 

To show there is \emph{at least} one homomorphism $\chi \from \Z \to R$, we
construct one. Define $\chi$ inductively on integers $n \geq 0$
by $\chi(0) = 0$ and $\chi(n + 1) = \chi(n) + 1_R$. Thus, 
\[
\chi(n) = 1_R + \cdots + 1_R.
\]
Define $\chi$ on negative integers $n$ by $\chi(n) = -\chi(-n)$. A series
of tedious checks shows that $\chi$ is indeed a ring homomorphism.

To show there is \emph{only} one homomorphism $\Z \to R$, let $\phi$ be
any homomorphism $\Z \to R$; we have to prove that $\phi =
\chi$. Certainly $\phi(0) = 0 = \chi(0)$. Next prove by induction on
$n$ that $\phi(n) = \chi(n)$ for nonnegative integers $n$. I leave the
details to you, but the crucial point is that \emph{because homomorphisms
preserve $1$}, we must have
\[
\phi(n + 1) = \phi(n) + \phi(1) = \phi(n) + 1_R
\]
for all $n \geq 0$. Once we have shown that $\phi$ and $\chi$ agree on the
nonnegative integers, it follows that for negative $n$,
\[
\phi(n) = -\phi(-n) = -\chi(-n) = \chi(n).
\]
Hence $\phi(n) = \chi(n)$ for all $n \in \Z$; that is, $\phi = \chi$. 

Usually we write $\chi(n)$ as \demph{$n \cdot 1_R$}, or simply as
\dem{$n$}
\video{The meaning of `$n \cdot 1$', and Exercise~\ref{ex:char-conn}}
if it is clear from the context that $n$ is to be interpreted as an element
of $R$. So for $n \geq 0$,
\[
n \cdot 1_R = \underbrace{1_R + \cdots + 1_R}_{n \text{ times}}.
\]
The dot in the expression `$n \cdot 1_R$' is not multiplication in
any ring, since $n \in \Z$ but $1_R \in R$. It's just notation.
\end{example}

Every ring homomorphism $\phi\from R \to S$ has an image \demph{$\im\phi$},
which is a subring of $S$, and a kernel \demph{$\ker\phi$}, which is an
ideal of $R$.

\begin{warning}{wg:ideal-subring}
Subrings are analogous to subgroups, and ideals are analogous to normal
subgroups. But whereas normal subgroups are a special kind of subgroup,
ideals are \emph{not} a special kind of subring! Subrings must contain $1$,
but most ideals don't.
\end{warning}

\begin{ex}{ex:ideal-subring}
Prove that the only subring of a ring $R$ that is also an ideal is $R$
itself. 
\end{ex}

Given an ideal $I \idl R$, we obtain the quotient ring or factor ring
\demph{$R/I$} and the canonical homomorphism $\dem{\pi_I} \from R \to R/I$,
which is surjective and has kernel $I$.
\video{Quotient rings}

As explained in Honours Algebra, the quotient ring together with the
canonical homomorphism has a `universal\label{p:univ-qt} property': given
any ring $S$ and any homomorphism $\phi \from R \to S$ satisfying $\ker
\phi \supseteq I$, there is exactly one homomorphism $\bar{\phi} \from R/I
\to S$ such that this diagram commutes:
\begin{align*}
\xymatrix{
R \ar[d]_{\pi_I} \ar[dr]^{\phi} &       \\
R/I \ar@{.>}[r]_{\bar{\phi}}    &S.
}
\end{align*}
(For a diagram to \demph{commute} means that whenever there are two
different paths from one object to another, the composites along the two
paths are equal. 
Here, it means that $\phi = \bar{\phi} \of \pi_I$.) The
first\label{p:fit} isomorphism theorem says that if $\phi$ is surjective
and has kernel \emph{equal} to $I$ then $\bar{\phi}$ is an isomorphism. So
$\pi_I \from R \to R/I$ is essentially the only surjective homomorphism out
of $R$ with kernel $I$.

\begin{digression}{dig:first-iso}
Loosely, the ideals of a ring $R$ correspond one-to-one with the surjective
homomorphisms out of $R$. This means four things:
\begin{itemize}
\item given an ideal $I \idl R$, we get a surjective homomorphism out of
$R$ (namely, $\pi_I\from R \to R/I$);

\item given a surjective homomorphism $\phi$ out of $R$, we
get an ideal of $R$ (namely, $\ker\phi$);

\item if we start with an ideal $I$ of $R$, take its associated surjective
homomorphism $\pi_I\from R \to R/I$, then take \emph{its} associated ideal,
we end up where we started (that is, $\ker(\pi_I) = I$);

\item if we start with a surjective homomorphism $\phi \from R \to S$, take
its associated ideal $\ker\phi$, then take \emph{its} associated surjective
homomorphism $\pi_{\ker\phi} \from R \to R/\ker\phi$, we end up where we
started (at least `up to isomorphism', in that we have the isomorphism
$\bar{\phi} \from R/\ker\phi \to S$ making the triangle commute). This is
the first isomorphism theorem.
\end{itemize}

Analogous stories can be told for groups and modules.
\end{digression}

An \demph{integral domain} is a ring $R$ such that $0_R \neq
1_R$ and for $r, r' \in R$,
\[
rr' = 0 \implies r = 0 \text{ or } r' = 0.
\]

\begin{ex}{ex:trivring}
The \demph{trivial ring} or \demph{zero ring} is the one-element set with
its only possible ring structure. Show that the only ring in which $0 = 1$
is the trivial ring.
\end{ex}

Equivalently, an integral domain is a nontrivial ring in which cancellation
is valid: $rs = r's$ implies $r = r'$ or $s = 0$.

\begin{warning}{wg:cant-cancel}
In an \emph{arbitrary} ring, you can't reliably cancel by nonzero elements. For
example, in the ring $\Z/\idlgen{6}$ of integers mod $6$, we have $1\times
2 = 4\times 2$ but $1 \neq 4$.
\end{warning}

\begin{digression}{dig:defn-id}
Why is the condition $0 \neq 1$ in the definition of integral domain?

My answer begins with a useful general point: the sum of no things
should always be interpreted as $0$. (The amount you pay in a shop is the
sum of the prices of the individual things. If you buy no things, you pay
\pounds 0.) This is ultimately because $0$ is the identity for addition.

Similarly, the product of no things should be interpreted as $1$. One
justification is that $1$ is the identity for multiplication. Another is
that if we want laws like $\exp(\sum x_i) = \prod \exp(x_i)$ to hold, and
if we believe that the sum of no things is $0$, then the product of no
things should be $1$. Or if we want every positive integer to be a
product of primes, we'd better say that $1$ is the product
of no primes. It's a convention to let us handle trivial cases smoothly.

Now consider the following condition on a ring $R$: for all $n \geq 0$ and
$r_1, \ldots, r_n \in R$,
\begin{align}
\label{eq:id-unbiased}
r_1 r_2 \cdots r_n = 0 
\implies 
\text{there exists } i \in \{1, \ldots, n\} 
\text{ such that } r_i = 0.
\end{align}
For $n = 2$, this is the main condition in the definition of integral
domain.  For $n = 0$, it says: if $1 = 0$ then there exists $i \in
\emptyset$ such that $r_i = 0$. But any statement beginning `there exists
$i \in \emptyset$' is false! So in the case $n = 0$,
condition~\bref{eq:id-unbiased} states that $1 \neq 0$. Hence `$1 \neq 0$'
is the 0-fold analogue of the main condition.

On the other hand, if~\bref{eq:id-unbiased} holds for $n = 0$ and $n = 2$
then a simple induction shows that it holds for all $n \geq 0$. Conclusion:
an integral domain can equivalently be defined as a ring in
which~\bref{eq:id-unbiased} holds for all $n \geq 0$.
\end{digression}

Let $Y$ be a subset of a ring $R$. The \demph{ideal $\idlgen{Y}$ generated
by $Y$} is defined as the intersection of all the ideals of $R$ containing
$Y$. You can show that any intersection of ideals is an ideal (much as for
subrings in Lemma~\ref{lemma:int-subrings}). So $\idlgen{Y}$ \emph{is} an
ideal. We can also characterize $\idlgen{Y}$ as the smallest ideal of $R$
containing $Y$. That is, $\idlgen{Y}$ is an ideal containing $Y$,
and if $I$ is another ideal containing $Y$ then $\idlgen{Y} \sub I$.

This definition of the ideal generated by $Y$ is top-down: we obtain
$\idlgen{Y}$ as the intersection of bigger ideals.  But there is also a
useful bottom-up description of $\idlgen{Y}$. Here it is when $Y$ is
finite.

\begin{lemma}
\label{lemma:idl-gen-exp}
Let $R$ be a ring and let $Y = \{r_1, \ldots, r_n\}$ be a finite
subset. Then
\[
\idlgen{Y}
=
\{ a_1 r_1 + \cdots + a_n r_n \such a_1, \ldots, a_n \in R \}.
\]
\end{lemma}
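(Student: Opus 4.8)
The plan is to write $J$ for the right-hand set, $J = \{a_1 r_1 + \cdots + a_n r_n \such a_1, \ldots, a_n \in R\}$, and to prove the two inclusions $\idlgen{Y} \sub J$ and $J \sub \idlgen{Y}$ separately. Throughout I will lean on the two descriptions of $\idlgen{Y}$ recalled just above the lemma: that it is the intersection of all ideals of $R$ containing $Y$, equivalently that it is the smallest ideal of $R$ containing $Y$.

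For $\idlgen{Y} \sub J$, I would show that $J$ is itself an ideal of $R$ that contains $Y$; minimality of $\idlgen{Y}$ then forces $\idlgen{Y} \sub J$. The inclusion $Y \sub J$ is immediate, writing $r_i = 0 \cdot r_1 + \cdots + 1 \cdot r_i + \cdots + 0 \cdot r_n$ — note that this uses that $R$ has a multiplicative identity. To check that $J$ is an ideal I would verify in turn that $0 \in J$ (take all coefficients zero), that $J$ is closed under addition and under negation (add, respectively negate, the coefficients termwise), and that $J$ absorbs multiplication by an arbitrary $b \in R$, since $b(a_1 r_1 + \cdots + a_n r_n) = (b a_1) r_1 + \cdots + (b a_n) r_n \in J$. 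This last point is the only place where commutativity and distributivity visibly do any work.

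For the reverse inclusion $J \sub \idlgen{Y}$, I would argue straight from the fact that $\idlgen{Y}$ is an ideal containing $Y$: each $r_i$ lies in $\idlgen{Y}$, so each product $a_i r_i$ lies in $\idlgen{Y}$ by the absorption property of ideals, and hence so does the sum $a_1 r_1 + \cdots + a_n r_n$, since ideals are closed under addition. Every element of $J$ has this shape, so $J \sub \idlgen{Y}$, and combining the two inclusions gives $\idlgen{Y} = J$.

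There is no real obstacle here; the points needing a little care are (i) remembering to invoke $1 \in R$ when showing $Y \sub J$, and (ii) the degenerate case $n = 0$, where $Y = \emptyset$, the empty sum is $0$, and both sides reduce to the zero ideal $\{0\}$, so the statement still holds exactly as written.
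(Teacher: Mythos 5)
Your proof is correct and follows essentially the same route as the paper's: show the set of linear combinations is an ideal containing $Y$, then use the absorption and additive closure of ideals for the reverse containment. The only cosmetic difference is that the paper verifies the combinations lie in an \emph{arbitrary} ideal containing $Y$ and then invokes minimality once, whereas you apply this directly to $\idlgen{Y}$ itself — the same argument, trivially rearranged.
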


\begin{proof}
Write $I$ for the right-hand side. It is straightforward to check that 
$I$ is an ideal of $R$, and it contains $Y$ because, for instance, $r_1 = 1
r_1 + 0 r_2 + \cdots + 0 r_n$. 

Now let $J$ be any ideal of $R$ containing $Y$. Let $a_1, \ldots, a_n \in
R$. For each $i$, we have $r_i \in J$ since $J$ contains $Y$, and so $a_i
r_i \in J$ since $J$ is an ideal. Hence $\sum a_i r_i \in J$, again since
$J$ is an ideal. So $I \sub J$.

Hence $I$ is the smallest ideal of $R$ containing $Y$, that is, $I =
\idlgen{Y}$.
\end{proof}

\begin{digression}{dig:top-bottom}
A similar interplay between top-down and bottom-up appears in other parts of
mathematics. 

For example, in topology, the closure of a subset of a metric or
topological space is the intersection of all closed subsets containing
it. In linear algebra, the span of a subset of a vector space is the
intersection of all linear subspaces containing it. In group theory, the
subgroup generated by a subset of a group is the intersection of all
subgroups containing it.

These are all top-down definitions, but there are equivalent bottom-up
definitions, describing explicitly which elements belong to the
subset. Sometimes we're lucky and those descriptions are simple. For
instance, closures can easily be described in terms of limit points, and
spans are just sets of linear combinations. But sometimes it
gets more complicated. For example, the subgroup of a group $G$ generated
by a subset $Y$ can be described \emph{informally} as the set of elements
of $G$ that can be obtained from $Y$ by taking products and inverses, but
expressing that precisely is a little bit fiddly.

It's worth getting comfortable with the top-down style of definition, as
it works well in cases where the bottom-up approach is prohibitively
complicated, and we'll need it later. 
\end{digression}

When $Y = \{r_1, \ldots, r_n\}$, we write $\idlgen{Y}$ as
\demph{$\idlgen{r_1, \ldots, r_n}$} rather than $\idlgen{\{r_1, \ldots,
r_n\}}$.  In particular, when $n = 1$, Lemma~\ref{lemma:idl-gen-exp}
implies that
\[
\idlgen{r} = \{ ar \such a \in R \}.
\]
Ideals of the form $\idlgen{r}$ are called \demph{principal ideals}.  A
\demph{principal ideal domain} is an integral domain in which every ideal
is principal.

\begin{example}
\label{eg:Z-pid}
$\Z$ is a principal ideal domain. Indeed, if $I \idl \Z$ then either $I =
\{0\}$, in which case $I = \idlgen{0}$, or $I$ contains some positive
integer, in which case we can define $n$ to be the least positive integer
in $I$ and use the division algorithm to show that $I = \idlgen{n}$.
\end{example}

\begin{ex}{ex:Z-pid}
Fill in the details of Example~\ref{eg:Z-pid}.\\
\end{ex}

Let $r$ and $s$ be elements of a ring $R$. We say that $r$ \demph{divides}
$s$, and write \demph{$r \dvd s$}, if there exists $a \in R$ such that $s =
ar$. This condition is equivalent to $s \in \idlgen{r}$, and to $\idlgen{s}
\sub \idlgen{r}$. 

An element $u \in R$ is a \demph{unit} if it has a multiplicative inverse,
or equivalently if $\idlgen{u} = R$. The units form a group
\demph{$R^\times$} under multiplication. For instance, $\Z^\times = \{1,
-1\}$.

\begin{ex}{ex:associates}
Let $r$ and $s$ be elements of an integral domain. Show that $r\dvd s \dvd
r \iff \idlgen{r} = \idlgen{s} \iff s = ur$ for some unit $u$.
\end{ex}

Elements $r$ and $s$ of a ring are \demph{coprime} if for $a \in R$,
\[
a \dvd r \text{ and } a \dvd s \implies a \text{ is a unit}.
\]

\begin{propn}
\label{propn:bezout}
Let $R$ be a principal ideal domain and $r, s \in R$. Then
\begin{align*}
r \text{ and } s \text{ are coprime}    
\iff
ar + bs = 1 \text{ for some } a, b \in R.
\end{align*}
\end{propn}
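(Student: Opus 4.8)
The plan is to prove the two implications separately, using the hypothesis that $R$ is a principal ideal domain at the point where it is genuinely needed.

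First I would handle the easy direction ($\Leftarrow$): suppose $ar + bs = 1$ for some $a, b \in R$, and let $d$ be any common divisor of $r$ and $s$. Then $d \dvd ar$ and $d \dvd bs$, hence $d \dvd ar + bs = 1$, so $d$ is a unit. This shows $r$ and $s$ are coprime, and this half uses nothing about principal ideals.

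For the harder direction ($\Rightarrow$), the idea is to bring in the ideal $\idlgen{r, s}$. By Lemma~\ref{lemma:idl-gen-exp}, we have $\idlgen{r, s} = \{ar + bs \such a, b \in R\}$, so the right-hand condition `$ar + bs = 1$ for some $a, b$' is exactly the statement that $1 \in \idlgen{r, s}$, i.e.\ that $\idlgen{r, s} = R$. Now since $R$ is a PID, the ideal $\idlgen{r, s}$ is principal, say $\idlgen{r, s} = \idlgen{d}$ for some $d \in R$. Because $r, s \in \idlgen{d}$, we get $d \dvd r$ and $d \dvd s$, so by coprimality $d$ is a unit; hence $\idlgen{d} = R$ (using the characterization of units via $\idlgen{u} = R$ noted in the excerpt), and therefore $1 \in \idlgen{r, s}$, giving the desired $a, b$.

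The main obstacle — really the only substantive step — is recognizing that $\idlgen{r, s}$ is the right object to look at and invoking principality to replace a two-generator ideal by a single generator $d$ that is then forced to be a common divisor. Once that move is made, everything else is immediate from the definitions of `divides', `unit', and the explicit description of finitely generated ideals. I would make sure to state clearly that the PID hypothesis is used precisely here and nowhere else.
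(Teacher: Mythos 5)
Your proposal is correct and follows essentially the same route as the paper: the $\Leftarrow$ direction is the same direct divisibility argument, and for $\Rightarrow$ you, like the paper, invoke principality to write $\idlgen{r,s}=\idlgen{d}$, deduce that $d$ is a common divisor hence a unit by coprimality, and then use Lemma~\ref{lemma:idl-gen-exp} to extract $a,b$. No meaningful differences.
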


\begin{proof}
$\Rightarrow$: suppose that $r$ and $s$ are coprime. Since $R$ is a
principal ideal domain, $\idlgen{r, s} = \idlgen{u}$ for some $u \in
R$. Since $r \in \idlgen{r, s} = \idlgen{u}$, we must have $u \dvd r$, and
similarly $u \dvd s$. But $r$ and $s$ are coprime, so $u$ is a unit. Hence
$1 \in \idlgen{u} = \idlgen{r, s}$. But by Lemma~\ref{lemma:idl-gen-exp},
\[
\idlgen{r, s}
=
\{ ar + bs \such a, b \in R\},
\]
and the result follows.

$\Leftarrow$: suppose that $ar + bs = 1$ for some $a, b \in R$. If $u \in
R$ with $u \dvd r$ and $u \dvd s$ then $u \dvd (ar + bs) = 1$, so $u$ is a
unit. Hence $r$ and $s$ are coprime.
\end{proof}

\section{Fields}
\label{sec:fields}

A \demph{field} is a ring $K$ in which $0 \neq 1$ and every nonzero element
is a unit. Equivalently, it is a ring such that $K^\times = K \without
\{0\}$.\label{p:mult-fd} Every field is an integral domain.

\begin{ex}{ex:eg-fd}
Write down all the examples of fields that you know.\\
\end{ex}

As we go on, we'll see several ways of making new fields out of old. Here's
the simplest.

\begin{example}
\label{eg:rat}
Let $K$ be a field. A \demph{rational expression} over $K$ is a ratio of
two polynomials
\[
\frac{f(t)}{g(t)},
\]
where $f(t), g(t) \in K[t]$ with $g \neq 0$. Two such expressions,
$f_1/g_1$ and $f_2/g_2$, are regarded as equal if $f_1 g_2 = f_2 g_1$ in
$K[t]$. So formally, a rational expression is an equivalence class of pairs
$(f, g)$ under the equivalence relation in the last sentence. The set
of rational expressions over $K$ is denoted by \demph{$K(t)$}.

Rational expressions are added, subtracted and multiplied in the ways you'd
expect, making $K(t)$ into a field. We will look at it more carefully in
Chapter~\ref{ch:polys}. 
\end{example}

A field $K$ has exactly two ideals: $\{0\}$ and $K$. For if $\{0\} \neq I
\idl K$ then $u \in I$ for some $u \neq 0$; but then $u$ is a unit, so
$\idlgen{u} = K$, so $I = K$.

\begin{lemma}
\label{lemma:fd-homm}
Every homomorphism between fields is injective.
\end{lemma}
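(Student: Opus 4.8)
The plan is to use the fact, just established in the previous paragraph, that a field $K$ has exactly two ideals: $\{0\}$ and $K$. Given a homomorphism of fields $\phi\from K \to L$, I would look at its kernel $\ker\phi$, which is an ideal of $K$. By the two-ideals fact, either $\ker\phi = K$ or $\ker\phi = \{0\}$. I would then rule out the first possibility using the condition that homomorphisms preserve $1$: if $\ker\phi = K$ then $1_K \in \ker\phi$, so $\phi(1_K) = 0_L$; but $\phi(1_K) = 1_L$, forcing $1_L = 0_L$, which contradicts the requirement $0 \neq 1$ in the definition of a field. Hence $\ker\phi = \{0\}$, and by the standard fact (used already in Lemma~\ref{lemma:ff-tfae}) that a homomorphism with trivial kernel is injective, $\phi$ is injective.

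The only mildly delicate point is the very first step — knowing that $\ker\phi$ is an ideal of $K$. This is recorded in the excerpt just before Warning~\ref{wg:ideal-subring} ("every ring homomorphism $\phi\from R \to S$ has \ldots a kernel $\ker\phi$, which is an ideal of $R$"), so I can cite it directly rather than reprove it. Everything else is a short chain of implications, so I would not expect any real obstacle; the proof is essentially two lines once the ingredients (the classification of ideals in a field, preservation of $1$, and kernel-trivial $\Rightarrow$ injective) are lined up.

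If I wanted to avoid mentioning ideals at all, an alternative route is purely elementary: suppose $\phi(x) = \phi(y)$ with $x \neq y$; then $\phi(x - y) = 0$ with $x - y \neq 0$, so $x - y$ is a unit, and $0 = \phi(x-y)\phi\bigl((x-y)^{-1}\bigr) = \phi\bigl((x-y)(x-y)^{-1}\bigr) = \phi(1_K) = 1_L$, again contradicting $0 \neq 1$ in $L$. I would probably present the ideal-based argument as the main one since the classification of ideals in a field has just been proved and this is the natural place to use it, but I would be happy with either.
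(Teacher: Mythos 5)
Your proposal is correct and is essentially the paper's own proof: the kernel is an ideal of $K$, hence $\{0\}$ or $K$, and the latter is ruled out because $\phi(1) = 1 \neq 0$ in the field $L$. The elementary variant you sketch is also fine, but the main argument matches the paper's exactly.
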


A `homomorphism between fields' means a \emph{ring} homomorphism.

\begin{proof}
Let $\phi \from K \to L$ be a homomorphism between fields. Then $\ker\phi
\idl K$, so $\ker\phi$ is either $\{0\}$ or $K$. If $\ker\phi = K$ then
$\phi(1) = 0$; but $\phi(1) = 1$ by definition of homomorphism, so $0 = 1$
in $L$, contradicting the assumption that $L$ is a field. Hence $\ker\phi =
\{0\}$, that is, $\phi$ is injective.
\end{proof}

\begin{warning}{wg:fd-homm}
With the Honours Algebra definition of homomorphism,
Lemma~\ref{lemma:fd-homm} would be false, since the map with constant value
$0$ would be a homomorphism.
\end{warning}

\begin{ex}{ex:homm-inv}
Let $\phi \from K \to L$ be a homomorphism of fields and let $0 \neq a \in
K$. Prove that $\phi(a^{-1}) = \phi(a)^{-1}$. Why is $\phi(a)^{-1}$ defined?
\end{ex}

A \demph{subfield} of a field $K$ is a subring that is a field. 

\begin{lemma}
\label{lemma:subfd-im-inv}
Let $\phi \from K \to L$ be a homomorphism between fields.
\begin{enumerate}
\item 
\label{part:sii-im}
For any subfield $K'$ of $K$, the image $\phi K'$ is a subfield of $L$.

\item
\label{part:sii-inv}
For any subfield $L'$ of $L$, the preimage $\phi^{-1} L'$ is a
subfield of $K$.
\end{enumerate}
\end{lemma}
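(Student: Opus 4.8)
The plan is to verify each part directly from the definitions, using the fact that a homomorphism between fields is injective (Lemma~\ref{lemma:fd-homm}), that a subfield is precisely a subring that happens to be a field, and that we already know images of subrings are subrings and preimages of subrings are subrings (the subring facts are the ring-theoretic analogues of things established earlier; in any case they are routine closure checks). So in both parts the subring half is automatic, and the real content is checking the \emph{field} condition: nonzero elements have inverses, and $0 \neq 1$.

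For part~\bref{part:sii-im}: first I would observe that $\phi K'$ is a subring of $L$ (closure under $+$, $\cdot$, negatives is immediate from $\phi$ being a homomorphism, and $\phi(1) = 1 \in \phi K'$). To see it is a field, take a nonzero element of $\phi K'$; it has the form $\phi(a)$ for some $a \in K'$, and $\phi(a) \neq 0$ forces $a \neq 0$ (since $\phi(0) = 0$). As $K'$ is a field, $a$ is a unit in $K'$, so $a^{-1} \in K'$, and then $\phi(a^{-1}) = \phi(a)^{-1}$ (by Exercise~\ref{ex:homm-inv}, or directly: $\phi(a)\phi(a^{-1}) = \phi(a a^{-1}) = \phi(1) = 1$). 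Hence $\phi(a)$ is invertible in $\phi K'$, with inverse $\phi(a^{-1})$. Finally $0 \neq 1$ in $\phi K'$ because it holds in $L$. So $\phi K'$ is a subfield.

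For part~\bref{part:sii-inv}: again $\phi^{-1} L'$ is a subring of $K$ by the usual preimage argument ($0, 1 \in \phi^{-1}L'$ since $\phi(0) = 0 \in L'$ and $\phi(1) = 1 \in L'$; closure under the operations follows since $\phi$ is a homomorphism and $L'$ is closed). For the field condition, let $a \in \phi^{-1} L'$ with $a \neq 0$. Since $a \in K$ and $K$ is a field, $a^{-1}$ exists in $K$; I need $a^{-1} \in \phi^{-1} L'$, i.e.\ $\phi(a^{-1}) \in L'$. Now $\phi(a) \in L'$, and $\phi(a) \neq 0$ because $\phi$ is injective (Lemma~\ref{lemma:fd-homm}) and $a \neq 0$ --- this is the one place injectivity is genuinely needed, and it is the only real obstacle in the proof. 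Since $L'$ is a field, $\phi(a)$ is a unit in $L'$, so $\phi(a)^{-1} \in L'$; and $\phi(a)^{-1} = \phi(a^{-1})$ as before. Hence $a^{-1} \in \phi^{-1}L'$. Together with $0 \neq 1$ in $K$, this shows $\phi^{-1}L'$ is a subfield.

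The only subtlety worth flagging is the asymmetry: in part~\bref{part:sii-im} the nonzeroness of $\phi(a)$ was our hypothesis, whereas in part~\bref{part:sii-inv} we had to \emph{deduce} $\phi(a) \neq 0$ from $a \neq 0$, which is exactly where injectivity of field homomorphisms enters. Everything else is bookkeeping with the homomorphism axioms.
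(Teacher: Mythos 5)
Your proof is correct and follows essentially the same route as the paper's: the paper delegates the subring part to a Honours Algebra result and handles inverses via Exercise~\ref{ex:homm-inv} (with part~(ii) left as ``similar''), and you simply spell out those same checks. Your observation that injectivity of $\phi$ (Lemma~\ref{lemma:fd-homm}) is the one point genuinely needed in part~(ii) is a correct and sensible filling-in of the paper's omitted detail.
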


\begin{proof}
For~\bref{part:sii-im}, you know from Proposition~3.4.28 of Honours Algebra
that $\phi K'$ is a subring of $L$, and you can use
Exercise~\ref{ex:homm-inv} above to show that if $0 \neq b \in \phi K'$ then
$b^{-1} \in \phi K'$. The proof of~\bref{part:sii-inv} is similar.
\end{proof}

Whenever we have a collection of homomorphisms between the same pair of
fields, we get a subfield in the following way.

\begin{defn}
\label{defn:equalizer}
Let $X$ and $Y$ be sets, and let $S \sub \{\text{functions $X \to
Y$}\}$. The \demph{equalizer} of $S$ is 
\[
\dem{\Eq(S)} 
= 
\{ x \in X \such f(x) = g(x) \text{ for all } f, g \in S \}.
\] 
\end{defn}

In other words, it is the part of $X$ where all the functions in $S$ are
\emph{equal}.

\begin{lemma}
\label{lemma:eq-subfd}
Let $K$ and $L$ be fields, and let $S \sub \{\text{homomorphisms } K \to
L\}$. Then $\Eq(S)$ is a subfield of $K$.
\end{lemma}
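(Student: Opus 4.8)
The statement is that $\Eq(S)$ is a subfield of $K$, where $S$ is a set of homomorphisms $K \to L$. Since a subfield is defined as a subring that is a field, and since every subring of a field that happens to be a field is automatically a subfield, the cleanest route is first to show $\Eq(S)$ is a subring of $K$, and then to show it is closed under taking inverses of nonzero elements (which, together with being a subring containing $0 \neq 1$, makes it a field in its own right). The plan is to check the subring axioms one by one directly from the definition of $\Eq(S)$, using only that each $f \in S$ is a ring homomorphism.

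\textbf{Key steps.} First I would observe that $0 \in \Eq(S)$ and $1 \in \Eq(S)$: for any $f, g \in S$ we have $f(0) = 0 = g(0)$ and $f(1) = 1 = g(1)$, the latter precisely because our homomorphisms preserve $1$ (Warning~\ref{wg:1} reminds us this needs checking). Next, closure under addition, multiplication and negation: if $x, y \in \Eq(S)$ then for all $f, g \in S$ we have $f(x) = g(x)$ and $f(y) = g(y)$, so $f(x+y) = f(x)+f(y) = g(x)+g(y) = g(x+y)$, and similarly $f(xy) = g(xy)$ and $f(-x) = g(-x)$; hence $x+y$, $xy$ and $-x$ all lie in $\Eq(S)$. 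That establishes that $\Eq(S)$ is a subring of $K$. Finally, for inverses: let $x \in \Eq(S)$ with $x \neq 0$. For any $f, g \in S$, since field homomorphisms are injective (Lemma~\ref{lemma:fd-homm}) we have $f(x) \neq 0$ and $g(x) \neq 0$, so $f(x)^{-1}$ and $g(x)^{-1}$ are defined in $L$, and by Exercise~\ref{ex:homm-inv}, $f(x^{-1}) = f(x)^{-1} = g(x)^{-1} = g(x^{-1})$, using $f(x) = g(x)$ in the middle. Hence $x^{-1} \in \Eq(S)$.

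\textbf{Wrapping up.} Since $\Eq(S)$ is a subring of $K$ containing $1 \neq 0$ in which every nonzero element is invertible (with inverse taken in $K$, which as we just saw also lies in $\Eq(S)$), it is a field, and being a subring of $K$, it is a subfield of $K$. There is no real obstacle here — every step is a routine application of the homomorphism axioms — but the one point that genuinely uses something nontrivial, and the place a careless argument could go wrong, is the inverse step: one must know that $f(x) \neq 0$ (so that $f(x)^{-1}$ makes sense) and that $f$ respects inverses, which is exactly why Lemma~\ref{lemma:fd-homm} and Exercise~\ref{ex:homm-inv} were set up just beforehand. A minor subtlety worth a sentence in the writeup: if $S$ is empty then $\Eq(S) = K$, which is still a subfield, so the statement holds vacuously in that edge case too.
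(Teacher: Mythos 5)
Your proof is correct and takes essentially the same direct-verification approach as the paper, which checks the subfield axioms one by one for $\Eq(S)$ (the paper writes out only the case of multiplication and leaves the rest to the reader, whereas you fill in all the cases, including the inverse step via Lemma~\ref{lemma:fd-homm} and Exercise~\ref{ex:homm-inv}). Your remark on the $S = \emptyset$ edge case is a nice touch, though not strictly needed since the vacuous reading already gives $\Eq(\emptyset) = K$.
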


\begin{proof}
We must show that $0, 1 \in \Eq(S)$, that if $a \in \Eq(S)$ then $-a \in
\Eq(S)$ and $1/a \in \Eq(S)$ (for $a \neq 0$), and that if $a, b \in
\Eq(S)$ then $a + b, ab \in \Eq(S)$.  I will show just the last of these,
leaving the rest to you. 

Suppose that $a, b \in \Eq(S)$. For all $\phi, \theta \in S$, we have
\[
\phi(ab)
=
\phi(a)\phi(b)
=
\theta(a)\theta(b)
=
\theta(ab),
\]
so $ab \in \Eq(S)$.
\end{proof}

\begin{example}
Let $K = L = \C$. Let $S = \{\id_\C, \kappa\}$, where $\kappa \from \C \to
\C$ is complex conjugation. Then 
\[
\Eq(S) = \{z \in \C \such z = \ovln{z} \} = \R,
\]
and $\R$ is indeed a subfield of $\C$.
\end{example}

Next we ask: when is $1 + \cdots + 1$ equal to $0$?

Let $R$ be any ring. By Example~\ref{eg:Z-initial}, there is a unique
homomorphism $\chi \from \Z \to R$. Its kernel is an ideal of the principal
ideal domain $\Z$. Hence $\ker\chi = \idlgen{n}$ for a unique integer $n
\geq 0$. This $n$ is called the \demph{characteristic} of $R$, and written
as \demph{$\chr R$}. So for $m \in \Z$, we have $m \cdot 1_R = 0$ if and
only if $m$ is a multiple of $\chr R$. Or equivalently,
\begin{align}
\label{eq:defn-char}
\chr R =
\begin{cases}
\text{the least $n > 0$ such that $n \cdot 1_R = 0_R$,}   & 
\text{if such an $n$ exists;}\\
0,      &
\text{otherwise}.
\end{cases}
\end{align}

The concept of characteristic is mostly used in the case of fields.

\begin{examples}
\label{egs:char-fds}
\begin{enumerate}
\item
$\Q$, $\R$ and $\C$ all have characteristic $0$. 

\item
For a prime number $p$, we write \demph{$\F_p$} for the field $\Z/\idlgen{p}$ of
integers modulo $p$. Then $\chr\F_p = p$.

\item
For any field $K$, the field $K(t)$ of rational expressions has the same
characteristic as $K$.
\end{enumerate}
\end{examples}

\begin{lemma}
\label{lemma:char-0p}
The characteristic of an integral domain is $0$ or a prime number.
\end{lemma}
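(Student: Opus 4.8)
The plan is to show the characteristic cannot be a composite positive integer, and since it is by definition a nonnegative integer, this leaves only $0$ or a prime. So suppose $R$ is an integral domain with $\chr R = n$ where $n > 0$ is not prime; I want a contradiction. Since $n$ is a positive non-prime, I can write $n = ab$ with $a, b \in \Z$ and $1 < a, b < n$. (The case $n = 1$ is ruled out separately: if $\chr R = 1$ then $1 \cdot 1_R = 0$, i.e.\ $1_R = 0_R$, contradicting the definition of integral domain, which requires $0_R \neq 1_R$.)

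Now I would exploit the fact that the map $\chi \from \Z \to R$ of Example~\ref{eg:Z-initial} is a ring homomorphism, so it respects products: $\chi(n) = \chi(a)\chi(b)$, that is, $(n \cdot 1_R) = (a \cdot 1_R)(b \cdot 1_R)$ in $R$. (Alternatively one checks $(a \cdot 1_R)(b \cdot 1_R) = (ab)\cdot 1_R$ directly by expanding the sums and using distributivity, but invoking that $\chi$ is a homomorphism is cleaner.) Since $n = \chr R$, we have $n \cdot 1_R = 0_R$, so $(a \cdot 1_R)(b \cdot 1_R) = 0_R$.

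Here is the key step. Because $R$ is an integral domain, from $(a \cdot 1_R)(b \cdot 1_R) = 0_R$ we conclude $a \cdot 1_R = 0_R$ or $b \cdot 1_R = 0_R$. But by the characterization of characteristic —$m \cdot 1_R = 0_R$ if and only if $\chr R = n$ divides $m$— this would force $n \dvd a$ or $n \dvd b$. Since $0 < a < n$ and $0 < b < n$, neither is divisible by $n$, a contradiction. I expect the only mild subtlety to be making sure the multiplicativity of $\chi$ (or the identity $(a\cdot 1_R)(b\cdot 1_R) = (ab)\cdot 1_R$) is invoked cleanly, and remembering to dispose of the $n = 1$ case using $0_R \neq 1_R$; everything else is a direct application of the integral domain property and the divisibility description of the characteristic established just before the lemma.
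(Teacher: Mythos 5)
Your proof is correct and essentially identical to the paper's: both rely on the multiplicativity of $\chi\from\Z\to R$, the integral domain property to split $\chi(a)\chi(b)=0$, and the fact that $\ker\chi=\idlgen{n}$ to obtain a divisibility contradiction. The only cosmetic difference is that you phrase the argument by contradiction (assume $n$ composite), while the paper argues directly (take any factorization $n=km$ and show one factor equals $n$).
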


\begin{proof}
Let $R$ be an integral domain and write $n = \chr R$. Suppose that $n > 0$;
we must prove that $n$ is prime.

Since $1 \neq 0$ in an integral domain, $n \neq 1$. (Remember that $1$ is
not a prime! So that step was necessary.) Now let $k, m > 0$ with $km =
n$. Writing $\chi$ for the unique homomorphism $\Z \to R$, we have 
\[
\chi(k) \chi(m) = \chi(km) = \chi(n) = 0,
\]
and $R$ is an integral domain, so $\chi(k) = 0$ or $\chi(m) = 0$. WLOG,
$\chi(k) = 0$. But $\ker\chi = \idlgen{n}$, so $n \dvd k$, so $k =
n$. Hence $n$ is prime.
\end{proof}

In particular, the characteristic of a field is always $0$ or a prime.  But
there is no way of mapping between fields of different characteristics:

\begin{lemma}
\label{lemma:same-char}
Let $\phi \from K \to L$ be a homomorphism of fields. Then $\chr K = \chr L$.
\end{lemma}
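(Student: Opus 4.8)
The key observation is that any field homomorphism $\phi \from K \to L$ sends $1_K$ to $1_L$ (by definition of ring homomorphism), and hence respects the canonical copies of $\Z$ sitting inside $K$ and $L$. More precisely, let $\chi_K \from \Z \to K$ and $\chi_L \from \Z \to L$ be the unique homomorphisms guaranteed by Example~\ref{eg:Z-initial}. Then $\phi \of \chi_K \from \Z \to L$ is a ring homomorphism, so by the \emph{uniqueness} clause of Example~\ref{eg:Z-initial} it must equal $\chi_L$. Thus $\phi \of \chi_K = \chi_L$, which says that $\phi(n \cdot 1_K) = n \cdot 1_L$ for every $n \in \Z$.

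\textbf{From here the argument is immediate.} By definition, $\chr K$ is the unique nonnegative generator of $\ker \chi_K$, and likewise for $L$. I would show $\ker \chi_K = \ker \chi_L$. For one inclusion: if $n \in \ker \chi_K$, then $\chi_L(n) = \phi(\chi_K(n)) = \phi(0) = 0$, so $n \in \ker \chi_L$; hence $\ker \chi_K \sub \ker \chi_L$. For the reverse inclusion, I use that $\phi$ is \emph{injective} (Lemma~\ref{lemma:fd-homm}): if $n \in \ker \chi_L$, then $\phi(\chi_K(n)) = \chi_L(n) = 0 = \phi(0)$, and injectivity of $\phi$ forces $\chi_K(n) = 0$, i.e.\ $n \in \ker \chi_K$. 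So the two kernels coincide, and therefore their unique nonnegative generators coincide: $\chr K = \chr L$.

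\textbf{What to watch out for.} There is no real obstacle here — the only subtlety is that one direction of the kernel equality genuinely needs injectivity of $\phi$, not just that $\phi$ is a homomorphism. (Without injectivity one would only get $\chr L \dvd \chr K$ in the appropriate sense.) This is exactly where Lemma~\ref{lemma:fd-homm} earns its keep, and it is worth flagging in the write-up. An alternative, slightly more hands-on phrasing avoids explicitly invoking $\chi_K, \chi_L$: simply note that for a positive integer $n$, we have $n \cdot 1_K = 0$ in $K$ iff $n \cdot 1_L = \phi(n \cdot 1_K) = 0$ in $L$ (the forward direction using that $\phi$ is a homomorphism, the reverse using injectivity), and then read off from~\eqref{eq:defn-char} that $\chr K$ and $\chr L$ are determined by the same condition. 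Either formulation is short; I would probably present the second, as it keeps the emphasis on the characterization~\eqref{eq:defn-char} just established.
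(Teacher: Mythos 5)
Your proof is correct and follows essentially the same route as the paper's: both factor $\chi_L = \phi \of \chi_K$ via the uniqueness of the homomorphism $\Z \to L$, then use the injectivity of $\phi$ (Lemma~\ref{lemma:fd-homm}) to conclude $\ker\chi_K = \ker\chi_L$. You spell out the kernel equality as two explicit inclusions where the paper compresses it to one line, but the idea is identical; the ``hands-on'' alternative you sketch at the end is precisely what Exercise~\ref{ex:char-conn} asks for.
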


\begin{proof}
Write $\chi_K$ and $\chi_L$ for the unique homomorphisms from $\Z$ to $K$
and $L$, respectively. Since $\chi_L$ is the \emph{unique} homomorphism $\Z
\to L$, the triangle 
\[
\xymatrix{
                &\Z \ar[dl]_{\chi_K} \ar[dr]^{\chi_L}   &       \\
K \ar[rr]_{\phi}&                                       &L
}
\]
commutes. (Concretely, this says that $\phi(n \cdot 1_K) = n \cdot 1_L$
for all $n \in \Z$.) Hence $\ker(\phi \of \chi_K) = \ker \chi_L $. But
$\phi$ is injective by Lemma~\ref{lemma:fd-homm}, so $\ker(\phi \of \chi_K)
= \ker \chi_K $. Hence $\ker\chi_K = \ker\chi_L$, or equivalently, $\chr K
= \chr L$.
\end{proof}

For example, the inclusion $\Q \to \R$ is a homomorphism of fields, and
both have characteristic $0$.

\begin{ex}{ex:char-conn}
This proof of Lemma~\ref{lemma:same-char} is quite abstract. Find a more
concrete proof, taking equation~\eqref{eq:defn-char} as your definition of
characteristic. (You will still need the fact that $\phi$ is injective.)
\end{ex}
\video{The meaning of `$n \cdot 1$', and Exercise~\ref{ex:char-conn}}

The \demph{prime subfield} of $K$ is the intersection of all the subfields
of $K$. It is straightforward to show that any intersection
\label{p:int-subfds}
of subfields is a subfield (much as in
Lemma~\ref{lemma:int-subrings}). Hence the prime subfield \emph{is} a
subfield. It is the smallest subfield of $K$, in the sense that any other
subfield of $K$ contains it.

Concretely (`bottom-up'), the prime subfield of $K$ is
\begin{align*}
\biggl\{
\frac{m \cdot 1_K}{n \cdot 1_K} 
\such
m, n \in \Z \text{ with } n \cdot 1_K \neq 0
\biggr\}.
\end{align*}
To see this, first note that this set is a subfield of $K$. It is
the smallest subfield of $K$: for if $L$ is a subfield of $K$ then $1_K \in
L$ by definition of subfield, so $m \cdot 1_K \in L$ for all integers
$m$, so $(m \cdot 1_K)/(n \cdot 1_K) \in L$ for all integers $m$ and $n$
such that $n \cdot 1_K \neq 0$.

\begin{examples}
\label{egs:ps}
\begin{enumerate}
\item
The field $\Q$ has no proper subfields, so the prime subfield of $\Q$ is
$\Q$ itself. 

\item
Let $p$ be a prime. The field $\F_p$ has no proper subfields, so the prime
subfield of $\F_p$ is $\F_p$ itself.
\end{enumerate}
\end{examples}

\begin{ex}{ex:prime-sub}
What is the prime subfield of $\R$? Of $\C$?\\
\end{ex}

The prime subfields appearing in Examples~\ref{egs:ps} were $\Q$ and
$\F_p$. In fact, these are the \emph{only} prime subfields of anything: 

\begin{lemma}
\label{lemma:char-prime-subfd}
Let $K$ be a field.
\begin{enumerate}
\item 
\label{part:cps-0}
If $\chr K = 0$ then the prime subfield of $K$ is $\Q$.

\item
\label{part:cps-p}
If $\chr K = p > 0$ then the prime subfield of $K$ is $\F_p$.
\end{enumerate}
\end{lemma}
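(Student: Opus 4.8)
The plan is to identify the prime subfield of $K$ with the image of a single canonical homomorphism, built from the unique ring homomorphism $\chi\from\Z\to K$ of Example~\ref{eg:Z-initial} and the explicit ``bottom-up'' description of the prime subfield established just above the lemma.

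For part~\bref{part:cps-p}, suppose $\chr K = p > 0$. By definition of characteristic, $\ker\chi = \idlgen{p}$, so the first isomorphism theorem gives an isomorphism $\Z/\idlgen{p} \to \im\chi$. Since $\Z/\idlgen{p} = \F_p$ is a field and $\im\chi$ is a subring of $K$, this shows $\im\chi$ is a subfield of $K$ isomorphic to $\F_p$. It then remains to check that $\im\chi$ \emph{is} the prime subfield. On the one hand, $\im\chi = \{m\cdot 1_K \such m\in\Z\}$ is contained in every subfield of $K$, because every subfield contains $1_K$ and is closed under addition and negation; in particular $\im\chi$ lies in the prime subfield. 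On the other hand, $\im\chi$ is itself a subfield, so the prime subfield, being the smallest, is contained in $\im\chi$. Hence the prime subfield equals $\im\chi \iso \F_p$.

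For part~\bref{part:cps-0}, suppose $\chr K = 0$. Then $\ker\chi = \idlgen{0} = \{0\}$, so $\chi$ is injective; equivalently, $n\cdot 1_K \neq 0$ for every nonzero integer $n$. This lets me define
\[
\psi \from \Q \to K,
\qquad
\psi\biggl(\frac{m}{n}\biggr) = \frac{m\cdot 1_K}{n\cdot 1_K}
\qquad (m,n\in\Z,\ n\neq 0).
\]
First I would check that $\psi$ is well defined: if $m/n = m'/n'$ in $\Q$ then $mn' = m'n$ in $\Z$, and applying $\chi$ gives $(m\cdot 1_K)(n'\cdot 1_K) = (m'\cdot 1_K)(n\cdot 1_K)$ in $K$, which rearranges (dividing by the nonzero element $(n\cdot 1_K)(n'\cdot 1_K)$) to $\psi(m/n) = \psi(m'/n')$. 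Next I would check that $\psi$ preserves $1$, addition and multiplication, which is a routine computation with fractions using that $\chi$ is a ring homomorphism; by the criterion noted after the definition of homomorphism in Section~\ref{sec:rings}, this makes $\psi$ a homomorphism. Being a homomorphism of fields, $\psi$ is automatically injective by Lemma~\ref{lemma:fd-homm}, so $\im\psi \iso \Q$. Finally, $\im\psi$ is precisely the set $\{(m\cdot 1_K)/(n\cdot 1_K) \such m,n\in\Z,\ n\cdot 1_K \neq 0\}$, which was identified above as the prime subfield of $K$. Hence the prime subfield is isomorphic to $\Q$.

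The only genuine work is the well-definedness and homomorphism verifications for $\psi$ in part~\bref{part:cps-0}; everything else is bookkeeping. The crucial hypothesis $\chr K = 0$ enters exactly to guarantee that the denominators $n\cdot 1_K$ are nonzero, hence invertible, so that $\psi$ even makes sense; in the $\chr K = p$ case the analogous fractions collapse into $\im\chi$, which is why that part reduces to the minimality argument.
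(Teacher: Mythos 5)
Your proof is correct and takes essentially the same route as the paper's: both parts hinge on the unique homomorphism $\chi \from \Z \to K$ (with the first isomorphism theorem in characteristic $p$) and the fraction map $\Q \to K$, $m/n \mapsto (m\cdot 1_K)/(n\cdot 1_K)$, in characteristic $0$, differing only cosmetically in the last step — you identify the image with the prime subfield by minimality and the bottom-up description, while the paper notes that $\Q$ and $\F_p$ have no proper subfields. The one small thing to add is a citation of Lemma~\ref{lemma:char-0p} (the characteristic of an integral domain is $0$ or prime) to justify your assertion that $\Z/\idlgen{p}$ is a field.
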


In the statement of this lemma, as so often in mathematics, the word `is'
means `is isomorphic to'. I hope you're comfortable with that by now.

\begin{proof}
For~\bref{part:cps-0}, suppose that $\chr K = 0$. By definition of
characteristic, $n \cdot 1_K \neq 0$ for all integers $n \geq 0$. One can
check that there is a well-defined homomorphism $\phi \from \Q \to K$
defined by $m/n \mapsto (m \cdot 1_K)/(n \cdot 1_K)$. (The check uses the
fact that $\chi\from n \mapsto n \cdot 1_K$ is a homomorphism.) Now $\phi$
is injective (being a homomorphism of fields), so $\im \phi \iso \Q$. But
$\im \phi$ is a subfield of $K$, and since $\Q$ has no proper subfields, it
is the prime subfield. 

For~\bref{part:cps-p}, suppose that $\chr K = p > 0$. By
Lemma~\ref{lemma:char-0p}, $p$ is prime. The unique homomorphism $\chi
\from \Z \to K$ has kernel $\idlgen{p}$, by definition. By the first
isomorphism theorem, $\im\chi \iso \Z/\idlgen{p} = \F_p$. But $\im\chi$ is
a subfield of $K$, and since $\F_p$ has no proper subfields, it is the
prime subfield. 
\end{proof}

\begin{lemma}
\label{lemma:ff-p}
Every finite field has positive characteristic.
\end{lemma}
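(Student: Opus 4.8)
The plan is to prove the contrapositive: if a field has characteristic $0$, then it is infinite. Suppose $K$ is a field with $\chr K = 0$. By Lemma~\ref{lemma:char-prime-subfd}\bref{part:cps-0}, the prime subfield of $K$ is (isomorphic to) $\Q$. Since $\Q$ is infinite and the prime subfield is a subset of $K$, it follows that $K$ is infinite. This is the cleanest route, since the hard work has already been done in establishing the structure of prime subfields.

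Alternatively, and more directly, I would argue as follows. Let $K$ be a finite field. Consider the unique homomorphism $\chi \from \Z \to K$ from Example~\ref{eg:Z-initial}. Since $K$ is finite, $\chi$ cannot be injective (its domain $\Z$ is infinite), so $\ker\chi \neq \{0\}$. But $\ker\chi = \idlgen{\chr K}$ by the definition of characteristic, so $\chr K \neq 0$, hence $\chr K > 0$. This version avoids invoking the full classification of prime subfields and uses only the basic existence-and-uniqueness of the map $\Z \to K$ together with the definition of characteristic.

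I expect there to be no real obstacle here: the statement is almost immediate from the machinery already built. The only point requiring a moment's care is the observation that an injective function from an infinite set cannot have a finite codomain — or equivalently, that a map out of $\Z$ with trivial kernel must have infinite image. Either phrasing is elementary. I would probably present the second proof, since it is self-contained and makes transparent exactly which earlier fact is doing the work (namely that $\ker\chi = \idlgen{\chr K}$), and then perhaps remark that it also follows instantly from Lemma~\ref{lemma:char-prime-subfd}.
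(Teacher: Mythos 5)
Your proposal is correct, and your first paragraph is essentially word-for-word the paper's own proof: the paper argues via Lemma~\ref{lemma:char-prime-subfd}\bref{part:cps-0} that a field of characteristic $0$ contains a copy of $\Q$ and is therefore infinite. Your preferred second argument is a genuinely more elementary route: it bypasses the classification of prime subfields entirely and uses only the unique homomorphism $\chi \from \Z \to K$ of Example~\ref{eg:Z-initial} together with the fact that $\ker\chi = \idlgen{\chr K}$ by the definition of characteristic. Since $K$ is finite, $\chi$ cannot be injective, so the kernel is nonzero and $\chr K > 0$; this is watertight (an injection $\Z \to K$ would force $K$ infinite). What the paper's route buys is brevity given that Lemma~\ref{lemma:char-prime-subfd} has already been proved; what yours buys is self-containedness and transparency about which earlier fact is actually doing the work. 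Either is acceptable.
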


\begin{proof}
By Lemma~\ref{lemma:char-prime-subfd}, a field of characteristic $0$
contains a copy of $\Q$ and is therefore infinite.
\end{proof}

\begin{warning}{wg:inf-p}
There are also \emph{infinite} fields of positive
characteristic.\linebreak[5] An example is the field $\F_p(t)$ of rational
expressions over $\F_p$.\\
\end{warning}

Square roots usually come in pairs: how many times in your life have you
written a $\pm$ sign before a $\sqrt{\mbox{\phantom{x}}}$? But in
characteristic $2$, plus and minus are the same, so the two square roots
become one.  We'll see that this pattern persists: $p$th roots behave
strangely in characteristic $p$. First, an important little lemma:

\begin{lemma}
\label{lemma:p-binom}
Let $p$ be a prime and $0 < i < p$. Then $p \dvd \binom{p}{i}$.
\end{lemma}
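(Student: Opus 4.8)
The plan is to work from the factorial formula $\binom{p}{i} = p!/(i!\,(p-i)!)$ and lean on the primality of $p$. Clearing denominators turns this into the integer identity $i!\,(p-i)!\,\binom{p}{i} = p!$. Since $p$ appears as one of the factors in $p! = 1\cdot 2\cdots p$, we have $p \dvd p!$, and hence $p \dvd i!\,(p-i)!\,\binom{p}{i}$.

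The next step is to observe that $p$ divides neither $i!$ nor $(p-i)!$. The factorial $i!$ is the product of the integers $1, 2, \ldots, i$, each of which satisfies $0 < j \le i < p$; a prime divides none of these, and, applying the defining property of primes repeatedly, therefore does not divide their product. The identical argument, using $0 < p-i < p$, shows $p \ndvd (p-i)!$.

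Finally, I would combine the two facts: we have $p \dvd i!\,(p-i)!\,\binom{p}{i}$ with $p \ndvd i!$ and $p \ndvd (p-i)!$, so applying the prime property twice more (cancel the factor $i!$, then the factor $(p-i)!$) forces $p \dvd \binom{p}{i}$, as required. There is no real obstacle here; the only point needing a little care is the bookkeeping that every appeal to ``a prime dividing a product divides one of the factors'' is legitimate, which it is precisely because $p$ is prime. (One could equally well begin from the identity $i\binom{p}{i} = p\binom{p-1}{i-1}$ and use that $p$ and $i$ are coprime when $0 < i < p$, but the factorial route keeps everything inside $\Z$ and needs no extra input.)
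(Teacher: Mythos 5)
Your proof is correct and follows essentially the same route as the paper: both start from the identity $i!\,(p-i)!\,\binom{p}{i} = p!$, observe that $p \dvd p!$ but $p \ndvd i!$ and $p \ndvd (p-i)!$, and conclude by primality that $p \dvd \binom{p}{i}$. You simply spell out the repeated use of the prime-divides-a-product property in more detail than the paper does.
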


For example, the $7$th row of Pascal's triangle is $1, 7, 21, 35, 35, 21,
7, 1$, and the lemma predicts that $7$ divides all of these numbers apart
from the first and last.

\begin{proof}
We have $i!(p - i)! \binom{p}{i} = p!$. Now $p$ divides $p!$ but not $i!$
or $(p - i)!$ (since $p$ is prime and $0 < i < p$), so $p$ must divide
$\binom{p}{i}$.
\end{proof}

\begin{propn}
\label{propn:frob}
Let $p$ be a prime number and $R$ a ring of characteristic $p$.
\begin{enumerate}
\item 
\label{part:frob-ring}
The function
\[
\begin{array}{cccc}
\theta\from     &R      &\to            &R      \\
                &r      &\mapsto        &r^p
\end{array}
\]
is a homomorphism.

\item
\label{part:frob-fd}
If $R$ is a field then $\theta$ is injective.

\item
\label{part:frob-ff}
If $R$ is a finite field then $\theta$ is an automorphism of $R$.
\end{enumerate}
\end{propn}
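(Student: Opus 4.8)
The plan is to prove the three parts in order, with part~\bref{part:frob-ring} doing essentially all the work and the other two following by short arguments.

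\textbf{Part~\bref{part:frob-ring}.} I must show that $\theta(r + s) = \theta(r) + \theta(s)$, that $\theta(rs) = \theta(r)\theta(s)$, and that $\theta(1) = 1$, since by the ``useful lemma'' quoted after the definition of ring homomorphism, preserving $+$, $\cdot$ and $1$ suffices. Preservation of $1$ is immediate: $1^p = 1$. Preservation of multiplication is also immediate from commutativity: $(rs)^p = r^p s^p$. The crux is additivity, which is the ``freshman's dream'' $(r+s)^p = r^p + s^p$. For this I would apply the binomial theorem, which is valid in any commutative ring:
\[
(r + s)^p = \sum_{i=0}^{p} \binom{p}{i} r^i s^{p-i}.
\]
Here each coefficient $\binom{p}{i}$ is to be read as $\binom{p}{i} \cdot 1_R$. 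For $0 < i < p$, Lemma~\ref{lemma:p-binom} gives $p \dvd \binom{p}{i}$, and since $\chr R = p$ we have $\binom{p}{i} \cdot 1_R = 0$ in $R$, so all the middle terms vanish. Only the $i = 0$ and $i = p$ terms survive, leaving $(r+s)^p = r^p + s^p$. This is the step I expect to be the main obstacle, though ``obstacle'' is generous --- the only real care needed is bookkeeping: justifying the binomial theorem over a commutative ring (a routine induction, or a cited fact) and being precise that integer coefficients act on ring elements via the canonical map $\Z \to R$, so that divisibility by $p$ really does kill them.

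\textbf{Part~\bref{part:frob-fd}.} If $R$ is a field, then $\theta$ is a homomorphism of fields by part~\bref{part:frob-ring}, and every homomorphism of fields is injective by Lemma~\ref{lemma:fd-homm}. (Alternatively, and just as quickly: $\theta$ is a ring homomorphism whose domain is a field, so its kernel is an ideal of a field, hence $\{0\}$ or all of $R$; it cannot be all of $R$ since $\theta(1) = 1 \neq 0$; so $\ker\theta = \{0\}$.)

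\textbf{Part~\bref{part:frob-ff}.} If $R$ is a finite field, then $\theta\from R \to R$ is an injective function from a finite set to itself, hence also surjective, hence bijective. An endomorphism of a field that is also a bijection is an automorphism, so $\theta$ is an automorphism of $R$. (This is the Frobenius automorphism, though the statement does not ask me to name it.)
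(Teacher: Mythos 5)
Your proof is correct and follows essentially the same route as the paper's: the binomial theorem together with Lemma~\ref{lemma:p-binom} and $\chr R = p$ for additivity in part~\bref{part:frob-ring}, then Lemma~\ref{lemma:fd-homm} for part~\bref{part:frob-fd}, then finiteness for part~\bref{part:frob-ff}. Your extra care about reading $\binom{p}{i}$ as $\binom{p}{i}\cdot 1_R$ and about the validity of the binomial theorem in a commutative ring is a welcome bit of precision but does not change the argument.
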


\begin{proof}
For~\bref{part:frob-ring}, certainly $\theta$ preserves multiplication and
$1$. To show that $\theta$ preserves addition, let $r, s \in R$: then by 
Lemma~\ref{lemma:p-binom} and the hypothesis that $\chr R = p$, 
\[
\theta(r + s) 
=
(r + s)^p
=
\sum_{i = 0}^p \binom{p}{i} r^i s^{p -i}
=
r^p + s^p
=
\theta(r) + \theta(s).
\]
Now~\bref{part:frob-fd} follows since every homomorphism between fields is
injective, and~\bref{part:frob-ff} since every injection from a finite set
to itself is bijective.
\end{proof}

The homomorphism $\theta \from r \mapsto r^p$ is called the
\demph{Frobenius map}, or, in the case of finite fields, the
\demph{Frobenius automorphism}.

That $\theta$ is a homomorphism is a shocker. Writing $(x + y)^n = x^n +
y^n$ is a classic algebra mistake. But here, it's true!

\begin{example}
\label{eg:frob-Fp}
The Frobenius automorphism of $\F_p = \Z/\idlgen{p}$ is not very
interesting. When $G$ is a finite group of order $n$, Lagrange's theorem
implies that $g^n = 1$ for all $g \in G$. Applying this to the
multiplicative group $\F_p^\times = \F_p \without \{0\}$ gives $a^{p - 1} =
1$ whenever $0 \neq a \in \F_p$.  It follows that $a^p = a$ for all $a \in
\F_p$. That is, $\theta$ is the identity.  Everything is its own $p$th
root!
\end{example}

Unfortunately, we can't give any interesting examples of the Frobenius map
just now, because we have so few examples of fields. That will change
later.

\begin{cor}
\label{cor:pth-roots}
Let $p$ be a prime number.
\begin{enumerate}
\item 
\label{part:pr-fd}
In a field of characteristic $p$, every element has at most one $p$th root.

\item
\label{part:pr-ff}
In a finite field of characteristic $p$, every element has exactly one
$p$th root.
\end{enumerate}
\end{cor}

\begin{proof}
Part~\bref{part:pr-fd} says that the Frobenius map is injective, and
part~\bref{part:pr-ff} says that it is bijective, as
Proposition~\ref{propn:frob} states.
\end{proof}

\begin{examples}
\label{egs:ff-roots}
\begin{enumerate}
\item 
In a field of characteristic $2$, every element has at most one square
root.

\item
\label{eg:ffr-unity}
In $\C$, there are $p$ different $p$th roots of unity. But in a field of
characteristic $p$, there is only one: $1$ itself.

\item
Let $K$ be a field of characteristic $p$ and $a \in
K$. Corollary~\ref{cor:pth-roots}\bref{part:pr-fd} says that $a$ has
\emph{at most} one $p$th root. It may have none. For instance, you'll show
in Exercise~\ref{ex:pth-root-trans} that the element $t$ of $\F_p(t)$ has
no $p$th root.
\end{enumerate}
\end{examples}

Here's a construction that will let us manufacture many more examples of
fields.

An element $r$ of a ring $R$ is \demph{irreducible} if $r$ is not $0$ or a
\video{Building blocks}
unit, and if for $a, b \in R$,
\[
r = ab \implies a \text{ or } b \text{ is a unit}.
\]
For example, the irreducibles in $\Z$ are $\pm 2, \pm 3, \pm 5, \ldots$. An
element of a ring is \demph{reducible} if it is not $0$, a unit, or
irreducible. 

\begin{warning}{wg:unirred}
The $0$ and units of a ring count as neither reducible nor irreducible, in
much the same way that the integers $0$ and $1$ are neither prime nor
composite.
\end{warning}

\begin{ex}{ex:irred-fd}
What are the irreducible elements of a field?\\
\end{ex}

\begin{propn}
\label{propn:pid-irr}
Let $R$ be a principal ideal domain and $0 \neq r \in R$. Then
\[
r \text{ is irreducible}
\iff
R/\idlgen{r} \text{ is a field}.
\]
\end{propn}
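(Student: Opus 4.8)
The plan is to prove the two directions separately: the forward implication will use the principal-ideal hypothesis (via a B\'ezout identity), and the reverse will use that $R$ is an integral domain.

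For ``$\Rightarrow$'', suppose $r$ is irreducible. Since $r$ is not a unit, $\idlgen{r} \neq R$, so $R/\idlgen{r}$ is nontrivial. Given $a \in R$ with $\bar a \neq 0$ in $R/\idlgen{r}$ --- that is, $r \ndvd a$ --- I must produce an inverse of $\bar a$. First I would check that $r$ and $a$ are coprime: if $c \dvd r$ and $c \dvd a$, then irreducibility of $r$ forces $c$ to be a unit or an associate of $r$, and in the latter case $r \dvd c \dvd a$, contradicting $r \ndvd a$. So $r$ and $a$ are coprime, and Proposition~\ref{propn:bezout} gives $x, y \in R$ with $xr + ya = 1$. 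Passing to the quotient, $\bar y \bar a = 1$, so $\bar a$ is invertible; hence $R/\idlgen{r}$ is a field.

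For ``$\Leftarrow$'', suppose $R/\idlgen{r}$ is a field. A field is nontrivial, so $\idlgen{r} \neq R$, i.e.\ $r$ is not a unit; and $r \neq 0$ by hypothesis. To see $r$ is irreducible, suppose $r = ab$. Then $\bar a \bar b = \bar r = 0$ in the field $R/\idlgen{r}$, which is an integral domain, so $\bar a = 0$ or $\bar b = 0$. Say $\bar a = 0$, so $a = rc$ for some $c \in R$. Then $r = ab = rcb$, so $r(1 - cb) = 0$; since $R$ is an integral domain and $r \neq 0$, we get $cb = 1$, so $b$ is a unit. Thus $r$ is irreducible.

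I expect the forward direction to be the one that needs real input: the inverse of $\bar a$ has to be extracted from a B\'ezout relation, which is precisely where the principal-ideal hypothesis enters (over a general integral domain, an irreducible element need not have a field quotient). The reverse direction is essentially routine manipulation with the integral-domain axiom. A minor point to stay alert to in both directions is nontriviality --- ``$0 \neq 1$'' is part of the definition of a field, and it also underlies the step ``$r$ not a unit $\implies \idlgen{r} \neq R$''.
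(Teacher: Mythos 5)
Your proof is correct and follows the same route as the paper's: in the forward direction, deduce coprimality from irreducibility and invoke the B\'ezout relation of Proposition~\ref{propn:bezout} to invert $\bar a$; in the reverse direction, use that a field is a nontrivial integral domain to cancel and conclude a factor is a unit. The only difference is cosmetic --- you spell out why $r \ndvd a$ together with irreducibility gives coprimality, a step the paper states without elaboration.
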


\begin{proof}
Write $\pi$ for the canonical homomorphism $R \to R/\idlgen{r}$.

$\Rightarrow$: suppose that $r$ is irreducible. To show that
$1_{R/\idlgen{r}} \neq 0_{R/\idlgen{r}}$, note that since $r$ is not a
unit, $1_R \not\in\idlgen{r} = \ker\pi$, so
\[
1_{R/\idlgen{r}} = \pi(1_R) \neq 0_{R/\idlgen{r}}.
\]

Next we have to show that every nonzero element of $R/\idlgen{r}$ is a
unit, or equivalently that $\pi(s)$ is a unit whenever $s \in R$ with $s
\not\in \idlgen{r}$. We have $r \ndvd s$, and $r$ is irreducible, so $r$
and $s$ are coprime. Hence by Proposition~\ref{propn:bezout} and the
assumption that $R$ is a principal ideal domain, we can choose $a, b \in
R$ such that
\[
ar + bs = 1_R.
\]
Applying $\pi$ to each side gives
\[
\pi(a)\pi(r) + \pi(b)\pi(s) = 1_{R/\idlgen{r}}.
\]
But $\pi(r) = 0$, so $\pi(b)\pi(s) = 1$, so $\pi(s)$ is a unit.

$\Leftarrow$: suppose that $R/\idlgen{r}$ is a field. Then
$1_{R/\idlgen{r}} \neq 0_{R/\idlgen{r}}$, that is, $1_R \not\in
\ker\pi = \idlgen{r}$, that is, $r \ndvd 1_R$. Hence $r$ is not a
unit. 

Next we have to show that if $a, b \in R$ with $r = ab$ then $a$ or $b$ is
a unit. We have 
\[
0 = \pi(r) = \pi(a)\pi(b)
\]
and $R/\idlgen{r}$ is an integral domain, so WLOG $\pi(a) = 0$. Then $a
\in \ker\pi = \idlgen{r}$, so $a = rb'$ for some $b' \in R$. This gives 
\[
r = ab = rb'b.
\]
But $r \neq 0$ by hypothesis, and $R$ is an integral domain, so $b'b =
1$. Hence $b$ is a unit. 
\end{proof}

\begin{example}
\label{eg:Z-fd}
When $n$ is an integer, $\Z/\idlgen{n}$ is a field if and only if
$n$ is irreducible (that is, $\pm$ a prime number).
\end{example}

Proposition~\ref{propn:pid-irr} enables us to construct fields from
irreducible elements\ldots but irreducible elements \emph{of a principal
ideal domain}. Right now that's not much help, because we don't have many
examples of principal ideal domains. But we will do soon.

\chapter{Polynomials}
\label{ch:polys}

This chapter revisits and develops some themes you met in Honours
Algebra. Before you begin, it may help you to reread Section~3.3
(Polynomials) of the Honours Algebra notes.
\video{Introduction to Week~3}

\section{The ring of polynomials}
\label{sec:ring-polys}

You already know the definition of polynomial, but I want to make a point
by phrasing it in an unfamiliar way.

\begin{defn} 
\label{defn:poly}
Let $R$ be a ring. A \demph{polynomial over $R$} is an infinite sequence
$(a_0, a_1, a_2, \ldots)$ of elements of $R$ such that $\{i \such a_i \neq
0\}$ is finite. 
\end{defn}

The set of polynomials over $R$ forms a ring as follows:
\begin{align}
(a_0, a_1, \ldots) + (b_0, b_1, \ldots)         &
=
(a_0 + b_0, a_1 + b_1, \ldots), 
\label{eq:poly-add}
\\
(a_0, a_1, \ldots) \cdot (b_0, b_1, \ldots)         &
=
(c_0, c_1, \ldots)
\label{eq:poly-mult}
\\
\text{where }
c_k     &
= 
\sum_{i, j\csuch i + j = k} a_i b_j,
\label{eq:poly-mult-coeffs}
\end{align}
the zero of the ring is $(0, 0, \ldots)$, and the multiplicative identity
is $(1, 0, 0, \ldots)$.

Of course, we almost always write $(a_0, a_1, a_2, \ldots)$ as \demph{$a_0
+ a_1 t + a_2 t^2 + \cdots$}, or the same with some other symbol in place
of $t$. In that notation, formulas~\eqref{eq:poly-add}
and~\eqref{eq:poly-mult} look like the usual formulas for addition and
multiplication of polynomials. Nevertheless:
\begin{warning}{wg:poly-not-fn}
A polynomial is not a function!

A polynomial \emph{gives rise to} a function, as we'll recall in a
moment. But a polynomial itself is a purely formal object. To emphasize
this, we sometimes call the symbol $t$ an \demph{indeterminate} rather than
a `variable'.
\end{warning}

The set of polynomials over $R$ is written as \demph{$R[t]$} (or $R[u]$,
$R[x]$, etc.). Since $R[t]$ is itself a ring $S$, we can consider the ring
$S[u] = (R[t])[u]$, usually written as \demph{$R[t, u]$}. And then there's 
$\dem{R[t, u, v]} = (R[t, u])[v]$, and so on.
\video{Why study polynomials?}

Polynomials are typically written as either $f$ or $f(t)$, 
interchangeably. A polynomial $f = (a_0, a_1, \ldots)$ over $R$
gives rise to a function
\[
\begin{array}{ccc}
R       &\to            &R      \\
r       &\mapsto        &a_0 + a_1 r + a_2 r^2 + \cdots.
\end{array}
\]
(The sum on the right-hand side makes sense because only finitely many
$a_i$s are nonzero.) This function is usually denoted by $f$ too. But
calling it that is slightly dangerous, because:

\begin{warning}{wg:poly-ev}
Different polynomials can give rise to the same function. For example,
consider $t, t^2 \in \F_2[t]$. They are different
polynomials: going back to Definition~\ref{defn:poly}, they're alternative
notation for the sequences
\[
(0, 1, 0, 0, \ldots) \quad\text{and}\quad (0, 0, 1, 0, \ldots),
\]
which are plainly not the same. On the other hand, they induce the same
\emph{function} $\F_2 \to \F_2$, because $a = a^2$ for all (both) $a
\in \F_2$. 
\end{warning}

\begin{ex}{ex:fin-ring-poly}
Show that whenever $R$ is a finite nontrivial ring, it is possible to find
distinct polynomials over $R$ that induce the same function $R \to
R$. (Hint: are there finitely or infinitely many polynomials over $R$?
Functions $R \to R$?)
\end{ex}

\begin{remark}
In Example~\ref{eg:rat}, we met the field $K(t)$ of rational expressions
over a field $K$. People sometimes say `rational function' to mean
`rational expression'. But just as for polynomials, I want to emphasize
that \emph{rational expressions are not functions}. For instance, $1/(t -
1)$ is a totally respectable element of $K(t)$. You don't have to worry
about what happens `when $t = 1$', because $t$ is just a formal symbol (a
mark on a piece of paper), not a variable. And $1/(t - 1)$ is just a formal
expression, not a function.
\end{remark}

The ring of polynomials has a universal property: a homomorphism
from $R[t]$ to some other ring $B$ is determined by its effect on scalars
and on $t$ itself, in the following sense.
\video{The universal property of $R[t]$}

\begin{propn}[Universal property of the polynomial ring]
\label{propn:univ-poly}
Let $R$ and $B$ be rings. For every homomorphism $\phi \from R \to B$ and
every $b \in B$, there is exactly one homomorphism $\theta \from R[t] \to
B$ such that 
\begin{align}
\theta(a)       &
= 
\phi(a) \quad\text{for all } a \in R,     
\label{eq:up-main}
\\
\theta(t) &
=
b.
\label{eq:up-t}
\end{align}
\end{propn}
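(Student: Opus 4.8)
The plan is the standard two-step argument for a universal property: first use the homomorphism axioms to show that $\theta$, if it exists, is \emph{forced} to be a specific formula (which gives uniqueness), then verify directly that this formula does define a ring homomorphism with the required properties (which gives existence). Throughout I identify $a \in R$ with the constant polynomial $(a, 0, 0, \ldots) \in R[t]$, so that condition \eqref{eq:up-main} makes literal sense.

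For uniqueness, suppose $\theta \from R[t] \to B$ is any homomorphism satisfying \eqref{eq:up-main} and \eqref{eq:up-t}. Write an arbitrary polynomial as $f = a_0 + a_1 t + \cdots + a_n t^n$ with $a_i \in R$. Applying preservation of sums and products and then \eqref{eq:up-main} and \eqref{eq:up-t} gives
\[
\theta(f) = \sum_{i=0}^{n} \theta(a_i)\,\theta(t)^i = \sum_{i=0}^{n} \phi(a_i)\, b^i.
\]
So there is at most one homomorphism with the stated properties, and we have no freedom about what its formula must be.

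For existence, I would \emph{define} $\theta\from R[t] \to B$ by $\theta\bigl((a_0, a_1, \ldots)\bigr) = \sum_{i} \phi(a_i)\,b^i$; this is a finite sum because only finitely many $a_i$ are nonzero, so $\theta$ is well-defined. It sends $(1, 0, 0, \ldots)$ to $\phi(1_R) = 1_B$, and it preserves addition because polynomial addition is coordinatewise \eqref{eq:poly-add} and $\phi$ preserves addition. The one laborious check is multiplicativity: with $f = (a_i)$, $g = (b_j)$ and $fg = (c_k)$ where $c_k = \sum_{i+j=k} a_i b_j$ as in \eqref{eq:poly-mult-coeffs}, I would expand
\[
\theta(fg) = \sum_{k} \phi(c_k)\, b^k = \sum_{k} \Bigl(\sum_{i+j=k} \phi(a_i)\phi(b_j)\Bigr) b^k = \Bigl(\sum_{i} \phi(a_i) b^i\Bigr)\Bigl(\sum_{j} \phi(b_j) b^j\Bigr) = \theta(f)\,\theta(g),
\]
where the third equality is the usual regrouping of a Cauchy product, legitimate because $B$ is commutative and $\phi$ is multiplicative. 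By the lemma recalled in Section~\ref{sec:rings} that a map preserving $+$, $\cdot$ and $1$ is automatically a homomorphism, $\theta$ is a ring homomorphism. Finally $\theta(a) = \phi(a)\,b^0 = \phi(a)$ for a constant polynomial $a$, and $\theta(t) = \theta\bigl((0,1,0,\ldots)\bigr) = \phi(1_R)\,b = b$, so \eqref{eq:up-main} and \eqref{eq:up-t} hold.

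The only real obstacle is the bookkeeping in the multiplicativity computation — manipulating the double sum correctly and keeping straight which ``$a_i$'' is a ring element and which is a constant polynomial — but nothing conceptually subtle happens; commutativity of $B$ (part of this course's convention that rings are commutative) is exactly what makes the rearrangement of powers of $b$ valid.
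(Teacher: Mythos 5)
Your proof is correct and follows essentially the same two-step strategy as the paper's: derive the forced formula $\theta\bigl(\sum a_i t^i\bigr) = \sum \phi(a_i) b^i$ from the homomorphism axioms to get uniqueness, then verify directly that this formula defines a homomorphism, with the Cauchy-product regrouping being the only substantive check. The paper's version of the multiplicativity computation is the same chain of equalities, just written out in a few more intermediate steps.
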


On the left-hand side of~\eqref{eq:up-main}, the `$a$' means the polynomial
$a + 0t + 0t^2 + \cdots$.

\begin{proof}
To show there is \emph{at most one} such $\theta$, take any homomorphism
$\theta\from R[t] \to B$ satisfying~\eqref{eq:up-main}
and~\eqref{eq:up-t}. Then for every polynomial $\sum_i a_i t^i$ over $R$,
\begin{align*}
\theta\Bigl( \sum_i a_i t^i \Bigr)      &
=
\sum_i \theta(a_i) \theta(t)^i  &
\text{since $\theta$ is a homomorphism} \\
&
=
\sum_i \phi(a_i) b^i    &
\text{by~\eqref{eq:up-main} and~\eqref{eq:up-t}}.
\end{align*}
So $\theta$ is uniquely determined.

To show there is \emph{at least one} such $\theta$, define a function
$\theta \from R[t] \to B$ by
\[
\theta\Bigl(\sum_i a_i t^i\Bigr) = \sum_i \phi(a_i) b^i
\]
($\sum_i a_i t^i \in R[t]$). Then $\theta$ clearly satisfies
conditions~\eqref{eq:up-main} and~\eqref{eq:up-t}. It remains to check that
$\theta$ is a homomorphism. I will do the worst part of this, which is
to check that $\theta$ preserves multiplication, and leave the rest to
you.

So, take polynomials $f(t) = \sum_i a_i t^i$ and $g(t) = \sum_j b_j
t^j$. Then $f(t)g(t) = \sum_k c_k t^k$, where $c_k$ is as defined in
equation~\eqref{eq:poly-mult-coeffs}. We have
\begin{align*}
\theta(fg) &
=
\theta\Bigl( \sum_k c_k t^k \Bigr)      \\
&
=
\sum_k \phi(c_k) b^k    &
\text{by definition of $\theta$}        \\
&
=
\sum_k \phi\Bigl( \sum_{i, j\csuch i + j = k} a_i b_j \Bigr) b^k     &
\text{by definition of $c_k$}   \\
&
=
\sum_k \sum_{i, j\csuch i + j = k} \phi(a_i) \phi(b_j) b^k     &
\text{since $\phi$ is a homomorphism}   \\
&
=
\sum_{i, j} \phi(a_i) \phi(b_j) b^{i + j}       \\
&
=
\Bigl( \sum_i \phi(a_i) b^i \Bigr) 
\Bigl( \sum_j \phi(b_j) b^j \Bigr)      \\
&
=
\theta(f) \theta(g)     &
\text{by definition of $\theta$}.
\end{align*}
\end{proof}

Here are three uses for the universal property of the ring of polynomials.
First:

\begin{defn}
\label{defn:ind-hom}
Let $\phi \from R \to S$ be a ring homomorphism. The \demph{induced
homomorphism} 
\[
\dem{\phi_*} \from R[t] \to S[t]
\]
is the unique homomorphism $R[t] \to S[t]$ such that $\phi_*(a) =
\phi(a)$ for all $a \in R$ and $\phi_*(t) = t$.
\end{defn}

The universal property
guarantees that there is one and only one homomorphism $\phi_*$ with
these properties. Concretely,
\[
\phi_*\Bigl(\sum_i a_i t^i\Bigr) = \sum_i \phi(a_i) t^i
\]
for all $\sum_i a_i t^i \in R[t]$.

Second, let $R$ be a ring and $r \in R$. By the universal property, there
is a unique homomorphism $\dem{\ev_r} \from R[t] \to R$ such that
$\ev_r(a) = a$ for all $a \in R$ and $\ev_r(t) = r$.  Concretely,
\[
\ev_r\Bigl(\sum_i a_i t^i\Bigr) = \sum_i a_i r^i
\]
for all $\sum_i a_i t^i \in R[t]$. This map $\ev_r$ is called
\demph{evaluation at $r$}.

(The notation $\sum a_i t^i$ for what is officially $(a_0, a_1, \ldots)$
makes it look \emph{obvious} that we can evaluate a polynomial at an
element, and that this gives a homomorphism: \emph{of course} $(f \cdot
g)(r) = f(r)g(r)$, for instance!  But that's only because of the notation:
there was actually something to prove here.)

Third, let $R$ be a ring and $c \in R$. For any $f(t) \in R[t]$, we can
`substitute\label{p:subst} $t = u + c$' to get a polynomial in $u$. What
exactly does this mean?  Formally, there is a unique homomorphism $\theta
\from R[t] \to R[u]$ such that $\theta(a) = a$ for all $a \in R$ and
$\theta(t) = u + c$. Concretely,
\[
\theta\Bigl(\sum_i a_i t^i\Bigr) = \sum_i a_i (u + c)^i.
\]
This particular substitution is invertible. Informally, the inverse is
`substitute $u = t - c$'. Formally, there is a unique homomorphism $\theta'
\from R[u] \to R[t]$ such that $\theta'(a) = a$ for all $a \in R$ and
$\theta'(u) = t - c$.  These maps $\theta$ and $\theta'$ carrying out the
substitutions are inverse to each other, as you can deduce from either
the universal property or the concrete descriptions. So, the substitution
maps
\begin{align}
\label{eq:subst}
\xymatrix{
R[t] \ar@<.5ex>[r]^-{\theta} &
R[u] \ar@<.5ex>[l]^-{\theta'}
}
\end{align}
define an isomorphism between $R[t]$ and $R[u]$. For example, since
isomorphism preserve irreducibility (and everything else that
matters!),\ $f(t)$ is irreducible\label{p:subst-irr} if and only if $f(t -
c)$ is irreducible.

\begin{ex}{ex:subst}
What happens to everything in the previous paragraph if we
substitute $t = u^2 + c$ instead?
\end{ex}

The rest of this section is about degree.

\begin{defn}
The \demph{degree}, \demph{$\deg(f)$}, of a nonzero polynomial $f(t) = \sum
a_i t^i$ is the largest $n \geq 0$ such that $a_n \neq 0$. By convention,
$\deg(0) = -\infty$, where \demph{$-\infty$} is a formal symbol which we
give the properties
\[
-\infty < n,
\qquad
(-\infty) + n = -\infty,
\qquad
(-\infty) + (-\infty) = -\infty
\]
for all integers $n$.
\end{defn}

\begin{digression}{dig:deg}
Defining $\deg(0)$ like this is helpful because it allows us to make
statements about \emph{all} polynomials without annoying
exceptions for the zero polynomial
(e.g.\ Lemma~\ref{lemma:idt}\bref{part:idt-deg}).

But putting $\deg(0) = -\infty$ also makes intuitive sense. At least for
polynomials over $\R$, the degree of a nonzero polynomial tells us how fast
it grows: when $x$ is large, $f(x)$ behaves roughly like
$x^{\deg(f)}$. What about the zero polynomial? Well, whether or not $x$ is
large, $0(x) = 0$. And $x^{-\infty}$ can sensibly be interpreted as
$\lim_{r \to -\infty} x^r = 0$, so it makes sense to put $\deg(0) =
-\infty$.
\end{digression}

\begin{lemma}
\label{lemma:idt}
Let $R$ be an integral domain. Then:
\begin{enumerate}
\item
\label{part:idt-deg}
$\deg(fg) = \deg(f) + \deg(g)$ for all $f, g \in R[t]$;

\item
\label{part:idt-id}
$R[t]$ is an integral domain.
\end{enumerate}
\end{lemma}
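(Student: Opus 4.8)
The plan is to prove the two parts together, deriving \bref{part:idt-id} as an immediate consequence of \bref{part:idt-deg}. The key input is that $R$ is an integral domain, which controls what happens to leading coefficients under multiplication.

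First I would dispose of the cases involving the zero polynomial. If either $f$ or $g$ is $0$, then $fg = 0$, so both sides of the equation in \bref{part:idt-deg} equal $-\infty$ (using the conventions $(-\infty) + n = -\infty$ and $(-\infty) + (-\infty) = -\infty$), and there is nothing to check. So from now on I would assume $f \neq 0$ and $g \neq 0$, and write $f(t) = \sum_i a_i t^i$ with $\deg f = m$ (so $a_m \neq 0$) and $g(t) = \sum_j b_j t^j$ with $\deg g = n$ (so $b_n \neq 0$).

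Next I would examine the product. From the multiplication formula~\eqref{eq:poly-mult-coeffs}, writing $fg = \sum_k c_k t^k$, the coefficient $c_k = \sum_{i+j=k} a_i b_j$. For $k > m + n$, every term $a_i b_j$ with $i + j = k$ has either $i > m$ or $j > n$, hence is zero; so $c_k = 0$ and $\deg(fg) \leq m + n$. For $k = m + n$, the only possibly-nonzero term in the sum is $a_m b_n$ (any other pair $(i,j)$ with $i + j = m+n$ has $i > m$ or $j > n$), so $c_{m+n} = a_m b_n$. This is exactly where I use that $R$ is an integral domain: since $a_m \neq 0$ and $b_n \neq 0$, we get $a_m b_n \neq 0$, hence $\deg(fg) = m + n = \deg f + \deg g$. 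This is the only real content of the argument, and it is the step I'd flag as the crux — everything else is bookkeeping with the conventions.

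Finally, for \bref{part:idt-id}: $R[t]$ is nontrivial because $1 \neq 0$ in $R$ (as $R$ is an integral domain) gives $1_{R[t]} = (1,0,0,\ldots) \neq (0,0,\ldots) = 0_{R[t]}$. And if $f, g \in R[t]$ with $fg = 0$ but $f \neq 0$ and $g \neq 0$, then by \bref{part:idt-deg} we would have $\deg(fg) = \deg f + \deg g \geq 0$, contradicting $\deg(0) = -\infty$. Hence $f = 0$ or $g = 0$, so $R[t]$ is an integral domain.
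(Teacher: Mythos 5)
Your proof is correct. Note that the paper does not actually prove this lemma at all: its ``proof'' is a citation to Honours Algebra (Section~3.3), so there is no argument to compare against. What you have written is precisely the standard argument that citation refers to: the crux is that the coefficient of $t^{m+n}$ in $fg$ is $a_m b_n$, which is nonzero because $R$ is an integral domain, and the zero-polynomial cases are handled by the $-\infty$ conventions; part (ii) then follows immediately from part (i) together with $1 \neq 0$. So your proposal correctly fills in the details the paper chose to omit.
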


\begin{proof}
This was proved in Honours Algebra (Section~3.3).
\end{proof}

\begin{example}
\label{eg:poly-multi-id}
For any integral domain $R$, the ring $R[t_1, \ldots, t_n]$ of polynomials
over $R$ in $n$ variables is also an integral domain, by
Lemma~\ref{lemma:idt}\bref{part:idt-id} and induction. In particular, this
is true when $R$ is a field.
\end{example}

\begin{ex}{ex:pth-root-trans}
Let $p$ be a prime and consider the field $\F_p(t)$ of rational expressions
over $\F_p$. Show that $t$ has no $p$th root in $\F_p(t)$. (Hint: consider
degrees of polynomials.) 
\end{ex}

The one and only polynomial of degree $-\infty$ is the zero polynomial. The
polynomials of degree $0$ are the nonzero constants. The polynomials of
degree $> 0$ are, therefore, the nonconstant polynomials.

\begin{lemma}
\label{lemma:fdt}
Let $K$ be a field. Then:
\begin{enumerate}
\item
\label{part:fdt-units}
the units in $K[t]$ are the nonzero constants; 

\item
\label{part:fdt-irred}
$f \in K[t]$ is irreducible if and only if $f$ is nonconstant and cannot
be expressed as a product of two nonconstant polynomials.
\end{enumerate}
\end{lemma}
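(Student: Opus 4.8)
The plan is to prove both parts of Lemma~\ref{lemma:fdt} using the degree function established in Lemma~\ref{lemma:idt}\bref{part:idt-deg}, which applies because a field is an integral domain.

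For part~\bref{part:fdt-units}, I would argue as follows. First, every nonzero constant $c \in K$ is a unit in $K[t]$, since $c$ has an inverse $c^{-1}$ in $K$ and hence in $K[t]$. Conversely, suppose $f \in K[t]$ is a unit, so $fg = 1$ for some $g \in K[t]$. Taking degrees and using Lemma~\ref{lemma:idt}\bref{part:idt-deg}, we get $\deg(f) + \deg(g) = \deg(1) = 0$. Since degrees of nonzero polynomials are nonnegative integers (and neither $f$ nor $g$ can be zero, as $fg = 1 \neq 0$), this forces $\deg(f) = \deg(g) = 0$, so $f$ is a nonzero constant.

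For part~\bref{part:fdt-irred}, I would unwind the general definition of irreducibility in the special case $R = K[t]$, using part~\bref{part:fdt-units} to replace ``unit'' by ``nonzero constant.'' An element $f$ is irreducible iff $f$ is not $0$, not a unit, and whenever $f = gh$ then $g$ or $h$ is a unit. By part~\bref{part:fdt-units}, ``$f$ is not $0$ and not a unit'' is equivalent to ``$f$ is nonconstant'' (a nonzero constant is a unit, and $0$ is excluded; the nonconstant polynomials are exactly those of degree $> 0$). For the factorization condition: if $f$ is nonconstant and $f = gh$, then $\deg(g) + \deg(h) = \deg(f) > 0$, so neither $g$ nor $h$ is $0$; the condition ``$g$ or $h$ is a unit'' then becomes ``$g$ or $h$ is a nonzero constant,'' i.e.\ ``$g$ or $h$ is constant.'' Hence $f$ fails to be irreducible (among nonconstant $f$) precisely when $f = gh$ with both $g$ and $h$ nonconstant. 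Negating, $f$ is irreducible iff $f$ is nonconstant and cannot be written as a product of two nonconstant polynomials.

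I do not expect any real obstacle here; the whole lemma is a routine translation of the abstract ring-theoretic definitions into the concrete language of degrees. The only point requiring a moment's care is making sure that in the factorization $f = gh$ of a nonconstant $f$, neither factor is the zero polynomial, so that ``unit'' can be cleanly swapped for ``nonzero constant'' and then for ``constant'' — but this follows immediately from $\deg(g) + \deg(h) = \deg(f) \geq 0$ together with $\deg(0) = -\infty$.
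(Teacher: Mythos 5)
Your proof is correct. The paper itself simply cites Honours Algebra for part (i) and notes that part (ii) follows from the general definition of irreducibility; your argument spells out precisely these two steps — the degree computation $\deg(f)+\deg(g)=0$ for part (i), and the translation of "unit" to "nonzero constant" (then to "constant," once the zero factor is ruled out) for part (ii) — so it is the standard argument the paper is implicitly relying on, with the details filled in.
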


\begin{proof}
Part~\bref{part:fdt-units} was also in Honours Algebra
(Section~3.3), and part~\bref{part:fdt-irred} follows from the general
definition of irreducible element of a ring.
\end{proof}

\section{Factorizing polynomials}

Every nonzero integer can be expressed as a product of primes in an
essentially unique way. But the analogous statement is not true in all
rings, or even all integral domains. Some rings have elements that can't be
expressed as a product of irreducibles at all. In other rings,
factorizations into irreducibles exist but are not unique. (By `not unique'
I mean more than just changing the order of the factors or multiplying them
by units.)

The big theorem of this section is that, happily, every polynomial over a
field \emph{can} be factorized into irreducibles, essentially uniquely.

We begin with a result on division of polynomials from Section~3.3 of
Honours Algebra.

\begin{propn}
\label{propn:div-alg}
Let $K$ be a field and $f, g \in K[t]$ with $g \neq 0$. Then there is
exactly one pair of polynomials $q, r \in K[t]$ such that $f = qg + r$ and
$\deg(r) < \deg(g)$.  \qed
\end{propn}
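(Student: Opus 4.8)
The plan is to prove this standard division algorithm for polynomials over a field, treating existence and uniqueness separately.

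\textbf{Existence.} I would argue by strong induction on $\deg(f)$. If $\deg(f) < \deg(g)$ — in particular if $f = 0$ — we simply take $q = 0$ and $r = f$; the condition $\deg(r) < \deg(g)$ holds. Otherwise, write $n = \deg(f)$ and $m = \deg(g)$ with $n \geq m \geq 0$, and let $a$ be the leading coefficient of $f$ and $b$ the leading coefficient of $g$. Since $K$ is a field and $b \neq 0$, the element $b^{-1}$ exists, so we may form the polynomial
\[
f_1 = f - a b^{-1} t^{n - m} g.
\]
The term $a b^{-1} t^{n-m} g$ has degree exactly $n$ (here I use Lemma~\ref{lemma:idt}\bref{part:idt-deg}, which applies since a field is an integral domain) and leading coefficient $a b^{-1} b = a$, matching the leading term of $f$. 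Hence $\deg(f_1) < n = \deg(f)$. By the inductive hypothesis there exist $q_1, r \in K[t]$ with $f_1 = q_1 g + r$ and $\deg(r) < \deg(g)$. Then $f = (q_1 + a b^{-1} t^{n-m}) g + r$, so taking $q = q_1 + a b^{-1} t^{n-m}$ completes the induction.

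\textbf{Uniqueness.} Suppose $f = q g + r = q' g + r'$ with $\deg(r), \deg(r') < \deg(g)$. Subtracting gives $(q - q') g = r' - r$. If $q \neq q'$, then $q - q' \neq 0$, and since $K[t]$ is an integral domain (Lemma~\ref{lemma:idt}\bref{part:idt-id}) with $g \neq 0$, Lemma~\ref{lemma:idt}\bref{part:idt-deg} gives $\deg((q-q')g) = \deg(q - q') + \deg(g) \geq \deg(g)$. But $\deg(r' - r) \leq \max(\deg(r), \deg(r')) < \deg(g)$, a contradiction. Hence $q = q'$, and then $r' - r = 0$ so $r = r'$.

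The only real subtlety — the place to be careful rather than a genuine obstacle — is making sure the degree bookkeeping in the existence step is airtight: one must check that subtracting $ab^{-1}t^{n-m}g$ genuinely kills the leading term and strictly drops the degree, which is exactly where the field hypothesis (invertibility of the leading coefficient of $g$) is essential and where the result would fail over a general ring. Everything else is routine induction and the integral-domain degree formula already recorded in Lemma~\ref{lemma:idt}.
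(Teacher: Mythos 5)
Your proof is correct: the existence argument by strong induction on $\deg(f)$ (subtracting $ab^{-1}t^{n-m}g$ to kill the leading term, which is exactly where the field hypothesis enters) and the uniqueness argument via the degree formula in the integral domain $K[t]$ are both sound, and the conventions $\deg(0) = -\infty$ make the edge cases work as you use them. The paper itself gives no proof of Proposition~\ref{propn:div-alg}, citing it from Honours Algebra, and yours is precisely the standard argument that fills that gap.
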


We use this to prove an extremely useful fact:

\begin{propn}
\label{propn:Kt-pid}
Let $K$ be a field. Then $K[t]$ is a principal ideal domain.
\end{propn}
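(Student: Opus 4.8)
The plan is to show that $K[t]$ is an integral domain and that every ideal in it is principal. The first part is immediate: we already know from Lemma~\ref{lemma:idt}\bref{part:idt-id} that $R[t]$ is an integral domain whenever $R$ is, and a field is in particular an integral domain. So the real content is the statement about ideals, and this is where the division algorithm (Proposition~\ref{propn:div-alg}) does the work.

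For the ideal part, I would argue exactly as in the proof that $\Z$ is a principal ideal domain (Example~\ref{eg:Z-pid}), with ``least positive integer'' replaced by ``nonzero polynomial of least degree''. Let $I \idl K[t]$. If $I = \{0\}$ then $I = \idlgen{0}$ and we are done. Otherwise $I$ contains a nonzero polynomial; among all nonzero elements of $I$, choose one, call it $g$, of minimal degree. I claim $I = \idlgen{g}$. The inclusion $\idlgen{g} \sub I$ is clear since $g \in I$ and $I$ is an ideal. For the reverse inclusion, take any $f \in I$. By Proposition~\ref{propn:div-alg} there are $q, r \in K[t]$ with $f = qg + r$ and $\deg(r) < \deg(g)$. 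Then $r = f - qg \in I$, because $f \in I$ and $qg \in \idlgen{g} \sub I$. But $g$ was chosen to have least degree among nonzero elements of $I$, and $\deg(r) < \deg(g)$, so $r$ cannot be a nonzero element of $I$; hence $r = 0$, giving $f = qg \in \idlgen{g}$. Therefore $I = \idlgen{g}$, and $K[t]$ is a principal ideal domain.

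There is no real obstacle here — the only point requiring a moment's care is the existence of a nonzero polynomial of least degree in a nonempty set of polynomials, which follows because the degrees of nonzero polynomials lie in $\{0, 1, 2, \ldots\}$, a well-ordered set. (One should make sure to pick $g$ among the \emph{nonzero} elements, so that $\deg(g)$ is a genuine natural number rather than $-\infty$.) Everything else is a direct application of the division algorithm together with the closure properties of an ideal.
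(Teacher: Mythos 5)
Your proof is correct and follows essentially the same route as the paper: establish that $K[t]$ is an integral domain via Lemma~\ref{lemma:idt}, then show every nonzero ideal is generated by a nonzero element of minimal degree using the division algorithm (Proposition~\ref{propn:div-alg}) and the minimality of the degree to force the remainder to vanish. Your extra remarks about well-ordering and choosing $g$ nonzero are fine but not points of difference.
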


\begin{proof}
First, $K[t]$ is an integral domain, by
Lemma~\ref{lemma:idt}\bref{part:idt-id}. 

Now let $I \idl K[t]$. If $I = \{0\}$ then $I = \idlgen{0}$. Otherwise, put
$d = \min \{ \deg(f) \such 0 \neq f \in I \}$ and choose $g \in I$ such
that $\deg(g) = d$.

I claim that $I = \idlgen{g}$. To prove this, let $f \in I$; we must show
that $g \dvd f$. By Proposition~\ref{propn:div-alg}, $f = qg + r$ for some
$q, r \in K[t]$ with $\deg(r) < d$. Now $r = f - qg \in I$ since $f, g \in
I$, so the minimality of $d$ implies that $r = 0$. Hence $f = qg$, as
required. 
\end{proof}

If you struggled with Exercise~\ref{ex:Z-pid}, that proof should give you a
clue.

\begin{warning}{wg:not-pid}
We just saw that $K[t]$ is a principal ideal domain, and we saw in
Example~\ref{eg:poly-multi-id} that $K[t_1,
\ldots, t_n]$ is an \emph{integral} domain. But it is not a
\emph{principal ideal} domain if $n > 1$. For example, the ideal
\[
\idlgen{t_1, t_2} 
=
\{ f(t_1, t_2) \in \Q[t_1, t_2] \such 
f \text{ has constant term } 0 \}
\]
of $\Q[t_1, t_2]$ is not principal. 

Also, Proposition~\ref{propn:Kt-pid} really needed the hypothesis that $K$
is a \emph{field}; it's not enough for it to be a principal ideal
domain. For example, $\Z$ is a principal ideal domain, but in $\Z[t]$, the
ideal
\[
\idlgen{2, t}
=
\{ f(t) \in \Z[t]
\such
\text{the constant term of $f$ is even} \}
\]
is not principal.
\end{warning}

\begin{ex}{ex:not-pids}
Prove that the ideals in Warning~\ref{wg:not-pid} are indeed not
principal. 
\end{ex}

\video{Exercise~\ref{ex:not-pids}: a non-principal ideal}%
At the end of Chapter~\ref{ch:garf}, I promised I'd give you a way of
manufacturing lots of new fields. Here it is!

\begin{cor}
\label{cor:qt-by-irr}
Let $K$ be a field and let $0 \neq f \in K[t]$. Then
\[
f \text{ is irreducible}
\iff
K[t]/\idlgen{f} \text{ is a field}.
\]
\end{cor}

\begin{proof}
This follows from Propositions~\ref{propn:pid-irr} and~\ref{propn:Kt-pid}.
\end{proof}

To make new fields using Corollary~\ref{cor:qt-by-irr}, we'll need a
way of knowing which polynomials are irreducible. That's the topic of
Section~\ref{sec:irr-polys}. But for now, let's stick to our mission:
proving that every polynomial factorizes into irreducibles in an
essentially unique way.

To achieve our mission, we'll need two more lemmas.

\begin{lemma}
\label{lemma:some-irr}
Let $K$ be a field and let $f(t) \in K[t]$ be a nonconstant
polynomial. Then $f(t)$ is divisible by some irreducible in $K[t]$.
\end{lemma}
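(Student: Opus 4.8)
The plan is to induct on the degree of $f$. More precisely, I would argue by strong induction on $\deg(f)$, so that every nonconstant polynomial of smaller degree is already known to be divisible by some irreducible. For the given $f$, there are two cases. If $f$ is itself irreducible, then $f$ divides $f$ and we are done immediately, since $\deg(f) \geq 1$ so $f$ is nonconstant and nonzero. If $f$ is not irreducible, then by Lemma~\ref{lemma:fdt}\bref{part:fdt-irred} (noting $f$ is nonconstant) we can write $f = gh$ with both $g$ and $h$ nonconstant.

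The key point is then a degree count: by Lemma~\ref{lemma:idt}\bref{part:idt-deg}, $\deg(f) = \deg(g) + \deg(h)$, and since $\deg(g), \deg(h) \geq 1$, we get $\deg(g) < \deg(f)$. So the inductive hypothesis applies to $g$: there is an irreducible $q \in K[t]$ with $q \dvd g$. Since $g \dvd f$ (as $f = gh$), divisibility is transitive and $q \dvd f$, completing the induction. The base case is $\deg(f) = 1$: a degree-one polynomial cannot be written as a product of two nonconstant polynomials (again by the degree formula), so by Lemma~\ref{lemma:fdt}\bref{part:fdt-irred} it is irreducible, and divides itself.

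There is no real obstacle here; the only thing to be careful about is making sure the induction is set up so that the "smaller" object ($g$) is genuinely nonconstant (hence the inductive hypothesis, which is a statement about nonconstant polynomials, applies to it) and genuinely of smaller degree. Both follow from $\deg(g), \deg(h) \geq 1$, which is exactly what the factorization into \emph{nonconstant} polynomials gives us. An alternative phrasing avoids explicit induction: take $g$ to be a nonconstant divisor of $f$ of \emph{least} degree among all nonconstant divisors (this set is nonempty since $f$ itself qualifies), and observe that $g$ must be irreducible — otherwise $g = g_1 g_2$ with both factors nonconstant and $\deg(g_1) < \deg(g)$, yet $g_1 \dvd g \dvd f$, contradicting minimality. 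I would probably present this second, slicker version.
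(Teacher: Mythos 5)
Your proposal is correct, and the ``slicker version'' you say you would present is exactly the paper's own proof: take a nonconstant divisor of $f$ of least degree and show that minimality forces it to be irreducible. Your inductive version is also a valid and genuinely different route — it trades the extremal-element argument for strong induction on degree — but since you land on the same argument the paper uses, there is nothing further to note.
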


\begin{proof}
Let $g$ be a nonconstant polynomial of smallest possible degree such that
$g \dvd f$. (For this to make sense, there must be at least one nonconstant
polynomial dividing $f$, and there is: $f$.) I claim that $g$ is
irreducible. Proof: if $g = g_1 g_2$ then each $g_i$ divides $f$, so by 
minimality of $\deg(g)$, each $g_i$ has degree $0$ or
$\deg(g)$. They cannot both have degree $\deg(g)$, since $\deg(g_1) +
\deg(g_2) = \deg(g) > 0$. So at least one has degree $0$, which by
Lemma~\ref{lemma:fdt}\bref{part:fdt-units} means that it is a unit. 
\end{proof}

\begin{lemma}
\label{lemma:irr-prime}
Let $K$ be a field and $f, g, h \in K[t]$. Suppose that $f$ is irreducible
and $f \dvd g h$. Then $f \dvd g$ or $f \dvd h$.
\end{lemma}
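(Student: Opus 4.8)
The plan is to imitate the classical proof that irreducible elements of a PID are prime, using Bézout's identity, which in this setting is Proposition~\ref{propn:bezout} applied to the principal ideal domain $K[t]$ (Proposition~\ref{propn:Kt-pid}). The only subtlety is that the definition of coprimality in the excerpt is phrased in terms of common divisors being units, so I first need to connect that to Bézout.

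First I would dispose of the trivial case. If $f \dvd g$ we are done, so assume $f \ndvd g$; the goal is then to show $f \dvd h$. Next I would argue that $f$ and $g$ are coprime in the sense defined before Proposition~\ref{propn:bezout}: suppose $a \in K[t]$ with $a \dvd f$ and $a \dvd g$. Since $f$ is irreducible and $a \dvd f$, either $a$ is a unit or $a$ is an associate of $f$ (using Lemma~\ref{lemma:fdt}\bref{part:fdt-irred} or Exercise~\ref{ex:associates}: writing $f = ab$, one of $a, b$ is a unit); if $a$ were an associate of $f$, then $f \dvd a \dvd g$, contradicting $f \ndvd g$. So $a$ must be a unit, proving $f$ and $g$ are coprime.

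Now apply Proposition~\ref{propn:bezout} in $K[t]$: there exist $a, b \in K[t]$ with
\[
af + bg = 1.
\]
Multiplying through by $h$ gives $afh + bgh = h$. The first term $afh$ is divisible by $f$, and the second term $bgh$ is divisible by $f$ because $f \dvd gh$ by hypothesis; hence $f$ divides their sum, which is $h$. This completes the proof.

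I do not anticipate a real obstacle here; the one step that needs a little care is the coprimality argument, where I must be sure that a common divisor of $f$ and $g$ which happens to be an associate of $f$ really does force $f \dvd g$ — but that is immediate since associates divide one another. Everything else is a direct invocation of results already established in the excerpt (Propositions~\ref{propn:bezout} and~\ref{propn:Kt-pid}).
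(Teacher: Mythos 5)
Your proof is correct and follows essentially the same route as the paper: assume $f \ndvd g$, deduce that $f$ and $g$ are coprime using irreducibility, invoke Proposition~\ref{propn:bezout} in the principal ideal domain $K[t]$ to write $af + bg = 1$, and multiply by $h$. The only difference is that you spell out the coprimality step in detail, which the paper states in one line.
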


This behaviour is familiar in the integers: if a prime $p$ divides some
product $ab$, then $p \dvd a$ or $p \dvd b$. In fact, our proof works in
any principal ideal domain.

\begin{proof}
Suppose that $f \ndvd g$. Since $f$ is irreducible, $f$ and $g$ are
coprime. Since $K[t]$ is a principal ideal domain,
Proposition~\ref{propn:bezout} implies that there are $p, q \in K[t]$
satisfying 
\[
pf + qg = 1.
\]
Multiplying both sides by $h$ gives
\[
pfh + qgh = h.
\]
But $f \dvd pfh$ and $f \dvd gh$, so $f \dvd h$.
\end{proof}

\begin{bigthm}
\label{thm:fact-polys}
Let $K$ be a field and $0 \neq f \in K[t]$. Then 
\[
f = af_1 f_2 \cdots f_n
\]
for some $n \geq 0$, $a \in K$ and monic irreducibles $f_1, \ldots, f_n \in
K[t]$. Moreover, $n$ and $a$ are uniquely determined by $f$, and $f_1,
\ldots, f_n$ are uniquely determined up to reordering.
\end{bigthm}

In the case $n = 0$, the product $f_1 \cdots f_n$ should be interpreted as
$1$ (as in Digression~\ref{dig:defn-id}). \demph{Monic} means that the
leading coefficient is $1$.

\begin{proof}
First we prove that such a factorization exists, by induction on
$\deg(f)$. If $\deg(f) = 0$ then $f$ is a constant $a$ and we take $n =
0$. Now suppose that $\deg(f) > 0$ and assume the result for polynomials of
smaller degree. By Lemma~\ref{lemma:some-irr}, there is an irreducible $g$
dividing $f$, and we can assume that $g$ is monic by dividing by a constant
if necessary. Then $f/g$ is a nonzero polynomial of smaller degree than
$f$, so by inductive hypothesis, 
\[
f/g = a h_1 \cdots h_m
\]
for some $a \in K$ and monic irreducibles $h_1, \ldots, h_m$. Rearranging
gives
\[
f = a h_1 \cdots h_m g,
\]
completing the induction.

Now we prove uniqueness, again by induction on $\deg(f)$. If $\deg(f) = 0$
then $f$ is a constant $a$ and the only possible factorization is the one
with $n = 0$. Now suppose that $\deg(f) > 0$, and take two factorizations 
\begin{align}
\label{eq:two-factns}
af_1 \cdots f_n = f = bg_1 \cdots g_m
\end{align}
where $a, b \in K$ and $f_i, g_j$ are monic irreducible. Since $\deg(f) >
0$, we have $n, m \geq 1$. Now $f_n \dvd b g_1 \cdots g_m$, so by
Lemma~\ref{lemma:irr-prime}, $f_n \dvd g_j$ for some $j$. By
rearranging, we can assume that $j = m$.  But $g_m$ is also
irreducible, so $f_n = c g_m$ for some nonzero $c \in K$, and both $f_n$
and $g_m$ are monic, so $c = 1$. Hence $f_n = g_m$. Cancelling
in~\eqref{eq:two-factns} (which we can do as $K[t]$ is an integral domain)
gives
\[
af_1 \cdots f_{n - 1} = bg_1 \cdots g_{m - 1}.
\]
By inductive hypothesis, $n - 1 = m - 1$, $a = b$, and the lists $f_1,
\ldots, f_{n - 1}$ and $g_1, \ldots, g_{m - 1}$ are the same up to
reordering. This completes the induction.
\end{proof}

One way to find an irreducible factor of a polynomial $f(t) \in K[t]$ is to
find a \demph{root} (an element $a \in K$ such that $f(a) = 0$):

\begin{lemma}
\label{lemma:root-irr}
Let $K$ be a field, $f(t) \in K[t]$ and $a \in K$. Then
\[
f(a) = 0 \iff (t - a) \dvd f(t).
\]
\end{lemma}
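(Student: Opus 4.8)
The plan is to prove both implications directly, using the division algorithm (Proposition~\ref{propn:div-alg}) as the single key tool.

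First, the easy direction: suppose $(t - a) \dvd f(t)$, so $f(t) = (t - a)q(t)$ for some $q(t) \in K[t]$. Applying the evaluation homomorphism $\ev_a \from K[t] \to K$ (which exists by the universal property of the polynomial ring, and which I will write as $g \mapsto g(a)$), and using that $\ev_a$ respects multiplication, I get $f(a) = (a - a)q(a) = 0 \cdot q(a) = 0$. This is immediate.

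For the converse, suppose $f(a) = 0$. Since $t - a$ is a nonzero polynomial (its degree is $1$), Proposition~\ref{propn:div-alg} gives unique $q, r \in K[t]$ with $f(t) = (t - a)q(t) + r(t)$ and $\deg(r) < \deg(t - a) = 1$. Hence $\deg(r) \leq 0$, so $r$ is a constant, say $r(t) = c$ for some $c \in K$. Now apply $\ev_a$ to the equation $f(t) = (t - a)q(t) + c$: the left side is $f(a) = 0$, and the right side is $(a - a)q(a) + c = c$. Therefore $c = 0$, so $f(t) = (t - a)q(t)$, which is exactly $(t - a) \dvd f(t)$.

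There is no real obstacle here; the only point requiring a little care is making sure that ``plugging in $t = a$'' is legitimate, i.e.\ that evaluation at $a$ is a genuine ring homomorphism $K[t] \to K$ so that it commutes with sums and products. That is already established via the universal property of $R[t]$ earlier in the chapter, so I would simply cite it (or the concrete formula $\ev_a(\sum_i a_i t^i) = \sum_i a_i a^i$) rather than re-prove it. Everything else is a two-line computation with the division algorithm.
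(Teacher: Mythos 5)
Your proof is correct and follows essentially the same route as the paper's: the easy direction by applying $\ev_a$ to a factorization, and the converse by dividing $f$ by $t - a$ via Proposition~\ref{propn:div-alg} and evaluating at $a$ to kill the constant remainder. The only difference is that you explicitly flag that evaluation at $a$ is a ring homomorphism, which the paper leaves implicit.
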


\begin{proof}
$\Rightarrow$: suppose that $f(a) = 0$. By Proposition~\ref{propn:div-alg},
\begin{align}
\label{eq:ri-da}
f(t) = (t - a)q(t) + r(t)
\end{align}
for some $q, r \in K[t]$ with $\deg(r) < 1$. Then $r$ is a constant, so
putting $t = a$ in~\eqref{eq:ri-da} gives $r = 0$.

$\Leftarrow$: if $f(t) = (t - a) q(t)$ for some polynomial $q$ then $f(a) = 0$.
\end{proof}

A field is \demph{algebraically closed} if every nonconstant polynomial has
at least one root. For example, $\C$ is algebraically closed (the
fundamental theorem of algebra). A straightforward induction shows:

\begin{lemma}
\label{lemma:ac-splits}
Let $K$ be an algebraically closed field and $0 \neq f \in K[t]$. Then
\[
f(t) = c(t - a_1)^{m_1} \cdots (t - a_k)^{m_k},
\]
where $c$ is the leading coefficient of $f$, and $a_1, \ldots, a_k$ are the
distinct roots of $f$ in $K$, and $m_1, \ldots, m_k \geq 1$.  \qed
\end{lemma}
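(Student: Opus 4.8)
The plan is to prove this by induction on $\deg(f)$, as the remark before the statement suggests. The base case is $\deg(f) = 0$: then $f$ is a nonzero constant $c$, it has no roots, so $k = 0$ and the empty-product convention gives $f = c$, as required. For the inductive step, suppose $\deg(f) > 0$ and that the result holds for all nonzero polynomials of smaller degree. Since $K$ is algebraically closed and $f$ is nonconstant, $f$ has a root $a \in K$, so by Lemma~\ref{lemma:root-irr} we may write $f(t) = (t - a) g(t)$ for some $g \in K[t]$. By Lemma~\ref{lemma:idt}\bref{part:idt-deg} we have $\deg(g) = \deg(f) - 1 < \deg(f)$, and $g \neq 0$ since $f \neq 0$, so the inductive hypothesis applies to $g$ and expresses it as a constant times a product of powers of distinct linear factors.

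Multiplying that expression for $g$ by $(t - a)$ and then gathering together any factor $(t - a)$ that already occurs among the linear factors of $g$ (increasing its exponent by $1$), we obtain $f(t) = c(t - b_1)^{m_1} \cdots (t - b_l)^{m_l}$ with the $b_j$ distinct and each $m_j \geq 1$. It then remains to check that $c$ is the leading coefficient of $f$ and that $\{b_1, \ldots, b_l\}$ is exactly the set of distinct roots of $f$ in $K$. The first is immediate: each $(t - b_j)^{m_j}$ is monic, and since $K$ is a field the top coefficient of a product is the product of the top coefficients, so the leading coefficient of the right-hand side is $c$. For the second, given any $\alpha \in K$ we have $f(\alpha) = c(\alpha - b_1)^{m_1} \cdots (\alpha - b_l)^{m_l}$, and since $K$ is an integral domain and $c \neq 0$, this vanishes if and only if $\alpha = b_j$ for some $j$; hence the roots of $f$ are precisely $b_1, \ldots, b_l$, which are distinct by construction.

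I do not expect a genuine obstacle here; the only point that needs a little care is the bookkeeping in the inductive step — ensuring that after multiplying by $(t - a)$ the list of bases is still a list of \emph{distinct} elements with positive exponents — together with the final verification that these bases form the full set of roots rather than some subset. An alternative that avoids the induction is to invoke Theorem~\ref{thm:fact-polys} directly: it gives $f = c f_1 \cdots f_n$ with each $f_i$ monic irreducible, and over an algebraically closed field every monic irreducible is linear (if $f_i$ had a root $a$ then $(t - a) \mid f_i$ by Lemma~\ref{lemma:root-irr}, and irreducibility of $f_i$ forces $f_i = t - a$), so collecting equal factors yields the claimed form; one then concludes with the same two verifications about $c$ and about the set of roots.
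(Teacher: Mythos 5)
Your induction on $\deg(f)$ is correct and is exactly the ``straightforward induction'' the paper has in mind (the paper states the lemma with the proof left to the reader): find a root using algebraic closedness, split off $(t-a)$ via Lemma~\ref{lemma:root-irr}, apply the inductive hypothesis, and do the bookkeeping on the leading coefficient and the set of roots, which you handle carefully. Your alternative route through Theorem~\ref{thm:fact-polys} is also valid, but no comparison is needed since your main argument matches the intended one.
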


\section{Irreducible polynomials}
\label{sec:irr-polys}

Determining whether an integer is prime is generally hard, so it's no
surprise that determining whether a polynomial is irreducible is hard
too. This section presents a few techniques for doing so.

Let's begin with the simplest cases. Recall
Lemma~\ref{lemma:fdt}\bref{part:fdt-irred}: a polynomial over a field is
irreducible if and only if it is nonconstant (has degree $> 0$) and cannot
be expressed as a product of two nonconstant polynomials.

\begin{lemma}
\label{lemma:irr-triv}
Let $K$ be a field and $f \in K[t]$.
\begin{enumerate}
\item 
\label{part:it-0}
If $f$ is constant then $f$ is not irreducible.

\item
\label{part:it-1}
If $\deg(f) = 1$ then $f$ is irreducible.

\item
\label{part:it-root}
If $\deg(f) \geq 2$ and $f$ has a root then $f$ is reducible.

\item
\label{part:it-23}
If $\deg(f) \in \{2, 3\}$ and $f$ has no root then $f$ is irreducible.
\end{enumerate}
\end{lemma}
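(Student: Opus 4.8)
The plan is to reduce everything to three earlier facts: the characterization in Lemma~\ref{lemma:fdt}\bref{part:fdt-irred} that a polynomial over a field is irreducible exactly when it is nonconstant and cannot be written as a product of two nonconstant polynomials; the degree formula $\deg(gh) = \deg(g) + \deg(h)$ from Lemma~\ref{lemma:idt}\bref{part:idt-deg}; and the root--factor correspondence of Lemma~\ref{lemma:root-irr}. I will also use that the units of $K[t]$ are precisely the nonzero constants (Lemma~\ref{lemma:fdt}\bref{part:fdt-units}) and the convention of Warning~\ref{wg:unirred} that $0$ and units count as neither reducible nor irreducible.

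Part~\bref{part:it-0} is immediate: a constant is either $0$ or a nonzero constant, hence either $0$ or a unit, so by definition it is not irreducible. For part~\bref{part:it-1}, a degree-$1$ polynomial $f$ is nonconstant, and if $f = gh$ with $g, h$ both nonconstant then $\deg(f) = \deg(g) + \deg(h) \geq 2$, a contradiction; so $f$ is irreducible by Lemma~\ref{lemma:fdt}\bref{part:fdt-irred}.

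For part~\bref{part:it-root}, if $\deg(f) \geq 2$ and $f(a) = 0$ then Lemma~\ref{lemma:root-irr} gives $f = (t - a)q$ for some $q \in K[t]$; the degree formula forces $\deg(q) = \deg(f) - 1 \geq 1$, so $f$ is a product of two nonconstant polynomials, and since $\deg(f) \geq 2$ it is neither $0$ nor a unit, hence reducible. For part~\bref{part:it-23}, I argue by contradiction: if $f$ with $\deg(f) \in \{2, 3\}$ were reducible, then by Lemma~\ref{lemma:fdt}\bref{part:fdt-irred} we could write $f = gh$ with $g, h$ nonconstant, so $\deg(g), \deg(h) \geq 1$ and $\deg(g) + \deg(h) \in \{2, 3\}$; hence one factor, say $g$, has degree exactly $1$, say $g(t) = bt + c$ with $b \neq 0$, which has root $-cb^{-1}$, so $f = gh$ has this root too, contradicting the hypothesis.

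There is no genuine difficulty here; the only points needing care are that ``reducible'' also requires $f$ to be nonzero and a non-unit (checked via $\deg(f) \geq 2$ in part~\bref{part:it-root}), and that in part~\bref{part:it-23} one should route the factorization through Lemma~\ref{lemma:fdt}\bref{part:fdt-irred} rather than attempting to factor $f$ directly from the bare definition of irreducibility.
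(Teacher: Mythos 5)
Your proof is correct and follows essentially the same route as the paper: parts~\bref{part:it-0} and~\bref{part:it-1} from the characterization of irreducibles and the degree formula, part~\bref{part:it-root} from Lemma~\ref{lemma:root-irr}, and part~\bref{part:it-23} by a degree argument forcing a linear factor, hence a root. The only cosmetic difference is that in part~\bref{part:it-23} you compute the root of $bt+c$ directly instead of first normalizing $g$ to be monic as the paper does.
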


\begin{proof}
Parts~\bref{part:it-0} and~\bref{part:it-1} follow from what we just
recalled, and~\bref{part:it-root} follows from
Lemma~\ref{lemma:root-irr}. For~\bref{part:it-23}, suppose for a
contradiction that $f = gh$ with $\deg(g), \deg(h) \geq 1$. We have
$\deg(g) + \deg(h) \in \{2, 3\}$, so without loss of generality,
$\deg(g) = 1$. Also without loss of generality, $g$ is monic, say $g(t) = t
+ a$; but then $f(-a) = 0$, a contradiction.
\end{proof}

\begin{warning}{wg:roots-irred}
To show a polynomial is irreducible, it's generally \emph{not} enough to
show it has no root. The converse of~\bref{part:it-root} is false! For
instance, $(t^2 + 1)^2 \in \Q[t]$ has no root but is reducible.
\end{warning}

\begin{warning}{wg:roots-irred-bis}
Make sure you've digested Warning~\ref{wg:roots-irred}!

This is an extremely common mistake.\\[-1ex]
\end{warning}

\begin{examples}
\begin{enumerate}
\item 
Let $p$ be a prime. Then $f(t) = 1 + t + \cdots + t^{p - 1} \in \F_p[t]$ is
reducible, since $f(1) = 0$.

\item
Let $f(t) = t^3 - 10 \in \Q[t]$. Then $\deg(f) = 3$ and $f$ has no root in
$\Q$, so $f$ is irreducible by part~\bref{part:it-23} of the lemma.

\item
Over $\C$ or any other algebraically closed field, the irreducibles are
exactly the polynomials of degree $1$. 
\end{enumerate}
\end{examples}

\begin{ex}{ex:quad-irr}
If I gave you a quadratic over $\Q$, how would you decide whether it was
reducible or irreducible?
\end{ex}

From now on we focus on $K = \Q$. Any polynomial over $\Q$ can be
multiplied by a nonzero integer to get a polynomial over $\Z$,
and that's often a helpful move, so we'll look at $\Z[t]$ too.

\begin{defn}
A polynomial over $\Z$ is \demph{primitive} if its coefficients have no
common divisor except for $\pm 1$.
\end{defn}

For example, $15 + 6t + 10t^2$ is primitive but $15 + 6t + 30t^2$ is not.

\begin{lemma}
\label{lemma:const-prim}
Let $f(t) \in \Q[t]$. Then there exist a primitive polynomial $F(t) \in
\Z[t]$ and $\alpha \in \Q$ such that $f = \alpha F$.
\end{lemma}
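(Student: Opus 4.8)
The plan is to clear denominators so as to land in $\Z[t]$, and then divide out the greatest common divisor of the resulting integer coefficients. First, dispose of the trivial case $f = 0$ on its own: take $F(t) = 1$, which is primitive (its only nonzero coefficient is $1$, and the only integers dividing $1$ are $\pm 1$), and $\alpha = 0$.

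Now assume $f \neq 0$ and write $f(t) = \sum_{i=0}^{k} (a_i/b_i) t^i$ with each $a_i \in \Z$ and each $b_i \in \Z \without \{0\}$. Put $d = b_0 b_1 \cdots b_k \in \Z \without \{0\}$; then $g(t) := d f(t)$ lies in $\Z[t]$ and is nonzero. Let $c > 0$ be the greatest common divisor of the coefficients of $g$ (this makes sense: $g$ has finitely many coefficients, not all zero). Since $c$ divides every coefficient of $g$, we may write $g = cF$ for a unique $F(t) \in \Z[t]$, and hence $f = (c/d)F$. Taking $\alpha := c/d \in \Q$, it only remains to check that $F$ is primitive.

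For that, observe that the coefficients of $F$ are exactly the coefficients of $g$, each divided by their greatest common divisor $c$; and it is a standard elementary fact that dividing a finite list of integers by their gcd yields a list whose gcd is $1$. So no integer other than $\pm 1$ divides all the coefficients of $F$, i.e.\ $F$ is primitive. This small number-theoretic fact is really the only point that needs a word of justification; clearing denominators and the existence of the gcd of a finite, not-all-zero set of integers are routine, so I do not expect any serious obstacle here.
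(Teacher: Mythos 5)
Your proof is correct and follows essentially the same route as the paper: clear denominators using a common multiple of the $b_i$ (you take the product, the paper takes any common multiple), then factor out the gcd of the resulting integer coefficients and note the quotient is primitive. Your separate treatment of $f=0$ is a small extra care that the paper's proof glosses over, but otherwise the arguments coincide.
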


\begin{proof}
Write $f(t) = \sum_i (a_i/b_i) t^i$, where $a_i \in \Z$ and $0 \neq b_i \in
\Z$. Take any common multiple $b$ of the $b_i$s; then writing $c_i = a_i
b/b_i \in \Z$, we have $f(t) = (1/b) \sum c_i t^i$. Now let $c$ be
the greatest common divisor of the $c_i$s, put $d_i = c_i/c \in \Z$, and
put $F(t) = \sum d_i t^i$. Then $F(t)$ is primitive and $f(t) = (c/b)
F(t)$.
\end{proof}

If the coefficients of a polynomial $f(t) \in \Q[t]$ happen to all be
integers, the word `irreducible' could mean two things: irreducibility in
the ring $\Q[t]$ or in the ring $\Z[t]$. We say that $f$ is irreducible
\demph{over} $\Q$ or $\Z$ to distinguish between the two.

Suppose we have a polynomial over $\Z$ that's irreducible over $\Z$. In
principle it could still be reducible over $\Q$: although there's no
nontrivial way of factorizing it over $\Z$, perhaps it can be factorized
when you give yourself the freedom of non-integer coefficients. But the
next result tells us that you can't.

\begin{lemma}[Gauss]
\label{lemma:gauss}
\begin{enumerate}
\item 
\label{part:gl-prim}
The product of two primitive polynomials over $\Z$ is primitive.

\item
\label{part:gl-irr}
If a nonconstant polynomial over $\Z$ is irreducible over $\Z$, it is
irreducible over $\Q$. 
\end{enumerate}
\end{lemma}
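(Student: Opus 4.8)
The plan is to prove part~\bref{part:gl-prim} first and then deduce part~\bref{part:gl-irr} from it.

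For part~\bref{part:gl-prim} I would argue by contradiction. Let $f, g \in \Z[t]$ be primitive and suppose $fg$ is not primitive, so some prime $p$ divides every coefficient of $fg$. I would pass to the reduction homomorphism $\phi_* \from \Z[t] \to \F_p[t]$ induced (Definition~\ref{defn:ind-hom}) by the canonical map $\Z \to \Z/\idlgen{p} = \F_p$, using the observation that a polynomial in $\Z[t]$ has all of its coefficients divisible by $p$ precisely when its image under $\phi_*$ is $0$. Then $\phi_*(f)\,\phi_*(g) = \phi_*(fg) = 0$ in $\F_p[t]$; but $\F_p$ is a field, so $\F_p[t]$ is an integral domain by Lemma~\ref{lemma:idt}\bref{part:idt-id}, and hence $\phi_*(f) = 0$ or $\phi_*(g) = 0$, i.e.\ $p$ divides all coefficients of $f$ or of $g$ --- contradicting primitivity. (A hands-on alternative avoiding $\phi_*$: writing $f = \sum a_i t^i$ and $g = \sum b_j t^j$, let $r$ and $s$ be the smallest indices with $p \ndvd a_r$ and $p \ndvd b_s$; then every term of $\sum_{i+j = r+s} a_i b_j$ except $a_r b_s$ is divisible by $p$, so the coefficient of $t^{r+s}$ in $fg$ is not, again a contradiction.)

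For part~\bref{part:gl-irr}, suppose $f \in \Z[t]$ is nonconstant and irreducible over $\Z$, and assume for a contradiction that $f = gh$ with $g, h \in \Q[t]$ both nonconstant. Using Lemma~\ref{lemma:const-prim} I would write $g = \alpha G$ and $h = \beta H$ with $G, H \in \Z[t]$ primitive and $\alpha, \beta \in \Q$, so that $f = (\alpha\beta)\,GH$; here $G, H$ are nonconstant since $g, h$ are, and $\alpha\beta \neq 0$. By part~\bref{part:gl-prim}, $GH$ is primitive. Now write $\alpha\beta = c/d$ with $c, d \in \Z$, $d \geq 1$, $\gcd(c,d) = 1$; then $d f = c\,GH$, so $d$ divides every coefficient of $c\,GH$, and since $\gcd(c,d) = 1$ this forces $d$ to divide every coefficient of $GH$. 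Primitivity of $GH$ then gives $d = 1$, so $f = c\,GH = (cG)H$ with $cG, H \in \Z[t]$ both nonconstant. Since $\deg(uv) = \deg(u) + \deg(v)$ in the integral domain $\Z[t]$ (Lemma~\ref{lemma:idt}), the only units of $\Z[t]$ are $\pm 1$, so neither $cG$ nor $H$ is a unit; hence $f$ is reducible over $\Z$, a contradiction.

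The routine parts --- invoking Lemma~\ref{lemma:const-prim}, applying part~\bref{part:gl-prim}, and the final appeal to irreducibility --- are immediate once set up. The one place that needs genuine care is the content/gcd manipulation in part~\bref{part:gl-irr} that turns the rational scalar $\alpha\beta$ into an integer (the step $d = 1$): this is the actual mechanism by which ``no factorization over $\Z$'' upgrades to ``no factorization over $\Q$'', so I would make sure that argument is written out carefully.
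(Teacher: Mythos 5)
Your proof is correct and follows essentially the same route as the paper: part~\bref{part:gl-prim} by reduction mod $p$ to $\F_p[t]$ and using that $\F_p[t]$ is an integral domain (your hands-on alternative is the classical direct version of the same idea), and part~\bref{part:gl-irr} by scaling $g, h$ to primitive polynomials via Lemma~\ref{lemma:const-prim}, clearing denominators, and using primitivity of $GH$ together with coprimality to force the denominator to be $\pm 1$. The only cosmetic difference is that you frame part~\bref{part:gl-irr} as a proof by contradiction with $g, h$ both assumed nonconstant, whereas the paper argues directly that one of $g, h$ must be constant; this is an equivalent phrasing.
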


\begin{proof}
For~\bref{part:gl-prim}, let $f$ and $g$ be primitive polynomials over
$\Z$. Let $p$ be a prime number. (We're going to show that $p$ doesn't
divide all the coefficients of $fg$.) Write $\pi \from \Z \to \Z/p\Z =
\F_p$ for the canonical homomorphism, which induces a homomorphism $\pi_*
\from \Z[t] \to \F_p[t]$ as in Definition~\ref{defn:ind-hom}. 

Since $f$ is primitive, $p$ does not divide all the coefficients of $f$.
Equivalently, $\pi_*(f) \neq 0$. Similarly, $\pi_*(g) \neq 0$. But
$\F_p[t]$ is an integral domain, so 
\[
\pi_*(fg) = \pi_*(f)\pi_*(g) \neq 0,
\]
so $p$ does not divide all the coefficients of
$fg$. This holds for all primes $p$, so $fg$ is primitive.

For~\bref{part:gl-irr}, let $f \in \Z[t]$ be a nonconstant polynomial
irreducible over $\Z$. Let $g, h \in \Q[t]$ with $f = gh$. By
Lemma~\ref{lemma:const-prim}, $g = \alpha G$ and $h = \beta H$ for some
$\alpha, \beta \in \Q$ and primitive $G, H \in \Z[t]$. Then $\alpha\beta =
m/n$ for some coprime integers $m$ and $n$, giving
\[
n f = m G H.
\]
(All three of these polynomials are over $\Z$.)  Now $n$ divides every
coefficient of $n f$, hence every coefficient of $mGH$. Since $m$ and $n$
are coprime, $n$ divides every coefficient of $GH$. But $GH$ is primitive
by~\bref{part:gl-prim}, so $n = \pm 1$, so $f = \pm m GH$. Since $f$ is
irreducible over $\Z$, either $G$ or $H$ is constant, so $g$ or $h$ is
constant, as required.
\end{proof}

Gauss's lemma quickly leads to a test for irreducibility. It involves
taking a polynomial over $\Z$ and reducing it mod $p$, for some prime
$p$. This means applying the map $\pi_* \from \Z[t] \to \F_p[t]$ from the
last proof. As we saw after Definition~\ref{defn:ind-hom}, if $f(t) = \sum
a_i t^i$ then $\pi_*(f)(t) = \sum \pi(a_i) t^i$, where $\pi(a_i)$ is the
congruence class of $a_i$ mod $p$. I'll write $\pi(a)$ as $\dem{\ovln{a}}$
and $\pi_*(f)$ as $\dem{\ovln{f}}$. That is, $\ovln{f}$ is `$f$ mod $p$'.

\begin{propn}[Mod $p$ method]
\label{propn:mod-p-method}
Let $f(t) = a_0 + a_1 t + \cdots + a_n t^n \in \Z[t]$. If there is some
prime $p$ such that $p \ndvd a_n$ and $\ovln{f} \in \F_p[t]$ is
irreducible, then $f$ is irreducible over $\Q$.
\end{propn}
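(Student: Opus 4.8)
The plan is to deduce this from Gauss's Lemma (Lemma~\ref{lemma:gauss}) together with a degree comparison after reducing mod $p$. Since $\ovln{f}$ is irreducible it is in particular nonconstant, and because $p \ndvd a_n$ the leading coefficient of $\ovln{f}$ is nonzero, so $\deg\ovln{f} = n = \deg f$; hence $f$ itself is nonconstant, which is the first half of irreducibility over $\Q$ (Lemma~\ref{lemma:fdt}\bref{part:fdt-irred}). Suppose, towards a contradiction, that $f = gh$ with $g, h \in \Q[t]$ both nonconstant.

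First I would convert this into a factorization inside $\Z[t]$, exactly as in the proof of Gauss's Lemma. By Lemma~\ref{lemma:const-prim}, write $g = \alpha G$ and $h = \beta H$ with $G, H \in \Z[t]$ primitive and $\alpha, \beta \in \Q$ nonzero (nonzero since $g, h \neq 0$). Put $\alpha\beta = m/k$ with $m, k$ coprime integers (so $m \neq 0$); then $kf = mGH$ in $\Z[t]$. Since $k$ divides every coefficient of $kf = mGH$ and $\gcd(m, k) = 1$, it divides every coefficient of $GH$; but $GH$ is primitive by Lemma~\ref{lemma:gauss}\bref{part:gl-prim}, so $k = \pm 1$. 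Hence $f = G_1 H$ where $G_1 = \pm m\, G \in \Z[t]$, with $\deg G_1 = \deg g \geq 1$ and $\deg H = \deg h \geq 1$. Moreover $p \ndvd m$: otherwise $p$ would divide every coefficient of $f = \pm m\, GH$, contradicting $p \ndvd a_n$; and then, since $G$ is primitive, $p$ does not divide every coefficient of $G_1$ either, i.e.\ $\ovln{G_1} \neq 0$.

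Now apply the reduction homomorphism $\pi_* \from \Z[t] \to \F_p[t]$ of Definition~\ref{defn:ind-hom}, obtaining $\ovln{f} = \ovln{G_1}\, \ovln{H}$ in $\F_p[t]$. The crux is a degree count. Reduction mod $p$ never raises degree, so $\deg\ovln{G_1} \leq \deg G_1$ and $\deg\ovln{H} \leq \deg H$. On the other hand $\F_p[t]$ is an integral domain (Lemma~\ref{lemma:idt}), so $\deg\ovln{G_1} + \deg\ovln{H} = \deg\ovln{f} = n = \deg G_1 + \deg H$, the last equality coming from $f = G_1 H$. These relations force $\deg\ovln{G_1} = \deg G_1 \geq 1$ and $\deg\ovln{H} = \deg H \geq 1$, so $\ovln{f}$ is a product of two nonconstant polynomials over $\F_p$ --- contradicting its irreducibility by Lemma~\ref{lemma:fdt}\bref{part:fdt-irred}. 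Therefore $f$ admits no factorization into two nonconstant polynomials over $\Q$, i.e.\ $f$ is irreducible over $\Q$.

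The denominator-clearing and the primitivity bookkeeping are routine. The two points I would watch most carefully are that the hypothesis $p \ndvd a_n$ is exactly what keeps $\deg\ovln{f} = \deg f$ (otherwise the degrees could collapse under $\pi_*$ and the argument fails), and that $p \ndvd m$, so the spurious integer factor introduced by clearing denominators does not vanish modulo $p$.
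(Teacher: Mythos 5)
Your proof is correct, and it rests on the same two pillars as the paper's: Gauss-style primitivity bookkeeping and the observation that $p \ndvd a_n$ prevents the degree from dropping under reduction mod $p$. The organization differs, though. The paper first reduces to the case of primitive $f$ (dividing out the content $c$, noting $p \ndvd c$ since $c \dvd a_n$), invokes Gauss's Lemma~\ref{lemma:gauss}\bref{part:gl-irr} as a black box to pass to irreducibility over $\Z$, and then shows that in any factorization $f = gh$ over $\Z$ the factor whose reduction is constant must itself be constant (leading-coefficient argument) and hence, by primitivity of $f$, a unit. You instead argue by contradiction from a factorization over $\Q$ into nonconstant factors, re-run the denominator-clearing from the proof of Gauss's lemma (using only part~\bref{part:gl-prim}) to produce a $\Z[t]$-factorization with the same degrees, and then obtain the contradiction from a degree count: since $\deg\ovln{f} = \deg f$ and reduction mod $p$ cannot raise degrees, both reduced factors remain nonconstant, contradicting the irreducibility of $\ovln{f}$. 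Your route avoids the primitive/general case split at the cost of repeating the computation inside Gauss's lemma, while the paper's version is slightly more modular. One small remark: your aside that $p \ndvd m$ (hence $\ovln{G_1} \neq 0$) is not actually needed --- the identity $\deg\ovln{G_1} + \deg\ovln{H} = n$ together with the convention $\deg 0 = -\infty$ already rules out either reduction being zero.
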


I'll give some examples first, then the proof.

\begin{examples}
\label{egs:mpm}
\begin{enumerate}
\item 
\label{eg:mpm-9}
Let's use the mod $p$ method to show that $f(t) = 9 + 14t - 8t^3$ is
irreducible over $\Q$. Take $p = 7$: then $\ovln{f}(t) = 2 - t^3 \in
\F_7[t]$, so it's enough to show that $2 - t^3$ is irreducible over
$\F_7$. Since this has degree $3$, it's enough to show that $t^3 = 2$ has
no solution in $\F_7$ (by Lemma~\ref{lemma:irr-triv}\bref{part:it-23}). And
you can easily check this by computing $0^3$, $(\pm 1)^3$, $(\pm 2)^3$ and
$(\pm 3)^3$ mod $7$.

\item
The condition in Proposition~\ref{propn:mod-p-method} that $p \ndvd a_n$
can't be dropped. For instance, consider $f(t) = 6t^2 + t$ and $p = 2$.
\end{enumerate}
\end{examples}

\begin{warning}{wg:mod-p}
Take $f(t)$ as in Example~\ref{egs:mpm}\bref{eg:mpm-9}, but this time take
$p = 3$. Then $\ovln{f}(t) = -t + t^3 \in \F_3[t]$, which is reducible. But
that doesn't mean $f$ is reducible! The mod $p$ method only ever lets you
show that a polynomial is \emph{irreducible} over $\Q$, not reducible.
\end{warning}

\begin{pfof}{Proposition~\ref{propn:mod-p-method}}
Take a prime $p$ satisfying the stated conditions.  

First suppose that $f$ is primitive.  By Gauss's lemma, it is enough to
prove that $f$ is irreducible over $\Z$.

Since $\ovln{f}$ is irreducible, $\deg(\ovln{f}) > 0$, so $\deg(f) > 0$.

Let $f = gh$ in $\Z[t]$. We have $\ovln{f} = \ovln{g}\ovln{h}$ and
$\ovln{f}$ is irreducible, so without loss of generality, $\ovln{g}$ is
constant. The leading coefficient of $f$ is the product of the leading
coefficients of $g$ and $h$, and is not divisible by $p$, so
the leading coefficient of $g$ is not divisible by $p$. Hence
$\deg(g) = \deg(\ovln{g})$. But $\deg(\ovln{g}) = 0$, so $\deg(g) = 0$, so
$g \in \Z[t]$ is a constant $b \in \Z$. Finally, $f = gh = bh$ and $f$ is
primitive, so $b = \pm 1$, which is a unit in $\Z[t]$. It follows that $f$
is irreducible over $\Z$.

Now take an arbitrary $f$ satisfying the hypotheses. We have $f = cF$ where
$c \in \Z$ is the greatest common divisor of the coefficients and $F \in
\Z[t]$ is primitive. Then $\ovln{f} = \ovln{c} \ovln{F}$, and $\ovln{c}$
is a unit in $\F_p$ because $p\ndvd c$. Since $\ovln{f}$ is irreducible, this
implies that $\ovln{F}$ is irreducible, and so by what we've just proved,
$F$ is irreducible over $\Q$. But $c \neq 0$, so $c$ is a unit in $\Q$, so
$f = cF$ is also irreducible over $\Q$. 
\end{pfof}

We finish with an irreducibility test that turns out to be surprisingly
powerful.

\begin{propn}[Eisenstein's criterion]
Let $f(t) = a_0 + \cdots + a_n t^n \in \Z[t]$, with $n \geq 1$. Suppose
\marginpar{\footnotesize \ \hfill Not Einstein.\hfill \ }%
there exists a prime $p$ such that:
\begin{itemize}
\item 
$p \ndvd a_n$;

\item
$p \dvd a_i$ for all $i \in \{ 0, \ldots, n - 1 \}$;

\item
$p^2 \ndvd a_0$.
\end{itemize}
Then $f$ is irreducible over $\Q$.
\end{propn}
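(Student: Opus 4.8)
The plan is to reduce to irreducibility over $\Z$ via Gauss's lemma (Lemma~\ref{lemma:gauss}) and then run a reduction-mod-$p$ argument in the spirit of the mod $p$ method (Proposition~\ref{propn:mod-p-method}), but tracking the \emph{constant} terms of the factors rather than the leading ones.

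First I would remove the primitivity hypothesis. Using Lemma~\ref{lemma:const-prim}, write $f = cF$ with $c \in \Z$ the greatest common divisor of $a_0, \ldots, a_n$ and $F = \sum_i b_i t^i \in \Z[t]$ primitive, so $a_i = c b_i$. Since $p \ndvd a_n$, also $p \ndvd c$; hence for $i < n$ we get $p \mid b_i$ (as $p \mid c b_i$ and $p$ is prime), and $p^2 \ndvd b_0$ (else $p^2 \mid c b_0 = a_0$). So $F$ satisfies Eisenstein's hypotheses with the same prime, and since $c$ is a nonzero constant, hence a unit in $\Q$, it suffices to prove $F$ irreducible over $\Q$. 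Thus I may assume $f$ primitive, and by Lemma~\ref{lemma:gauss}\bref{part:gl-irr} it is enough to show $f$ is irreducible over $\Z$.

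Next, towards a contradiction, suppose $f = gh$ with $g, h \in \Z[t]$, neither a unit. Since $f$ is primitive, a nonzero constant factor of $f$ would have to be $\pm 1$ (a unit), so both $g$ and $h$ are nonconstant, say of degrees $r, s \geq 1$. Reducing mod $p$, the hypotheses $p \mid a_0, \ldots, a_{n-1}$, $p \ndvd a_n$ give $\ovln{f} = \ovln{a_n}\,t^n$ with $\ovln{a_n} \neq 0$, while $\ovln{f} = \ovln{g}\,\ovln{h}$. Since $\F_p[t]$ is a principal ideal domain (Proposition~\ref{propn:Kt-pid}) in which $t$ is irreducible (Lemma~\ref{lemma:irr-triv}\bref{part:it-1}), unique factorization (Theorem~\ref{thm:fact-polys}) forces $\ovln{g} = \beta t^{r'}$ and $\ovln{h} = \gamma t^{s'}$ for some $\beta, \gamma \in \F_p$ and $r', s' \geq 0$. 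Crucially, $p \ndvd a_n$ and $a_n$ is the product of the leading coefficients of $g$ and $h$, so those leading coefficients survive reduction; hence $r' = \deg\ovln{g} = r \geq 1$ and $s' = \deg\ovln{h} = s \geq 1$. So $\ovln{g}$ and $\ovln{h}$ each have zero constant term, i.e. $p$ divides $g(0)$ and $p$ divides $h(0)$; but $a_0 = f(0) = g(0)h(0)$, so $p^2 \mid a_0$, contradicting the third hypothesis. Hence no such factorization exists, $f$ is irreducible over $\Z$, and we are done.

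The crux — and the place it is easiest to go wrong — is the claim that $r' \geq 1$ and $s' \geq 1$, that is, that neither $\ovln{g}$ nor $\ovln{h}$ degenerates to a constant after reduction mod $p$. This is exactly what the hypothesis $p \ndvd a_n$ buys us: it keeps both leading coefficients of $g$ and $h$ nonzero mod $p$, which in turn forces the constant terms of $g$ and $h$ to be \emph{both} divisible by $p$, and it is this simultaneous divisibility that collides with $p^2 \ndvd a_0$. Everything else — the reduction to the primitive case, Gauss's lemma, and unique factorization in $\F_p[t]$ — is routine.
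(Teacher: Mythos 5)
Your proof is correct. It takes the same outer route as the paper — reduce to the primitive case, invoke Gauss's lemma (Lemma~\ref{lemma:gauss}\bref{part:gl-irr}) to pass to irreducibility over $\Z$, and then reduce mod $p$ — but the middle of the argument is organized differently. You argue by contradiction: assuming $f = gh$ with neither factor a unit, primitivity forces both $g$ and $h$ to be nonconstant; then $\ovln{f} = \ovln{a_n}t^n$ together with unique factorization in $\F_p[t]$ (Theorem~\ref{thm:fact-polys}, with $t$ irreducible) makes $\ovln{g}$ and $\ovln{h}$ monomials, the condition $p \ndvd a_n$ keeps both leading coefficients alive so these monomials have positive degree, and hence $p \dvd g(0)$ and $p \dvd h(0)$, colliding with $p^2 \ndvd a_0$. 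The paper instead runs a direct argument built on the codegree: since $p^2 \ndvd a_0 = g(0)h(0)$, WLOG $p \ndvd g(0)$, so $\codeg(\ovln{g}) = 0$; additivity of codegree and $\codeg(\ovln{f}) = n$ then give $n = \codeg(\ovln{h}) \leq \deg(\ovln{h}) \leq \deg(h)$, forcing $\deg(g) = 0$ and, by primitivity, $g = \pm 1$. So the paper uses $p^2 \ndvd a_0$ at the start (to fix which factor has unit constant term mod $p$) and never needs unique factorization in $\F_p[t]$, only codegree additivity over an integral domain; your version uses $p \ndvd a_n$ first and saves $p^2 \ndvd a_0$ for the final contradiction, at the cost of invoking the UFD structure of $\F_p[t]$ — which is available, so this is a legitimate trade. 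Your handling of the reduction to the primitive case (checking $p \ndvd c$ so the Eisenstein hypotheses pass to $F$) spells out exactly the point the paper compresses into one sentence.
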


To prove this, we will use the concept of the \demph{codegree $\codeg(f)$}
of a polynomial $f(t) = \sum_i a_i t^i$, which is defined to be the least
$i$ such that $a_i \neq 0$ (if $f \neq 0$), or as the formal symbol
$\infty$ if $f = 0$. For polynomials $f$ and $g$ over an integral domain,
\[
\codeg(fg) = \codeg(f) + \codeg(g).
\]
Clearly $\codeg(f) \leq \deg(f)$ unless $f = 0$.

\begin{proof}
We may assume $f$ is primitive: if not, divide $f$ through by the greatest
common divisor of its coefficients, which does not affect their
divisibility by powers of $p$ or the reducibility of $f$ over $\Q$. By Gauss's
lemma, it is enough to show that $f$ is irreducible over $\Z$. Let $g, h \in
\Z[t]$ with $f = gh$. Continue to write $\ovln{f}(t) \in \F_p[t]$ for $f$
reduced mod $p$; then $\ovln{f} = \ovln{g} \ovln{h}$. Since
\[
p^2 \ndvd a_0 = f(0) = g(0)h(0),
\] 
we may assume without loss of generality that $p \ndvd g(0)$. Hence
$\codeg(\ovln{g}) = 0$. Also, $\codeg(\ovln{f}) = n$, since $p$ divides each
of $a_0, \ldots, a_{n - 1}$ but not $a_n$. So
\begin{align}
\label{eq:eis-bds}
n 
= 
\codeg(\ovln{f}) 
= 
\codeg(\ovln{g}) + \codeg(\ovln{h})
=
\codeg(\ovln{h})
\leq
\deg(\ovln{h})
\leq
\deg(h),
\end{align}
giving $n \leq \deg(h)$. But $f = gh$ with $\deg(f) = n$, so $\deg(h) = n$
and $\deg(g) = 0$. Hence $g$ is constant. Since $f$ is primitive, $g = \pm
1$, so $g$ is a unit in $\Z[t]$.
\end{proof}

\begin{ex}{ex:eis-step}
The last step in~\eqref{eq:eis-bds} was `$\deg(\ovln{h}) \leq
\deg(h)$'. Why is that true? And when does equality hold?
\end{ex}

\begin{example}
Let 
\[
g(t) = \frac{2}{9}t^5 - \frac{5}{3}t^4 + t^3 + \frac{1}{3} \in \Q[t].
\]
Then $g$ is irreducible over $\Q$ if and only if 
\[
9g(t) = 2t^5 - 15t^4 + 9t^3 + 3
\]
is irreducible over $\Q$, which it is by Eisenstein's criterion with $p =
3$. 
\video{Testing for irreducibility}%
\end{example}

\begin{ex}{ex:eis-every}
Use Eisenstein's criterion to show that for every $n \geq 1$, there is an
irreducible polynomial over $\Q$ of degree $n$.
\end{ex}

I'll give you one more example, and it's an important one.

\begin{example}
\label{eg:cyclo-irr}
Let $p$ be a prime. The \demph{$p$th cyclotomic polynomial} is
\begin{align}
\label{eq:cyclo-p}
\dem{\Phi_p(t)} = 1 + t + \cdots + t^{p - 1} = \frac{t^p - 1}{t - 1}.
\end{align}
I claim that $\Phi_p$ is irreducible. We can't apply Eisenstein to $\Phi_p$
as it stands, because whichever prime we choose (whether it's $p$ or another
one) doesn't divide any of the coefficients. However, we saw on
p.~\pageref{p:subst-irr} that $\Phi_p(t)$ is irreducible if and only if
$\Phi_p(t - c)$ is irreducible, for any $c \in \Q$. We'll take $c = -1$. We
have 
\begin{align*}
\Phi_p(t + 1)   &
=
\frac{(t + 1)^p - 1}{(t + 1) - 1}       \\
&
=
\frac{1}{t}\sum_{i = 1}^p \binom{p}{i} t^i       \\
&
=
p + \binom{p}{2} t + \cdots + \binom{p}{p - 1} t^{p - 2} + t^{p - 1}.
\end{align*}
So $\Phi_p(t + 1)$ is irreducible by Eisenstein's criterion and
Lemma~\ref{lemma:p-binom}, hence $\Phi_p(t)$ is irreducible too.
\end{example}

\begin{digression}{dig:cyclo}
I defined the $p$th cyclotomic polynomial $\Phi_p$ only when $p$ is
prime. The definition of $\Phi_n$ for general $n \geq 1$ is \emph{not} the
obvious generalization of~\eqref{eq:cyclo-p}. Instead, it's this:
\[
\dem{\Phi_n(t)} = \prod_\zeta (t - \zeta),
\]
where the product runs over all primitive $n$th roots of unity $\zeta$. (In
this context, `primitive' means that $n$ is the smallest number satisfying
$\zeta^n = 1$; it's a different usage from `primitive polynomial'.) 

Many surprising things are true. It's not obvious that the coefficients of
$\Phi_n$ are real, but they are. Even given that they're real, it's not
obvious that they're rational, but they are. Even given that they're
rational, it's not obvious that they're integers, but they are (\href{https://www.maths.ed.ac.uk/~tl/galois}{Workshop~4,
question~14}). The degree of $\Phi_n$ is $\phi(n)$, the number of integers
between $1$ and $n$ that are coprime with $n$ (Euler's function). It's also
true that the polynomial $\Phi_n$ is irreducible for \emph{all} $n$, not
just primes.

Some of these things are quite hard to prove, and results from Galois
theory help. We won't get into all of this, but you can
\href{https://en.wikipedia.org/wiki/Cyclotomic_polynomial}{read more here}.
\end{digression}

\chapter{Field extensions}
\label{ch:exts}

Roughly speaking, an `extension' of a field $K$ is a field $M$ that
contains $K$ as a subfield. It's not much of an exaggeration to say that
field extensions are the central objects of Galois theory, in much the same
way that vector spaces are the central objects of linear algebra.
\video{Introduction to Week~4}

It will be a while before it becomes truly clear why field extensions are
so important, but here are a couple of indications:
\begin{itemize}
\item
For any polynomial $f$ over $\Q$, we can take the smallest subfield $M$ of
$\C$ that contains all the complex roots of $f$, and that's an extension of
$\Q$.

\item
For any irreducible polynomial $f$ over a field $K$, the quotient ring
$M = K[t]/\idlgen{f}$ is a field. The constant polynomials form a subfield of
$M$ isomorphic to $K$, so $M$ is an extension of $K$.
\end{itemize}
It's important to distinguish between these two types of example. The first
extends $\Q$ by \emph{all} the roots of $f$, whereas the second extends $K$
by just \emph{one} root of $f$---as we'll see.

\section{Definition and examples}

Before we do anything else, we need to think about some set theory. What
follows might seem trivial, but it's worth taking the time to get it
straight. 

Given a set $A$ and a subset $B \sub A$, there is an \demph{inclusion}
function $\iota \from B \to A$ defined by $\iota(b) = b$ for all $b \in
B$. (That's a Greek letter iota.) Remember that by definition, every
function has a specified domain and codomain, so this is not the same as
the identity on $B$. The inclusion $\iota$ is injective.

On the other hand, given any injective function between sets, say $\phi
\from X \to A$, the image $\im\phi$ is a subset of $A$, and there is a
bijection $\phi' \from X \to \im\phi$ given by $\phi'(x) = \phi(x)$ ($x \in
X$). Hence the set $X$ is isomorphic to (in bijection with) the subset
$\im\phi$ of $A$.

So given any subset of $A$, we get an injection into $A$, and vice
versa. These two back-and-forth processes are mutually inverse (up to
isomorphism), so subsets and injections are more or less the same thing.

Now here's an example to show you that the concept of subset is not as
clear-cut as it might seem---at least when you look at what mathematicians
actually \emph{do}, rather than what we claim we do. 
\begin{itemize}
\item
It's common to define the set $\C$ as $\R^2$.

\item
Everyone treats $\R$ as a subset of $\C$.

\item 
But almost no one would say that $\R$ is a subset of $\R^2$. (If you think
it is, and you agree that $\R$ has an element called $6$, then you must think
that $\R^2$ has an element called $6$---which you probably don't.)
\end{itemize}
So, is $\R$ a subset of $\C$ or not? In truth, while we almost always `know
what we mean', the common conventions are inconsistent.

Probably you're thinking that this all seems rather distant from `real
mathematics'. Nothing important should depend on whether $\R$ is
\emph{literally} a subset of $\C$. I agree! But the challenge is to set up
the formal definitions so that we never have to worry about
irrelevant-seeming questions like this again. And the solution is to work
with injections rather than subsets.

So: we intuitively want to define an `extension' of a field $K$ as a field
$M$ that contains $K$ as a subfield. But if we defined it that way, we'd
run into the annoying question of whether $\C$ really is an extension of
$\R$. So instead, we define an extension of $K$ to be a field $M$ together
with an injective homomorphism $K \to M$. Lemma~\ref{lemma:fd-homm} tells
us that \emph{every} homomorphism between fields is injective, so our
actual definition is as follows.

\begin{defn}
\label{defn:ext}
Let $K$ be a field. An \demph{extension} of $K$ is a field $M$ together
with a homomorphism $\iota \from K \to M$. 
\end{defn}

Often we blur the distinction between injections and subsets, speaking as
if $K$ is literally a subfield of $M$ and $\iota$ is the inclusion. We then
write \demph{$M: K$} (read `$M$ over $K$') to mean that $M$ is an extension
of $K$, not bothering to mention $\iota$.

\begin{examples}
\label{egs:ext}
\begin{enumerate}
\item 
The field $\C$, together with the inclusion $\iota \from \Q \to \C$, is an
extension of $\Q$. We write it as $\C: \Q$. Similarly, there are field
extensions $\C : \R$ and $\R : \Q$.

\item
\label{eg:ext-sqrt2}
Let 
\[
\Q(\sqrt{2}) = \{ a + b\sqrt{2} \such a, b \in \Q\}.
\]
Then $\Q(\sqrt{2})$ is a subring of $\C$ (easily), and in fact it's a
subfield: for if $(a, b) \neq (0, 0)$ then
\[
\frac{1}{a + b\sqrt{2}} 
=
\frac{a - b\sqrt{2}}{a^2 - 2b^2}
\]
(noting that the denominators are not $0$ because $\sqrt{2}$ is
irrational). So we have an extension $\C: \Q(\sqrt{2})$. Also, because $\Q
\sub \Q(\sqrt{2})$, we have another extension $\Q(\sqrt{2}): \Q$.

\item
\label{eg:ext-sqrt2i}
Write
\[
\Q(\sqrt{2}, i) = 
\{
a + b \sqrt{2} + ci + d\sqrt{2}i
\such
a, b, c, d \in \Q.
\}
\]
By direct calculation or later theory (which will make it much easier),
$\Q(\sqrt{2}, i)$ is also a subfield of $\C$, so we have extensions $\C:
\Q(\sqrt{2}, i)$ and $\Q(\sqrt{2}, i): \Q$. 

\item
\label{eg:ext-rat}
Let $K$ be a field, and consider the field $K(t)$ of rational expressions
over $K$ (Example~\ref{eg:rat}). There is a homomorphism $\iota \from K
\to K(t)$ given by $\iota(a) = a/1$ ($a \in K$). In other words, $K(t)$
contains a copy of $K$ as the constant rational expressions. So, we have a
field extension $K(t) : K$.

\item
\label{eg:ext-conj}
There is a homomorphism $\kappa\from \C \to \C$ defined by $\kappa(z) =
\ovln{z}$. So $\C$ together with $\kappa$ is an extension of $\C$! You
might feel that this example obeys the letter but not the spirit of
Definition~\ref{defn:ext}, but it \emph{is} an example.
\end{enumerate}
\end{examples}

\begin{ex}{ex:eg-int}
Find two examples of fields $K$ such that $\Q \subsetneqq K \subsetneqq
\Q(\sqrt{2}, i)$. (The symbol $\subsetneqq$ means proper subset.)
\end{ex}

Sometimes we fix a field $K$ and think about fields that contain
it---extensions of $K$. Other times, we fix a field $K$ and think about
fields it contains---subfields of $K$. It may be that we are given a mere
sub\emph{set} $X$ of $K$ and want to generate a subfield from it. Recalling
the top-down/bottom-up distinction of Digression~\ref{dig:top-bottom}, we
define this as follows.

\begin{defn}
Let $K$ be a field and $X$ a subset of $K$. The subfield of $K$
\demph{generated by} $X$ is the intersection of all the subfields of $K$
containing $X$.
\end{defn}

Let $F$ be the subfield of $K$ generated by $X$. Since any intersection of
subfields is a subfield, $F$ really is a subfield of $K$. It contains
$X$. By definition of intersection, $F$ is the \emph{smallest} subfield of
$K$ containing $X$, in the sense that any subfield of $K$ containing $X$
contains $F$.

\begin{ex}{ex:int-sub}
Check the truth of all the statements in the previous paragraph.
\end{ex}

\begin{examples}
\label{egs:sub-gen}
\begin{enumerate}
\item 
The subfield of $K$ generated by $\emptyset$ is the prime subfield of~$K$.

\item
\label{eg:sub-gen-i}
Let $L$ be the subfield of $\C$ generated by $\{i\}$. I claim that
\[
L = \{ a + b i \such a, b \in \Q \}.
\]
To prove this, we have to show that $L$ is the smallest subfield of $\C$
containing $i$. First, it \emph{is} a subfield of $\C$ (by an argument
similar to Example~\ref{egs:ext}\bref{eg:ext-sqrt2}) and it contains $0 +
1i = i$. Now let $L'$ be any subfield of $\C$ containing $i$. Then $L'$
contains the prime subfield of $\C$ (by definition of prime subfield),
which is $\Q$. So whenever $a, b \in \Q$, we have $a, b, i \in L'$ and so
$a + bi \in L'$. Hence $L \sub L'$, as required.

\item
\label{eg:sub-gen-sqrt2}
A very similar argument shows that the subfield of $\C$ generated by
$\sqrt{2}$ is what we have been calling $\Q(\sqrt{2})$.
\end{enumerate}
\end{examples}

\begin{ex}{ex:sub-gen-triv}
What is the subfield of $\C$ generated by $\{7/8\}$? By $\{2 + 3i\}$? By
$\R \cup \{i\}$?
\end{ex}

We will be \emph{very} interested in chains of fields
\[
K \sub L \sub M
\]
in which $K$ and $M$ are regarded as fixed and $L$ as variable. You can think
of $K$ as the floor, $M$ as the ceiling, and $L$ as varying in between. 

\begin{defn}
\label{defn:adj}
Let $M : K$ be a field extension and $Y \sub M$. We write \demph{$K(Y)$}
for the subfield of $M$ generated by $K \cup Y$. We call it $K$ with $Y$
\demph{adjoined}, or the subfield of $M$ \demph{generated by $Y$ over $K$}.
\end{defn}

So, $K(Y)$ is the smallest subfield of $M$ containing both $K$ and $Y$.

When $Y$ is a finite set $\{\alpha_1, \ldots, \alpha_n\}$, we write
$K(\{\alpha_1, \ldots, \alpha_n\})$ as \demph{$K(\alpha_1, \ldots,
\alpha_n)$}.

\begin{examples}
\label{egs:adj}
\begin{enumerate}
\item
\label{eg:adj-sqrt2}
Take $M : K$ to be $\C : \Q$ and $Y = \{\sqrt{2}\}$. By definition, $K(Y)$
is the smallest subfield of $\C$ containing $\Q \cup \{\sqrt{2}\}$. But
\emph{every} subfield of $\C$ contains $\Q$: that's what it means for $\Q$
to be the prime subfield of $\C$. So, $K(Y)$ is the smallest subfield of
$\C$ containing $\sqrt{2}$. By
Example~\ref{egs:sub-gen}\bref{eg:sub-gen-sqrt2}, that's exactly what we've
been calling $\Q(\sqrt{2})$ all along. We refer to $\Q(\sqrt{2})$ as `$\Q$
with $\sqrt{2}$ adjoined'.

\item
\label{eg:adj-sqrt2i}
Similarly, $\Q$ with $i$ adjoined is
\[
\Q(i) = \{a + bi \such a, b \in \Q\}
\]
(Example~\ref{egs:sub-gen}\bref{eg:sub-gen-i}), and $\Q$ with $\{\sqrt{2},
i\}$ adjoined is the subfield denoted by $\Q(\sqrt{2}, i)$ in
Example~\ref{egs:ext}\bref{eg:ext-sqrt2i}. 

\item 
Let $M$ be a field and $X \sub M$. Write $K$ for the prime subfield of
$M$. Then $K(X)$ is the smallest subfield of $M$ containing $K$ and
$X$. But \emph{every} subfield of $M$ contains $K$, by definition of prime
subfield. So $K(X)$ is the smallest subfield of $M$ containing $X$; that
is, it's the subfield of $M$ generated by $X$.

We already saw this argument in~\bref{eg:adj-sqrt2}, in the case $M = \C$
and $X = \{\sqrt{2}\}$.

\item
Let $K$ be any field and let $M$ be the field $K(t)$ of rational
expressions over $K$, which is an extension of $K$. You might worry that
there's some ambiguity in the notation: $K(t)$ could \emph{either} mean the
field of rational expressions over $K$ (as defined in
Example~\ref{egs:ext}\bref{eg:ext-rat}) \emph{or} the subfield of $K(t)$
obtained by adjoining the element $t$ of $K(t)$ to $K$ (as in
Definition~\ref{defn:adj}). 

In fact, they're the same. In other words, the smallest subfield of $K(t)$
containing $K$ and $t$ is $K(t)$ itself. Or equivalently, the \emph{only}
subfield of $K(t)$ containing $K$ and $t$ is $K(t)$ itself. To see this,
let $L$ be any such subfield. For any polynomial $f(t) = \sum a_i t^i$
over $K$, we have $f(t) \in L$, since $a_i, t \in L$ and $L$ is closed under
multiplication and addition. Hence for any polynomials $f(t), g(t)$ over
$K$ with $g(t) \neq 0$, we have $f(t), g(t) \in L$, so $f(t)/g(t) \in L$
as $L$ is closed under division by nonzero elements. So $L = K(t)$. 
\end{enumerate}
\end{examples}

\begin{warning}{wg:adj-not-quad}
It is \emph{not} true in general that 
\begin{align}
\label{eq:adj-not-quad}
K(\alpha) = \{a + b\alpha \such a, b \in K\}
\qquad
\text{\color{Red!100}(false!)}
\end{align}
Examples like $\Q(\sqrt{2})$ and $\Q(i)$ do satisfy this, but that's only
because $\sqrt{2}$ and $i$ satisfy \emph{quadratic} equations. Certainly
the right-hand side is a \emph{subset} of $K(\alpha)$, but in general it's
much smaller, and isn't a subfield.

You've just seen an example: the field $K(t)$ of rational expressions 
is much bigger than the set $\{a + bt \such a, b \in K\}$ of polynomials of
degree $\leq 1$. And that set of polynomials isn't closed under
multiplication. 

Another example: let $\xi$ be the real cube root of $2$. You can show
that $\xi^2$ cannot be expressed as $a + b\xi$ for any $a, b \in \Q$
(a fact we'll come back to in
Example~\ref{egs:min-poly}\bref{eg:min-poly-cbrt2}). But $\xi \in
\Q(\xi)$, so $\xi^2 \in \Q(\xi)$, so~\eqref{eq:adj-not-quad} fails
in this case. In fact,
\[
\Q(\xi) = \{a + b\xi + c\xi^2 \such a, b, c \in \Q\}.
\]
We'll see why next week.
\end{warning}

\begin{ex}{ex:adjn-two}
Let $M: K$ be a field extension. Show that $K(Y \cup Z) = (K(Y))(Z)$
whenever $Y, Z \sub M$. (For example, $K(\alpha, \beta) =
(K(\alpha))(\beta)$ whenever $\alpha, \beta \in M$.)
\end{ex}

\begin{remark}
For a field extension $M: K$, I'll generally use small Greek letters
$\alpha, \beta, \ldots$ for elements of $M$ and small English letters $a,
b, \ldots$ for elements of $K$. 
\end{remark}

\section{Algebraic and transcendental elements}
\label{sec:alg-trans}

A complex number $\alpha$ is said to be `algebraic' if
\[
a_0 + a_1 \alpha + \cdots + a_n \alpha^n = 0
\]
for some rational numbers $a_i$, not all zero. (You may have seen this
definition with `integer' instead of `rational number'. It makes no
difference, as you can always clear the denominators.) This concept
generalizes to arbitrary field extensions:

\begin{defn}
Let $M: K$ be a field extension and $\alpha \in M$. Then $\alpha$ is
\demph{algebraic} over $K$ if there exists $f \in K[t]$ such that
$f(\alpha) = 0$ but $f \neq 0$, and \demph{transcendental} otherwise.
\end{defn}

\begin{ex}{ex:alg-base}
Show that every element of $K$ is algebraic over $K$.\\
\end{ex}

\begin{examples}
\label{egs:alg-trans}
\begin{enumerate}
\item 
Let $n \geq 1$. Then $e^{2\pi i/n} \in \C$ is algebraic over $\Q$, since
$f(t) = t^n - 1$ is a nonzero polynomial such that $f(e^{2\pi i/n}) = 0$.

\item
The numbers $\pi$ and $e$ are both transcendental over $\Q$. Both
statements are hard to prove (and we won't prove them). By
Exercise~\ref{ex:alg-base}, any complex number transcendental over $\Q$ is
irrational. Proving the irrationality of $\pi$ and $e$ is already a
challenge; proving they're transcendental is even harder.

\item
Although $\pi$ is transcendental over $\Q$, it is algebraic over $\R$, since
it's an \emph{element} of $\R$. (Again, we're using
Exercise~\ref{ex:alg-base}.) Moral: you shouldn't say an element of a field
is just `algebraic' or `transcendental'; you should say it's
`algebraic/transcendental 
\emph{over $K$}', specifying your $K$. Or
at least, you should do this when there's any danger of confusion.

\item
\label{eg:at-rat}
Take the field $K(t)$ of rational expressions over a field $K$. Then $t \in
K(t)$ is transcendental over $K$, since $f(t) = 0 \iff f = 0$. 
\end{enumerate}
\end{examples}

The set of complex numbers algebraic over $\Q$ is written as
\demph{$\ovln{\Q}$}. It's a fact that $\ovln{\Q}$ is a subfield of $\C$,
but this is extremely hard to prove by elementary means. Next week I'll
show you that with a surprisingly small amount of abstract algebra, you can
transform this from a very hard problem into an easy one
(Proposition~\ref{propn:Qbar-subfd}).

So that you appreciate the miracle later, I give you this unusual
exercise now.

\begin{ex}{ex:alg-fd-doomed}
Attempt to prove any part of the statement that $\ovln{\Q}$ is a subfield
of $\C$. For example, try to show that $\ovln{\Q}$ is closed under
addition, or multiplication, or reciprocals. I have no idea how to do any
of these using only our current tools, but it's definitely worth a few
minutes of doomed effort to get a sense of the difficulties.
\end{ex}

\begin{digression}{dig:ac}
The field $\ovln{\Q}$ is, in fact, algebraically closed, as you'll see in
\href{https://www.maths.ed.ac.uk/~tl/galois}{Workshop~3, question~8}. So
you might ask whether it's possible for \emph{every} field $K$ to build an
algebraically closed field containing $K$. It turns out that it is. Better
still, there is a unique `smallest' algebraically closed field containing
$K$, called its \demph{algebraic closure $\ovln{K}$}. For example, the
algebraic closure of $\Q$ is $\ovln{\Q}$. We won't have time to do
algebraic closure properly, but you can read about it in most Galois theory
texts.
\end{digression}

Let $M: K$ be a field extension and $\alpha \in M$. An \demph{annihilating
polynomial} of $\alpha$ is a polynomial $f \in K[t]$ such that $f(\alpha)
= 0$. So, $\alpha$ is algebraic if and only if it has some nonzero
annihilating polynomial. 

It is natural to ask not only \emph{whether} $\alpha$ is annihilated by
some nonzero polynomial, but \emph{which} polynomials annihilate it. The
situation is pleasantly simple: 

\begin{lemma}
Let $M: K$ be a field extension and $\alpha \in M$. Then there is a
polynomial $m(t) \in K[t]$ such that
\begin{align}
\label{eq:ann}
\idlgen{m} 
= 
\{\text{annihilating polynomials of }\alpha\text{ over }K\}.
\end{align}
If $\alpha$ is transcendental over $K$ then $m = 0$. If $\alpha$ is
algebraic over $K$ then there is a unique monic polynomial $m$
satisfying~\eqref{eq:ann}. 
\end{lemma}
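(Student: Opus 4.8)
The plan is to recognize the set of annihilating polynomials as the kernel of an evaluation homomorphism $K[t] \to M$, and then invoke the fact that $K[t]$ is a principal ideal domain.

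First I would apply the universal property of the polynomial ring (Proposition~\ref{propn:univ-poly}) to the inclusion $K \to M$ and the element $\alpha \in M$, obtaining a homomorphism $\ev_\alpha \from K[t] \to M$ with $\ev_\alpha(a) = a$ for all $a \in K$ and $\ev_\alpha(t) = \alpha$; concretely $\ev_\alpha(\sum_i a_i t^i) = \sum_i a_i \alpha^i$. By definition, a polynomial $f \in K[t]$ annihilates $\alpha$ exactly when $\ev_\alpha(f) = 0$, so the right-hand side of~\eqref{eq:ann} is precisely $\ker(\ev_\alpha)$, which is an ideal of $K[t]$. Since $K[t]$ is a principal ideal domain (Proposition~\ref{propn:Kt-pid}), this ideal equals $\idlgen{m}$ for some $m \in K[t]$, which gives the existence claim.

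If $\alpha$ is transcendental over $K$, then by definition its only annihilating polynomial is $0$, so $\ker(\ev_\alpha) = \idlgen{0}$ and we take $m = 0$. If $\alpha$ is algebraic, then $\ker(\ev_\alpha) \neq \{0\}$, so $m \neq 0$; dividing $m$ by its leading coefficient (a unit of $K[t]$ by Lemma~\ref{lemma:fdt}\bref{part:fdt-units}) does not change the ideal it generates, so we may take $m$ monic. For uniqueness, if $m$ and $m'$ are both monic generators of $\ker(\ev_\alpha)$, then $m \dvd m' \dvd m$, so $m' = um$ for some unit $u$ by Exercise~\ref{ex:associates}; the units of $K[t]$ are the nonzero constants, and comparing leading coefficients (both equal to $1$) forces $u = 1$, hence $m = m'$.

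The only genuinely delicate point is that ``evaluation at an element of an extension field'' really is a ring homomorphism---this is what lets us call its kernel an ideal---and that is exactly what Proposition~\ref{propn:univ-poly} supplies, so no hand computation is needed. Everything else is a direct application of the principal-ideal-domain structure of $K[t]$ and the description of its units.
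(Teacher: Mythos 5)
Your proof is correct and follows essentially the same route as the paper's: both recognize the set of annihilating polynomials as the kernel of the evaluation homomorphism supplied by the universal property of $K[t]$, invoke that $K[t]$ is a principal ideal domain to get a generator $m$, and settle uniqueness in the algebraic case via Exercise~\ref{ex:associates} and the description of units in $K[t]$. No further comment is needed.
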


\begin{proof}
By the universal property of polynomial rings
(Proposition~\ref{propn:univ-poly}), there is a unique homomorphism
\[
\theta \from K[t] \to M
\]
such that $\theta(a) = a$ for all $a \in K$ and $\theta(t) = \alpha$. (Here
we're taking the `$\phi$' of Proposition~\ref{propn:univ-poly} to be the
inclusion $K \to M$.) Then
\[
\theta \Bigl( \sum a_i t^i \Bigr) 
=
\sum a_i \alpha^i
\]
for all $\sum a_i t^i \in K[t]$, so 
\[
\ker \theta 
= 
\{\text{annihilating polynomials of }\alpha\text{ over }K\}.
\]
But $\ker\theta$ is an ideal of the principal ideal domain $K[t]$ (using
Proposition~\ref{propn:Kt-pid}), so $\ker\theta = \idlgen{m}$ for some $m
\in K[t]$.

If $\alpha$ is transcendental then $\ker\theta = \{0\}$, so $m = 0$. 

If $\alpha$ is algebraic then $m \neq 0$. Multiplying a polynomial by a
nonzero constant does not change the ideal it generates (by
Exercise~\ref{ex:associates} and
Lemma~\ref{lemma:fdt}\bref{part:fdt-units}), so we can assume that $m$ is
monic. It remains to prove that $m$ is the \emph{only} monic polynomial
such that $\idlgen{m} = \ker\theta$. If $\twid{m}$ is another monic
polynomial such that $\idlgen{\twid{m}} = \ker\theta$ then $\twid{m} = cm$
for some nonzero constant $c$ (again by Exercise~\ref{ex:associates} and
Lemma~\ref{lemma:fdt}\bref{part:fdt-units}), and both are monic, so $c = 1$
and $\twid{m} = m$.
\end{proof}

\begin{defn}
Let $M: K$ be a field extension and let $\alpha \in M$ be algebraic over
$K$. The \demph{minimal polynomial} of $\alpha$ is the unique monic
polynomial $m$ satisfying~\eqref{eq:ann}.
\end{defn}

\begin{warning}{wg:trans-min}
We do not define the minimal polynomial of a transcendental element. So for
an arbitrary field extension $M: K$, some elements of $M$ may have no
minimal polynomial.
\end{warning}

\begin{ex}{ex:min-poly-triv}
What is the minimal polynomial of an element of $K$?\\
\end{ex}

This is an important definition, so we give some equivalent conditions.

\begin{lemma}
\label{lemma:min-poly-tfae}
Let $M: K$ be a field extension, let $\alpha \in M$ be algebraic over $K$,
and let $m \in K[t]$ be a monic polynomial. The following are equivalent:
\begin{enumerate}
\item 
\label{part:mpt-min}
$m$ is the minimal polynomial of $\alpha$ over $K$;

\item
\label{part:mpt-exp}
$m(\alpha) = 0$, and $m \dvd f$ for all annihilating polynomials $f$ of
$\alpha$ over $K$;

\item 
\label{part:mpt-deg}
$m(\alpha) = 0$, and $\deg(m) \leq \deg(f)$ for all nonzero annihilating
polynomials $f$ of $\alpha$ over $K$;

\item
\label{part:mpt-irr}
$m(\alpha) = 0$ and $m$ is irreducible over $K$.
\end{enumerate}
\end{lemma}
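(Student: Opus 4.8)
The plan is to prove the cyclic chain \bref{part:mpt-min} $\Rightarrow$ \bref{part:mpt-exp} $\Rightarrow$ \bref{part:mpt-deg} $\Rightarrow$ \bref{part:mpt-irr} $\Rightarrow$ \bref{part:mpt-min}. Throughout I write $m_0$ for the minimal polynomial of $\alpha$ over $K$ supplied by the lemma immediately preceding this one, so that $\idlgen{m_0}$ is exactly the set of annihilating polynomials of $\alpha$ (this is~\eqref{eq:ann}) and $m_0$ is monic; having this fixed object to compare the given $m$ against is what drives the less trivial implications.

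The first two implications are short. If \bref{part:mpt-min} holds then $\idlgen{m}$ is the set of annihilating polynomials of $\alpha$; in particular $m(\alpha) = 0$, and every annihilating $f$ lies in $\idlgen{m}$, i.e.\ $m \dvd f$, which is \bref{part:mpt-exp}. If \bref{part:mpt-exp} holds and $f$ is a nonzero annihilating polynomial, then $f = mg$ with $g \neq 0$, so $\deg(f) = \deg(m) + \deg(g) \geq \deg(m)$ by Lemma~\ref{lemma:idt}\bref{part:idt-deg}; this is \bref{part:mpt-deg}.

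For \bref{part:mpt-deg} $\Rightarrow$ \bref{part:mpt-irr}: first $m$ is nonconstant, since the only monic constant polynomial is $1$ and $1(\alpha) = 1 \neq 0$. Suppose $m = gh$ with $g$ and $h$ both nonconstant. Then $g(\alpha)h(\alpha) = m(\alpha) = 0$ in $M$, and $M$ is a field, hence an integral domain, so $g(\alpha) = 0$ or $h(\alpha) = 0$; say $g(\alpha) = 0$. Then $g$ is a nonzero annihilating polynomial with $\deg(g) < \deg(g) + \deg(h) = \deg(m)$, contradicting \bref{part:mpt-deg}. By Lemma~\ref{lemma:fdt}\bref{part:fdt-irred}, $m$ is therefore irreducible over $K$. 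For \bref{part:mpt-irr} $\Rightarrow$ \bref{part:mpt-min}: since $m(\alpha) = 0$, $m$ is annihilating, so $m \in \idlgen{m_0}$, say $m = m_0 q$ with $q \in K[t]$. Now $m_0$ is nonconstant (again because $1(\alpha) \neq 0$), hence not a unit in $K[t]$ by Lemma~\ref{lemma:fdt}\bref{part:fdt-units}, so irreducibility of $m$ forces $q$ to be a unit, i.e.\ a nonzero constant. Comparing leading coefficients in $m = m_0 q$ and using that both $m$ and $m_0$ are monic gives $q = 1$, whence $m = m_0$, which is \bref{part:mpt-min}.

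I do not expect a real obstacle here; the only points needing a little care are that it is the integral-domain structure of $M$ that allows $m(\alpha) = 0$ to split as $g(\alpha) = 0$ or $h(\alpha) = 0$, and keeping straight the distinctions in $K[t]$ between `nonconstant', `not a unit', and `irreducible', all of which are handled via Lemma~\ref{lemma:fdt}.
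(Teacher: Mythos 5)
Your proof is correct and follows the same cyclic chain \bref{part:mpt-min}$\Rightarrow$\bref{part:mpt-exp}$\Rightarrow$\bref{part:mpt-deg}$\Rightarrow$\bref{part:mpt-irr}$\Rightarrow$\bref{part:mpt-min} as the paper, with each step handled in essentially the same way (the only cosmetic difference is that in \bref{part:mpt-deg}$\Rightarrow$\bref{part:mpt-irr} you argue by contradiction with two nonconstant factors, whereas the paper directly shows one factor has degree $0$). The attention you draw to the distinctions among nonconstant, non-unit, and irreducible in $K[t]$ is exactly the care the argument requires.
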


Part~\bref{part:mpt-deg} says the minimal polynomial is a monic
annihilating polynomial of least degree.

\begin{proof}
\bref{part:mpt-min}$\implies$\bref{part:mpt-exp} follows from the
definition of minimal polynomial.

\bref{part:mpt-exp}$\implies$\bref{part:mpt-deg} because if $m \dvd f \neq
0$ then $\deg(m) \leq \deg(f)$. 

\bref{part:mpt-deg}$\implies$\bref{part:mpt-irr}:
assume~\bref{part:mpt-deg}. First, $m$ is not constant: for if $m$ is
constant then $m = 1$ (since $m$ is monic); but $m(\alpha) = 0$, so $1 = 0$
in $K$, a contradiction. Next, suppose that $m = fg$ for some $f, g \in
K[t]$. Then $0 = m(\alpha) = f(\alpha)g(\alpha)$, so without loss of
generality, $f(\alpha) = 0$. By~\bref{part:mpt-deg}, $\deg(f) \geq
\deg(m)$, so $\deg(f) = \deg(m)$ and $\deg(g) = 0$. This
proves~\bref{part:mpt-irr}. 

\bref{part:mpt-irr}$\implies$\bref{part:mpt-min}:
assume~\bref{part:mpt-irr}, and write $m_\alpha$ for the minimal polynomial
of $\alpha$. We have $m_\alpha \dvd m$ by definition of $m_\alpha$ and
since $m(\alpha) = 0$. But $m$ is irreducible and $m_\alpha$ is not
constant, so $m$ is a nonzero constant multiple of $m_\alpha$. Since both
are monic, $m = m_\alpha$, proving~\bref{part:mpt-min}.
\end{proof}

\begin{examples}
\label{egs:min-poly}
\begin{enumerate}
\item 
\label{eg:min-poly-sqrt2}
The minimal polynomial of $\sqrt{2}$ over $\Q$ is $t^2 - 2$. There are
several ways to see this. 

One argument: $t^2 - 2$ is a monic annihilating polynomial of $\sqrt{2}$,
and no nonzero polynomial of degree $\leq 1$ over $\Q$ annihilates
$\sqrt{2}$ since it is irrational. Then use
Lemma~\ref{lemma:min-poly-tfae}\bref{part:mpt-deg}.

Another: $t^2 - 2$ is an irreducible monic annihilating polynomial. It is
irreducible because $t^2 - 2$ has degree $2$ and has no rational
roots (using Lemma~\ref{lemma:irr-triv}\bref{part:it-23}). Then use
Lemma~\ref{lemma:min-poly-tfae}\bref{part:mpt-irr}.

\item
\label{eg:min-poly-cbrt2}
The minimal polynomial of $\sqrt[3]{2}$ over $\Q$ is $t^3 - 2$. This will
follow from Lemma~\ref{lemma:min-poly-tfae}\bref{part:mpt-irr} as long as
$t^3 - 2$ is irreducible, which you can show using either
Lemma~\ref{lemma:irr-triv}\bref{part:it-23} or Eisenstein.

But unlike in~\bref{eg:min-poly-sqrt2}, it's \emph{not} so easy to show
directly that $t^3 - 2$ is the annihilating polynomial of least degree. Try
proving with your bare hands that $\sqrt[3]{2}$ satisfies no quadratic
equation over $\Q$, i.e.\ that the equation
\[
\sqrt[3]{2}^2 = a\sqrt[3]{2} + b
\]
has no solution for $a, b \in \Q$. It's not impossible, but it's a
mess. (You naturally begin by cubing both sides, but look what happens
next\ldots) So the theory really gets us something here.
\video{Two traps}

\item
\label{eg:min-poly-cyclo}
Let $p$ be a prime number, and put $\omega = e^{2\pi i/p} \in \C$. Then
$\omega$ is a root of $t^p - 1$, but that is not the minimal polynomial of
$\omega$, since it is reducible:
\[
t^p - 1 = (t - 1)m(t)
\]
where
\[
m(t) = t^{p - 1} + \cdots + t + 1.
\]
Since $\omega^p - 1 = 0$ but $\omega - 1 \neq 0$, we must have $m(\omega) =
0$. By Example~\ref{eg:cyclo-irr}, $m$ is irreducible over $\Q$. Hence $m$
is the minimal polynomial of $\omega$ over $\Q$.
\end{enumerate}
\end{examples}

\section{Simple extensions}
\label{sec:simple}

Suppose I give you a field $K$ and a nonconstant polynomial $f$ over
$K$. Can you find an extension of $K$ containing a root of $f$?

If $K = \Q$, it's easy. The
fundamental theorem of algebra guarantees that $f$ has a root $\alpha$
in $\C$, so you can take your extension to be $\C$. Or, if you're feeling
economical, you can take $\Q(\alpha)$ as your extension, that being the
\emph{smallest} subfield of $\C$ containing your root $\alpha$.

But what if $K$ is not $\Q$?

It's a bit like this. Say you want to go rock-climbing. If you live next to
Arthur's Seat, no problem: just walk out of your door and get
started. There's a ready-made solution. But if you live in the middle of
the fields in the Netherlands, you're going to have to build your own
climbing wall.

When $K = \Q$, we have a ready-made algebraically closed field $\C$
containing $K$, so it's easy to find an extension of $K$ containing a root
of $f$. For a general $K$, it's not so easy. We're going to have to
build an extension of our own. But it's not so hard either! 

Rather than taking a general polynomial $f$, we will just consider
irreducibles. That's fine, because Theorem~\ref{thm:fact-polys}
guarantees that $f$ has some irreducible factor $m$, and any root of $m$ is
automatically a root of $f$. We will also restrict to \emph{monic}
irreducibles, which makes no real difference to anything.

So, we have a field $K$ and a monic irreducible polynomial $m \in K[t]$. We
are trying to construct an extension $M$ of $K$ and an element $\alpha \in
M$ such that $m(\alpha) = 0$. By Lemma~\ref{lemma:min-poly-tfae}, $m$ will
then be the minimal polynomial of $\alpha$.

This construction can be done as follows. By Corollary~\ref{cor:qt-by-irr},
the quotient $K[t]/\idlgen{m}$ is a field. We have ring homomorphisms
\begin{align}
\label{eq:qt-ext}
K \to K[t] \toby{\pi} K[t]/\idlgen{m},
\end{align}
where the first homomorphism sends $a \in K$ to the constant polynomial $a
\in K[t]$ and $\pi$ is the canonical homomorphism. Their composite is a
homomorphism of fields $K \to K[t]/\idlgen{m}$.  So, we have a field
extension $\bigl(K[t]/\idlgen{m}\bigr): K$. And one of the elements of
$K[t]/\idlgen{m}$ is $\pi(t)$, which I will call $\alpha$. 

For a polynomial $\sum a_i t^i \in K[t]$,
\begin{align}
\label{eq:qt-alpha}
\pi\biggl( \sum_i a_i t^i\biggr)
=
\sum_i a_i \alpha^i.
\end{align}
Since $\pi$ is surjective, every element of $K[t]/\idlgen{m}$ is of the
form $\sum a_i \alpha^i$. You can think of $K[t]/\idlgen{m}$ as the ring of
polynomials over $K$, but with two polynomials seen as equal if they differ
by a multiple of $m$. (Compare how you think of $\Z/\idlgen{p}$.)

Part~\bref{part:sb-alg} of the following lemma says that $\alpha$ is a root
of $m$, and that if we're looking for an extension of $K$ containing a root
of $m$, then $K[t]/\idlgen{m}$ is an economical choice: it's no bigger
than it needs to be.

Part~\bref{part:sb-trans} answers an analogous but easier question: what if
we start with a field $K$ and want to extend it by an element that
satisfies \emph{no} nonzero polynomial over $K$?

\begin{lemma}
\label{lemma:simple-build}
Let $K$ be a field.
\begin{enumerate}
\item 
\label{part:sb-alg}
Let $m \in K[t]$ be monic and irreducible. Write $\alpha \in
K[t]/\idlgen{m}$ for the image of $t$ under the canonical homomorphism
$K[t] \to K[t]/\idlgen{m}$. Then $\alpha$ has minimal polynomial $m$ over
$K$, and $K[t]/\idlgen{m}$ is generated by $\alpha$ over $K$.

\item
\label{part:sb-trans}
The element $t$ of the field $K(t)$ of rational expressions over $K$ is
transcendental over $K$, and $K(t)$ is generated by $t$ over $K$.
\end{enumerate}
\end{lemma}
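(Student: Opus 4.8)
The plan is to treat the two parts separately; part~\bref{part:sb-alg} carries all the content, and part~\bref{part:sb-trans} is a repackaging of facts already in hand.

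For~\bref{part:sb-alg}, I would first check that it even makes sense to speak of the minimal polynomial of $\alpha$ over $K$: the composite $K \to K[t] \toby{\pi} K[t]/\idlgen{m}$ of~\eqref{eq:qt-ext} is a ring homomorphism whose codomain is a field by Corollary~\ref{cor:qt-by-irr}, hence it is injective by Lemma~\ref{lemma:fd-homm}, so $\bigl(K[t]/\idlgen{m}\bigr) : K$ is a genuine field extension, and I identify $K$ with its image. The crux is the one-line computation $m(\alpha) = 0$: by the concrete description~\eqref{eq:qt-alpha} of $\pi$, evaluating the polynomial $m$ at $\alpha$ yields $\pi(m)$, and $m \in \idlgen{m} = \ker\pi$, so $\pi(m) = 0$. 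Thus $\alpha$ is algebraic over $K$ with annihilating polynomial $m$; since $m$ is monic and irreducible over $K$ by hypothesis, Lemma~\ref{lemma:min-poly-tfae} (in the form \bref{part:mpt-irr}$\Rightarrow$\bref{part:mpt-min}) tells us at once that $m$ is the minimal polynomial of $\alpha$. For the generation claim, I would take any subfield $L$ of $K[t]/\idlgen{m}$ containing $K$ and $\alpha$; since $L$ is closed under addition and multiplication it contains every element $\sum_i a_i \alpha^i$ with $a_i \in K$, and by~\eqref{eq:qt-alpha} together with the surjectivity of $\pi$ these are \emph{all} the elements of $K[t]/\idlgen{m}$. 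So $L = K[t]/\idlgen{m}$, i.e.\ $K[t]/\idlgen{m} = K(\alpha)$.

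For~\bref{part:sb-trans}, transcendence of $t$ over $K$ is exactly Example~\ref{egs:alg-trans}\bref{eg:at-rat}: for $f \in K[t]$, evaluating $f$ at the element $t \in K(t)$ returns $f$ itself, so $f(t) = 0$ in $K(t)$ forces $f = 0$ in $K[t]$, and hence $t$ has no nonzero annihilating polynomial over $K$. That $K(t)$ is generated by $t$ over $K$ --- i.e.\ that the only subfield of $K(t)$ containing $K$ and $t$ is $K(t)$ itself --- was already proved in the last item of Examples~\ref{egs:adj}, so I would simply cite it. I do not anticipate any real obstacle here; the only things that need care are bookkeeping --- keeping the identification of $K$ with its image in $K[t]/\idlgen{m}$ straight when we write ``$m(\alpha)$'', and phrasing the generation statements in terms of the smallest subfield containing $K$ and the adjoined element, as in Definition~\ref{defn:adj}, rather than in terms of subrings --- and both are handled cleanly by leaning on the formula~\eqref{eq:qt-alpha}.
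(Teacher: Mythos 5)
Your proposal is correct and follows essentially the same route as the paper: both rest on the explicit formula~\eqref{eq:qt-alpha} for $\pi$, and both prove generation by observing that a subfield containing $K$ and $\alpha$ (resp.\ $K$ and $t$) contains all polynomials in $\alpha$ (resp.\ all ratios $f/g$), the latter point being exactly the earlier example you cite. The only cosmetic difference is that the paper identifies the whole annihilator of $\alpha$ as $\ker\pi = \idlgen{m}$ and reads off the minimal polynomial from the definition, whereas you verify $m(\alpha) = 0$ and invoke the irreducibility criterion of Lemma~\ref{lemma:min-poly-tfae}\bref{part:mpt-irr}; both are valid and of equal weight.
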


In part~\bref{part:sb-alg}, we are viewing $K[t]/\idlgen{m}$ as
an extension of $K$, as in~\eqref{eq:qt-ext}.

\begin{proof}
For~\bref{part:sb-alg}, write $M =
K[t]/\idlgen{m}$. Equation~\eqref{eq:qt-alpha} implies that the set of
annihilating polynomials of $\alpha$ over $K$ is $\ker\pi$, which is
$\idlgen{m}$. So $m$ is by definition the minimal polynomial of $\alpha$
over $K$. 

Any subfield $L$ of $M$ containing $K$ and $\alpha$ contains every
polynomial in $\alpha$ over $K$, so $L = M$. Hence $M$ is generated by
$\alpha$ over $K$.

For~\bref{part:sb-trans}, we have already seen that $t$ is transcendental
over $K$ (Example~\ref{egs:alg-trans}\bref{eg:at-rat}). 

Let $L$ be a subfield of $K(t)$ containing $K$ and $t$. Then any
polynomials $f, g \in K[t]$ are in $L$, so if $g \neq 0$ then $f/g \in
L$. Hence $L = M$, and $M$ is generated by $t$ over $K$.
\end{proof}

So far, we've seen that given a monic irreducible polynomial $m$ over a
field $K$, we can build an extension of $K$ containing a root of $m$. In
fact, there are many such extensions. For instance:

\begin{example}
\label{eg:simple-ext-lots}
If $K = \Q$ and $m(t) = t^2 - 2$ then all three of the extensions
$\Q(\sqrt{2})$, $\R$ and $\C$ contain a root of $m$.
\end{example}

But $K[t]/\idlgen{m}$ is the \emph{canonical} or \emph{minimal}
choice. In fact, $K[t]/\idlgen{m}$ has a universal property. To
express it, we need a definition.

\begin{defn}
\label{defn:homm-over}
Let $K$ be a field, and let $\iota\from K \to M$ and $\iota' \from K \to
M'$ be extensions of $K$. A homomorphism $\phi \from M \to M'$ is said to
be a \demph{homomorphism over $K$} if 
\[
\xymatrix{
M \ar[rr]^\phi &      
        &
M'      \\
        &
K \ar[lu]^\iota \ar[ru]_{\iota'}
}
\]
commutes. 
\end{defn}

For the triangle to commute means that $\phi(\iota(a)) = \iota'(a)$ for all
$a \in K$. Very often, the homomorphisms $\iota$ and $\iota'$ are thought
of as inclusions, and we write both $\iota(a)$ and $\iota'(a)$ as just
$a$. Then for $\phi$ to be a homomorphism over $K$ means that $\phi(a) = a$
for all $a \in K$.

\begin{example}
\label{eg:conj-over}
Define $\kappa \from \C \to \C$ by $\kappa(z) = \ovln{z}$. Then $\kappa$ is
a homomorphism, and it is a homomorphism over $\R$ since $\ovln{a} = a$ for
all $a \in \R$. 
\end{example}

\begin{ex}{ex:homm-minpoly}
Let $M: K$ and $L: K$ be field extensions, and let $\phi \from M \to L$ be
a homomorphism over $K$. Show that if $\alpha \in M$ has minimal polynomial
$m$ over $K$ then $\phi(\alpha) \in L$ also has minimal polynomial $m$ over
$K$. 
\end{ex}

Here's an extremely useful lemma about homomorphisms over a field.

\begin{lemma}
\label{lemma:gen-epic}
Let $M$ and $M'$ be extensions of a field $K$, and let $\phi, \psi\from M
\to M'$ be homomorphisms over $K$. Let $Y$ be a subset of $M$ such that $M
= K(Y)$. If $\phi(\alpha) = \psi(\alpha)$ for all $\alpha \in Y$ then $\phi
= \psi$.
\end{lemma}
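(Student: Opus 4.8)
The plan is to use the equalizer machinery set up earlier (Definition~\ref{defn:equalizer} and Lemma~\ref{lemma:eq-subfd}). Consider $E = \Eq(\{\phi, \psi\}) = \{x \in M \such \phi(x) = \psi(x)\}$. By Lemma~\ref{lemma:eq-subfd}, $E$ is a subfield of $M$, since $\phi$ and $\psi$ are ring homomorphisms $M \to M'$. So the one non-routine structural fact we need is already packaged for us, and the rest is a matter of showing $E$ is big enough.

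First I would check that $E$ contains $K$: because $\phi$ and $\psi$ are homomorphisms over $K$, we have $\phi(a) = a = \psi(a)$ for all $a \in K$ (thinking of the structure maps as inclusions, or more carefully $\phi(\iota(a)) = \iota'(a) = \psi(\iota(a))$ for the given $\iota, \iota'$), so every element of $K$ lies in $E$. Next, $E$ contains $Y$ by hypothesis, since $\phi(\alpha) = \psi(\alpha)$ for all $\alpha \in Y$. Hence $E$ is a subfield of $M$ containing $K \cup Y$.

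Now invoke the defining property of $K(Y)$ from Definition~\ref{defn:adj}: $K(Y)$ is the \emph{smallest} subfield of $M$ containing $K \cup Y$, so $K(Y) \sub E$. But $M = K(Y)$ by assumption, so $M \sub E \sub M$, forcing $E = M$. By definition of $E$, this says $\phi(x) = \psi(x)$ for all $x \in M$, i.e.\ $\phi = \psi$.

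There is no real obstacle here — the only point that needs a moment's care is the first step, namely spelling out exactly why ``homomorphism over $K$'' forces agreement on $K$ (one should be slightly careful if the structure maps $\iota, \iota'$ are not literally inclusions, but the commuting-triangle condition in Definition~\ref{defn:homm-over} gives it immediately). Everything else is just combining Lemma~\ref{lemma:eq-subfd} with the minimality built into the definition of $K(Y)$.
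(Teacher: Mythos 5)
Your proof is correct and is essentially the paper's own argument: both show that the equalizer $\Eq\{\phi,\psi\}$ is a subfield of $M$ (Lemma~\ref{lemma:eq-subfd}) containing $K \cup Y$, then use the fact that $K(Y)$ is the smallest such subfield to conclude it equals $M$. Nothing further is needed.
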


\begin{proof}
We have $\phi(a) = a = \psi(a)$ for all $a \in K$, since $\phi$ and $\psi$
are homomorphisms over $K$. But we are assuming that $\phi(\alpha) =
\psi(\alpha)$ for all $\alpha \in Y$, so $K \cup Y$ is a subset of the
equalizer $\Eq\{\phi, \psi\}$ (Definition~\ref{defn:equalizer}). Hence by
Lemma~\ref{lemma:eq-subfd}, $\Eq\{\phi, \psi\}$ is a subfield of $M$
containing $K \cup Y$. But $K(Y)$ is the smallest subfield of $M$
containing $K \cup Y$, so $\Eq\{\phi, \psi\} = K(Y) = M$. Hence $\phi =
\psi$. 
\end{proof}

Now we can formulate the universal property of $K[t]/\idlgen{m}$, and
similarly that of $K(t)$.

\begin{propn}[Universal properties of {$K[t]/\idlgen{m}$} and \protect$K(t)$]
\label{propn:univ-simp}
Let $K$ be a field.
\begin{enumerate}
\item 
\label{part:us-alg}
Let $m \in K[t]$ be monic and irreducible, let $L: K$ be an extension
of $K$, and let $\beta \in L$ with minimal polynomial
$m$. 
Write $\alpha$
for the image of $t$ under the canonical homomorphism $K[t] \to
K[t]/\idlgen{m}$. 
Then there is exactly one homomorphism $\phi \from K[t]/\idlgen{m} \to L$
over $K$ such that $\phi(\alpha) = \beta$.

\item
\label{part:us-trans}
Let $L: K$ be an extension of $K$, and let $\beta \in L$ be
transcendental. Then there is exactly one homomorphism $\phi \from K(t) \to
L$ over $K$ such that $\phi(t) = \beta$.
\end{enumerate}
\end{propn}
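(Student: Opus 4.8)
The plan is to handle both parts by the same two-move strategy: use the universal property of the polynomial ring (Proposition~\ref{propn:univ-poly}) to produce a homomorphism out of $K[t]$, then in part~\bref{part:us-alg} factor it through the quotient, and in part~\bref{part:us-trans} extend it to the rational expressions. Uniqueness in both parts will come for free from Lemma~\ref{lemma:gen-epic} together with Lemma~\ref{lemma:simple-build}, since $K[t]/\idlgen{m}$ is generated by $\alpha$ over $K$ and $K(t)$ is generated by $t$ over $K$.

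For~\bref{part:us-alg}: apply Proposition~\ref{propn:univ-poly} with the inclusion $K \to L$ as the scalar homomorphism and $\beta$ as the designated image of $t$, obtaining the unique homomorphism $\theta \from K[t] \to L$ with $\theta(a) = a$ for all $a \in K$ and $\theta(t) = \beta$. Concretely $\theta\bigl(\sum_i a_i t^i\bigr) = \sum_i a_i \beta^i$, so $\ker\theta$ is exactly the set of annihilating polynomials of $\beta$ over $K$, which is $\idlgen{m}$ because $m$ is the minimal polynomial of $\beta$. In particular $\ker\theta \supseteq \idlgen{m}$, so the universal property of the quotient ring gives a homomorphism $\phi \from K[t]/\idlgen{m} \to L$ with $\phi \of \pi = \theta$, where $\pi \from K[t] \to K[t]/\idlgen{m}$ is the canonical map. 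Then $\phi(a) = \phi(\pi(a)) = \theta(a) = a$ for $a \in K$, so $\phi$ is a homomorphism over $K$, and $\phi(\alpha) = \phi(\pi(t)) = \theta(t) = \beta$. For uniqueness: by Lemma~\ref{lemma:simple-build}\bref{part:sb-alg}, $K[t]/\idlgen{m}$ is generated by $\alpha$ over $K$, so any homomorphism over $K$ carrying $\alpha$ to $\beta$ agrees with $\phi$ on $K \cup \{\alpha\}$ and hence equals $\phi$ by Lemma~\ref{lemma:gen-epic} (applied with $Y = \{\alpha\}$).

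For~\bref{part:us-trans}: again use Proposition~\ref{propn:univ-poly} to get $\theta \from K[t] \to L$ with $\theta(a) = a$ and $\theta(t) = \beta$. Since $\beta$ is transcendental over $K$, $\ker\theta = \{0\}$, so $\theta$ is injective and therefore $\theta(g) \neq 0$ in the field $L$ for every nonzero $g \in K[t]$. The heart of this part is to define $\phi \from K(t) \to L$ by $\phi(f/g) = \theta(f)\theta(g)^{-1}$ and to check it is legitimate: first that $\theta(g)^{-1}$ makes sense (it does, as $\theta(g) \neq 0$ in the field $L$); then that $\phi$ respects the equivalence relation defining rational expressions, i.e.\ that $f_1 g_2 = f_2 g_1$ in $K[t]$ forces $\theta(f_1)\theta(g_1)^{-1} = \theta(f_2)\theta(g_2)^{-1}$ in $L$ (immediate after applying $\theta$ to both sides); and finally that $\phi$ preserves addition, multiplication and $1$. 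Granting that, $\phi(a) = \theta(a) = a$ for $a \in K$ and $\phi(t) = \theta(t) = \beta$, so $\phi$ is a homomorphism over $K$ with the required value. Uniqueness is as before: by Lemma~\ref{lemma:simple-build}\bref{part:sb-trans}, $K(t)$ is generated by $t$ over $K$, so Lemma~\ref{lemma:gen-epic} (with $Y = \{t\}$) forces any homomorphism over $K$ sending $t$ to $\beta$ to equal $\phi$.

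I expect the only genuine labour to be the well-definedness and homomorphism verifications for $\phi$ in part~\bref{part:us-trans}; everything else is assembly of tools already in hand. One small point worth noting is that in part~\bref{part:us-alg} we never need the equality $\ker\theta = \idlgen{m}$, only the inclusion $\ker\theta \supseteq \idlgen{m}$, so it suffices to know that $m$ annihilates $\beta$ and is its minimal polynomial.
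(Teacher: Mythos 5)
Your proposal is correct and follows essentially the same route as the paper: existence in~(i) via the universal property of $K[t]$ followed by the universal property of the quotient, existence in~(ii) via $f/g \mapsto f(\beta)/g(\beta)$ (your $\theta(f)\theta(g)^{-1}$ is the same map), and uniqueness in both parts from Lemma~\ref{lemma:simple-build} plus Lemma~\ref{lemma:gen-epic}. Your closing observation that only $\idlgen{m} \sub \ker\theta$ is needed, not equality, matches the paper exactly.
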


Diagram for~\bref{part:us-alg}:
\[
\xymatrix@R=2ex{
        &
        &
\beta   \\
\alpha \ar@{|.>}[rru]
        &       
        &
L       \\
K[t]/\idlgen{m} \ar@{.>}[rru]^\phi      &
        &
        \\
        &
        &
        \\
        &
K \ar[luu] \ar[ruuu]
}
\]
I've drawn $L$ higher than $K[t]/\idlgen{m}$ to convey the idea that $L$
may be bigger. Before I give the proof, here's an example.

\begin{example}
\label{eg:univ-simp-sqrt2}
Let $K = \Q$ and $m(t) = t^2 - 2$. Let $L = \C$ and let $\beta = -\sqrt{2}
\in \C$. Proposition~\ref{propn:univ-simp}\bref{part:us-alg} tells us that
there is a unique field homomorphism 
\[
\phi \from \Q[t]/\idlgen{t^2 - 2} \to \C
\]
over $\Q$ mapping the equivalence class of $t$ to $-\sqrt{2}$. 
\end{example}

\begin{pfof}{Proposition~\ref{propn:univ-simp}}
For~\bref{part:us-alg}, first we show there is \emph{at least} one
homomorphism $\phi\from K[t]/\idlgen{m} \to L$ over $K$ such that
$\phi(\alpha) = \beta$. By the universal property of polynomial rings
(Proposition~\ref{propn:univ-poly}), there is exactly one homomorphism
$\theta \from K[t] \to L$ such that $\theta(a) = a$ for all $a \in K$ and
$\theta(t) = \beta$. Then $\theta(m(t)) = m(\beta) = 0$, so $\idlgen{m}
\sub \ker\theta$. Hence by the universal property of quotients
(p.~\pageref{p:univ-qt}), there is exactly one homomorphism $\phi \from
K[t]/\idlgen{m} \to L$ such that
\[
\xymatrix{
K[t] \ar[d]_\pi \ar[dr]^\theta  &       \\
K[t]/\idlgen{m} \ar@{.>}[r]_-\phi   &
L
}
\]
commutes. Then $\phi$ is a homomorphism over $K$, since for all $a \in K$
we have 
\[
\phi(a) = \phi(\pi(a)) = \theta(a) = a.
\] 
Moreover, 
\[
\phi(\alpha) = \phi(\pi(t)) = \theta(t) = \beta, 
\]
so $\phi(\alpha) = \beta$. 

Now we show there is \emph{at most} one homomorphism $K[t]/\idlgen{m} \to
L$ over $K$ such that $\alpha \mapsto \beta$. Let $\phi$ and $\phi'$ be two
such. Then $\phi(\alpha) = \phi'(\alpha)$, and $\alpha$ generates
$K[t]/\idlgen{m}$ over $K$
(Lemma~\ref{lemma:simple-build}\bref{part:sb-alg}), so $\phi = \phi'$ by
Lemma~\ref{lemma:gen-epic}.  

For~\bref{part:us-trans}, first we show there is \emph{at least} one
homomorphism $\phi\from K(t) \to L$ over $K$ such that $\phi(t) =
\beta$. Every element of $K(t)$ can be represented as $f/g$ where $f, g \in
K[t]$ with $g \neq 0$. Since $\beta$ is transcendental over $K$, we have
$g(\beta) \neq 0$, and so $f(\beta)/g(\beta)$ is a well-defined element of
$L$. One can check that this gives a well-defined homomorphism
\[
\begin{array}{cccc}
\phi\from       &K(t)                   &
\to             &L      \\[1ex]
                &\displaystyle\frac{f(t)}{g(t)}      &
\mapsto         &\displaystyle\frac{f(\beta)}{g(\beta)}.
\end{array}
\]
Evidently $\phi$ is a homomorphism over $K$ (that is, $\phi(a) = a$ for all
$a \in K$), and evidently $\phi(t) = \beta$.

The proof that there is \emph{at most} one homomorphism $K(t) \to L$ over
$K$ such that $t \mapsto \beta$ is similar to the uniqueness proof in
part~\bref{part:us-alg}.
\end{pfof}

\begin{ex}{ex:unique-trans}
Fill in the details of the last paragraph of that proof.\\
\end{ex}

We know now that for a monic irreducible polynomial $m$ over $K$, the
extension $K[t]/\idlgen{m}$ contains a root of $m$ and is generated by that
root. As we're about to see, Proposition~\ref{propn:univ-simp} implies that
$K[t]/\idlgen{m}$ is the \emph{only} extension of $K$ with this property.
But the word `only' has to be interpreted in an up-to-isomorphism sense
(as you're used to from statements like `there is only one group of order
$5$'). The appropriate notion of isomorphism is as follows.

Let $M$ and $M'$ be extensions of a field $K$. A homomorphism $\phi \from M
\to M'$ is an \demph{isomorphism over $K$} if it is a homomorphism over $K$
and an isomorphism of fields. (You can check that $\phi^{-1}$ is then also
a homomorphism over $K$.) If such a $\phi$ exists, we say that $M$ and $M'$
are \demph{isomorphic over $K$}.

\begin{warning}{wg:iso-over}
Let $M$ and $M'$ be extensions of a field $K$. It can happen that $M$ and
$M'$ are isomorphic, but not isomorphic over $K$. In other words, just
because it's possible to find an isomorphism $\phi\from M \to M'$, it
doesn't mean you can find one making the triangle in
Definition~\ref{defn:homm-over}
commute. \href{https://www.maths.ed.ac.uk/~tl/galois}{Workshop~3,
question~16} leads you through a counterexample.
\end{warning}

\begin{cor}
\label{cor:simp-iso}
Let $K$ be a field.
\begin{enumerate}
\item 
\label{part:si-alg}
Let $m \in K[t]$ be monic and irreducible, let $L: K$ be an extension of
$K$, and let $\beta \in L$ with minimal polynomial $m$ and with $L =
K(\beta)$. Write $\alpha$
for the image of $t$ under the canonical homomorphism $K[t] \to
K[t]/\idlgen{m}$. 
Then there is exactly one isomorphism $\phi \from
K[t]/\idlgen{m} \to L$ over $K$ such that $\phi(\alpha) = \beta$.

\item
\label{part:si-trans}
Let $L: K$ be an extension of $K$, and let $\beta \in L$ be transcendental
with $L = K(\beta)$. Then there is exactly one isomorphism $\phi \from K(t)
\to L$ over $K$ such that $\phi(t) = \beta$.
\end{enumerate}
\end{cor}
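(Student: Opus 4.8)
The plan is to deduce both parts from the universal properties established in Proposition~\ref{propn:univ-simp} by observing that the extra hypothesis $L = K(\beta)$ is exactly what upgrades the unique homomorphism $\phi$ provided there to an isomorphism. For part~\bref{part:si-alg}, first I would invoke Proposition~\ref{propn:univ-simp}\bref{part:us-alg} to get the unique homomorphism $\phi\from K[t]/\idlgen{m} \to L$ over $K$ with $\phi(\alpha) = \beta$; uniqueness of $\phi$ as a homomorphism over $K$ with this property is immediate from that proposition, so all that remains is to show this particular $\phi$ is bijective. Since $\phi$ is a homomorphism between fields, Lemma~\ref{lemma:fd-homm} tells us it is automatically injective, so the only real content is surjectivity.

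For surjectivity, the key step is to identify the image of $\phi$. By Lemma~\ref{lemma:subfd-im-inv}\bref{part:sii-im}, $\im\phi$ is a subfield of $L$. It contains $K$ (because $\phi$ is a homomorphism over $K$) and it contains $\beta = \phi(\alpha)$. On the other hand, $L = K(\beta)$ is by Definition~\ref{defn:adj} the smallest subfield of $L$ containing $K$ and $\beta$, so $L \sub \im\phi$, giving $\im\phi = L$. Hence $\phi$ is an isomorphism over $K$. Part~\bref{part:si-trans} runs identically: apply Proposition~\ref{propn:univ-simp}\bref{part:us-trans} to get the unique homomorphism $\phi\from K(t) \to L$ over $K$ with $\phi(t) = \beta$, note it is injective by Lemma~\ref{lemma:fd-homm}, and observe that $\im\phi$ is a subfield of $L$ containing $K$ and $\beta$, hence all of $K(\beta) = L$.

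The only remaining point is to note that $\phi^{-1}$ is then automatically a homomorphism over $K$, so that $\phi$ is genuinely an \emph{isomorphism over $K$} in the sense defined just before the corollary; this was flagged as a routine check in that definition and follows because $\phi^{-1}(a) = a$ whenever $\phi(a) = a$. I do not expect any serious obstacle here: the whole argument is a short packaging of earlier results, and the one thing worth stating carefully is the image computation, since that is where the hypothesis $L = K(\beta)$ does its work. (Contrast this with Proposition~\ref{propn:univ-simp}, where $L$ was allowed to be larger than $K(\beta)$ and the map need not be onto.)
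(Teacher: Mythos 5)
Your proposal is correct and follows essentially the same route as the paper: apply Proposition~\ref{propn:univ-simp} to get the unique homomorphism over $K$, note injectivity is automatic for field homomorphisms, and use Lemma~\ref{lemma:subfd-im-inv}\bref{part:sii-im} plus $L = K(\beta)$ to show $\im\phi = L$. No gaps.
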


(Spot the differences between this corollary and
Proposition~\ref{propn:univ-simp}\ldots)

\begin{proof}
For~\bref{part:si-alg}, Proposition~\ref{propn:univ-simp}\bref{part:us-alg}
implies that there is a unique homomorphism $\phi \from K[t]/\idlgen{m} \to
L$ over $K$ such that $\phi(\alpha) = \beta$. So we only have to show that
$\phi$ is an isomorphism. Since homomorphisms of fields are injective, we
need only show that $\phi$ is surjective. Now by
Lemma~\ref{lemma:subfd-im-inv}\bref{part:sii-im}, $\im\phi$ is a subfield
of $L$, and it contains both $K$ (since $\phi$ is a homomorphism over $K$)
and $\beta$ (since $\phi(\alpha) = \beta$). But $L = K(\beta)$, so $\im\phi
= L$.

The proof of~\bref{part:si-trans} is similar.
\end{proof}

\begin{examples}
\begin{enumerate}
\item
Let $m$ be a monic irreducible polynomial over $\Q$. Choose a complex root
$\beta$ of $m$. Then the subfield $\Q(\beta)$ of $\C$ is an extension of
$\Q$ generated by $\beta$. So by Corollary~\ref{cor:simp-iso}\bref{part:si-alg},
$\Q[t]/\idlgen{m} \iso \Q(\beta)$.

\item
Let $\beta$ be a transcendental complex number. Then by
Corollary~\ref{cor:simp-iso}\bref{part:si-trans}, the field $\Q(t)$ of
rational expressions is isomorphic to $\Q(\beta) \sub \C$.
\end{enumerate}
\end{examples}

Field extensions generated by a single element have a special name.

\begin{defn}
A field extension $M: K$ is \demph{simple} if there exists $\alpha \in M$
such that $M = K(\alpha)$.
\end{defn}

\begin{examples}
\label{egs:simple}
\begin{enumerate}
\item
\label{eg:simple-Q23}
Surprisingly many extensions are simple. For instance, $\Q(\sqrt{2},
\sqrt{3}) : \Q$ is a simple extension (despite appearances), because in
fact $\Q(\sqrt{2}, \sqrt{3}) = \Q(\sqrt{2} + \sqrt{3})$.

\item
$K(t): K$ is simple, where $K(t)$ is the field of rational expressions
over $K$.
\end{enumerate}
\end{examples}

\begin{ex}{ex:sqrt2plus3}
Prove that $\Q(\sqrt{2}, \sqrt{3}) = \Q(\sqrt{2} + \sqrt{3})$. Hint: begin
by finding $(\sqrt{2} + \sqrt{3})^3$.
\end{ex}

We've now shown that simple extensions can be
classified completely:
\video{How to understand simple algebraic extensions}%

\begin{bigthm}[Classification of simple extensions]
\label{thm:class-simp}
Let $K$ be a field.
\begin{enumerate}
\item 
\label{part:cs-alg}
Let $m \in K[t]$ be a monic irreducible polynomial. Then there exist an
extension $M: K$ and an algebraic element $\alpha \in M$ such that $M =
K(\alpha)$ and $\alpha$ has minimal polynomial $m$ over $K$.

Moreover, if $(M, \alpha)$ and $(M', \alpha')$ are two such pairs, there is
exactly one isomorphism $\phi\from M \to M'$ over $K$ such that
$\phi(\alpha) = \alpha'$.

\item
\label{part:cs-trans}
There exist an extension $M: K$ and a transcendental element $\alpha \in M$
such that $M = K(\alpha)$.

Moreover, if $(M, \alpha)$ and $(M', \alpha')$ are two such pairs, there is
exactly one isomorphism $\phi\from M \to M'$ over $K$ such that $\phi(\alpha)
= \alpha'$.
\end{enumerate}
\end{bigthm}

\begin{proof}
For~\bref{part:cs-alg}, we can take $M = K[t]/\idlgen{m}$ and $\alpha$ to
be the image of $t$ under the canonical homomorphism $K[t] \to
M$. Lemma~\ref{lemma:simple-build}\bref{part:sb-alg} implies that $\alpha$
has minimal polynomial $m$ over $K$ and that $M = K(\alpha)$, and
Corollary~\ref{cor:simp-iso}\bref{part:si-alg} gives `Moreover'.

Part~\bref{part:cs-trans} follows from
Lemma~\ref{lemma:simple-build}\bref{part:sb-trans} and
Corollary~\ref{cor:simp-iso}\bref{part:si-trans} in the same way.
\end{proof}

Conclusion: given any field $K$ (not necessarily $\Q$!) and any
monic irreducible $m(t) \in K[t]$, we can say the words `\demph{adjoin to
$K$ a root $\alpha$ of $m$}', and this unambiguously defines an extension
$K(\alpha): K$. (At least, unambiguously up to isomorphism over $K$---but
who could want more?) Similarly, we can unambiguously adjoin to $K$ a
transcendental element.

\begin{examples}
\label{egs:adjoin}
\begin{enumerate}
\item
Let $K$ be any field not containing a square root of $2$. Then $t^2 - 2$ is
irreducible over $K$. So we can adjoin to $K$ a root of $t^2 - 2$, giving
an extension $K(\sqrt{2}): K$.

We have already seen this example many times when $K = \Q$, in which case
$K(\sqrt{2})$ can be seen as a subfield of $\C$. But the construction works
for \emph{any} $K$. For instance, $2$ has no square root in $\F_3$, so there
is an extension $\F_3(\sqrt{2})$ of $\F_3$. It can be constructed as
$\F_3[t]/\idlgen{t^2 - 2}$.

\item
\label{eg:adjoin-F2}
The polynomial $m(t) = 1 + t + t^2$ is irreducible over $\F_2$, so we may
adjoin to $\F_2$ a root $\alpha$ of $m$. Then $\F_2(\alpha) =
\F_2[t]/\idlgen{1 + t + t^2}$.
\end{enumerate}
\end{examples}

\begin{ex}{ex:F3_2}
How many elements does the field $\F_3(\sqrt{2})$ have? What about
$\F_2(\alpha)$, where $\alpha$ is a root of $1 + t + t^2$? 
\end{ex}

\begin{warning}{wg:sub-abs}
Take $K = \Q$ and $m(t) = t^3 - 2$, which is irreducible. Write $\alpha_1,
\alpha_2, \alpha_3$ for the roots of $m$ in $\C$. Then $\Q(\alpha_1)$,
$\Q(\alpha_2)$ and $\Q(\alpha_3)$ are all different \emph{as subsets of
\,$\C$}. For example, one of the $\alpha_i$ is the real cube root of $2$
(say $\alpha_1$), which implies that $\Q(\alpha_1) \sub \R$, whereas the
other two are not real, so $\Q(\alpha_i) \not\subseteq \R$ for $i \neq
1$. However, $\Q(\alpha_1): \Q$, \, $\Q(\alpha_2): \Q$ and $\Q(\alpha_3): \Q$
are all isomorphic \emph{as abstract field extensions} of $\Q$. This
follows from Theorem~\ref{thm:class-simp}, since all the $\alpha_i$ have
the same minimal polynomial, $m$.

You're already very familiar with this kind of situation in other branches
of algebra. For instance, in linear algebra, take three vectors $\vc{v}_1,
\vc{v}_2, \vc{v}_3$ in $\R^2$, none a scalar multiple of any other. Then
$\spn(\vc{v}_1)$, $\spn(\vc{v}_2)$ and $\spn(\vc{v}_3)$ are all different
\emph{as subsets of \,$\R^2$}, but they are all isomorphic \emph{as
abstract vector spaces} (since they're all $1$-dimensional). A similar
example could be given with a group containing several subgroups that are
all isomorphic.
\end{warning}

You've seen that Galois theory involves aspects of group theory and ring
theory. In the next chapter, you'll see how linear algebra enters the
picture too. 

\chapter{Degree}
\label{ch:deg}

We've seen that if you adjoin to $\Q$ a \emph{square} root of $2$,
then each element of the resulting field can be specified using \emph{two}
rational numbers, $a$ and $b$:
\[
\Q(\sqrt{2}) = \bigl\{ a + b\sqrt{2} \such a, b \in \Q\bigr\}.
\]
I also mentioned that if you adjoin to $\Q$ a \emph{cube} root of $2$, then
it takes \emph{three} rational numbers to specify each element of the
resulting field:
\video{Introduction to Week~5}
\[
\Q(\sqrt[3]{2}) = 
\Bigl\{ a + b\sqrt[3]{2} + c\sqrt[3]{2}^2 \such a, b, c \in \Q \Bigr\}
\]
(Warning~\ref{wg:adj-not-quad}). 
This might lead us to suspect that
$\Q(\sqrt[3]{2}): \Q$ is in some sense a `bigger' extension than
$\Q(\sqrt{2}): \Q$. 

The first thing we'll do in this chapter is to make this intuition
rigorous. We'll define the `degree' of an extension and see that
$\Q(\sqrt{2}): \Q$ and $\Q(\sqrt[3]{2}): \Q$ have degrees $2$ and $3$,
respectively. 

The concept of degree is incredibly useful, and not only in Galois
theory. In fact, I'll show you how it can be used to solve three problems
that remained unsolved for literally millennia, since the time of
the ancient Greeks.

\section{The degree of an extension}
\label{sec:degree}

Let $M: K$ be a field extension. Then $M$ is a vector space over $K$ in a
natural way. Addition and subtraction in the vector space $M$ are the same
as in the field $M$. Scalar multiplication in the vector space is just
multiplication of elements of $M$ by elements of $K$, which makes sense
because $K$ is embedded as a subfield of $M$.

This little observation is amazingly useful. It is an excellent
illustration of a powerful mathematical technique: forgetting. When we view
$M$ as a vector space over $K$ rather than a field extension of $K$, we are
forgetting how to multiply together elements of $M$ that aren't in $K$.

\begin{defn}
\label{defn:deg-ext}
The \demph{degree $[M: K]$} of a field extension $M: K$ is the
dimension of $M$ as a vector space over $K$.
\end{defn}

If $M$ is a \emph{finite-dimensional} vector space over $K$, it's clear
what this means. If $M$ is infinite-dimensional over $K$, we write $[M: K]
= \infty$, where \demph{$\infty$} is a formal symbol which we give the
properties
\[
n < \infty, 
\qquad
n \cdot \infty = \infty \quad (n \geq 1),
\qquad
\infty \cdot \infty = \infty
\]
for integers $n$. An extension $M: K$ is
\demph{finite}\label{p:fin-deg} if $[M: K] < \infty$.

\begin{digression}{dig:inf-dim}
You know that whenever $V$ is a finite-dimensional vector space, (i)~there
exists a basis of $V$, and (ii)~there is a bijection between any two
bases. This makes it possible to define the dimension of a vector space as
the number of elements in a basis. In fact, both~(i) and~(ii) are true for
\emph{every} vector space, not just the finite-dimensional ones. So we can
define the dimension of an arbitrary vector space as the `number' of
elements in a basis, where now `number' means cardinal, i.e.\ isomorphism
class of sets. 

We could interpret Definition~\ref{defn:deg-ext} using this general
definition of dimension. For instance, suppose we had one field extension
$M: K$ such that $M$ had a countably infinite basis over $K$, and another,
$M': K$, such that $M'$ had an uncountably infinite basis over
$K$. Then $[M: K]$ and $[M': K]$ would be different. 

However, we'll lump all the infinite-dimensional extensions together
and say that their degrees are all $\infty$. We'll mostly be dealing with
finite extensions anyway, and won't need to distinguish between sizes of
$\infty$. It's a bit like the difference between a house that costs a
million pounds and a house that costs ten million: although the
difference is vast, most of us would lump them together in a
single category called `unaffordable'.
\end{digression}

\begin{examples}
\label{egs:deg-ext}
\begin{enumerate}
\item 
\label{eg:de-triv}
Every field $M$ contains at least one nonzero element, namely, $1$. So $[M:
K] \geq 1$ for every field extension $M: K$. 

If $M = K$ then $\{1\}$ is a
basis, so $[M: K] = 1$. On the other hand, if $[M: K] = 1$ then the
one-element linearly independent set $\{1\}$ must be a basis, which implies
that every element of $M$ is equal to $a \cdot 1 = a$ for some $a \in K$,
and so $M = K$. Hence
\[
[M: K] = 1 \iff M = K.
\]

\item
Every element of $\C$ is equal to $x + yi$ for a unique pair $(x, y)$ of
elements of $\R$. That is, $\{1, i\}$ is a basis of $\C$ over $\R$. Hence
$[\C: \R] = 2$.

\item
\label{eg:de-rat}
Let $K$ be a field and $K(t)$ the field of rational expressions over
$K$. Then $1, t, t^2, \ldots$ are linearly independent over $K$, so $[K(t):
K] = \infty$.
\end{enumerate}
\end{examples}

\begin{warning}{wg:deg-not-0}
The degree $[K: K]$ of $K$ over itself is $1$, not $0$.  Degrees of
extensions are never $0$.  See
Example~\ref{egs:deg-ext}\bref{eg:de-triv}.\\
\end{warning}

\begin{bigthm}
\label{thm:simple-basis}
Let $K(\alpha): K$ be a simple extension.
\begin{enumerate}
\item
\label{part:sba-alg}
Suppose that $\alpha$ is algebraic over
$K$. Write $m \in K[t]$ for the minimal polynomial of $\alpha$ and $n =
\deg(m)$. Then
\[
1, \alpha, \ldots, \alpha^{n - 1}
\]
is a basis of $K(\alpha)$ over $K$. In particular, $[K(\alpha): K] =
\deg(m)$.

\item
\label{part:sba-trans}
Suppose that $\alpha$ is transcendental over $K$. Then $1, \alpha,
\alpha^2, \ldots$ are linearly independent over $K$. In particular,
$[K(\alpha): K] = \infty$.
\end{enumerate}
\end{bigthm}

\begin{proof}
For~\bref{part:sba-alg}, to show that $1, \alpha, \ldots, \alpha^{n - 1}$
is a basis of $K(\alpha)$ over $K$, we will show that every element of
$K(\alpha)$ can be expressed as a $K$-linear combination of $1, \alpha,
\ldots, \alpha^{n - 1}$ in a unique way.

By Lemma~\ref{lemma:simple-build}\bref{part:sb-alg} and
Theorem~\ref{thm:class-simp}\bref{part:cs-alg}, we might as well take
$K(\alpha) = K[t]/\idlgen{m}$ and $\alpha = \pi(t)$, where $\pi\from K[t]
\to K[t]/\idlgen{m}$ is the canonical homomorphism.

Since $\pi$ is surjective, every element of $K(\alpha)$ is equal to
$\pi(f)$ for some $f \in K[t]$. By Proposition~\ref{propn:div-alg}, there
are unique $q, r \in K[t]$ such that $f = qm + r$ and $\deg(r) < n$.  In
particular, there is a unique polynomial $r \in K[t]$ such that $f - r \in
\idlgen{m}$ and $\deg(r) < n$. Equivalently, there are unique $a_0, \ldots,
a_{n - 1} \in K$ such that
\[
f(t) - \bigl(a_0 + a_1 t + \cdots + a_{n - 1}t^{n - 1}\bigr) 
\in \idlgen{m}.
\]
Equivalently, there are unique $a_0, \ldots, a_{n - 1} \in K$ such that
\[
\pi(f) = \pi\bigl(a_0 + a_1 t + \cdots + a_{n - 1} t^{n - 1}\bigr).
\]
Equivalently (since $\pi(t) = \alpha$), there are unique $a_0, \ldots, a_{n
- 1} \in K$ such that 
\[
\pi(f) = a_0 + a_1 \alpha + \cdots a_{n - 1} \alpha^{n - 1},
\]
as required.

For~\bref{part:sba-trans}, 
Theorem~\ref{thm:class-simp}\bref{part:cs-trans} implies that
$K(\alpha)$ is isomorphic over $K$ to the
field $K(t)$ of rational expressions. The result now follows from 
Example~\ref{egs:deg-ext}\bref{eg:de-rat}.
\end{proof}

\begin{examples}
\label{egs:alg-basis}
\begin{enumerate}
\item
Let $\alpha \in \C$ be an algebraic number over $\Q$ whose minimal
polynomial is quadratic. Then by
Theorem~\ref{thm:simple-basis}\bref{part:sba-alg},
\[
\Q(\alpha) = \{ a + b\alpha \such a, b \in \Q\}.
\]
We've already seen this in many examples, such as $\alpha = \sqrt{2}$ and
$\alpha = i$.

\item
\label{eg:ab-cyclo}
Let $p$ be a prime. We saw in
Example~\ref{egs:min-poly}\bref{eg:min-poly-cyclo} that $e^{2\pi i/p}$ has
minimal polynomial $1 + t + \cdots + t^{p - 1}$. This has degree $p - 1$,
so $[\Q(e^{2\pi i/p}): \Q] = p - 1$.
\end{enumerate}
\end{examples}

\begin{warning}{wg:deg-root-unity}
For a prime $p$, the degree of $e^{2\pi i/p}$ over $\Q$ is $p - 1$, not
$p$! \\
\end{warning}

\begin{example}
\label{eg:ff-4}
Apart from finite fields of the form $\F_p$, the simplest finite field is
$\F_2(\alpha)$, where $\alpha$ is a root of the irreducible polynomial $1 +
t + t^2$ over $\F_2$ (Example~\ref{egs:adjoin}\bref{eg:adjoin-F2}). By
Theorem~\ref{thm:simple-basis}\bref{part:sba-alg},
\[
\F_2(\alpha)
=
\{ a + b\alpha \such a, b \in \F_2\}
=
\{ 0, 1, \alpha, 1 + \alpha\}.
\]
Since $1 + \alpha + \alpha^2 = 0$ and $\F_2(\alpha)$ has characteristic $2$, 
\[
\alpha^2 = 1 + \alpha,
\qquad
(1 + \alpha)^2 = \alpha.
\]
So the Frobenius automorphism of $\F_2(\alpha)$ interchanges $\alpha$ and
$1 + \alpha$. Like all automorphisms, it fixes $0$ and $1$.
\end{example}

\begin{ex}{ex:F4-mult}
Write out the addition and multiplication tables of $\F_2(\alpha)$. 
\end{ex}

Theorem~\ref{thm:simple-basis}\bref{part:sba-alg} implies that when $\alpha
\in M$ is algebraic over $K$, with minimal polynomial of degree $n$, the
subset $\{a_0 + a_1 \alpha + \cdots + a_{n - 1} \alpha^{n - 1} \such a_i
\in K\}$ is a subfield of $M$. This isn't particularly obvious: for
instance, why is it closed under taking reciprocals? But it's true.

For a field extension $M: K$ and $\alpha \in M$, the \demph{degree} of
$\alpha$ over $K$ is $[K(\alpha): K]$. We write it as
\demph{$\deg_K(\alpha)$}. 
Theorem~\ref{thm:simple-basis} immediately implies:

\begin{cor}
\label{cor:fin-deg-alg}
Let $M: K$ be a field extension and $\alpha \in M$. Then
\[
\deg_K(\alpha) < \infty
\iff
\alpha \text{ is algebraic over } K.
\]
\qed
\end{cor}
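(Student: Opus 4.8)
The plan is to read this off directly from Theorem~\ref{thm:simple-basis}, applied to the simple extension $K(\alpha): K$. Recall first that by definition $\deg_K(\alpha) = [K(\alpha): K]$, the dimension of $K(\alpha)$ as a vector space over $K$, with the convention that an infinite-dimensional extension has degree $\infty$. So the statement to prove is that $[K(\alpha):K]$ is finite exactly when $\alpha$ is algebraic over $K$.

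For the implication $\Leftarrow$, I would suppose $\alpha$ is algebraic over $K$ and let $m \in K[t]$ be its minimal polynomial, with $n = \deg(m)$. Then Theorem~\ref{thm:simple-basis}\bref{part:sba-alg} tells us that $1, \alpha, \ldots, \alpha^{n-1}$ is a basis of $K(\alpha)$ over $K$, so $\deg_K(\alpha) = [K(\alpha):K] = n < \infty$. For the implication $\Rightarrow$, I would argue by contrapositive: if $\alpha$ is not algebraic, i.e.\ transcendental over $K$, then Theorem~\ref{thm:simple-basis}\bref{part:sba-trans} says that $1, \alpha, \alpha^2, \ldots$ are linearly independent over $K$, so $K(\alpha)$ contains an infinite linearly independent family and hence is infinite-dimensional over $K$; thus $\deg_K(\alpha) = \infty$. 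Therefore $\deg_K(\alpha) < \infty$ forces $\alpha$ to be algebraic.

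There is essentially no obstacle here — both directions are immediate consequences of the two parts of Theorem~\ref{thm:simple-basis}, and the whole content of the corollary is just the observation that a vector space with a finite basis is finite-dimensional while one containing an infinite linearly independent set is not. The only point requiring the slightest care is unwinding the definition $\deg_K(\alpha) = [K(\alpha):K]$ and keeping the $\infty$ convention in mind; no genuine new argument is needed.
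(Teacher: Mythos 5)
Your proof is correct and is exactly the paper's argument: the paper introduces this corollary with the words ``Theorem~\ref{thm:simple-basis} immediately implies,'' and gives no further proof. You have simply unpacked the two parts of that theorem (finite basis in the algebraic case, infinite linearly independent family in the transcendental case), which is precisely what the paper leaves implicit.
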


If $\alpha$ is algebraic over $K$ then by
Theorem~\ref{thm:simple-basis}\bref{part:sba-alg}, the degree of $\alpha$
over $K$ is the degree of the minimal polynomial of $\alpha$ over $K$.

\begin{example}
\label{eg:sb-cbrt2}
Let $\xi$ be the real cube root of $2$. By
Example~\ref{egs:min-poly}\bref{eg:min-poly-cbrt2}, the minimal polynomial
of $\xi$ over $\Q$ is $t^3 - 2$, so $\deg_\Q(\xi) = 3$. It follows
that $\Q(\xi) \neq \{a + b\xi \such a, b \in \Q\}$, since otherwise 
the two-element set $\{1, \xi\}$ would span the
three-dimensional vector space $\Q(\xi)$. So we have another
proof that $2^{2/3}$ cannot be written as a $\Q$-linear
combination of $1$ and $2^{1/3}$. As observed in
Example~\ref{egs:min-poly}\bref{eg:min-poly-cbrt2}, this is messy to prove
directly. 
\end{example}

Theorem~\ref{thm:simple-basis} is powerful. Here are two more
of its corollaries.

\begin{cor}
\label{cor:deg-ineqs}
Let $M: L: K$ be field extensions and $\beta \in M$. Then $[L(\beta): L]
\leq [K(\beta): K]$.
\end{cor}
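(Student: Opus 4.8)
The plan is to split into two cases according to whether $\beta$ is algebraic or transcendental over $K$, and in the algebraic case to compare minimal polynomials.

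First I would dispose of the easy case: if $\beta$ is transcendental over $K$, then by Corollary~\ref{cor:fin-deg-alg} (equivalently Theorem~\ref{thm:simple-basis}\bref{part:sba-trans}) we have $[K(\beta): K] = \infty$, so the right-hand side is $\infty$ and the inequality holds automatically, whatever value $[L(\beta): L]$ takes.

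Now suppose $\beta$ is algebraic over $K$, and let $m \in K[t]$ be its minimal polynomial over $K$, with $n = \deg(m)$; by Theorem~\ref{thm:simple-basis}\bref{part:sba-alg}, $[K(\beta): K] = n$. Since $K \sub L$, we may view $m$ as an element of $L[t]$, and it is then a nonzero annihilating polynomial of $\beta$ over $L$. Hence $\beta$ is algebraic over $L$; let $m'$ denote its minimal polynomial over $L$. By Lemma~\ref{lemma:min-poly-tfae}\bref{part:mpt-exp}, $m'$ divides every annihilating polynomial of $\beta$ over $L$, and in particular $m' \dvd m$, so $\deg(m') \leq \deg(m) = n$. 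Applying Theorem~\ref{thm:simple-basis}\bref{part:sba-alg} once more, this time to the simple extension $L(\beta): L$, gives $[L(\beta): L] = \deg(m') \leq n = [K(\beta): K]$, as required.

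I do not expect any real obstacle here. The one point worth flagging is the case split itself: $\beta$ could in principle be transcendental over $K$ yet algebraic over $L$ (if $L$ is large), so one genuinely cannot compare degrees of minimal polynomials across the board — but in exactly that situation the right-hand side is $\infty$ and there is nothing to prove. It is also harmless to note that $K(\beta) \sub L(\beta)$, which is immediate from $K \sub L$, although the argument above does not need it.
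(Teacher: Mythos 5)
Your proof is correct and follows essentially the same route as the paper's: dispose of the transcendental case (where $[K(\beta):K]=\infty$), then in the algebraic case observe that the minimal polynomial of $\beta$ over $K$ is an annihilating polynomial over $L$, so the minimal polynomial over $L$ has degree at most $\deg(m)$, and conclude via Theorem~\ref{thm:simple-basis}\bref{part:sba-alg}. The only cosmetic difference is that you justify $\deg(m')\le\deg(m)$ via the divisibility characterisation (Lemma~\ref{lemma:min-poly-tfae}\bref{part:mpt-exp}) whereas the paper implicitly uses the minimal-degree characterisation; both are fine.
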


Informally, I think of Corollary~\ref{cor:deg-ineqs} as in
Figure~\ref{fig:mental-deg-ineq}. The degree of $\beta$ over $K$ measures
how far $\beta$ is from being in $K$. Since $L$ contains $K$, it might be
that $\beta$ is closer to $L$ than to $K$ (i.e.\ $[L(\beta): L] <
[K(\beta): K]$), and it's certainly no further away.

\begin{proof}
If $[K(\beta): K] = \infty$ then the inequality
is clear. Otherwise, $\beta$ is algebraic over $K$ (by
Corollary~\ref{cor:fin-deg-alg}), with minimal polynomial $m \in K[t]$,
say. Then $m$ is an annihilating polynomial for $\beta$ over $L$, so the
minimal polynomial of $\beta$ over $L$ has degree $\leq \deg(m)$. The
result follows from Theorem~\ref{thm:simple-basis}\bref{part:sba-alg}.
\end{proof}

\begin{ex}{ex:strict-triv}
Give an example to show that the inequality in
Corollary~\ref{cor:deg-ineqs} can be strict. Your example
can be as trivial as you like.
\end{ex}

\begin{figure}
\setlength{\fboxsep}{0mm}%
\setlength{\unitlength}{1mm}%
\begin{picture}(136,30)
\cell{68}{15}{c}{\includegraphics[height=30\unitlength]{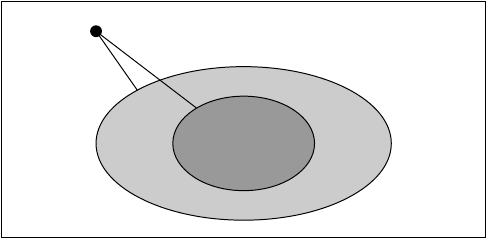}}
\cell{102}{28}{c}{$M$}
\cell{85}{20}{c}{$L$}
\cell{78}{16}{c}{$K$}
\cell{47.5}{27.5}{c}{$\beta$}
\cell{46}{21}{c}{$\scriptstyle{[L(\beta): L]}$}
\cell{60}{24.5}{c}{$\scriptstyle{[K(\beta): K]}$}
\end{picture}%
\caption{Visualization of Corollary~\ref{cor:deg-ineqs}
(not to be taken too seriously).}
\label{fig:mental-deg-ineq}
\end{figure}

\begin{cor}
\label{cor:alg-ext-poly}
Let $M: K$ be a field extension. Let $\alpha_1, \ldots, \alpha_n \in
M$, with $\alpha_i$ algebraic over $K$ of degree
$d_i$. Then every element $\alpha \in K(\alpha_1, \ldots, \alpha_n)$ can be
expressed as a polynomial in $\alpha_1, \ldots, \alpha_n$ over $K$. More
exactly, 
\[
\alpha
=
\sum_{r_1, \ldots, r_n} 
c_{r_1, \ldots, r_n} \alpha_1^{r_1} \cdots \alpha_n^{r_n}
\]
for some $c_{r_1, \ldots, r_n} \in K$, where $r_i$ ranges over $0,
\ldots, d_i - 1$. 
\end{cor}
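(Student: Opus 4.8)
The plan is to induct on $n$. The base case $n = 1$ is exactly Theorem~\ref{thm:simple-basis}\bref{part:sba-alg}: there $1, \alpha_1, \ldots, \alpha_1^{d_1 - 1}$ is a basis of $K(\alpha_1)$ over $K$, so every element of $K(\alpha_1)$ is a $K$-linear combination of these powers, which is the claimed form with a single index $r_1 \in \{0, \ldots, d_1 - 1\}$.

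For the inductive step, suppose $n \geq 2$ and the result holds for $n - 1$ algebraic elements. Put $L = K(\alpha_1, \ldots, \alpha_{n-1})$, so that $K(\alpha_1, \ldots, \alpha_n) = L(\alpha_n)$ by Exercise~\ref{ex:adjn-two}. Since $\alpha_n$ is algebraic over $K$, it is also algebraic over $L$ (its minimal polynomial over $K$ lies in $K[t] \sub L[t]$ and still annihilates it), and by Corollary~\ref{cor:deg-ineqs},
\[
e := \deg_L(\alpha_n) = [L(\alpha_n) : L] \leq [K(\alpha_n) : K] = d_n.
\]
Applying Theorem~\ref{thm:simple-basis}\bref{part:sba-alg} to the simple extension $L(\alpha_n) : L$ shows that $1, \alpha_n, \ldots, \alpha_n^{e - 1}$ spans $L(\alpha_n)$ over $L$; since $e \leq d_n$, the longer list $1, \alpha_n, \ldots, \alpha_n^{d_n - 1}$ also spans $L(\alpha_n)$ over $L$ (the extra vectors just get zero coefficients).

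Hence every $\alpha \in K(\alpha_1, \ldots, \alpha_n)$ can be written as $\alpha = \sum_{r_n = 0}^{d_n - 1} b_{r_n} \alpha_n^{r_n}$ with each $b_{r_n} \in L = K(\alpha_1, \ldots, \alpha_{n-1})$. By the inductive hypothesis, each $b_{r_n}$ is a polynomial in $\alpha_1, \ldots, \alpha_{n-1}$ over $K$ in which the exponent of $\alpha_i$ ranges over $0, \ldots, d_i - 1$; substituting these into the sum and collecting terms gives an expression of the required shape, completing the induction. I expect no serious obstacle here: the one point that genuinely needs to be said out loud is that $\alpha_n$ may have degree $e < d_n$ over the intermediate field $L$, so one must note explicitly that padding the exponent range from $0, \ldots, e - 1$ up to $0, \ldots, d_n - 1$ is harmless, so that the final index ranges match the statement exactly.
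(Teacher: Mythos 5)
Your proof is correct and is essentially the paper's own argument: induction on $n$, peeling off $\alpha_n$, applying Theorem~\ref{thm:simple-basis}\bref{part:sba-alg} to the simple extension over $K(\alpha_1, \ldots, \alpha_{n-1})$ together with $\deg_{K(\alpha_1,\ldots,\alpha_{n-1})}(\alpha_n) \leq d_n$ (Corollary~\ref{cor:deg-ineqs}), and then substituting via the inductive hypothesis. Your explicit remark about padding the exponent range from $e$ up to $d_n$ is exactly the point the paper handles with the same inequality, so there is nothing to add.
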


For example, here's what this says in the case $n = 2$. Let $M: K$ be a
field extension, and take algebraic elements $\alpha_1, \alpha_2$ of
$M$. Write $d_1$ and $d_2$ for their degrees over $K$.  Then every element
of $K(\alpha_1, \alpha_2)$ is equal to
\[
\sum_{r = 0}^{d_1 - 1} \sum_{s = 0}^{d_2 - 1} c_{rs} \alpha_1^r \alpha_2^s
\]
for some coefficients $c_{rs} \in K$. A fundamental point in the proof is
that a polynomial in two variables can be seen as a polynomial in one
variable whose coefficients are themselves polynomials in one variable, and
similarly for more than two variables.

\begin{proof}
When $n = 0$, this is trivial. Now let $n \geq 1$ and suppose inductively
that the result holds for $n - 1$. Let
\[
\alpha 
\in 
K(\alpha_1, \ldots, \alpha_n) 
= 
\bigl(K(\alpha_1, \ldots, \alpha_{n - 1})\bigr)(\alpha_n).
\]
By Theorem~\ref{thm:simple-basis}\bref{part:sba-alg} applied to the
extension $(K(\alpha_1, \ldots, \alpha_{n - 1}))(\alpha_n): K(\alpha_1,
\ldots, \alpha_{n - 1})$, noting that $\deg_{K(\alpha_1, \ldots, \alpha_{n
- 1})}(\alpha_n) \leq \deg_K(\alpha_n) = d_n$, we have
\begin{align}
\label{eq:aep-1}
\alpha = \sum_{r = 0}^{d_n - 1} c_r \alpha_n^r
\end{align}
for some $c_0, \ldots, c_{d_n - 1} \in K(\alpha_1, \ldots, \alpha_{n -
1})$.  By inductive hypothesis, for each $r$ we have
\begin{align}
\label{eq:aep-2}
c_r 
=
\sum_{r_1, \ldots, r_{n - 1}}
c_{r_1, \ldots, r_{n - 1}, r} 
\alpha_1^{r_1} \cdots \alpha_{n - 1}^{r_{n - 1}}
\end{align}
for some $c_{r_1, \ldots, r_{n - 1}, r} \in K$, where $r_i$ ranges over $0,
\ldots, d_i - 1$. Substituting~\eqref{eq:aep-2} into~\eqref{eq:aep-1}
completes the induction.
\end{proof}

\begin{example}
\label{eg:aep-sqrt2i}
Back in Example~\ref{egs:adj}\bref{eg:adj-sqrt2i}, I claimed that 
\[
\Q(\sqrt{2}, i)
=
\{ a + b\sqrt{2} + ci + d\sqrt{2}i
\such a, b, c, d \in \Q\}.
\]
Corollary~\ref{cor:alg-ext-poly} applied to $\Q(\sqrt{2}, i): \Q$
proves this, since $\deg_\Q(\sqrt{2})
= \deg_\Q(i) = 2$. 
\end{example}

\begin{ex}{ex:trans-poly}
Let $M: K$ be a field extension and $\alpha$ a transcendental element of
$M$. Can every element of $K(\alpha)$ be represented as a polynomial in
$\alpha$ over $K$?
\end{ex}

For extensions obtained by adjoining several elements, the following result
is invaluable.

\begin{bigthm}[Tower law]
\label{thm:tower}
Let $M: L: K$ be field extensions. 
\begin{enumerate}
\item
\label{part:tower-basis}
If $(\alpha_i)_{i \in I}$ is a basis of
$L$ over $K$ and $(\beta_j)_{j \in J}$ is a basis of $M$ over $L$, then
$(\alpha_i \beta_j)_{(i, j) \in I \times J}$ is a basis of $M$ over $K$.

\item
\label{part:tower-fin}
$M: K$ is finite $\iff$ $M: L$ and $L: K$ are finite.

\item
\label{part:tower-dim}
$[M: K] = [M: L][L: K]$.
\end{enumerate}
\end{bigthm}%
\marginpar{\hspace*{8mm}%
\raisebox{3mm}{%
\includegraphics[height=39mm]{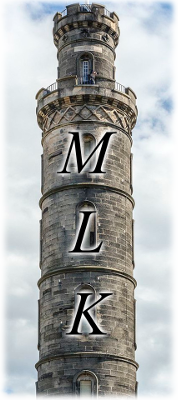}%
}}

The sets $I$ and $J$ here could be infinite. I'll say that a family
$(a_i)_{i \in I}$ of elements of a field is \demph{finitely supported} if
the set $\{ i \in I \such a_i \neq 0\}$ is finite.

\begin{proof}
To prove~\bref{part:tower-basis}, we show that $(\alpha_i \beta_j)_{(i, j) \in I
\times J}$ is a linearly independent spanning set of $M$ over $K$.  

For linear independence, let $(c_{ij})_{(i, j) \in I \times J}$ be a
finitely supported family of elements of $K$ such that $\sum_{i, j} c_{ij}
\alpha_i \beta_j = 0$. Then $\sum_j (\sum_i c_{ij} \alpha_i) \beta_j = 0$,
with $\sum_i c_{ij} \alpha_i \in L$ for each $j \in J$. Since $(\beta_j)_{j
\in J}$ is linearly independent over $L$, we have $\sum_i c_{ij} \alpha_i =
0$ for each $j \in J$. But $(\alpha_i)_{i \in I}$ is linearly
independent over $K$, so $c_{ij} = 0$ for each $i \in I$ and $j \in
J$.

To show $(\alpha_i \beta_j)_{(i, j) \in I \times J}$ spans $M$ over $K$,
let $e \in M$. Since $(\beta_j)_{j \in J}$ spans $M$ over $L$, we have $e =
\sum_j d_j \beta_j$ for some finitely supported family $(d_j)_{j \in J}$ of
elements of $L$. Since $(\alpha_i)_{i \in I}$ spans $L$ over $K$, for each
$j \in J$ we have $d_j = \sum_i c_{ij} \alpha_i$ for some finitely
supported family $(c_{ij})_{i \in I}$ of $K$. Hence $e = \sum_{i, j} c_{ij}
\alpha_i \beta_j$, as required.

Parts~\bref{part:tower-fin} and~\bref{part:tower-dim} follow.
\end{proof}

\begin{example}
\label{eg:deg-23}
What is $[\Q(\sqrt{2}, \sqrt{3}): \Q]$? The tower law gives
\begin{align*}
\bigl[\Q(\sqrt{2}, \sqrt{3}): \Q\bigr]    &
=
\bigl[\Q(\sqrt{2}, \sqrt{3}): \Q(\sqrt{2})\bigr] \bigl[\Q(\sqrt{2}): \Q\bigr]       \\
&
=
2\bigl[\Q(\sqrt{2}, \sqrt{3}): \Q(\sqrt{2})\bigr].
\end{align*}
Now on the one hand,
\[
\bigl[\Q(\sqrt{2}, \sqrt{3}): \Q(\sqrt{2})\bigr] 
\leq 
\bigl[\Q(\sqrt{3}): \Q\bigr] 
= 
2 
\]
by Corollary~\ref{cor:deg-ineqs}. On the other,
$\sqrt{3} \not\in \Q(\sqrt{2})$, so $\Q(\sqrt{2}, \sqrt{3}) \neq
\Q(\sqrt{2})$, so $[\Q(\sqrt{2}, \sqrt{3}): \Q(\sqrt{2})] > 1$ by
Example~\ref{egs:deg-ext}\bref{eg:de-triv}. So $[\Q(\sqrt{2}, \sqrt{3}):
\Q(\sqrt{2})] = 2$, giving the answer: $[\Q(\sqrt{2}, \sqrt{3}): \Q] = 4$.

By the same argument as in Example~\ref{eg:aep-sqrt2i}, $\{1, \sqrt{2},
\sqrt{3}, \sqrt{6}\}$ spans $\Q(\sqrt{2}, \sqrt{3})$ over $\Q$. But we have
just shown that $\Q(\sqrt{2}, \sqrt{3})$ has dimension $4$ over $\Q$. Hence
this spanning set is a basis. That is, for every element $\alpha \in
\Q(\sqrt{2}, \sqrt{3})$, there is one \emph{and only one} 4-tuple $(a, b,
c, d)$ of rational numbers such that
\[
\alpha = a + b\sqrt{2} + c\sqrt{3} + d\sqrt{6}.
\]
\end{example}

\begin{cor}
\label{cor:tower-div}
Let $M: L': L: K$ be field extensions. If $M: K$ is finite then $[L': L]$
divides $[M: K]$.
\end{cor}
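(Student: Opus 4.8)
The plan is to deduce everything from the tower law (Theorem~\ref{thm:tower}), since we already have the chain of subfields $K \subseteq L \subseteq L' \subseteq M$ and the hypothesis that $M:K$ is finite. The first thing I would do is propagate finiteness down the tower: applying Theorem~\ref{thm:tower}\bref{part:tower-fin} to the tower $M:L:K$, the finiteness of $M:K$ forces both $M:L$ and $L:K$ to be finite; then applying the same part to the tower $M:L':L$, the finiteness of $M:L$ forces both $M:L'$ and $L':L$ to be finite. This matters because it guarantees all the degrees in sight are genuine positive integers, so that statements about divisibility make sense (we never have to worry about $\infty$).

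Next I would extract two instances of the multiplicativity of degree, Theorem~\ref{thm:tower}\bref{part:tower-dim}. From the tower $M:L':L$ we get $[M:L] = [M:L'][L':L]$, which shows $[L':L] \mid [M:L]$. From the tower $M:L:K$ we get $[M:K] = [M:L][L:K]$, which shows $[M:L] \mid [M:K]$. Chaining these two divisibilities gives $[L':L] \mid [M:K]$, as required.

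I do not expect any real obstacle here: the only thing to be slightly careful about is the order in which finiteness is established, since Theorem~\ref{thm:tower}\bref{part:tower-fin} is stated for a three-step tower and we have a four-step tower, so it has to be invoked twice on two different sub-towers (and one could equally invoke it on $M:L':K$ first, then on $L':L:K$). Everything else is a one-line application of the tower law.

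\begin{proof}
Since $M:K$ is finite, Theorem~\ref{thm:tower}\bref{part:tower-fin} applied
to $M:L:K$ shows that $M:L$ is finite, and then the same part applied to
$M:L':L$ shows that $L':L$ is finite. All degrees below are therefore
positive integers.

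By Theorem~\ref{thm:tower}\bref{part:tower-dim} applied to $M:L':L$,
\[
[M:L] = [M:L'][L':L],
\]
so $[L':L]$ divides $[M:L]$. By Theorem~\ref{thm:tower}\bref{part:tower-dim}
applied to $M:L:K$,
\[
[M:K] = [M:L][L:K],
\]
so $[M:L]$ divides $[M:K]$. Hence $[L':L]$ divides $[M:K]$.
\end{proof}
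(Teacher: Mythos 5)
Your proof is correct and takes essentially the same approach as the paper: both apply the tower law twice to factor $[M:K]$ as $[M:L'][L':L][L:K]$. The paper simply writes this as a single product, whereas you also spell out the (worthwhile) preliminary step of propagating finiteness down the tower so that all degrees are genuine integers.
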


\begin{proof}
By the tower law twice, $[M: K] = [M: L'][L': L][L: K]$.
\end{proof}

That result might remind you of Lagrange's theorem on group orders. The
resemblance is no coincidence, as we'll see.

\begin{ex}{ex:prime-simple}
Show that a field extension whose degree is a prime number must be simple. 
\end{ex}

\emph{That} result might remind you of the fact that a group of prime order
must be cyclic, and that's no coincidence either! 

A second corollary of the tower law:

\begin{cor}
\label{cor:tower-prod}
Let $M: K$ be a field extension and $\alpha_1, \ldots, \alpha_n \in
M$. Then
\[
[K(\alpha_1, \ldots, \alpha_n) : K]
\leq
[K(\alpha_1): K] \cdots [K(\alpha_n): K].
\]
\end{cor}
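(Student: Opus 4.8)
The plan is to argue by induction on $n$, with the tower law (Theorem~\ref{thm:tower}) and Corollary~\ref{cor:deg-ineqs} doing all the work. For the base case $n = 0$, both sides equal $1$: the left-hand side is $[K:K] = 1$ by Example~\ref{egs:deg-ext}\bref{eg:de-triv}, and the right-hand side is the empty product, which is $1$ (as in Digression~\ref{dig:defn-id}). (If one prefers, $n = 1$ is an equally trivial base case, since then the inequality is the equality $[K(\alpha_1):K] = [K(\alpha_1):K]$.)

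For the inductive step, suppose $n \geq 1$ and that the result holds for $n - 1$ elements. Put $L = K(\alpha_1, \ldots, \alpha_{n-1})$. Then $K(\alpha_1, \ldots, \alpha_n) = L(\alpha_n)$ by Exercise~\ref{ex:adjn-two} (applied repeatedly, or in the form $K(Y \cup Z) = (K(Y))(Z)$ with $Y = \{\alpha_1, \ldots, \alpha_{n-1}\}$ and $Z = \{\alpha_n\}$). The tower law applied to $K(\alpha_1, \ldots, \alpha_n) : L : K$ gives
\[
[K(\alpha_1, \ldots, \alpha_n) : K] = [L(\alpha_n) : L]\,[L : K].
\]
Now I would bound each factor. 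First, applying Corollary~\ref{cor:deg-ineqs} to the chain $M : L : K$ with $\beta = \alpha_n$ yields $[L(\alpha_n):L] \leq [K(\alpha_n):K]$. Second, by the inductive hypothesis, $[L:K] = [K(\alpha_1, \ldots, \alpha_{n-1}):K] \leq [K(\alpha_1):K] \cdots [K(\alpha_{n-1}):K]$. Multiplying these two inequalities (both sides of each being non-negative extended integers, so multiplication preserves $\leq$) gives exactly
\[
[K(\alpha_1, \ldots, \alpha_n):K] \leq [K(\alpha_1):K] \cdots [K(\alpha_n):K],
\]
completing the induction.

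There is essentially no serious obstacle here; the only point needing a word of care is the bookkeeping around $\infty$. If any $[K(\alpha_i):K] = \infty$ then the right-hand side is $\infty$ by the conventions on p.~\pageref{p:fin-deg}, and the inequality is automatic; otherwise all factors are finite positive integers and the arithmetic above is ordinary arithmetic. Both the tower law and Corollary~\ref{cor:deg-ineqs} are already stated so as to accommodate infinite degrees, so no separate case analysis is really required — but I would mention the point explicitly so the reader is not left wondering whether multiplying inequalities is legitimate.
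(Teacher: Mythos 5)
Your proof is correct and is essentially the same argument as the paper's, just packaged as an induction on $n$ rather than as a single telescoping chain: the paper applies the tower law once to the chain $K \sub K(\alpha_1) \sub \cdots \sub K(\alpha_1,\ldots,\alpha_n)$ and then bounds every factor by Corollary~\ref{cor:deg-ineqs}, which is precisely what your induction does one step at a time. The remark about $\infty$ is a sensible clarification, though the paper's conventions already cover it.
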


\begin{proof}
By the tower law and then Corollary~\ref{cor:deg-ineqs},
\begin{align*}
&
[K(\alpha_1, \ldots, \alpha_n) : K]     \\
&\qquad
=
[K(\alpha_1, \ldots, \alpha_n) : K(\alpha_1, \ldots, \alpha_{n - 1})]
\cdots
[K(\alpha_1, \alpha_2): K(\alpha_1)] [K(\alpha_1): K]   \\
&
\qquad
\leq
[K(\alpha_n): K] \cdots [K(\alpha_2): K] [K(\alpha_1): K].
\end{align*}
\end{proof}

\begin{example}
\label{eg:tower-126}
What is $[\Q(12^{1/4}, 6^{1/15}): \Q]$? You can check (hint, hint) that
$\deg_\Q(12^{1/4}) = 4$ and $\deg_\Q(6^{1/15}) = 15$. So by
Corollary~\ref{cor:tower-div}, $[\Q(12^{1/4}, 6^{1/15}): \Q]$ is divisible
by $4$ and $15$. But also, Corollary~\ref{cor:tower-prod} implies that
$[\Q(12^{1/4}, 6^{1/15}): \Q] \leq 4 \times 15 = 60$. Since $4$ and $15$
are coprime, the answer is $60$.
\end{example}

\begin{ex}{ex:126-gen}
Generalize Example~\ref{eg:tower-126}. In other words, what general result
does the argument of Example~\ref{eg:tower-126} prove, not involving the
particular numbers chosen there?
\end{ex}

\section{Algebraic extensions}

We defined a field extension $M: K$ to be finite if $[M: K] < \infty$, that
is, $M$ is finite-dimensional as a vector space over $K$. Here are two
related conditions.

\begin{defn}
A field extension $M: K$ is \demph{finitely generated} if $M = K(Y)$ for
some finite subset $Y \sub M$.
\end{defn}

\begin{defn}
A field extension $M: K$ is \demph{algebraic} if every element of $M$ is
algebraic over $K$.
\end{defn}

Recall from Corollary~\ref{cor:fin-deg-alg} that $\alpha$ is algebraic over
$K$ if and only if $K(\alpha): K$ is finite. So for a field extension to be
algebraic is also a kind of finiteness condition.

\begin{examples}
\begin{enumerate}
\item 
For any field $K$, the extension $K(t): K$ is finitely
generated (take the `$Y$' above to be $\{t\}$) but not finite, by
Corollary~\ref{cor:fin-deg-alg}.

\item
In Section~\ref{sec:alg-trans} you met the set $\ovln{\Q}$ of complex
numbers algebraic over $\Q$. We'll very soon prove that it's a subfield of
$\C$. It is algebraic over $\Q$, by definition. But you'll show in
\href{https://www.maths.ed.ac.uk/~tl/galois}{Workshop~3, question~13} that
it is not finite over $\Q$. 
\end{enumerate}
\end{examples}

Our three finiteness conditions are related as follows
(Figure~\ref{fig:fin-condns}). 

\begin{figure}
\setlength{\fboxsep}{0mm}%
\setlength{\unitlength}{1mm}%
\begin{picture}(136,30)
\cell{68}{15}{c}{\includegraphics[height=30\unitlength]{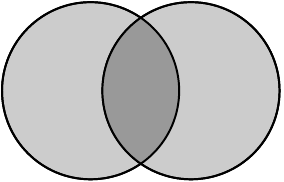}}
\cell{37}{20}{c}{algebraic}
\cell{92}{21.5}{l}{finitely}
\cell{92}{18}{l}{generated}
\cell{68}{15}{c}{finite}
\end{picture}%
\caption{Finiteness conditions on a field extension}
\label{fig:fin-condns}
\end{figure}

\begin{propn}
\label{propn:three-fin}
The following conditions on a field extension $M: K$ are equivalent:
\begin{enumerate}
\item 
\label{part:tf-fin}
$M: K$ is finite;

\item
\label{part:tf-fga}
$M: K$ is finitely generated and algebraic;

\item
\label{part:tf-seq}
$M = K(\alpha_1, \ldots, \alpha_n)$ for some finite set $\{\alpha_1,
\ldots, \alpha_n\}$ of elements of $M$ algebraic over $K$.
\end{enumerate}
\end{propn}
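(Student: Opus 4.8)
The plan is to prove the three conditions equivalent by establishing the cycle \bref{part:tf-fin}$\implies$\bref{part:tf-fga}$\implies$\bref{part:tf-seq}$\implies$\bref{part:tf-fin}. The middle implication is essentially trivial once \bref{part:tf-fga} is unpacked, and the last implication is exactly where the tower law earns its keep, so I expect the main work to be concentrated in \bref{part:tf-fin}$\implies$\bref{part:tf-fga} and \bref{part:tf-seq}$\implies$\bref{part:tf-fin}.

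For \bref{part:tf-fin}$\implies$\bref{part:tf-fga}, assume $[M:K] = n < \infty$. Finite generation is immediate: any basis of $M$ over $K$ is a finite subset $Y$ with $M = K(Y)$ (indeed $M$ is already spanned by $Y$ as a $K$-vector space, so certainly $M = K(Y)$). For algebraicity, take any $\alpha \in M$; then $K(\alpha)$ is a $K$-subspace of $M$, so $[K(\alpha):K] \leq [M:K] = n < \infty$, and Corollary~\ref{cor:fin-deg-alg} gives that $\alpha$ is algebraic over $K$. (Alternatively, and more concretely, $1, \alpha, \ldots, \alpha^n$ are $n+1$ elements of an $n$-dimensional space, hence linearly dependent over $K$, yielding a nonzero annihilating polynomial.) Either way $M:K$ is algebraic.

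For \bref{part:tf-fga}$\implies$\bref{part:tf-seq}: if $M = K(Y)$ with $Y$ finite, write $Y = \{\alpha_1, \ldots, \alpha_n\}$, so $M = K(\alpha_1, \ldots, \alpha_n)$; and since $M:K$ is algebraic, each $\alpha_i \in M$ is algebraic over $K$. That is literally \bref{part:tf-seq}.

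The substantive step is \bref{part:tf-seq}$\implies$\bref{part:tf-fin}. Suppose $M = K(\alpha_1, \ldots, \alpha_n)$ with each $\alpha_i$ algebraic over $K$. I would argue by induction on $n$ that $[M:K] < \infty$; the case $n = 0$ gives $M = K$, which is finite. For the inductive step, set $L = K(\alpha_1, \ldots, \alpha_{n-1})$, which is finite over $K$ by the inductive hypothesis, and note $M = L(\alpha_n)$ by Exercise~\ref{ex:adjn-two}. Now $\alpha_n$ is algebraic over $K$, hence algebraic over the larger field $L$ (its $K$-annihilating polynomial is also an $L$-annihilating polynomial), so $\deg_L(\alpha_n) < \infty$ by Corollary~\ref{cor:fin-deg-alg}, i.e.\ $[M:L] = [L(\alpha_n):L] < \infty$ by Theorem~\ref{thm:simple-basis}\bref{part:sba-alg}. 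By the tower law (Theorem~\ref{thm:tower}\bref{part:tower-dim}, or \bref{part:tower-fin}), $[M:K] = [M:L][L:K] < \infty$, completing the induction. (In fact Corollary~\ref{cor:tower-prod} already packages this bound, so one could cite it directly instead of re-running the induction.) The only thing to be mildly careful about is the passage from "algebraic over $K$" to "algebraic over $L$" and the bookkeeping $M = L(\alpha_n)$, both of which are routine given the earlier results; there is no real obstacle here.
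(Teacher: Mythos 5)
Your proof is correct and follows essentially the same route as the paper: a basis yields finite generation and algebraicity via Corollary~\ref{cor:fin-deg-alg} for \bref{part:tf-fin}$\implies$\bref{part:tf-fga}, the middle implication is definitional, and \bref{part:tf-seq}$\implies$\bref{part:tf-fin} rests on the tower law. The only cosmetic difference is that for the last step you re-run the induction that proves Corollary~\ref{cor:tower-prod}, whereas the paper simply cites that corollary (as you yourself note one could).
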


\begin{proof}
\bref{part:tf-fin}$\implies$\bref{part:tf-fga}: suppose that $M: K$ is
finite.

To show that $M: K$ is finitely generated, take a basis $\alpha_1, \ldots,
\alpha_n$ of $M$ over $K$. Every subfield $L$ of $M$ containing $K$ is a
$K$-linear subspace of $M$, so if $\alpha_1, \ldots, \alpha_n \in L$ then
$L = M$. This proves that the only subfield of $M$ containing $K \cup
\{\alpha_1, \ldots, \alpha_n\}$ is $M$ itself; that is, $M = K(\alpha_1,
\ldots, \alpha_n)$. So $M: K$ is finitely generated.

To show that $M: K$ is algebraic, let $\alpha \in M$. Then by
part~\bref{part:tower-fin} of the tower law
(Theorem~\ref{thm:tower}), $K(\alpha): K$ is finite, so by
Corollary~\ref{cor:fin-deg-alg}, $\alpha$ is algebraic over $K$.

\bref{part:tf-fga}$\implies$\bref{part:tf-seq} is immediate from the
definitions. 

\bref{part:tf-seq}$\implies$\bref{part:tf-fin}: suppose that $M =
K(\alpha_1, \ldots, \alpha_n)$ for some $\alpha_i \in M$ algebraic over
$K$. Then 
\[
[M: K] \leq [K(\alpha_1): K] \cdots [K(\alpha_n): K]
\]
by Corollary~\ref{cor:tower-prod}. For each $i$, we have $[K(\alpha_i): K] < \infty$  since $\alpha_i$ is algebraic over $K$ (using
Corollary~\ref{cor:fin-deg-alg} again). So $[M: K] < \infty$.
\end{proof}

We already saw that when $M = K(\alpha_1, \ldots, \alpha_n)$ with each
$\alpha_i$ algebraic, every element of $M$ is a polynomial in $\alpha_1,
\ldots, \alpha_n$ (Corollary~\ref{cor:alg-ext-poly}). So for any finite
extension $M: K$, there is some finite set of elements such that everything
in $M$ can be expressed as a polynomial over $K$ in these elements.

\begin{ex}{ex:subfd-subsp}
Let $M: K$ be a field extension and $K \sub L \sub M$.  In the proof of
Proposition~\ref{propn:three-fin}, I said that \emph{if} $L$ is a subfield
of $M$ \emph{then} $L$ is a $K$-linear subspace of $M$. Why is that true?
And is the converse also true? Give a proof or a counterexample.
\end{ex}

\begin{cor}
\label{cor:simple-alg}
Let $K(\alpha): K$ be a simple extension. The following are equivalent:
\begin{enumerate}
\item 
\label{part:sa-fin}
$K(\alpha): K$ is finite;

\item
\label{part:sa-alg-ext}
$K(\alpha): K$ is algebraic;

\item
\label{part:sa-alg-elt}
$\alpha$ is algebraic over $K$.
\end{enumerate}
\end{cor}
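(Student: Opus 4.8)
The plan is to prove the equivalence by establishing the cycle \bref{part:sa-fin}$\implies$\bref{part:sa-alg-ext}$\implies$\bref{part:sa-alg-elt}$\implies$\bref{part:sa-fin}, using results already in hand. Almost everything here is a direct consequence of Corollary~\ref{cor:fin-deg-alg}, which says that $\deg_K(\beta) < \infty$ if and only if $\beta$ is algebraic over $K$, together with the tower law (Theorem~\ref{thm:tower}).

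First I would do \bref{part:sa-fin}$\implies$\bref{part:sa-alg-ext}. Suppose $[K(\alpha):K] < \infty$ and let $\beta \in K(\alpha)$ be arbitrary. Then $K \sub K(\beta) \sub K(\alpha)$, so by part~\bref{part:tower-fin} of the tower law, $[K(\beta):K] < \infty$, and hence $\beta$ is algebraic over $K$ by Corollary~\ref{cor:fin-deg-alg}. Since $\beta$ was arbitrary, $K(\alpha):K$ is algebraic. (Alternatively one could just cite Proposition~\ref{propn:three-fin}, since a simple extension is automatically finitely generated, but the one-line argument via the tower law is cleaner and more self-contained.)

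Next, \bref{part:sa-alg-ext}$\implies$\bref{part:sa-alg-elt} is immediate: by definition of algebraic extension, every element of $K(\alpha)$ is algebraic over $K$, and in particular $\alpha \in K(\alpha)$ is. Finally, \bref{part:sa-alg-elt}$\implies$\bref{part:sa-fin}: if $\alpha$ is algebraic over $K$ then $\deg_K(\alpha) = [K(\alpha):K] < \infty$ by Corollary~\ref{cor:fin-deg-alg} (or directly from Theorem~\ref{thm:simple-basis}\bref{part:sba-alg}, which says $[K(\alpha):K] = \deg(m)$ where $m$ is the minimal polynomial). This closes the cycle.

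There is no real obstacle here — every implication is a one- or two-line appeal to an already-established result, and the only thing to be slightly careful about is that in \bref{part:sa-fin}$\implies$\bref{part:sa-alg-ext} one must pass through an arbitrary intermediate element $\beta$ rather than just arguing about $\alpha$ itself, since ``algebraic extension'' is a statement about all elements. So the write-up will be short.
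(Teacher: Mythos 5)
Your proof is correct and uses the same cyclic decomposition \bref{part:sa-fin}$\implies$\bref{part:sa-alg-ext}$\implies$\bref{part:sa-alg-elt}$\implies$\bref{part:sa-fin} as the paper; the paper cites Proposition~\ref{propn:three-fin} for the two nontrivial implications, whereas you unpack them directly via the tower law and Corollary~\ref{cor:fin-deg-alg}, which is a cosmetic rather than substantive difference (and you note the alternative citation yourself). The care you take in \bref{part:sa-fin}$\implies$\bref{part:sa-alg-ext} to quantify over an arbitrary $\beta \in K(\alpha)$ is exactly right.
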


\begin{proof}
\bref{part:sa-fin}$\implies$\bref{part:sa-alg-ext} follows from
\bref{part:tf-fin}$\implies$\bref{part:tf-fga} of
Proposition~\ref{propn:three-fin}.

\bref{part:sa-alg-ext}$\implies$\bref{part:sa-alg-elt} is immediate from
the definitions.

\bref{part:sa-alg-elt}$\implies$\bref{part:sa-fin} follows from 
\bref{part:tf-seq}$\implies$\bref{part:tf-fin} of
Proposition~\ref{propn:three-fin}.
\end{proof}

Here's a spectacular application of Corollary~\ref{cor:simple-alg}.

\begin{propn}
\label{propn:Qbar-subfd}
$\ovln{\Q}$ is a subfield of $\C$.
\end{propn}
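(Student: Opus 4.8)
The plan is to show that $\ovln{\Q}$ is closed under addition, subtraction, multiplication, and taking reciprocals of nonzero elements, and that it contains $0$ and $1$; this is exactly what it means to be a subfield of $\C$. The containment of $0$ and $1$ is trivial (they are rational, hence algebraic over $\Q$ by Exercise~\ref{ex:alg-base}), so the whole content is the closure under the arithmetic operations. The key insight is that the awkward direct approach (trying to write down an annihilating polynomial for $\alpha + \beta$ or $\alpha\beta$ given ones for $\alpha$ and $\beta$) should be abandoned entirely in favour of the degree machinery.

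First I would take $\alpha, \beta \in \ovln{\Q}$, so that $\alpha$ and $\beta$ are algebraic over $\Q$. Consider the extension $\Q(\alpha, \beta) : \Q$. By Proposition~\ref{propn:three-fin} (specifically \bref{part:tf-seq}$\implies$\bref{part:tf-fin}), since $\Q(\alpha,\beta)$ is generated over $\Q$ by the finite set $\{\alpha,\beta\}$ of elements algebraic over $\Q$, the extension $\Q(\alpha,\beta):\Q$ is finite. Now $\alpha + \beta$, $\alpha - \beta$, $\alpha\beta$, and (if $\alpha \neq 0$) $\alpha^{-1}$ are all elements of the field $\Q(\alpha,\beta)$. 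So each of these elements $\gamma$ lies in a finite extension of $\Q$, namely $\Q(\alpha,\beta)$.

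Next I would deduce that such a $\gamma$ is algebraic over $\Q$. Since $\Q(\gamma)$ is a subfield of $\Q(\alpha,\beta)$ containing $\Q$, the tower law (Theorem~\ref{thm:tower}, part~\bref{part:tower-fin} applied to $\Q(\alpha,\beta) : \Q(\gamma) : \Q$, or more directly Corollary~\ref{cor:tower-div}) gives that $[\Q(\gamma):\Q]$ is finite, being at most $[\Q(\alpha,\beta):\Q]$. Hence by Corollary~\ref{cor:fin-deg-alg} (equivalently Corollary~\ref{cor:simple-alg}), $\gamma$ is algebraic over $\Q$, i.e.\ $\gamma \in \ovln{\Q}$. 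Applying this to $\gamma = \alpha+\beta$, $\alpha-\beta$, $\alpha\beta$, and $\alpha^{-1}$ establishes all the closure properties, so $\ovln{\Q}$ is a subfield of $\C$.

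There is no real obstacle here — that is precisely the ``miracle'' the author advertised in Exercise~\ref{ex:alg-fd-doomed}. The only thing to be careful about is not to get lured into the doomed explicit-polynomial approach; once one has the slogan ``algebraic $=$ lives in a finite extension'' together with the tower law, the proof is essentially a two-line consequence. The mildly subtle point worth spelling out is \emph{why} $\Q(\alpha,\beta):\Q$ is finite (it needs both $\alpha$ and $\beta$ to be algebraic, feeding into Corollary~\ref{cor:tower-prod} or Proposition~\ref{propn:three-fin}), and then the observation that $\Q(\alpha,\beta)$, being a field containing $\alpha$ and $\beta$, automatically contains all the sums, products, and inverses we care about.
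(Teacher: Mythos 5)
Your proof is correct and follows essentially the same route as the paper's: show $[\Q(\alpha,\beta):\Q]$ is finite (the paper cites Corollary~\ref{cor:tower-prod} directly where you go via Proposition~\ref{propn:three-fin}, which amounts to the same thing), then observe that any $\gamma\in\Q(\alpha,\beta)$ has $[\Q(\gamma):\Q]$ finite by the tower law and is therefore algebraic by Corollary~\ref{cor:fin-deg-alg}. No gaps.
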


\begin{proof}
By Corollary~\ref{cor:simple-alg},
\[
\ovln{\Q} = \{ \alpha \in \C \such [\Q(\alpha): \Q] < \infty\}.
\]
For all $\alpha, \beta \in \ovln{\Q}$, 
\[
[\Q(\alpha, \beta): \Q] 
\leq 
[\Q(\alpha): \Q] [\Q(\beta): \Q]
< 
\infty
\]
by Corollary~\ref{cor:tower-prod}. Now $\alpha + \beta \in \Q(\alpha,
\beta)$, so $\Q(\alpha + \beta) \sub \Q(\alpha, \beta)$, so
\[
[\Q(\alpha + \beta): \Q]
\leq
[\Q(\alpha, \beta): \Q]
< 
\infty,
\]
giving $\alpha + \beta \in \ovln{\Q}$. Similarly, $\alpha \cdot \beta \in
\ovln{\Q}$. For all $\alpha \in \ovln{\Q}$,
\[
[\Q(-\alpha): \Q]
=
[\Q(\alpha): \Q]
< 
\infty,
\]
giving $-\alpha \in \ovln{\Q}$. Similarly, $1/\alpha \in \ovln{\Q}$ (if
$\alpha \neq 0$). And clearly $0, 1 \in \ovln{\Q}$.
\end{proof}

If you did Exercise~\ref{ex:alg-fd-doomed}, you'll appreciate how hard that
result is to prove from first principles, and how amazing it is that the
proof above is so clean and simple.

\begin{ex}{ex:alg-subfd}
Let $M: K$ be a field extension, and write $L$ for the set of elements of
$M$ algebraic over $K$. By imitating the proof of 
Proposition~\ref{propn:Qbar-subfd}, prove that $L$ is a subfield of $M$.
\end{ex}

\section{Ruler and compass constructions}
\label{sec:ruler-compass}

This section is a truly wonderful application of the algebra we've
developed so far. Using it, we will solve problems that lay unsolved for
thousands of years.

The arguments here have a lot in common with those we'll use in
Chapter~\ref{ch:sol} for the problem of solving polynomials by
radicals. It's well worth getting used to these arguments now, since the
polynomial problem involves some extra subtleties that will need your
full attention then. In other words, treat this as a warm-up.

The ancient Greeks developed planar geometry to an extraordinary degree,
discovering how to perform a very wide range of constructions using only
ruler and compasses. But there were three particular constructions that
they couldn't figure out how to do using only these instruments:
\begin{itemize}
\item \textbf{Trisect the angle:} given an angle $\theta$, construct the
angle $\theta/3$. 

\item \textbf{Duplicate the cube:} given a length, construct a new length
whose cube is twice the cube of the original. That is, given two points
distance $L$ apart, construct two points distance $\sqrt[3]{2}L$ apart.

\item \textbf{Square the circle:} given a circle, construct a square with
the same area. That is, given two points distance $L$ apart, construct two
points distance $\sqrt{\pi}L$ apart.
\end{itemize}
The challenge of finding constructions lay unanswered for millennia. And it
wasn't for lack of attention. My Galois
theory lecture notes from when I was an undergraduate contain the following
words:
\begin{quote}
Thomas Hobbes claimed to have solved these. John Wallis
disagreed. A 17th century pamphlet war ensued.
\end{quote}
Twitter users may conclude that human nature has not changed.

It turns out that the reason why no one could find a way to do these
constructions is that they're impossible. We'll prove it using field
theory.

In order to prove that you \emph{can't} do these things using ruler and
compasses, it's necessary to know that you \emph{can} do certain other
things using ruler and compasses. 
\video{Ruler and compass constructions}
I'll take some simple constructions for granted (but there's a video if you
want the details). 

\begin{digression}{dig:rc}
The standard phrase is `ruler and compass constructions', but it's slightly
misleading. A ruler has distance markings on it, whereas for the problems
of ancient Greece, you're supposed to use only a `straight edge': a ruler
without markings (and no, you're not allowed to mark it). As Stewart
explains (Section~7.1), with a marked or markable straight edge, you
\emph{can} solve all three problems. Also, for what it's worth, an
instrument for drawing circles is strictly speaking a \emph{pair} of
compasses.
But like everyone else, we'll say `ruler and compass'---
\[
\includegraphics[height=25mm]{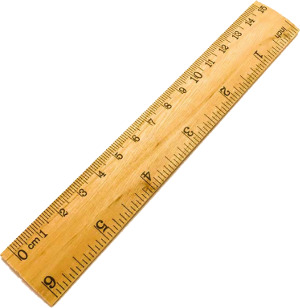}
\qquad
\includegraphics[height=25mm]{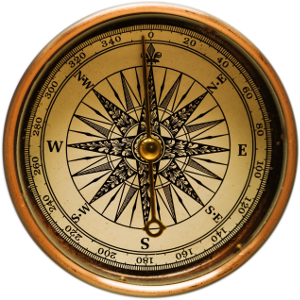}
\]
---when we really mean `straight edge and compasses'---
\[
\includegraphics[height=20mm]{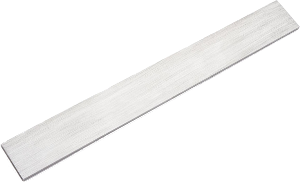}
\qquad
\includegraphics[height=20mm]{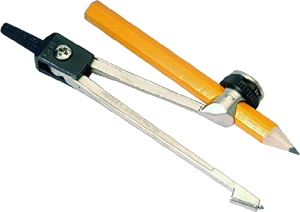}
\]
\end{digression}

The problems as stated above are maybe not quite precise; let's 
formalize them.

Starting from a subset $\Sigma$ of the plane, our instruments allow the
following constructions:
\begin{itemize}
\item 
given two distinct points $A, B$ of $\Sigma$, draw the (infinite) line
through $A$ and $B$;

\item
given two distinct points $A, B$ of $\Sigma$, draw the circle with centre
$A$ passing through $B$.
\end{itemize}
A point in the plane is \demph{immediately constructible} from $\Sigma$
if it is a point of intersection between two distinct lines, or two
distinct circles, or a line and a circle, of the form above. A point $C$ in
the plane is \demph{constructible} from $\Sigma$ if there is a finite
sequence $C_1, \ldots, C_n = C$ of points such that $C_i$ is immediately
constructible from $\Sigma \cup \{C_1, \ldots, C_{i - 1}\}$ for each
$i$. Broadly, the question is: which points are constructible from which?

The key idea of the solution is that when you're doing Greek-style
geometry, then in terms of coordinates, you're repeatedly solving linear or
quadratic equations. (The Greeks didn't use coordinates, but we
will.) This is because equations of lines and circles are linear or
quadratic.

For instance, suppose we start with the points $(0, 0)$ and $(1, 0)$. Draw
the circle with centre $(0, 0)$ passing through $(1, 0)$, and vice
versa. The intersection points of the two circles are $(1/2, \pm
\sqrt{3}/2)$, where the square root came from solving a quadratic. If we
do further ruler and compass constructions, we might end up with
coordinates like $\sqrt{\sqrt{2} + \sqrt{3}}$.  But we can never get
coordinates like $\sqrt[3]{2}$, because where would a cube root come from?
We're only solving quadratics here.

To translate this idea into field terms, we make the following
definition. Take a subfield $K \sub \R$. We say that the extension $K: \Q$
is \demph{iterated quadratic} if there is some finite sequence
of subfields
\[
\Q = K_0 \sub K_1 \sub \cdots \sub K_n = K
\]
such that $[K_i: K_{i - 1}] = 2$ for all $i \in \{1, \ldots, n\}$. 

\begin{example}
$\Q\Bigl(\sqrt{\sqrt{2} + \sqrt{3}}\Bigr)$ is an iterated
quadratic extension of $\Q$, because we have a chain of subfields
\[
\Q 
\sub \Q\bigl(\sqrt{2}\bigr)
\sub \Q\bigl(\sqrt{2}, \sqrt{3}\bigr) 
= \Q\bigl(\sqrt{2} + \sqrt{3}\bigr)
\sub \Q\biggl(\sqrt{\sqrt{2} + \sqrt{3}}\biggr)
\]
where each has degree $2$ over the last. (For the equality, see
Exercise~\ref{ex:sqrt2plus3}.) 
\end{example}

There is an iterated quadratic extension of $\Q$ containing $\sqrt{\sqrt{2}
+ \sqrt{3}}$, and by the same argument, there is one containing
$\sqrt{\sqrt{5} + \sqrt{7}}$. Is there one containing both? We will prove a
general result guaranteeing that there is. The following terminology will
be useful.

\begin{defn}
\label{defn:compm}
Let $L$ and $L'$ be subfields of a field $M$. The \demph{compositum $LL'$} of
$L$ and $L'$ is the subfield of $M$ generated by $L \cup L'$.
\end{defn}

That is, $LL'$ is the smallest subfield of $M$ containing both $L$ and
$L'$. In the notation of Definition~\ref{defn:adj}, we could also write $LL'$
as either $L(L')$ or $L'(L)$.

\begin{example}
The compositum of the subfields $\Q(\sqrt{2})$ and $\Q(\sqrt{3})$ of $\R$
is $\Q(\sqrt{2}, \sqrt{3})$. 
\end{example}

\begin{warning}{wg:compm}
Despite the notation,
\[
LL'
\ {\color{Red!100} \neq}\  
\{ \alpha\alpha' \such \alpha \in L, \alpha' \in L'\}.
\]
But it is true that $LL'$ is the subfield of $M$ \emph{generated} by the
right-hand side. (Why?)
\end{warning}

To show that any two iterated quadratic extensions of $\Q$ can be merged
into one, we first consider extensions of degree~$2$. 

\begin{lemma}
\label{lemma:2-join}
Let $M: K$ be a field extension and let $L, L'$ be subfields of $M$
containing $K$. 
\marginpar{\quad
$\xymatrix@C=3mm@R=3mm{%
        &
\scriptstyle
M \ar@{-}[d]   &       \\
        &
\scriptstyle
LL' \ar@{-}[ld] \ar@{-}[rd]    &       \\
\scriptstyle
L \ar@{-}[rd]   &       &
\scriptstyle
L' \ar@{-}[ld] \\
        &
\scriptstyle
K      &       
}$%
}%
If $[L: K] = 2$ then $[LL': L'] \in \{1, 2\}$.
\end{lemma}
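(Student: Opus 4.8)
The plan is to reduce to a simple extension and then quote Corollary~\ref{cor:deg-ineqs}. First I would check that $L = K(\alpha)$ for a suitable $\alpha$. Since $[L:K] = 2 > 1$, Example~\ref{egs:deg-ext}\bref{eg:de-triv} gives $L \neq K$, so we may pick $\alpha \in L \without K$. Then $K(\alpha)$ is a subfield of $M$ with $K \subsetneqq K(\alpha) \sub L$, so by the tower law (Theorem~\ref{thm:tower}) the number $[K(\alpha):K]$ divides $[L:K] = 2$ and is strictly greater than $1$; hence $[K(\alpha):K] = 2 = [L:K]$, and applying the tower law once more gives $[L:K(\alpha)] = 1$, i.e.\ $L = K(\alpha)$. (Alternatively, one can simply cite Exercise~\ref{ex:prime-simple}: an extension of prime degree is simple.) In particular $\alpha$ is algebraic over $K$, of degree $2$, with minimal polynomial some monic irreducible $m \in K[t]$ of degree $2$.

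Next I would identify the compositum. Since $K \sub L'$ by hypothesis, the subfield of $M$ generated by $L \cup L' = K(\alpha) \cup L'$ coincides with the subfield generated by $L' \cup \{\alpha\}$; that is, $LL' = L'(\alpha)$. This is a one-line check from the definitions, or a direct application of Exercise~\ref{ex:adjn-two} together with the fact that $K(L') = L'$.

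Finally, apply Corollary~\ref{cor:deg-ineqs} to the tower $M : L' : K$ with $\beta = \alpha$: it yields $[L'(\alpha):L'] \leq [K(\alpha):K] = 2$. Since every field extension has degree at least $1$ (Example~\ref{egs:deg-ext}\bref{eg:de-triv}), we conclude $[LL':L'] = [L'(\alpha):L'] \in \{1,2\}$, as required.

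There is no real obstacle here; the only points that need a moment's care are (i) that a degree-$2$ extension is automatically simple, and (ii) the identification $LL' = L'(\alpha)$ — both routine given the machinery already developed. If one prefers to avoid Corollary~\ref{cor:deg-ineqs}, the final step can be done by hand: $\alpha$ satisfies $m \in K[t] \sub L'[t]$ over $L'$, so the minimal polynomial of $\alpha$ over $L'$ divides $m$ and hence has degree $1$ or $2$, whence $[L'(\alpha):L'] \in \{1,2\}$ by Theorem~\ref{thm:simple-basis}\bref{part:sba-alg}.
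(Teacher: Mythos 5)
Your proof is correct and follows essentially the same route as the paper's: pick $\alpha \in L \without K$, use the tower law to get $L = K(\alpha)$, identify $LL' = L'(\alpha)$, and apply Corollary~\ref{cor:deg-ineqs} to bound $[LL': L'] \leq [K(\alpha): K] = 2$. The extra detail you supply (divisibility via the tower law, and the alternative ending via the minimal polynomial of $\alpha$ over $L'$) is fine but not a different argument.
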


Actually, it is true more generally that $[LL': L'] \leq [L: K]$
(\href{https://www.maths.ed.ac.uk/~tl/galois}{Workshop~3, question~18}),
but we will not need this fact.

\begin{proof}
Choose some $\beta \in L \without K$. By applying the tower law to $L:
K(\beta): K$ and using the hypothesis that $[L: K] = 2$, we see that
$K(\beta) = L$. 

Next we show that $LL' = L'(\beta)$. Certainly $L'(\beta) \sub LL'$, since
$L' \sub LL'$ and $\beta \in L \sub LL'$. Conversely, $L'(\beta)$ is a
subfield of $M$ that contains both $K(\beta) = L$ and $L'$, so it contains
$LL'$. Hence $LL' = L'(\beta)$, as claimed.

It follows that $[LL': L'] = [L'(\beta): L'] \leq [K(\beta): K] = 2$, where
the inequality comes from Corollary~\ref{cor:deg-ineqs}. 
\end{proof}

\begin{ex}{ex:diamond-2}
Find an example of Lemma~\ref{lemma:2-join} where $[LL': L'] = 2$, and
another where $[LL': L'] = 1$.
\end{ex}

\begin{lemma}
\label{lemma:iq-join}
Let $K$ and $L$ be subfields of $\R$ such that the extensions $K: \Q$ and
$L: \Q$ are iterated quadratic. Then there is some subfield $M$ of $\R$
such that the extension $M: \Q$ is iterated quadratic and $K, L \sub M$.
\end{lemma}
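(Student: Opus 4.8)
The plan is to take $M = KL$, the compositum of $K$ and $L$ formed inside $\R$, and to exhibit an iterated quadratic tower from $\Q$ up to $KL$ by splicing together a quadratic tower for $K:\Q$ with a suitably shifted version of a quadratic tower for $L:\Q$.

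First I would fix iterated quadratic towers
\[
\Q = K_0 \sub K_1 \sub \cdots \sub K_n = K,
\qquad
\Q = L_0 \sub L_1 \sub \cdots \sub L_m = L,
\]
with $[K_i : K_{i-1}] = 2$ and $[L_j : L_{j-1}] = 2$ throughout. Since $K$ and $L$ are subfields of $\R$, so is $M = KL$, and trivially $K, L \sub M$. Now look at the chain of subfields of $\R$
\[
\Q = K_0 \sub \cdots \sub K_n = K = KL_0 \sub KL_1 \sub \cdots \sub KL_m = KL = M,
\]
where the middle equality uses $L_0 = \Q$ and the last uses $L_m = L$. The first $n$ steps have degree $2$ by construction. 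For the remaining steps, fix $j \in \{1, \ldots, m\}$ and apply Lemma~\ref{lemma:2-join} with ambient field $\R$, base field $L_{j-1}$, and intermediate subfields $L_j$ and $KL_{j-1}$ (both contain $L_{j-1}$). Since $[L_j : L_{j-1}] = 2$, the lemma yields $[\,L_j(KL_{j-1}) : KL_{j-1}\,] \in \{1,2\}$. The key small observation is that $L_j(KL_{j-1})$, being the smallest subfield of $\R$ containing $L_j$ and $KL_{j-1}$, is also the smallest containing $K$ and $L_j$ (using $L_{j-1} \sub L_j$), i.e.\ it equals $KL_j$. Hence $[KL_j : KL_{j-1}] \in \{1,2\}$ for every $j$, so every step in the displayed chain has degree $1$ or $2$.

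Finally I would prune the chain: deleting each field that coincides with its predecessor leaves a strictly increasing chain $\Q = F_0 \subsetneqq F_1 \subsetneqq \cdots \subsetneqq F_r = M$. Each step $[F_{k+1}:F_k]$ equals the degree of some step of the original chain that is not an equality, hence is $2$ rather than $1$, and it cannot exceed $2$ since no original step did. Therefore $M : \Q$ is iterated quadratic and $K, L \sub M$, which is exactly what we need. There is no genuine obstacle here beyond the bookkeeping; the only point requiring a moment's care is the identification $L_j(KL_{j-1}) = KL_j$, which is what makes Lemma~\ref{lemma:2-join} applicable to precisely the steps of the interleaved tower.
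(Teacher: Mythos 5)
Your proposal is correct and follows essentially the same route as the paper's proof: form the compositum $KL$, splice the tower for $L:\Q$ onto the top of the tower for $K:\Q$, apply Lemma~\ref{lemma:2-join} with base field $L_{j-1}$ to get $[KL_j : KL_{j-1}] \in \{1,2\}$, and discard the degree-one repeats. Your explicit check that $L_j(KL_{j-1}) = KL_j$ is a point the paper leaves implicit, but it is the same argument.
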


\begin{proof}
Take subfields 
\[
\Q = K_0 \sub K_1 \sub \cdots \sub K_n = K \sub \R,
\qquad
\Q = L_0 \sub L_1 \sub \cdots \sub L_m = L \sub \R
\]
with $[K_i: K_{i - 1}] = 2 = [L_j:
L_{j - 1}]$ for all $i, j$. Consider the chain of subfields
\begin{align}
\label{eq:ij-chain}
\Q = K_0 \sub K_1 \sub \cdots \sub K_n = K
= K L_0 \sub K L_1 \sub \cdots \sub K L_m = KL
\end{align}
of $\R$. It is enough to show that $KL$ is an iterated quadratic extension
of $K$.

In the chain~\eqref{eq:ij-chain}, $[K_i: K_{i - 1}] = 2$ for all
$i$. Moreover, for each $j$ we have $[L_j: L_{j - 1}] = 2$, so 
Lemma~\ref{lemma:2-join} implies that $[KL_j: KL_{j - 1}] \in \{1, 2\}$
(taking the `$K$' of that lemma to be $L_{j - 1}$). Hence
in~\eqref{eq:ij-chain}, all the successive degrees are $1$ or $2$. An
extension of degree $1$ is an equality, so by ignoring repeats, we see that
$KL: \Q$ is an iterated quadratic extension.
\end{proof}

The general theory of ruler and compass constructibility starts with any
set $\Sigma \sub \R^2$ of given points. But for simplicity, we will stick to
the case where $\Sigma$ consists of just two points, and we'll choose our
coordinate axes so that they have coordinates $(0, 0)$ and $(1, 0)$. This
will still enable us to solve the notorious problems of ancient Greece.

\begin{propn}
\label{propn:iq}
Let $(x, y) \in \R^2$. If $(x, y)$ is constructible from $\{(0, 0), (1,
0)\}$ then there is an iterated quadratic extension of $\Q$ containing
$x$ and $y$.
\end{propn}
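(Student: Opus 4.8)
The plan is to induct on the length $n$ of the construction sequence $C_1, \ldots, C_n = (x,y)$, proving the stronger statement that there is a \emph{single} iterated quadratic extension $K \sub \R$ of $\Q$ containing the coordinates of \emph{all} the points $C_1, \ldots, C_n$ (together with $0$ and $1$, which are trivially in $\Q$). Strengthening the statement this way is essential: to analyse the step producing $C_{i}$, I need all the previously constructed coordinates to live in one common field, so that the lines and circles through them have coefficients in that field.

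First I would set up the inductive step. Suppose the coordinates of $(0,0), (1,0), C_1, \ldots, C_{i-1}$ all lie in an iterated quadratic extension $K_{i-1} \sub \R$ of $\Q$. The point $C_i$ is immediately constructible from $\Sigma \cup \{C_1, \ldots, C_{i-1}\}$, so it is the intersection of two lines, two circles, or a line and a circle, each determined by two points whose coordinates lie in $K_{i-1}$. A line through two such points has an equation $ax + by = c$ with $a,b,c \in K_{i-1}$; a circle with centre and radius-defining point having coordinates in $K_{i-1}$ has an equation $x^2 + y^2 + dx + ey + f = 0$ with $d,e,f \in K_{i-1}$. In each of the three cases, solving the simultaneous system reduces (for the circle--circle case, by subtracting the two circle equations to get a line) to substituting a linear relation into at most one quadratic, so the coordinates of $C_i$ satisfy a quadratic (or linear) equation over $K_{i-1}$. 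Hence $C_i$'s coordinates lie in $K_{i-1}(\gamma)$ for some $\gamma \in \R$ with $[K_{i-1}(\gamma) : K_{i-1}] \in \{1, 2\}$ (namely $\gamma$ a root of that quadratic), and adjoining the second coordinate similarly costs degree $1$ or $2$ more. Discarding the degree-$1$ steps, we obtain an iterated quadratic extension $K_i \sup K_{i-1}$ containing the coordinates of $C_i$ as well, completing the induction. Taking $K = K_n$ gives the proposition, since $x, y$ are the coordinates of $C_n$.

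The routine-but-necessary ingredients are: (a) the elementary coordinate geometry that line and circle equations through points with coordinates in a field $F$ have coefficients in $F$, and that the relevant intersections are governed by quadratics over $F$; (b) the observation that the composite of iterated quadratic extensions built at successive stages is again iterated quadratic — here one appends the new degree-$\leq 2$ steps to the existing tower and deletes any equalities, exactly as in the proof of Lemma~\ref{lemma:iq-join}. I expect the main obstacle, or at least the point needing the most care, to be the bookkeeping in the circle--circle case: one must check that subtracting the two circle equations really does produce a genuine line (the quadratic terms cancel), and that intersecting that line with one of the original circles is the degree-$\leq 2$ operation claimed; the line--circle and line--line cases are then easier special cases of the same substitution argument. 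A secondary subtlety is simply remembering to strengthen the inductive hypothesis to cover all constructed points simultaneously rather than trying to treat $(x,y)$ in isolation — without that, the fields produced at different stages need not nest, and Lemma~\ref{lemma:iq-join} would have to be invoked repeatedly and less cleanly.
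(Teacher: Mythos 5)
Your proof is correct and follows essentially the same route as the paper's: induction on the number of construction steps, together with the coordinate-geometry observation that intersecting lines and circles whose coefficients lie in a field $F$ produces coordinates of degree at most $2$ over $F$. The one structural difference is how the inductive hypothesis is organized. You strengthen it so that a \emph{single} iterated quadratic extension $K_{i-1}$ already contains the coordinates of $(0,0)$, $(1,0)$ and $C_1, \dots, C_{i-1}$, so that each inductive step just appends at most two more degree-$\leq 2$ layers onto that one tower. The paper instead carries the weaker pointwise hypothesis that each coordinate of each point constructible in fewer steps lies in \emph{some} iterated quadratic extension (possibly a different one per coordinate), and then calls on Lemma~\ref{lemma:iq-join} \emph{inside} the inductive step to merge those into a single field $L$ before running the degree argument --- and once more at the end, to combine the extensions containing $x$ and containing $y$. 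Your closing remark anticipated exactly this alternative and flagged it as workable but less tidy; that is in fact what the paper does. Both organizations are valid, resting on the same coordinate geometry and, in one form or another, on Lemma~\ref{lemma:iq-join}.
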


\begin{proof}
Suppose that $(x, y)$ is constructible from $\{(0, 0), (1, 0)\}$ in $n$
steps. If $n = 0$ then $(x, y)$ is $(0, 0)$ or $(1, 0)$, so $x, y \in \Q$,
and $\Q$ is trivially an iterated quadratic extension of $\Q$.

Now let $n \geq 1$. Suppose inductively that each coordinate of each
point constructible from $\{(0, 0), (1, 0)\}$ in $< n$ steps lies
in some iterated quadratic extension of $\Q$. By definition, $(x, y)$ is an
intersection point of two distinct lines/circles through points
constructible in $< n$ steps. By inductive hypothesis, each coordinate of
each of those points lies in some iterated quadratic extension of $\Q$, so
by Lemma~\ref{lemma:iq-join}, there is an iterated quadratic extension $L$
of $\Q$ containing all the points' coordinates. The coefficients in the
equations of the lines/circles then also lie in~$L$.

We now show that $\deg_L(x) \in \{1, 2\}$.

If $(x, y)$ is the intersection point of two distinct lines, then $x$ and
$y$ satisfy two linearly independent equations
\begin{align*}
ax + by + c     &= 0,   \\
a'x + b'y + c'  &= 0
\end{align*}
with $a, b, c, a', b', c' \in L$. Solving gives $x \in L$. (In more detail,
$x$ is a rational function of $a, b, \ldots$---write it down if you
want!---and so $x \in L$.)

If $(x, y)$ is an intersection point of a line and a circle, then
\begin{align*}
ax + by + c     &= 0,   \\
x^2 + y^2 + dx + ey + f &= 0
\end{align*}
with $a, b, c, d, e, f \in L$. If $b = 0$ then $a \neq 0$ and $x = -c/a \in
L$. Otherwise, we can eliminate $y$ to give a quadratic over $L$ satisfied
by $x$, so that $\deg_L(x) \in \{1, 2\}$. 

If $(x, y)$ is an intersection point of two circles, then
\begin{align*}
x^2 + y^2 + dx + ey + f &=0,    \\
x^2 + y^2 + d'x + e'y + f'      &=0
\end{align*}
with $d, e, f, d', e', f' \in L$. Subtracting, we reduce to the case of a
line and a circle, again giving $\deg_L(x) \in \{1, 2\}$. 

Hence $\deg_L(x) \in \{1, 2\}$. If $\deg_L(x) = 1$ then $x \in L$, and $L$ is
an iterated quadratic extension of $\Q$. If $\deg_L(x) = 2$,
i.e.\ $[L(x): L] = 2$, then $L(x)$ is an iterated quadratic extension of
$\Q$. In either case, $x$ lies in some iterated quadratic
extension of $\Q$. The same is true of $y$. Hence by
Lemma~\ref{lemma:iq-join}, there is an iterated quadratic extension of
$\Q$ containing $x$ and $y$. This completes the induction.
\end{proof}

\begin{bigthm}
\label{thm:rc-deg}
Let $(x, y) \in \R^2$. If $(x, y)$ is constructible from $\{(0, 0), (1,
0)\}$ then $x$ and $y$ are algebraic over $\Q$, and their degrees over $\Q$
are powers of $2$.
\end{bigthm}

\begin{proof}
By Proposition~\ref{propn:iq}, there is an iterated quadratic extension $M$
of $\Q$ with $x \in M$. Then $[M: \Q] = 2^n$ for some $n \geq 0$, by the
tower law. But then $\deg_{\Q}(x) = [\Q(x): \Q]$ divides $2^n$ by
Corollary~\ref{cor:tower-div}, and is therefore a power of $2$. And
similarly for $y$.
\end{proof}

Now we solve the problems of ancient Greece. 

\begin{propn}
The angle cannot be trisected by ruler and compass.
\end{propn}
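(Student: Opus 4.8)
The plan is to exhibit a single angle that provably cannot be trisected, namely $\theta = 60^\circ$, which is certainly available since the points $(0,0)$ and $(1,0)$ let us construct an equilateral triangle and hence a $60^\circ$ angle. Trisecting this angle would amount to constructing $20^\circ$, and by standard facts about constructibility (an angle $\phi$ is constructible from a configuration iff the point $(\cos\phi,\sin\phi)$ is, after we've set up the unit length) this reduces to asking whether $\cos 20^\circ$ is constructible from $\{(0,0),(1,0)\}$. By Theorem~\ref{thm:rc-deg}, if it were, then $\deg_\Q(\cos 20^\circ)$ would be a power of $2$. So the whole proof comes down to showing $\cos 20^\circ$ has degree $3$ over $\Q$.

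First I would recall the triple-angle identity $\cos 3\phi = 4\cos^3\phi - 3\cos\phi$. Setting $\phi = 20^\circ$, so $3\phi = 60^\circ$ and $\cos 60^\circ = \tfrac12$, and writing $c = \cos 20^\circ$, this gives $4c^3 - 3c = \tfrac12$, i.e.\ $8c^3 - 6c - 1 = 0$. So $c$ is a root of $g(t) = 8t^3 - 6t - 1 \in \Q[t]$. Next I would show $g$ is irreducible over $\Q$. Since $g$ has degree $3$, by Lemma~\ref{lemma:irr-triv}\bref{part:it-23} it suffices to check $g$ has no rational root. By the rational root theorem any rational root has the form $\pm p/q$ with $p \mid 1$ and $q \mid 8$, so the candidates are $\pm 1, \pm\tfrac12, \pm\tfrac14, \pm\tfrac18$; a direct check shows none of these is a root (alternatively, clear denominators by substituting $t = s/2$ to get $s^3 - 3s - 1$, which is monic with integer coefficients and has no root among $\pm 1$, so is irreducible over $\Z$ and hence over $\Q$ by Gauss). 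Therefore $g$ (suitably scaled to be monic) is the minimal polynomial of $c$ over $\Q$, and by Theorem~\ref{thm:simple-basis}\bref{part:sba-alg}, $\deg_\Q(\cos 20^\circ) = [\Q(\cos 20^\circ):\Q] = 3$.

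Now I would assemble the contradiction: if the $60^\circ$ angle could be trisected by ruler and compass starting from $\{(0,0),(1,0)\}$, then $\cos 20^\circ$ would be a coordinate of a constructible point, so by Theorem~\ref{thm:rc-deg} its degree over $\Q$ would be a power of $2$; but $3$ is not a power of $2$, a contradiction. Hence $60^\circ$ cannot be trisected, and so it is not true that every angle can be trisected by ruler and compass.

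The main obstacle is the bookkeeping connecting the informal problem statement (``trisect a given angle'') to the precise framework of Theorem~\ref{thm:rc-deg}, which is phrased about constructing \emph{points} from the two-point set $\{(0,0),(1,0)\}$. I would need a short lemma or remark establishing that from $(0,0)$ and $(1,0)$ one can construct the point $(\cos 60^\circ, \sin 60^\circ)$ (so that a $60^\circ$ angle is genuinely ``given'' in this model), and that trisecting it would let one construct $(\cos 20^\circ, \sin 20^\circ)$, whose $x$-coordinate is $\cos 20^\circ$; dropping a perpendicular from this point to the $x$-axis is an elementary ruler-and-compass move, so $\cos 20^\circ$ itself becomes constructible. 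Everything else — the triple-angle identity, the irreducibility check, and invoking Theorem~\ref{thm:rc-deg} — is routine. The irreducibility of $8t^3 - 6t - 1$ is the one genuine computation, and it is short.
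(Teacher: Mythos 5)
Your proof is correct and follows essentially the same approach as the paper: construct an equilateral triangle to obtain a $60^\circ$ angle, observe that trisecting it would make $\cos 20^\circ = \cos(\pi/9)$ a constructible coordinate, and contradict Theorem~\ref{thm:rc-deg} via $\deg_\Q(\cos 20^\circ) = 3$. The only difference is that the paper outsources the fact $\deg_\Q(\cos(\pi/9)) = 3$ to an assignment, while you supply the computation (triple-angle identity giving $8t^3 - 6t - 1$, then irreducibility via the rational root theorem), which is correct.
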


\begin{proof}
Suppose it can be. Construct an equilateral triangle with $(0, 0)$ and $(1,
0)$ as two of its vertices (which can be done by ruler and compass;
Figure~\ref{fig:trisect}). Trisect the angle of the triangle at $(0,
0)$. Plot the point $(x, y)$ where the trisector meets the circle with
centre $(0, 0)$ through $(1, 0)$. Then $x = \cos(\pi/9)$, so by
Theorem~\ref{thm:rc-deg}, $\deg_\Q(\cos(\pi/9))$ is a power of $2$. But you
showed in \href{https://www.maths.ed.ac.uk/~tl/galois}{Assignment~2} that
$\deg_\Q(\cos(\pi/9)) = 3$, a contradiction.
\end{proof}
\begin{figure}
\setlength{\fboxsep}{0mm}%
\setlength{\unitlength}{1mm}%
\begin{picture}(136,32)
\cell{68}{15}{c}{\includegraphics[height=30\unitlength]{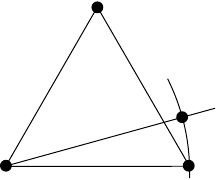}}
\cell{45}{2}{c}{$(0, 0)$}
\cell{87}{2}{c}{$(1, 0)$}
\cell{86}{9.5}{c}{$(x, y)$}
\end{picture}%
\caption{The impossibility of trisecting $60^\circ$.}
\label{fig:trisect}
\end{figure}

\begin{propn}
The cube cannot be duplicated by ruler and compass.
\end{propn}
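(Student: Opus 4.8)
The plan is to argue by contradiction, in exactly the same spirit as the proof just given for trisection, reducing everything to Theorem~\ref{thm:rc-deg}. Suppose the cube could be duplicated by ruler and compass. We start, as always, from the two points $(0,0)$ and $(1,0)$, which are distance $1$ apart. Applying the duplication procedure to this configuration produces two points distance $\sqrt[3]{2}$ apart. Using the standard ruler-and-compass construction for transferring a given length onto a given line (draw the line through $(0,0)$ and $(1,0)$, then strike off the constructed distance from $(0,0)$ along it with a compass), we obtain the point $(\sqrt[3]{2}, 0)$. Hence $(\sqrt[3]{2}, 0)$ is constructible from $\{(0,0),(1,0)\}$.

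Now Theorem~\ref{thm:rc-deg} forces $\deg_\Q(\sqrt[3]{2})$ to be a power of $2$. But $t^3 - 2$ is irreducible over $\Q$ --- either by Eisenstein's criterion with $p = 2$, or because it is a cubic with no rational root (Lemma~\ref{lemma:irr-triv}\bref{part:it-23}) --- so it is the minimal polynomial of $\sqrt[3]{2}$ over $\Q$, and therefore $\deg_\Q(\sqrt[3]{2}) = 3$ by Theorem~\ref{thm:simple-basis}\bref{part:sba-alg} (this was already recorded in Example~\ref{eg:sb-cbrt2}). Since $3$ is not a power of $2$, we have a contradiction, so the cube cannot be duplicated by ruler and compass.

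The only step requiring a little care is the translation from the informally stated problem (``construct two points distance $\sqrt[3]{2}L$ apart'') into the formal statement that the specific point $(\sqrt[3]{2},0)$ is constructible from $\{(0,0),(1,0)\}$; this rests on the elementary length-transfer construction that I am taking for granted (as flagged in Digression~\ref{dig:rc}), together with the observation that we may feed the duplication procedure the particular length we began with. Beyond that, the argument is purely an application of the degree machinery already in place, so no genuine obstacle remains.
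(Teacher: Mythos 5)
Your proof is correct and takes essentially the same route as the paper: apply the duplication procedure to the unit segment, transfer the resulting length to obtain $(\sqrt[3]{2},0)$ as a constructible point, invoke Theorem~\ref{thm:rc-deg} to conclude $\deg_\Q(\sqrt[3]{2})$ is a power of $2$, and derive a contradiction from $\deg_\Q(\sqrt[3]{2}) = 3$. The extra remarks you add (the explicit length-transfer step and the re-derivation of $\deg_\Q(\sqrt[3]{2}) = 3$ via Eisenstein or the cubic-with-no-rational-root test) are all sound and merely elaborate points the paper cites more tersely.
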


\begin{proof}
Suppose it can be. Since $(0, 0)$ and $(1, 0)$ are distance $1$ apart, we
can construct from them two points $A$ and $B$ distance $\sqrt[3]{2}$
apart. From $A$ and $B$ we can construct, using ruler and compass, the
point $(\sqrt[3]{2}, 0)$.  So $\deg_\Q(\sqrt[3]{2})$ is a power of $2$, by
Theorem~\ref{thm:rc-deg}. But $\deg_\Q(\sqrt[3]{2}) = 3$ by
Example~\ref{eg:sb-cbrt2}, a contradiction.
\end{proof}

\begin{propn}
The circle cannot be squared by ruler and compass.
\end{propn}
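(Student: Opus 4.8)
The plan is to argue by contradiction, exactly as in the previous two impossibility proofs, reducing everything to a single known fact: that $\pi$ is transcendental over $\Q$ (stated in Examples~\ref{egs:alg-trans}). First I would suppose the circle can be squared by ruler and compass. Applied to the two given points $(0,0)$ and $(1,0)$, which are distance $1$ apart, this produces two constructible points $A$ and $B$ at distance $\sqrt{\pi}$. From $A$ and $B$ one can then construct (using only ruler and compass, by the simple constructions taken for granted) the point $(\sqrt{\pi}, 0)$.

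Next I would invoke Theorem~\ref{thm:rc-deg}: since $(\sqrt{\pi}, 0)$ is constructible from $\{(0,0),(1,0)\}$, the number $\sqrt{\pi}$ is algebraic over $\Q$ (indeed $\deg_\Q(\sqrt{\pi})$ is a power of $2$, though we only need algebraicity here). Now $\ovln{\Q}$ is a subfield of $\C$ by Proposition~\ref{propn:Qbar-subfd}, so it is closed under multiplication; hence $\pi = \sqrt{\pi}\cdot\sqrt{\pi} \in \ovln{\Q}$, that is, $\pi$ is algebraic over $\Q$. This contradicts the transcendence of $\pi$, completing the proof.

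The ``main obstacle'' is not really an obstacle for us at all, but it is worth naming honestly: unlike the trisection and duplication problems, where the needed fact ($\deg_\Q(\cos(\pi/9)) = 3$ or $\deg_\Q(\sqrt[3]{2}) = 3$) is elementary and proved in the course, here the entire weight of the argument rests on the transcendence of $\pi$, which the notes deliberately do not prove (it is a genuinely deep analytic result, due to Lindemann). So the role of field theory in this case is merely to convert ``$\pi$ is transcendental'' into ``the circle cannot be squared''; all the difficulty has been pushed into that one imported theorem. Everything else is a two-line deduction from Theorem~\ref{thm:rc-deg} and Proposition~\ref{propn:Qbar-subfd}.
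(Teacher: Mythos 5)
Your proof is correct and follows essentially the same route as the notes: construct $(\sqrt{\pi},0)$, apply Theorem~\ref{thm:rc-deg} to get that $\sqrt{\pi}$ is algebraic, deduce via Proposition~\ref{propn:Qbar-subfd} that $\pi$ is algebraic, and contradict the (unproved here) transcendence of $\pi$. Your explicit remark that only algebraicity of $\sqrt{\pi}$ (not the power-of-$2$ degree) is needed, and that all the depth sits in Lindemann's theorem, is an accurate and fair gloss on the argument.
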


This one is the most outrageously impossible, yet the hardest to prove.

\begin{proof}
Suppose it can be. Since the circle with centre $(0, 0)$ through $(1, 0)$
has area $\pi$, we can construct by ruler and compass a square with
side-length $\sqrt{\pi}$, and from that, we can construct by ruler and
compass the point $(\sqrt{\pi}, 0)$. So by Theorem~\ref{thm:rc-deg},
$\sqrt{\pi}$ is algebraic over $\Q$ with degree a power of $2$. Since
$\ovln{\Q}$ is a subfield of $\C$, it follows that $\pi$ is algebraic over
$\Q$. But it is a (hard) theorem that $\pi$ is transcendental over $\Q$.
\end{proof}

\begin{digression}{dig:rc-complex}
Stewart has a nice alternative approach to all this, in his Chapter~7. He
treats the plane as the \emph{complex} plane, and he shows that the set of
all points in $\C$ constructible from $0$ and $1$ is a subfield. In fact,
it is the smallest subfield of $\C$ closed under taking square roots. He
calls it $\Q^\text{py}$, the `Pythagorean closure' of $\Q$. It can also be
described as the set of complex numbers contained in some iterated
quadratic extension of $\Q$.
\end{digression}

There is one more famous ruler and compass problem: for which integers $n$
is the regular $n$-sided polygon constructible, starting from just a pair
of points in the plane?

The answer has to do with \demph{Fermat primes}, which are prime numbers of
the form $2^u + 1$ for some $u \geq 1$. A little exercise in number theory
shows that if $2^u + 1$ is prime then $u$ must itself be a power of
$2$. The only known Fermat primes are
\[
2^{2^0} + 1 = 3,
\quad
2^{2^1} + 1 = 5,
\quad
2^{2^2} + 1 = 17,
\quad
2^{2^3} + 1 = 257,
\quad
2^{2^4} + 1 = 65537.
\]
Whether there are any others is a longstanding open question. In any case,
it can be shown that the regular $n$-sided polygon is constructible if and
only if
\[
n = 2^r p_1 \cdots p_k
\]
for some $r, k \geq 0$ and distinct Fermat primes $p_1, \ldots, p_k$. 

We will not do the proof, but it involves cyclotomic polynomials. A glimpse
of the connection: let $p$ be a prime such that the regular $p$-sided
polygon is constructible. Consider the regular $p$-sided polygon inscribed
in the unit circle in $\C$, with one of its vertices at $1$. Then another
vertex is at $e^{2\pi i/p}$, and from constructibility, it follows that
$\deg_\Q(e^{2\pi i/p})$ is a power of $2$. But we saw in
Example~\ref{egs:alg-basis}\bref{eg:ab-cyclo} that $\deg_\Q(e^{2\pi i/p}) =
p - 1$. So $p - 1$ is a power of $2$, that is, $p$ is a Fermat prime.
Field theory, number theory and Euclidean geometry come together!

\chapter{Splitting fields}
\label{ch:split}

In Chapter~\ref{ch:overview}, we met a definition of the symmetry group of
a polynomial over $\Q$. It was phrased in terms of conjugate tuples, it was
possibly a little mysterious, and it was definitely difficult to work with
(e.g.\ we couldn't compute the symmetry group of $1 + t + t^2 + t^3 +
t^4$).
\video{Introduction to Week~6}

In this chapter, we're going to give a different but equivalent definition
of the symmetry group of a polynomial. It's a two-step process:
\begin{itemize}
\item[1.]
We show how every polynomial $f$ over $K$ gives rise to an extension of
$K$, called the `splitting field' of $f$.

\item[2.]
We show how every field extension has a symmetry group.
\end{itemize}
The symmetry group, or `Galois group', of a polynomial is then defined to
be the symmetry group of its splitting field extension.

How does these two steps work? 
\begin{itemize}
\item[1.]  When $K = \Q$, the splitting field of $f$ is the smallest
subfield of $\C$ containing all the complex roots of $f$.  For a general
field $K$, it's constructed by adding the roots of $f$ one at a time, using
simple extensions, until we obtain an extension of $K$ in which $f$ splits
into linear factors.

\item[2.]
The symmetry group of a field extension $M: K$ is defined as the group of
automorphisms of $M$ over $K$. This is the same idea you've seen many
times before, for symmetry groups of other mathematical objects.
\end{itemize}
Why bother? Why not define the symmetry group of $f$ directly, as in
Chapter~\ref{ch:overview}? 
\begin{itemize}
\item 
Because this strategy works over every field $K$, not just $\Q$.

\item
Because there are field extensions that do not arise from a
polynomial, and their symmetry groups are sometimes important. For
example, an important structure in number theory, somewhat mysterious
to this day, is the symmetry group of the algebraic numbers $\ovln{\Q}$
over $\Q$. 

\item
Because using abstract algebra means you can cut down on explicit
calculations with polynomials. (By way of analogy, you've seen how abstract
linear algebra with vector spaces and linear maps allows you to cut down on
calculations with matrices.) It also reveals connections with other parts of
mathematics.
\end{itemize}

\section{Extending homomorphisms}
\label{sec:ext-homm}

In your degree so far, you'll have picked up the general principle that for
many kinds of mathematical \emph{object} (such as groups, rings, fields,
vector spaces, modules, metric spaces, topological spaces, measure spaces,
\ldots), it's important to consider the appropriate notion of
\emph{mapping} between them (such as homomorphisms, linear maps, continuous
maps, \ldots).  And since Chapter~\ref{ch:exts}, you've known that the basic
objects of Galois theory are field extensions.

So it's no surprise that sooner or later, we have to think about
mappings from one field extension to another. That moment is now. We'll
need what's in this section in order to establish fundamental facts about
splitting fields.

When we think about a field extension $M: K$, we generally regard the field
$K$ as our starting point and $M$ as a field that extends it.  Similarly,
we might start with a \emph{homomorphism} $\psi \from K \to K'$ between
fields, together with extensions $M$ of $K$ and $M'$ of $K'$, and look for
a \emph{homomorphism} $M \to M'$ that extends $\psi$. The language is as
follows.

\begin{defn}
Let $\iota \from K \to M$ and $\iota' \from K' \to M'$ be field
extensions. Let $\psi \from K \to K'$ be a homomorphism of fields. A
homomorphism $\phi \from M \to M'$ \demph{extends} $\psi$ if 
the square
\video{Extension problems}%
\[
\xymatrix{
M \ar[r]^\phi   &
M'      \\
K \ar[u]^\iota \ar[r]_\psi      &
K' \ar[u]_{\iota'}
}
\]
commutes ($\phi \of \iota = \iota' \of \psi$). 
\end{defn}

Here I've used the strict definition of a field extension as a
homomorphism $\iota$ of fields (Definition~\ref{defn:ext}). Most of the
time we view $K$ as a subset of $M$ and $K'$ as a subset of $M'$, with
$\iota$ and $\iota'$ being the inclusions. In that case, for $\phi$ to
extend $\psi$ just means that
\[
\phi(a) = \psi(a) \text{ for all } a \in K.
\]

\begin{examples}
\begin{enumerate}
\item
Let $M$ and $M'$ be two extensions of a field $K$. For a homomorphism $\phi
\from M \to M'$ to extend $\id_K$ means that $\phi$ is a homomorphism over
$K$. 

\item
The conjugation homomorphism $\C \to \C$ extends the conjugation
homomorphism $\Q(i) \to \Q(i)$.
\end{enumerate}
\end{examples}

The basic questions about extending homomorphisms are: given the two field
extensions and the homomorphism $\psi$, is there some $\phi$ that extends
$\psi$? If so, how many extensions $\phi$ are there?

We'll get to these questions later. In this section, we simply prove two
general results about extensions of field homomorphisms.

Recall that any ring homomorphism $\psi \from R \to S$ induces a
homomorphism $\psi_* \from R[t] \to S[t]$
(Definition~\ref{defn:ind-hom}). To reduce clutter, I'll write $\psi_*(f)$
as \demph{$\psi_* f$}.
\video{Explanation of Lemma~\ref{lemma:ext-zeros}}%

\begin{lemma}
\label{lemma:ext-zeros}
Let $M: K$ and $M': K'$ be field extensions, let $\psi \from K \to K'$ be a
homomorphism, and let $\phi \from M \to M'$ be a homomorphism extending
$\psi$. Let $\alpha \in M$ and $f(t) \in K[t]$. Then 
\marginpar{%
$\xymatrix@C=3mm@R=4mm{%
\scriptstyle \alpha \ \ \ M \ar[r]^-\phi          &
\scriptstyle M' \ \ \ \phi(\alpha)                \\
\scriptstyle f \ \ \ K \ar[r]_-\psi \ar@<-2.5mm>[u] &
\scriptstyle K' \ar@<5mm>[u] \ \ \ \psi_* f
}$%
}%
\[
f(\alpha) = 0 \iff (\psi_* f)(\phi(\alpha)) = 0.
\]
\end{lemma}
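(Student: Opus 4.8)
The plan is to reduce the whole statement to a single algebraic identity:
\[
\phi\bigl(f(\alpha)\bigr) = (\psi_* f)\bigl(\phi(\alpha)\bigr),
\]
valid for every $f \in K[t]$ and every $\alpha \in M$. Once this identity is in hand, the lemma is immediate. If $f(\alpha) = 0$, then applying $\phi$ and using $\phi(0) = 0$ gives $(\psi_* f)(\phi(\alpha)) = 0$. Conversely, if $(\psi_* f)(\phi(\alpha)) = 0$, then the identity gives $\phi(f(\alpha)) = 0$; but $\phi$ is a homomorphism between fields, hence injective by Lemma~\ref{lemma:fd-homm}, so $f(\alpha) = 0$.

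To prove the identity I would simply write $f = \sum_i a_i t^i$ with $a_i \in K$ and expand both sides. On the left, $f(\alpha) = \sum_i \iota(a_i)\alpha^i$ (the coefficients viewed inside $M$ via the extension map $\iota\from K \to M$), so since $\phi$ is a ring homomorphism,
\[
\phi\bigl(f(\alpha)\bigr) = \sum_i \phi\bigl(\iota(a_i)\bigr)\,\phi(\alpha)^i.
\]
On the right, by the concrete description of the induced homomorphism given after Definition~\ref{defn:ind-hom}, $\psi_* f = \sum_i \psi(a_i)\,t^i \in K'[t]$, so
\[
(\psi_* f)\bigl(\phi(\alpha)\bigr) = \sum_i \iota'\bigl(\psi(a_i)\bigr)\,\phi(\alpha)^i,
\]
where $\iota'\from K' \to M'$ is the second extension map. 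The two sums agree term by term precisely because $\phi$ extends $\psi$, i.e.\ $\phi \of \iota = \iota' \of \psi$, so $\phi(\iota(a_i)) = \iota'(\psi(a_i))$ for each $i$.

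The only point requiring care is the bookkeeping of the inclusions $\iota$ and $\iota'$ --- spelling out what "$f(\alpha)$" and "$(\psi_* f)(\phi(\alpha))$" literally mean when the coefficients live in $K$ and $K'$ but are evaluated in $M$ and $M'$. Once that is made honest, the hypothesis that $\phi$ extends $\psi$ is exactly what powers the coefficient-by-coefficient comparison, and there is no genuine obstacle; under the usual convention of treating $\iota,\iota'$ as inclusions and suppressing them, the identity is just $\phi(f(\alpha)) = (\psi_* f)(\phi(\alpha))$ and the proof collapses to a one-line expansion.
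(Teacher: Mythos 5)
Your proof is correct and follows essentially the same route as the paper's: expand $f = \sum_i a_i t^i$, show $\phi(f(\alpha)) = (\psi_* f)(\phi(\alpha))$ term by term using the extension condition $\phi \of \iota = \iota' \of \psi$, and then appeal to injectivity of $\phi$ (Lemma~\ref{lemma:fd-homm}) to convert the implication into an equivalence. The only difference is that you make the inclusions $\iota, \iota'$ explicit where the paper suppresses them, which is harmless extra care.
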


\begin{proof}
Write $f(t) = \sum_i a_i t^i$, where $a_i \in
K$. Then $(\psi_* f)(t) = \sum_i \psi(a_i) t^i \in K'[t]$, so
\[
(\psi_* f)(\phi(\alpha)) 
= 
\sum_i \psi(a_i) \phi(\alpha)^i
=
\sum_i \phi(a_i) \phi(\alpha)^i
=
\phi(f(\alpha)),
\]
where the second equality holds because $\phi$ extends $\psi$. Since $\phi$
is injective (Lemma~\ref{lemma:fd-homm}), the result follows.
\end{proof}

\begin{example}
\label{eg:ext-zeros-id}
Let $M$ and $M'$ be extensions of a field $K$, and let $\phi \from M \to
M'$ be a homomorphism over $K$. Then the annihilating polynomials of an
element $\alpha \in M$ are the same as those of $\phi(\alpha)$. This is the
case $\psi = \id_K$ of Lemma~\ref{lemma:ext-zeros}.
\end{example}

\begin{ex}{ex:ind-inj}
Show that if a ring homomorphism $\psi$ is injective then so is $\psi_*$,
and if $\psi$ is an isomorphism then so is $\psi_*$.
\end{ex}

An isomorphism between fields, rings, groups, vector spaces, etc., can be
understood as simply a renaming of the elements. For example, if I tell you
that the ring $R$ is left Noetherian but not right Artinian, and that $S$
is isomorphic to $R$, then you can deduce that $S$ is left Noetherian but
not right Artinian \emph{without having the slightest idea what those words
mean}. Just as long as they don't depend on the names of the elements of the
ring concerned (which such definitions never do), you're fine.

\begin{propn}
\label{propn:simp-unique}
Let $\psi \from K \to K'$ be an isomorphism of fields. Let $K(\alpha): K$
be a simple extension where $\alpha$ has minimal polynomial $m$ over $K$,
and let $K'(\alpha'): K'$ be a simple extension where $\alpha'$ has minimal
polynomial $\psi_* m$ over $K'$. Then there is exactly one isomorphism $\phi
\from K(\alpha) \to K'(\alpha')$ that extends $\psi$ and satisfies
$\phi(\alpha) = \alpha'$. 
\end{propn}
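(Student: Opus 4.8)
The plan is to reduce this to the universal property of simple extensions (Proposition~\ref{propn:univ-simp}) and Corollary~\ref{cor:simp-iso}, which handle the case where the base field is literally the same on both sides and the map fixing it is the identity. The trick to bridge the gap is to push everything through $\psi$ first: instead of thinking of $K'(\alpha')$ as an extension of $K'$, pull it back along $\psi^{-1}$ to make it an extension of $K$.

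More concretely, first I would observe that since $\psi\from K \to K'$ is an isomorphism, we can regard $K'(\alpha')$ as an extension of $K$ via the composite $K \toby{\psi} K' \hookrightarrow K'(\alpha')$. With respect to this $K$-structure, the element $\alpha'$ generates $K'(\alpha')$ over $K$ (since it generates over $K'$ and $\psi$ is onto $K'$). I then need to identify the minimal polynomial of $\alpha'$ over $K$ with respect to this structure: a polynomial $f \in K[t]$ annihilates $\alpha'$ (in the new sense) precisely when $(\psi_* f)(\alpha') = 0$, i.e.\ precisely when $\psi_* m \dvd \psi_* f$, i.e.\ (applying $(\psi^{-1})_* = (\psi_*)^{-1}$, using Exercise~\ref{ex:ind-inj}) precisely when $m \dvd f$. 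So $m$ is the minimal polynomial of $\alpha'$ over $K$ in the pulled-back structure. Now both $K(\alpha)$ and $K'(\alpha')$ are extensions of $K$, generated by an element with minimal polynomial $m$, so Corollary~\ref{cor:simp-iso}\bref{part:si-alg} (or Theorem~\ref{thm:class-simp}\bref{part:cs-alg}) gives exactly one isomorphism $\phi\from K(\alpha) \to K'(\alpha')$ over $K$ with $\phi(\alpha) = \alpha'$.

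The last step is to translate ``isomorphism over $K$'' in the pulled-back sense back into ``extends $\psi$'' in the original sense. By construction the $K$-structure on $K'(\alpha')$ sends $a \in K$ to $\iota'(\psi(a))$, so $\phi$ being a homomorphism over $K$ says exactly $\phi(\iota(a)) = \iota'(\psi(a))$ for all $a \in K$, which is the statement that $\phi$ extends $\psi$. Uniqueness transfers the same way: any $\phi$ extending $\psi$ with $\phi(\alpha)=\alpha'$ is a homomorphism over $K$ in the pulled-back structure fixing the generator, hence is the unique one supplied by the corollary. Alternatively, and perhaps more cleanly for exposition, one could prove existence directly by combining the universal property of the polynomial ring $K[t]$ (Proposition~\ref{propn:univ-poly}, with the ``$\phi$'' there taken to be $K \toby{\psi} K' \to K'(\alpha')$ and ``$b$'' taken to be $\alpha'$) with the universal property of the quotient $K[t]/\idlgen{m}$, after checking $\idlgen{m} \sub \ker$ of the resulting map $K[t] \to K'(\alpha')$ because that map sends $m$ to $(\psi_* m)(\alpha') = 0$; then identify $K(\alpha)$ with $K[t]/\idlgen{m}$ via Theorem~\ref{thm:class-simp}.

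The main obstacle is bookkeeping rather than mathematics: keeping straight which field-structure (the $K$-structure via $\psi$, versus the native $K'$-structure) one is using on $K'(\alpha')$ at each moment, and verifying the compatibility $(\psi^{-1})_* = (\psi_*)^{-1}$ and the claim $\psi_*(\idlgen{m}) = \idlgen{\psi_* m}$ so that minimal polynomials correspond correctly. Everything else — surjectivity of $\phi$ via Lemma~\ref{lemma:subfd-im-inv}, injectivity via Lemma~\ref{lemma:fd-homm}, uniqueness via Lemma~\ref{lemma:gen-epic} — is routine once the setup is in place.
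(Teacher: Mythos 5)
Your proposal is correct and takes essentially the same route as the paper: view $K'(\alpha')$ as an extension of $K$ via $\psi$, observe that $\alpha'$ then has minimal polynomial $m$ over $K$, and invoke the classification of simple extensions (Theorem~\ref{thm:class-simp} / Corollary~\ref{cor:simp-iso}). The only difference is that you spell out the ``renaming'' step -- that annihilators of $\alpha'$ over $K$ in the pulled-back structure correspond via $\psi_*$ to annihilators over $K'$, giving minimal polynomial $m$ -- which the paper leaves as an intuitive remark.
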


Diagram:
\[
\xymatrix{
K(\alpha) \ar@{.>}[r]^\phi_\iso   &
K'(\alpha')     \\
K \ar[u] \ar[r]_\psi^\iso &
K' \ar[u]       
}
\]
We often use a dotted arrow to denote a map whose existence is part of the
conclusion of a theorem.

\begin{proof}
View $K'(\alpha')$ as an extension of $K$ via the composite homomorphism $K
\toby{\psi} K' \to K'(\alpha')$. Then the minimal polynomial of $\alpha'$
over $K$ is $m$. (If this isn't intuitively clear to you, think of the
isomorphism $\psi$ as renaming.) Hence by the classification of simple
extensions, Theorem~\ref{thm:class-simp}, there is exactly one isomorphism
$\phi \from K(\alpha) \to K'(\alpha')$ over $K$ such that $\phi(\alpha) =
\alpha'$. The result follows.
\end{proof}

\section{Existence and uniqueness of splitting fields}

Let $f$ be a polynomial over a field $K$. Informally, a splitting field for
$f$ is an extension of $K$ where $f$ has all its roots, and which is no
bigger than it needs to be. 

\begin{warning}{wg:one-all}
If $f$ is irreducible, we know how to create an extension of $K$ where $f$
has at least \emph{one} root: take the simple extension $K[t]/\idlgen{f}$,
in which the equivalence class of $t$ is a root of $f$
(Lemma~\ref{lemma:simple-build}\bref{part:sb-alg}). 

But $K[t]/\idlgen{f}$ is not usually a splitting field for $f$. For
example, take $K = \Q$ and $f(t) = t^3 - 2$, as in
Warning~\ref{wg:sub-abs}. Write $\xi$ for the real cube root of
$2$. (Half the counterexamples in Galois theory involve the real cube root
of $2$.) Then $\Q[t]/\idlgen{f}$ is isomorphic to the subfield $\Q(\xi)$
of $\R$, which only contains \emph{one} root of $f$: the other two are
non-real, hence not in $\Q(\xi)$.
\end{warning}

\begin{defn}
Let $f$ be a polynomial over a field $M$. Then $f$ \demph{splits} in
$M$ if
\[
f(t) = \beta(t - \alpha_1) \cdots (t - \alpha_n)
\]
for some $n \geq 0$ and $\beta, \alpha_1, \ldots, \alpha_n \in M$.
\end{defn}

Equivalently, $f$ splits in $M$ if all its irreducible factors in $M[t]$
are linear.

\begin{examples}
\label{egs:splits}
\begin{enumerate}
\item 
A field $M$ is algebraically closed if and only if every polynomial over
$M$ splits in $M$.

\item
\label{eg:splits-some}
Let $f(t) = t^4 - 4t^2 - 5$. Then $f$ splits in $\Q(i, \sqrt{5})$, since
\begin{align*}
f(t)    &
=
(t^2 + 1)(t^2 - 5)      \\
&
=
(t - i)(t + i)(t - \sqrt{5})(t + \sqrt{5}).
\end{align*}
But $f$ does not split in $\Q(i)$, as its factorization into irreducibles
in $\Q(i)[t]$ is
\[
f(t) = (t - i)(t + i)(t^2 - 5),
\]
which contains a nonlinear factor. 
\end{enumerate}
\end{examples}

\begin{warning}{wg:splits-roots}
As Example~\ref{egs:splits}\bref{eg:splits-some} shows, a polynomial 
over $M$ may have one root or
even several roots in $M$, but still not split in $M$.
\\
\end{warning}

\begin{example}
\label{eg:splits-F2}
Let $M = \F_2(\alpha)$, where $\alpha$ is a root of $f(t) = 1 + t + t^2$,
as in Example~\ref{egs:adjoin}\bref{eg:adjoin-F2}. We have
\[
f(1 + \alpha) 
= 
1 + (1 + \alpha) + (1 + 2\alpha + \alpha^2) 
=
1 + \alpha + \alpha^2
=
0,
\]
so $f$ has two distinct roots in $M$, giving
\[
f(t) = (t - \alpha)(t - (1 + \alpha))
\]
in $M[t]$. Hence $f$ splits in $M$. 

In \emph{this} example, adjoining one root of $f$ gave us a second root for
free. But this doesn't typically happen (Warning~\ref{wg:one-all}).
\end{example}

\begin{defn}
Let $f$ be a nonzero polynomial over a field $K$. A \demph{splitting field}
of $f$ over $K$ is an extension $M$ of $K$ such that:
\begin{enumerate}
\item 
$f$ splits in $M$;

\item
\label{condn:sf-gen}
$M = K(\alpha_1, \ldots, \alpha_n)$, where $\alpha_1, \ldots, \alpha_n$ are
the roots of $f$ in $M$.
\end{enumerate}
\end{defn}

\begin{ex}{ex:sf-gen-eqv}
Show that~\bref{condn:sf-gen} can equivalently be replaced by:
`if $L$ is a subfield of $M$ containing $K$, and $f$ splits in $L$, then $L =
M$'. 
\end{ex}

\begin{examples}
\label{egs:sf}
\begin{enumerate}
\item 
\label{eg:sf-Q}
Let $0 \neq f \in \Q[t]$. Write $\alpha_1, \ldots, \alpha_n$ for the
complex roots of $f$. Then $\Q(\alpha_1, \ldots, \alpha_n)$, the smallest
subfield of $\C$ containing $\alpha_1, \ldots, \alpha_n$, is a splitting
field of $f$ over $\Q$.

Splitting fields over $\Q$ are easy because we have a ready-made
algebraically closed field containing $\Q$, namely, $\C$.

\item
\label{eg:sf-triv}
If a polynomial $f \in K[t]$ splits in $K$ then $K$ itself is a splitting
field of $f$ over $K$. For instance, since $\C$ is algebraically closed, it
is a splitting field of every nonzero polynomial over $\C$.

\item
\label{eg:sf-cbrt2}
Let $f(t) = t^3 - 2 \in \Q[t]$. Its complex roots are $\xi$,
$\omega\xi$ and $\omega^2\xi$, where $\xi$ is the real cube root
of $2$ and $\omega = e^{2\pi i/3}$. Hence a splitting field of $f$ over
$\Q$ is 
\[
\Q(\xi, \omega\xi, \omega^2\xi) = \Q(\xi, \omega).
\]
Now $\deg_\Q(\xi) = 3$ as $f$ is irreducible, and $\deg_\Q(\omega) = 2$ as
$\omega$ has minimal polynomial $1 + t + t^2$. By an argument like that in
Example~\ref{eg:tower-126}, it follows that $[\Q(\xi, \omega): \Q] = 6$. On
the other hand, $[\Q(\xi): \Q] = 3$. So again, the extension we get by
adjoining \emph{all} the roots of $f$ is bigger than the one we get by
adjoining just \emph{one} root of $f$.

\item
Take $f(t) = 1 + t + t^2 \in \F_2[t]$, as in Example~\ref{eg:splits-F2}. By
Theorem~\ref{thm:simple-basis}\bref{part:sba-alg}, $\{1, \alpha\}$ is a
basis of $\F_2(\alpha)$ over $\F_2$, so
\begin{align*}
\F_2(\alpha)    &
= 
\{0, 1, \alpha, 1 + \alpha\}    \\
&
=
\F_2 \cup \{\text{the roots of $f$ in $\F_2(\alpha)$}\}.
\end{align*}
Hence $\F_2(\alpha)$ is a splitting field of $f$ over $\F_2$.
\end{enumerate}
\end{examples}

\begin{ex}{ex:cbrt2-gen}
In Example~\ref{egs:sf}\bref{eg:sf-cbrt2}, I said that $\Q(\xi,
\omega\xi, \omega^2\xi) = \Q(\xi, \omega)$. Why is that true?
\end{ex}

Our mission for the rest of this section is to show that every nonzero
polynomial $f$ has exactly one splitting field. So that's actually two
tasks: first, show that $f$ has \emph{at least} one splitting field, then,
show that $f$ has \emph{only} one splitting field. The first task is easy,
and in fact we prove a little bit more:

\begin{lemma}
\label{lemma:sf-exists}
Let $f \neq 0$ be a polynomial over a field $K$. Then there exists a splitting
field $M$ of $f$ over $K$ such that $[M: K] \leq \deg(f)!$.
\end{lemma}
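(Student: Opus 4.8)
The plan is to prove this by induction on $\deg(f)$, constructing the splitting field by repeatedly adjoining roots one at a time. The degree bound $\deg(f)!$ will come out of the induction naturally because each step of adjoining a root multiplies the degree of the extension so far by at most the degree of the (irreducible factor of the) remaining polynomial.

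First I would set up the induction. If $\deg(f) = 0$ then $f$ is a nonzero constant, it splits trivially in $K$ (empty product of linear factors), and $K$ itself is a splitting field with $[K:K] = 1 = 0!$. If $\deg(f) = 1$, then $f = \beta(t - \alpha)$ already splits in $K$, and again $[K:K] = 1 = 1!$. For the inductive step, suppose $\deg(f) = n \geq 1$ and the result holds for all nonzero polynomials of smaller degree over any field. By Theorem~\ref{thm:fact-polys}, $f$ has some monic irreducible factor $m \in K[t]$; say $\deg(m) = d$, so $1 \leq d \leq n$. Using Theorem~\ref{thm:class-simp}\bref{part:cs-alg} (or equivalently Lemma~\ref{lemma:simple-build}\bref{part:sb-alg}), adjoin to $K$ a root $\alpha$ of $m$, giving an extension $K(\alpha): K$ with $[K(\alpha): K] = d$. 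In $K(\alpha)[t]$ we can write $f(t) = (t - \alpha) g(t)$ for some $g \in K(\alpha)[t]$ with $\deg(g) = n - 1$, using Lemma~\ref{lemma:root-irr}.

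Now I would apply the inductive hypothesis to $g$ over the field $K(\alpha)$: there is a splitting field $M$ of $g$ over $K(\alpha)$ with $[M : K(\alpha)] \leq (n-1)!$. In $M$, $g$ splits into linear factors, so $f = (t-\alpha)g$ also splits into linear factors in $M$; hence $f$ splits in $M$. Moreover $M = K(\alpha)(\beta_1, \ldots, \beta_{n-1})$ where the $\beta_j$ are the roots of $g$ in $M$, and $\{\alpha\} \cup \{\beta_j\}$ are exactly the roots of $f$ in $M$, so $M = K(\alpha, \beta_1, \ldots, \beta_{n-1})$ is generated over $K$ by the roots of $f$. Thus $M$ is a splitting field of $f$ over $K$. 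For the degree bound, the tower law (Theorem~\ref{thm:tower}\bref{part:tower-dim}) gives
\[
[M : K] = [M : K(\alpha)] \cdot [K(\alpha): K] \leq (n-1)! \cdot d \leq (n-1)! \cdot n = n!,
\]
completing the induction.

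I do not expect any serious obstacle here; the main things to be careful about are the bookkeeping. One point to get right is that the inductive hypothesis must be stated over an arbitrary base field (not just $K$), since the second application is over $K(\alpha)$ — this is why the lemma is phrased for a general field $K$. Another is verifying that the roots of $f$ in $M$ are precisely $\alpha$ together with the roots of $g$ in $M$, which follows from the factorization $f = (t-\alpha)g$ and the fact that $M$ is an integral domain (a root of a product is a root of one of the factors). Everything else is a routine application of results already established: factorization into irreducibles, the classification of simple extensions, and the tower law.
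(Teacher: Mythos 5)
Your proof is correct and follows essentially the same route as the paper's: induction on $\deg(f)$, adjoining one root of an irreducible factor, applying the inductive hypothesis to $f/(t-\alpha)$ over $K(\alpha)$, and closing with the tower law. You supply a bit more explicit detail (e.g.\ verifying that $M$ really is a splitting field of $f$, where the paper leaves this as a check), but the argument is the same.
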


\begin{proof}
We prove this by induction on $\deg(f)$, for all fields $K$
simultaneously. 

If $\deg(f) = 0$ then $K$ is a splitting field of $f$ over $K$, and the
result holds trivially. 

Now suppose that $\deg(f) \geq 1$. We may choose an irreducible factor $m$
of $f$. By Theorem~\ref{thm:class-simp}, there is an extension $K(\alpha)$
of $K$ with $m(\alpha) = 0$. Then $(t - \alpha) \dvd f(t)$ in
$K(\alpha)[t]$, giving a polynomial $g(t) = f(t)/(t - \alpha)$ over
$K(\alpha)$.

We have $\deg(g) = \deg(f) - 1$, so by inductive hypothesis, there is a
splitting field $M$ of $g$ over $K(\alpha)$ with $[M: K(\alpha)] \leq
\deg(g)!$. Then $M$ is a splitting field of $f$ over $K$. (Check that you
understand why.) Also, by the tower law,
\[
[M: K] 
= 
[M: K(\alpha)][K(\alpha): K]
\leq
(\deg(f) - 1)! \cdot \deg(m)
\leq
\deg(f)!,
\]
completing the induction.
\end{proof}

Proving that every polynomial has \emph{only} one splitting field is
harder. As ever, `only one' has to be understood up to isomorphism: after
all, if you're given a splitting field, you can always rename its elements
to get an isomorphic copy that's not literally identical to the original
one. But isomorphism is all that matters.

Our proof of the uniqueness of splitting fields depends on the following
result, which will also be useful for other purposes as we head
towards the fundamental theorem of Galois theory.

\begin{propn}
\label{propn:ext-iso}
Let $\psi \from K \to K'$ be an isomorphism of fields, let $0 \neq f \in K[t]$,
let $M$ be a splitting field of $f$ over $K$, and let $M'$ be a splitting
field of $\psi_* f$ over $K'$. Then:
\begin{enumerate}
\item 
\label{part:eh-exists}
there exists an isomorphism $\phi \from M \to M'$ extending $\psi$;%
\marginpar{%
$\xymatrix@C=5mm@R=4mm{%
\scriptstyle \ \ \ \ M \ar@{.>}[r]^-\phi        &  
\scriptstyle M' \ \ \ \phantom{\psi_* f}                       \\
\scriptstyle f \ \ \ K \ar[r]_-\psi \ar@<-2.5mm>[u] &
\scriptstyle K' \ar@<5mm>[u] \ \ \ \psi_* f
}$%
}%

\item
\label{part:eh-upr}
there are at most $[M: K]$ such extensions $\phi$.
\end{enumerate}
\end{propn}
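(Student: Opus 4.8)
The plan is to prove both parts by induction on $[M:K]$, treating all fields $K$, $K'$ and isomorphisms $\psi$ simultaneously so that the inductive hypothesis is available for the smaller extensions that arise. I would actually prove a slightly finer statement: that the number of isomorphisms $\phi\from M\to M'$ extending $\psi$ is at least $1$ and at most $[M:K]$, since it is convenient to run the existence and counting arguments together.

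For the base case $[M:K]=1$ we have $M=K$, so $f$ splits in $K$, hence $\psi_* f$ splits in $K'$; since $M'$ is generated over $K'$ by the roots of $\psi_* f$, which all lie in $K'$, we get $M'=K'$, and the only extension of $\psi$ is $\psi$ itself. For the inductive step, suppose $[M:K]>1$. Then $f$ has some irreducible factor $m\in K[t]$ with $\deg(m)\geq 2$ (if every irreducible factor were linear, $f$ would split in $K$ and $M$ would equal $K$). Pick a root $\alpha\in M$ of $m$; then $m$ is the minimal polynomial of $\alpha$ over $K$. The first key step is to understand how any extension $\phi$ of $\psi$ behaves on $\alpha$: by Lemma~\ref{lemma:ext-zeros}, $\phi(\alpha)$ must be a root of $\psi_* m$ in $M'$. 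Now $\psi_* m$ is irreducible over $K'$ (since $\psi_*$ is an isomorphism of polynomial rings, by Exercise~\ref{ex:ind-inj}), it divides $\psi_* f$, and $\psi_* f$ splits in $M'$; so $\psi_* m$ splits in $M'$ and has some root $\alpha'\in M'$. The number of distinct roots of $\psi_* m$ in $M'$ is at most $\deg(\psi_* m)=\deg(m)=[K(\alpha):K]$.

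The second key step is to factor the problem through the intermediate field $K(\alpha)$. For each root $\alpha'$ of $\psi_* m$ in $M'$, Proposition~\ref{propn:simp-unique} (applied to $\psi\from K\to K'$, noting $\alpha'$ has minimal polynomial $\psi_* m$ over $K'$) gives exactly one isomorphism $\psi'\from K(\alpha)\to K'(\alpha')$ extending $\psi$ with $\psi'(\alpha)=\alpha'$. Then $M$ is a splitting field of $f$ over $K(\alpha)$, and $M'$ is a splitting field of $\psi'_* f=\psi_* f$ over $K'(\alpha')$ (one checks $\psi'_*$ agrees with $\psi_*$ on $K[t]$). By the tower law, $[M:K(\alpha)]=[M:K]/[K(\alpha):K]<[M:K]$, so the inductive hypothesis applies: there is at least one, and at most $[M:K(\alpha)]$, isomorphisms $M\to M'$ extending $\psi'$. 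Every extension $\phi$ of $\psi$ restricts to some such $\psi'$ (namely the one determined by $\alpha'=\phi(\alpha)$), so the extensions of $\psi$ are partitioned according to the value $\phi(\alpha)$, and for each admissible value there are between $1$ and $[M:K(\alpha)]$ of them. Since there is at least one admissible $\alpha'$, existence follows; and the total count is at most $(\text{number of roots of }\psi_* m\text{ in }M')\cdot[M:K(\alpha)]\leq [K(\alpha):K]\cdot[M:K(\alpha)]=[M:K]$, which is the bound in~\bref{part:eh-upr}.

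I expect the main obstacle to be bookkeeping rather than a deep difficulty: making sure that $M$ really is a splitting field of $f$ over $K(\alpha)$ (the roots of $f$ in $M$ generate $M$ over $K$, hence a fortiori over $K(\alpha)$, and $\alpha$ is among them), that $\psi'_* = \psi_*$ on $K[t]$ so the notion of "splitting field of $\psi_* f$ over $K'(\alpha')$" is unambiguous, and that every extension $\phi$ of $\psi$ does restrict to a homomorphism $K(\alpha)\to M'$ landing in $K'(\alpha')$ — the last point because $\phi(K(\alpha))$ is generated over $K'$ by $\phi(\alpha)=\alpha'$. None of these is hard, but they are the places where an incautious write-up would have a gap.
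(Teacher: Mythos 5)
Your proof is correct and takes essentially the same route as the paper's: you choose an irreducible factor $m$ of $f$, pass to the simple extension $K(\alpha)$ via Proposition~\ref{propn:simp-unique}, and bound the count by (number of roots of $\psi_* m$ in $M'$)${}\cdot[M:K(\alpha)]\leq\deg(m)\cdot[M:K(\alpha)]=[M:K]$. The only difference is cosmetic: you induct on $[M:K]$ with $f$ held fixed (so you must note that $\deg(m)\geq 2$ can be arranged when $[M:K]>1$), whereas the paper inducts on $\deg(f)$ and replaces $f$ by $g=f/(t-\alpha)$ over $K(\alpha)$, which makes progress automatically.
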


We'll often use this result in the case where $K' = K$ and $\psi =
\id_K$. (What does it say then?)

\begin{proof}
We prove both statements by induction on $\deg(f)$. If $\deg(f) = 0$ then
both field extensions are trivial, so there is exactly one
isomorphism $\phi$ extending~$\psi$.

Now suppose that $\deg(f) \geq 1$. We can choose a monic irreducible factor $m$
of $f$. Then $m$ splits in $M$ since $f$ does and $m \dvd f$; choose a root
$\alpha \in M$ of $m$. We have $f(\alpha) = 0$, so $(t - \alpha) \dvd f(t)$
in $K(\alpha)[t]$, giving a polynomial $g(t) = f(t)/(t - \alpha)$ over
$K(\alpha)$. Then $M$ is a splitting field of $g$ over $K(\alpha)$, and
$\deg(g) = \deg(f) - 1$.

Also, $\psi_* m$ splits in $M'$ since $\psi_* f$
does and $\psi_* m \dvd \psi_* f$. Write $\alpha'_1, \ldots, \alpha'_s$
for the distinct roots of $\psi_* m$ in $M'$. 
Note that 
\begin{align}
\label{eq:ei-ub}
1 \leq s \leq \deg(\psi_* m) = \deg(m).
\end{align}
\video{Counting isomorphisms: the proof of Proposition~\ref{propn:ext-iso}}%
Since $\psi_*$ is an isomorphism, $\psi_* m$ is monic and irreducible, and
is therefore the minimal polynomial of $\alpha'_j$ for each $j \in \{1,
\ldots, s\}$. Hence by Proposition~\ref{propn:simp-unique}, for each $j$,
there is a unique isomorphism $\theta_j \from K(\alpha) \to K'(\alpha'_j)$
that extends $\psi$ and satisfies $\theta_j(\alpha) = \alpha'_j$. (See diagram
below.) 

For each $j \in \{1, \ldots, s\}$, we have a polynomial
\[
\theta_{j*}(g)
=
\frac{\theta_{j*}(f)}{\theta_{j*}(t - \alpha)}
=
\frac{\psi_* f}{t - \alpha'_j}
\]
over $K'(\alpha'_j)$, and $M'$ is a splitting field of $\psi_* f$ over
$K'$, so $M'$ is also a splitting field of $\theta_{j*}(g)$ over
$K'(\alpha'_j)$.

To prove that there is at least one isomorphism $\phi$ extending $\psi$,
choose any $j \in \{1, \ldots, s\}$ (as we may since $s \geq 1$). By
applying the inductive hypothesis to $g$ and $\theta_j$, there is an
isomorphism $\phi$ extending $\theta_j$:
\[
\xymatrix{
M \ar@{.>}[r]^\phi &
M'      \\
K(\alpha) \ar[u] \ar[r]^{\theta_j}      &
K'(\alpha'_j) \ar[u]    \\
K \ar[u] \ar[r]_\psi &
K' \ar[u]
}
\]
But then $\phi$ also extends $\psi$, as required.

To prove there are at most $[M: K]$ isomorphisms $\phi\from M \to M'$
extending $\psi$, first note that any such $\phi$ satisfies
$(\psi_* m)(\phi(\alpha)) = 0$ (by Lemma~\ref{lemma:ext-zeros}), so
$\phi(\alpha) = \alpha'_j$ for some $j \in \{1, \ldots, s\}$. Hence
\begin{multline*}
(\text{number of isos $\phi$ extending $\psi$})
= \\
\sum_{j = 1}^s
(\text{number of isos $\phi$ extending $\psi$ such that $\phi(\alpha) = \alpha'_j$}).
\end{multline*}
If $\phi$ extends $\psi$ then $\phi K = \psi K = K'$, and if also
$\phi(\alpha) = \alpha'_j$ then $\phi(K(\alpha)) = K'(\alpha'_j)$. Since
homomorphisms of fields are injective, $\phi$ then restricts to an
isomorphism $K(\alpha) \to K'(\alpha'_j)$ satisfying $\alpha \mapsto
\alpha'_j$. By the uniqueness part of Proposition~\ref{propn:simp-unique},
this restricted isomorphism must be $\theta_j$. Thus, $\phi$ extends
$\theta_j$. Hence
\[
(\text{number of isos $\phi$ extending $\psi$})
=
\sum_{j = 1}^s
(\text{number of isos $\phi$ extending $\theta_j$}).
\]
For each $j$, the number of isomorphisms $\phi$ extending $\theta_j$ is
$\leq [M: K(\alpha)]$, by inductive hypothesis. So, using the tower law
and~\eqref{eq:ei-ub}, 
\[
(\text{number of isos $\phi$ extending $\psi$})
\leq
s \cdot [M: K(\alpha)]
=
s \cdot \frac{[M: K]}{\deg(m)}
\leq 
[M: K],
\]
completing the induction.
\end{proof}

\begin{ex}{ex:ei-eq}
Why does the proof of Proposition~\ref{propn:ext-iso} not show that there
are \emph{exactly} $[M: K]$ isomorphisms $\phi$ extending $\psi$? How could
you strengthen the hypotheses in order to obtain that conclusion? (The
second question is a bit harder, and we'll see the answer next week.)
\end{ex}

This brings us to the foundational result on splitting fields. Recall that
an \demph{automorphism} of an object $X$ is an isomorphism $X \to X$.

\begin{bigthm}
\label{thm:sf}
Let $f$ be a nonzero polynomial over a field $K$. Then:
\begin{enumerate}
\item 
\label{part:sf-exists}
there exists a splitting field of $f$ over $K$;

\item
\label{part:sf-unique}
any two splitting fields of $f$ are isomorphic over $K$;

\item
\label{part:sf-ineqs}
when $M$ is a splitting field of $f$ over $K$,
\[
(\text{number of automorphisms of $M$ over $K$})
\leq
[M: K]
\leq 
\deg(f)!.
\]
\end{enumerate}
\end{bigthm}

\begin{proof}
Part~\bref{part:sf-exists} is immediate from Lemma~\ref{lemma:sf-exists}, and
part~\bref{part:sf-unique} follows from Proposition~\ref{propn:ext-iso} by
taking $K' = K$ and $\psi = \id_K$. The first inequality
in~\bref{part:sf-ineqs} follows from Proposition~\ref{propn:ext-iso} by
taking $K' = K$, $M' = M$ and $\psi = \id_K$, and the second follows from
Lemma~\ref{lemma:sf-exists}. 
\end{proof}

Up to now we have been saying `a' splitting
field. Parts~\bref{part:sf-exists} and~\bref{part:sf-unique} of
Theorem~\ref{thm:sf} give us the right to
speak of \emph{the} splitting field of a given polynomial $f$ over a given
field $K$. We write it as \demph{$\SF_K(f)$}.

We finish with a left over lemma that will be useful later. 

\begin{lemma}
\label{lemma:sf-adj}
\begin{enumerate}
\item 
\label{part:sfa-gen}
Let $M: S: K$ be field extensions, $0 \neq f \in K[t]$, and $Y \sub M$. Suppose
that $S$ is the splitting field of $f$ over $K$. Then $S(Y)$ is the splitting
field of $f$ over $K(Y)$.

\item
\label{part:sfa-fds}
Let $f \neq 0$ be a polynomial over a field $K$, and let $L$ be a subfield of
$\SF_K(f)$ containing $K$ (so that $\SF_K(f): L: K$). Then $\SF_K(f)$ is
the splitting field of $f$ over $L$.
\end{enumerate}
\end{lemma}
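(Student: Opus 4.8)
The plan is to prove part~\bref{part:sfa-gen} directly from the definition of splitting field, and then read off part~\bref{part:sfa-fds} as the special case $M = S = \SF_K(f)$, $Y = L$.

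For~\bref{part:sfa-gen}: since $S$ is the splitting field of $f$ over $K$, we have a factorization $f(t) = \beta(t - \alpha_1) \cdots (t - \alpha_n)$ in $S[t]$ with $\beta, \alpha_i \in S$, and $S = K(\alpha_1, \ldots, \alpha_n)$. First, $f$ splits in $S(Y)$, because it already splits in the subfield $S \sub S(Y)$. Next I would pin down the roots of $f$ in $S(Y)$: the same factorization holds in $S(Y)[t]$, so any root $\gamma \in S(Y)$ satisfies $\beta(\gamma - \alpha_1) \cdots (\gamma - \alpha_n) = 0$, and since $\beta \neq 0$ this forces $\gamma = \alpha_i$ for some $i$; hence the set of roots of $f$ in $S(Y)$ is exactly $\{\alpha_1, \ldots, \alpha_n\}$, the same as the roots of $f$ in $S$. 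Finally, for the generation condition I would apply Exercise~\ref{ex:adjn-two} twice to get $S(Y) = (K(\alpha_1, \ldots, \alpha_n))(Y) = K(\{\alpha_1, \ldots, \alpha_n\} \cup Y) = (K(Y))(\alpha_1, \ldots, \alpha_n)$, so that $S(Y)$ is generated over $K(Y)$ by the roots of $f$ in $S(Y)$. Together with the splitting, this is precisely the statement that $S(Y)$ is the splitting field of $f$ over $K(Y)$.

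For~\bref{part:sfa-fds}: apply part~\bref{part:sfa-gen} with $M = S = \SF_K(f)$ and $Y = L$. Because $K \sub L \sub \SF_K(f)$, we have $S(Y) = \SF_K(f)(L) = \SF_K(f)$ (as $L \sub \SF_K(f)$) and $K(Y) = K(L) = L$ (as $K \sub L$), so part~\bref{part:sfa-gen} immediately gives that $\SF_K(f)$ is the splitting field of $f$ over $L$.

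The whole argument is essentially bookkeeping, so I do not anticipate a real obstacle; the one point deserving a moment's care is the claim that passing from $S$ to the larger field $S(Y)$ introduces no new roots of $f$, which is exactly where I use that $f$ is already fully factored into linear factors over $S$.
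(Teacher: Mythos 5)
Your proof is correct and takes essentially the same route as the paper's: note that $f$ splits in $S \sub S(Y)$, then use the adjunction identity $S(Y) = K(X)(Y) = K(X \cup Y) = K(Y)(X)$ where $X$ is the set of roots, and deduce part~(ii) by specializing $M = S = \SF_K(f)$, $Y = L$. Your extra observation that the roots of $f$ in $S(Y)$ are exactly the roots in $S$ (because $f$ already factors into linear factors over $S$) is a detail the paper leaves implicit, and it is a sensible thing to make explicit since the generation condition in the definition of splitting field refers to the roots in $S(Y)$.
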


\begin{proof}
For~\bref{part:sfa-gen}, $f$ splits in $S$, hence in $S(Y)$. Writing $X$
for the set of roots of $f$ in $S$, we have $S = K(X)$ and so $S(Y) =
K(X)(Y) = K(X \cup Y) = K(Y)(X)$; that is, $S(Y)$ is generated over $K(Y)$
by $X$. This proves~\bref{part:sfa-gen}, and~\bref{part:sfa-fds} follows by
taking $M = \SF_K(f)$ and $Y = L$.
\end{proof}

\section{The Galois group}
\label{sec:gal-gp}

Before you get stuck into this section, you may want to review
Section~\ref{sec:actions}, especially the parts about homomorphisms $G \to
\Sym(X)$. We'll need all of it.

What gives Galois theory its special flavour is the use of groups to study
fields and polynomials. Here is the central definition.

\begin{defn}
The \demph{Galois group $\Gal(M: K)$} of a field extension $M: K$ is the
group of automorphisms of $M$ over $K$, with composition as the group
operation.
\end{defn}

\begin{ex}{ex:Gal-sensical}
Check that this really does define a group.\\
\end{ex}

In other words, an element of $\Gal(M: K)$ is an isomorphism $\theta \from
M \to M$ such that $\theta(a) = a$ for all $a \in K$.

\begin{examples}
\label{egs:gal-ext}
\begin{enumerate}
\item 
\label{eg:gal-ext-CR}
What is $\Gal(\C: \R)$? Certainly the identity is an automorphism of $\C$
over $\R$. So is complex conjugation $\kappa$, as implicitly shown in the
first proof of Lemma~\ref{lemma:indist-R}. So $\{\id, \kappa\} \sub
\Gal(\C: \R)$. I claim that $\Gal(\C: \R)$ has no other elements. For let
$\theta \in \Gal(\C: \R)$. Then
\[
(\theta(i))^2 = \theta(i^2) = \theta(-1) = -\theta(1) = -1
\]
as $\theta$ is a homomorphism, so $\theta(i) = \pm i$. If $\theta(i) = i$
then $\theta = \id$, by Lemma~\ref{lemma:gen-epic} and the fact that $\C =
\R(i)$. Similarly, if $\theta(i) = -i$ then $\theta = \kappa$. So $\Gal(\C:
\R) = \{\id, \kappa\} \iso C_2$.

\item
\label{eg:gal-ext-cbrt2}
Let $\xi$ be the real cube root of $2$. For each $\theta \in
\Gal(\Q(\xi): \Q)$, we have 
\[
(\theta(\xi))^3 = \theta(\xi^3) = \theta(2) = 2
\]
and $\theta(\xi) \in \Q(\xi) \sub \R$, so $\theta(\xi) = \xi$. It follows
from Lemma~\ref{lemma:gen-epic} that $\theta = \id$. Hence
$\Gal(\Q(\xi): \Q)$ is trivial.
\end{enumerate}
\end{examples}

\begin{ex}{ex:gal-gp-3u}
Prove that $\Gal(\Q(e^{2\pi i/3}): \Q) = \{\id, \kappa\}$, where $\kappa(z) =
\ovln{z}$. (Hint: imitate Example~\ref{egs:gal-ext}\bref{eg:gal-ext-CR}.)
\end{ex}

The Galois group of a polynomial is defined to be the Galois group of its
splitting field extension:

\begin{defn}
\label{defn:gal-gp-poly}
Let $f$ be a nonzero polynomial over a field $K$. The \demph{Galois group
$\Gal_K(f)$} of $f$ over $K$ is $\Gal(\SF_K(f): K)$. 
\end{defn}

So the definitions fit together like this:
\[
\label{p:peg}
\text{polynomial } 
\longmapsto 
\text{ field extension } 
\longmapsto 
\text{ group.}
\]
We will soon prove that Definition~\ref{defn:gal-gp-poly} is equivalent to
the definition of Galois group in Chapter~\ref{ch:overview}, where we went
straight from polynomials to groups.

Theorem~\ref{thm:sf}\bref{part:sf-ineqs} says that 
\begin{align}
\label{eq:gal-poly-bounds}
|\Gal_K(f)| \leq [\SF_K(f): K] \leq \deg(f)!.
\end{align}
In particular, $\Gal_K(f)$ is always a \emph{finite} group.

\begin{examples}
\label{egs:gal-gp-first}
\begin{enumerate}
\item
\label{eg:ggf-triv}
If $f \in K[t]$ splits in $K$ then $\SF_K(f) = K$
(Example~\ref{egs:sf}\bref{eg:sf-triv}), so $\Gal_K(f)$ is trivial. In
particular, the Galois group of any polynomial over an algebraically closed
field is trivial.

\item 
$\Gal_\Q(t^2 + 1) = \Gal(\Q(i): \Q) = \{\id, \kappa\} \iso C_2$, where
$\kappa$ is complex conjugation on $\Q(i)$. The second equality is proved
by the same argument as in Example~\ref{egs:gal-ext}\bref{eg:gal-ext-CR},
replacing $\C: \R$ by $\Q(i): \Q$.

\item
\label{eg:ggf-conj}
Generally, let $f \in \Q[t]$. We can view $\SF_\Q(f)$ as the subfield of
$\C$ generated by the complex roots of $f$, and if $\alpha \in \C$ is a
root of $f$ then so is $\ovln{\alpha}$. Hence complex conjugation, as
an automorphism of $\C$, restricts to an automorphism $\kappa$ of
$\SF_\Q(f)$. 

If all the complex roots of $f$ are real then $\kappa = \id \in
\Gal_\Q(f)$. Otherwise, $\kappa$ is an element of $\Gal_\Q(f)$ of order
$2$.

\item
\label{eg:ggf-klein}
\video{Calculating the Galois group with bare hands, part 1}%
\video{Calculating the Galois group with bare hands, part 2}%
Let $f(t) = (t^2 + 1)(t^2 - 2)$. Then $\Gal_\Q(f)$ is the group of
automorphisms of $\Q(i, \sqrt{2})$ over $\Q$. Similar arguments to those in
Examples~\ref{egs:gal-ext} show that every $\theta \in \Gal_\Q(f)$ must
satisfy $\theta(i) = \pm i$ and $\theta(\sqrt{2}) = \pm \sqrt{2}$, and that
the two choices of sign determine $\theta$ completely. And one can show
that all four choices are possible, so that $|\Gal_\Q(f)| = 4$. There are
two groups of order four, $C_4$ and $C_2 \times C_2$. But each element of
$\Gal_\Q(f)$ has order $1$ or $2$, so $\Gal_\Q(f)$ is not $C_4$, so
$\Gal_\Q(f) \iso C_2 \times C_2$.

I've been sketchy with the details here, because it's not really sensible
to try to calculate Galois groups until we have a few more tools at our
disposal. We start to assemble them now.
\end{enumerate}
\end{examples}

By definition, $\Gal_K(f)$ acts on $\SF_K(f)$
(Example~\ref{egs:gp-actions}\bref{eg:ga-aut}). The action is
\[
(\theta, \alpha) \mapsto \theta(\alpha)
\]
($\theta \in \Gal_K(f)$, $\alpha \in \SF_K(f)$). In the examples
so far, we've seen that if $\alpha$ is a root of $f$ then so is
$\theta(\alpha)$ for every $\theta \in \Gal_K(f)$. This is true in
general: the action of $\Gal_K(f)$ on $\SF_K(f)$ restricts
to an action on the set of roots. In a slogan: \emph{the Galois group
permutes the roots.}

\begin{lemma}
\label{lemma:gal-acts}
Let $f$ be a nonzero polynomial over a field $K$. Then the action of
$\Gal_K(f)$ on $\SF_K(f)$ restricts to an action on the set of roots of
$f$ in $\SF_K(f)$. 
\end{lemma}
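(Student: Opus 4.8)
The plan is to show that each automorphism in $\Gal_K(f)$ carries roots of $f$ to roots of $f$, and then note that the action axioms are automatically inherited. Write $M = \SF_K(f)$ and let $R = \{\alpha \in M \such f(\alpha) = 0\}$ be the (finite) set of roots of $f$ in $M$. Recall that $\Gal_K(f) = \Gal(M : K)$ acts on $M$ via $(\theta, \alpha) \mapsto \theta(\alpha)$, as in Example~\ref{egs:gp-actions}\bref{eg:ga-aut}. The goal is to show this action restricts to $R$.

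First I would prove that $\theta(R) \sub R$ for every $\theta \in \Gal_K(f)$. Fix $\theta \in \Gal_K(f)$ and $\alpha \in R$. Since $\theta \from M \to M$ is a homomorphism over $K$ and the coefficients of $f$ lie in $K$, this is precisely the situation of Lemma~\ref{lemma:ext-zeros} with both field extensions taken to be $M : K$ and $\psi = \id_K$ (so $\psi_* f = f$); equivalently, it is Example~\ref{eg:ext-zeros-id}. That lemma gives $f(\alpha) = 0 \iff f(\theta(\alpha)) = 0$, so from $\alpha \in R$ we get $\theta(\alpha) \in R$, as wanted. Applying the same reasoning to $\theta^{-1} \in \Gal_K(f)$ shows $\theta^{-1}(R) \sub R$, hence $R \sub \theta(R)$; combined with injectivity of $\theta$, this means $\theta$ restricts to a bijection $R \to R$.

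It then follows that the assignment $\Gal_K(f) \times R \to R$, $(\theta, \alpha) \mapsto \theta(\alpha)$, is a well-defined function. The two action axioms, $(\theta\eta)(\alpha) = \theta(\eta(\alpha))$ and $\id(\alpha) = \alpha$ for $\alpha \in R$, hold because they already hold for the action of $\Gal_K(f)$ on all of $M$. Hence we have a genuine action of $\Gal_K(f)$ on $R$, which is exactly the claim.

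I do not expect a real obstacle here: the only substantive point is that an automorphism over $K$ sends roots of $f$ to roots of $f$, and that has essentially been packaged already in Lemma~\ref{lemma:ext-zeros}. The remaining content is just the bookkeeping observation that a subset preserved by a group of bijections inherits the action, together with the remark that $\theta$ and $\theta^{-1}$ both preserving $R$ upgrades "$\theta$ maps $R$ into $R$" to "$\theta$ permutes $R$".
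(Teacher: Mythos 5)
Your proof is correct and follows essentially the same route as the paper: the key step in both is invoking Example~\ref{eg:ext-zeros-id} (equivalently, Lemma~\ref{lemma:ext-zeros} with $\psi = \id_K$) to conclude that each $\theta \in \Gal_K(f)$ sends roots of $f$ to roots of $f$. Your additional remarks --- that the action axioms are inherited automatically and that applying the argument to $\theta^{-1}$ shows $\theta$ restricts to a bijection of the root set --- are fine but not needed, since the paper's definition of ``restricts'' only asks that $\theta(\alpha)$ lie in the root set.
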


Terminology: given a group $G$ acting on a set $X$ and a subset $A \sub X$,
the action \demph{restricts} to $A$ if $ga \in A$ for all $g \in G$ and $a
\in A$.

\begin{proof}
We have to show that if $\theta \in \Gal_K(f)$ and $\alpha$ is a root of
$f$ in $\SF_K(f)$ then $\theta(\alpha)$ is also a root. This follows from
Example~\ref{eg:ext-zeros-id}. 
\end{proof}
\video{The action of the Galois group}%

Better still, the Galois group acts \emph{faithfully} on the roots:

\begin{lemma}
\label{lemma:gal-acts-faithfully}
Let $f$ be a nonzero polynomial over a field $K$. Then the action of
$\Gal_K(f)$ on the roots of $f$ is faithful.
\end{lemma}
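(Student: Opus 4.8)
The plan is to exploit the fact that the splitting field $\SF_K(f)$ is \emph{generated} over $K$ by the roots of $f$, together with Lemma~\ref{lemma:gen-epic}. Recall that an action of a group $G$ on a set $X$ is faithful precisely when the associated homomorphism $G \to \Sym(X)$ is injective (Lemma~\ref{lemma:ff-tfae}), and equivalently when the only $g \in G$ acting as the identity on $X$ is $g = 1$. So I would unwind what it means for the action of $\Gal_K(f)$ on the root set to be faithful: if $\theta \in \Gal_K(f)$ fixes every root of $f$ in $\SF_K(f)$, I must show $\theta = \id$.

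First I would set $M = \SF_K(f)$ and write $\alpha_1, \ldots, \alpha_n$ for the roots of $f$ in $M$. By the definition of splitting field, $M = K(\alpha_1, \ldots, \alpha_n)$. Now suppose $\theta \in \Gal_K(f)$ satisfies $\theta(\alpha_i) = \alpha_i$ for all $i$. Since $\theta$ is a homomorphism over $K$ (that is what it means to be an element of the Galois group), and $\id_M$ is trivially a homomorphism over $K$, I can apply Lemma~\ref{lemma:gen-epic} with $\phi = \theta$, $\psi = \id_M$, and $Y = \{\alpha_1, \ldots, \alpha_n\}$: since $\theta$ and $\id_M$ agree on every element of $Y$ and $M = K(Y)$, they agree everywhere, i.e.\ $\theta = \id_M$.

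That already shows part (ii) of Lemma~\ref{lemma:ff-tfae} holds for this action, hence the action is faithful. If I want to present it even more directly in terms of Definition of faithfulness: suppose $\theta, \theta' \in \Gal_K(f)$ induce the same permutation of the roots, so $\theta(\alpha_i) = \theta'(\alpha_i)$ for all $i$; then Lemma~\ref{lemma:gen-epic} (applied to $\phi = \theta$, $\psi = \theta'$, both homomorphisms over $K$, and $Y$ the root set) gives $\theta = \theta'$, which is exactly faithfulness. There is essentially no obstacle here — the whole content is packaged into Lemma~\ref{lemma:gen-epic}, and the only thing to be careful about is invoking the correct generating set, namely that the \emph{roots} of $f$ generate $M$ over $K$, which is condition~(ii) in the definition of splitting field. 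The proof is therefore just a couple of lines once that observation is made.
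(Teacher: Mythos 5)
Your proposal is correct and follows exactly the paper's own argument: since $\SF_K(f) = K(X)$ where $X$ is the set of roots, any $\theta \in \Gal_K(f)$ fixing every root agrees with $\id$ on a generating set and so equals $\id$ by Lemma~\ref{lemma:gen-epic}. Nothing is missing; the extra remarks about Lemma~\ref{lemma:ff-tfae} and the two-element version are fine but not needed.
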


\begin{proof}
Write $X$ for the set of roots of $f$ in $\SF_K(f)$. Then $\SF_K(f) =
K(X)$. Hence by Lemma~\ref{lemma:gen-epic}, if $\theta \in \Gal_K(f)$ with
$\theta(x) = x$ for all $x \in X$, then $\theta = \id$.
\end{proof}

In other words, an element of the Galois group of $f$ is completely
determined by how it permutes the roots of $f$. So you can view elements of
the Galois group as \emph{being} permutations of the roots. 

However, not every permutation of the roots belongs to the Galois group. To
understand the situation, recall Remark~\ref{rmk:faith-indices}, which
tells us the following. Suppose that $f \in K[t]$ has distinct roots
$\alpha_1, \ldots, \alpha_k$ in its splitting field. For each $\theta \in
\Gal_K(f)$, there is a permutation $\sigma_\theta \in S_k$ defined by
\[
\theta(\alpha_i) = \alpha_{\sigma_\theta(i)}
\]
($i \in \{1, \ldots, k\}$). Then $\Gal_K(f)$ is isomorphic to the subgroup
$\{\sigma_\theta \such \theta \in \Gal_K(f)\}$ of $S_k$ (and this is indeed
a subgroup). The isomorphism is given by $\theta \mapsto \sigma_\theta$. 

All this talk of the Galois group as a subgroup of $S_k$ may have set
your antennae tingling. Back in Chapter~1, we provisionally
\emph{defined} the Galois group to be a certain subgroup of $S_k$
(Definition~\ref{defn:gg-conc}). We can now show that the two definitions
are equivalent. 

That definition was in terms of conjugacy. Let's now make the
concept of conjugacy official, also generalizing from $\Q$ to an arbitrary
field.

\begin{defn}
\label{defn:conj-tuples}
Let $M: K$ be a field extension, let $k \geq 0$, and let $(\alpha_1,
\ldots, \alpha_k)$ and $(\alpha'_1, \ldots, \alpha'_k)$ be $k$-tuples of
elements of $M$. Then $(\alpha_1, \ldots, \alpha_k)$ and $(\alpha'_1, \ldots,
\alpha'_k)$ are \demph{conjugate} over $K$ if for all $p \in K[t_1, \ldots,
t_k]$, 
\[
p(\alpha_1, \ldots, \alpha_k) = 0 \iff 
p(\alpha'_1, \ldots, \alpha'_k) = 0.
\]
In the case $k = 1$, we omit the brackets and say that $\alpha$ and
$\alpha'$ are conjugate to mean that $(\alpha)$ and $(\alpha')$ are.
\end{defn}

We now show that the two definitions of the Galois group of $f$ are
equivalent. 

\begin{propn}
\label{propn:gal-defns-same}
Let $f$ be a nonzero polynomial over a field $K$, with distinct roots $\alpha_1,
\ldots, \alpha_k$ in $\SF_K(f)$. Then
\begin{align}
\label{eq:gal-perm-subgp}
\{ \sigma \in S_k \such (\alpha_1, \ldots, \alpha_k) \text{ and }
(\alpha_{\sigma(1)}, \ldots, \alpha_{\sigma(k)}) \text{ are conjugate over
} K\}
\end{align}
is a subgroup of $S_k$ isomorphic to $\Gal_K(f)$. 
\end{propn}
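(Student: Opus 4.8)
The plan is to reduce the statement to facts already established about the action of the Galois group on the roots. By Lemmas~\ref{lemma:gal-acts} and~\ref{lemma:gal-acts-faithfully} together with Remark~\ref{rmk:faith-indices}, the assignment sending $\theta \in \Gal_K(f)$ to the permutation $\sigma_\theta \in S_k$ determined by $\theta(\alpha_i) = \alpha_{\sigma_\theta(i)}$ is an injective group homomorphism, and its image $G = \{\sigma_\theta \such \theta \in \Gal_K(f)\}$ is a subgroup of $S_k$ isomorphic to $\Gal_K(f)$. So it is enough to prove that $G$ equals the set $H$ displayed in~\eqref{eq:gal-perm-subgp}; I would prove the two inclusions separately.

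For $G \sub H$: given $\theta \in \Gal_K(f)$ and $p \in K[t_1, \ldots, t_k]$, the fact that $\theta$ is a homomorphism fixing $K$ pointwise gives $\theta\bigl(p(\alpha_1, \ldots, \alpha_k)\bigr) = p(\alpha_{\sigma_\theta(1)}, \ldots, \alpha_{\sigma_\theta(k)})$, and since $\theta$ is injective (Lemma~\ref{lemma:fd-homm}) this forces $p(\alpha_1, \ldots, \alpha_k) = 0$ if and only if $p(\alpha_{\sigma_\theta(1)}, \ldots, \alpha_{\sigma_\theta(k)}) = 0$. Hence $(\alpha_1, \ldots, \alpha_k)$ and $(\alpha_{\sigma_\theta(1)}, \ldots, \alpha_{\sigma_\theta(k)})$ are conjugate over $K$, i.e.\ $\sigma_\theta \in H$. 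This direction is routine.

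For $H \sub G$: this is the substantial step, and the only place the definition of $H$ is really used. Given $\sigma \in H$, I would build an automorphism $\theta$ of $M := \SF_K(f)$ over $K$ with $\sigma_\theta = \sigma$. Since $M = K(\alpha_1, \ldots, \alpha_k)$ with each $\alpha_i$ algebraic over $K$, Corollary~\ref{cor:alg-ext-poly} says every element of $M$ can be written as $g(\alpha_1, \ldots, \alpha_k)$ for some $g \in K[t_1, \ldots, t_k]$. Define $\theta\bigl(g(\alpha_1, \ldots, \alpha_k)\bigr) = g(\alpha_{\sigma(1)}, \ldots, \alpha_{\sigma(k)})$, which again lies in $M$ because $\sigma$ merely permutes the $\alpha_i$. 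The point that needs checking is that this is well defined: if $g$ and $h$ represent the same element of $M$, then $(g - h)(\alpha_1, \ldots, \alpha_k) = 0$, so by conjugacy (this is where $\sigma \in H$ enters) also $(g - h)(\alpha_{\sigma(1)}, \ldots, \alpha_{\sigma(k)}) = 0$. Granting well-definedness, $\theta$ is visibly a ring homomorphism fixing $K$ pointwise, hence injective by Lemma~\ref{lemma:fd-homm}; and since $[M : K] < \infty$ by Theorem~\ref{thm:sf}\bref{part:sf-ineqs}, an injective $K$-linear self-map of $M$ is surjective, so $\theta$ is an automorphism of $M$ over $K$, i.e.\ $\theta \in \Gal_K(f)$. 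Taking $g = t_i$ yields $\theta(\alpha_i) = \alpha_{\sigma(i)}$ for each $i$, so $\sigma_\theta = \sigma$ and $\sigma \in G$.

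Combining the inclusions gives $H = G$, which is a subgroup of $S_k$ isomorphic to $\Gal_K(f)$, as required. I expect the only real obstacle to be bookkeeping: verifying that the formula for $\theta$ is well defined (exactly the content of $\sigma \in H$) and that the finiteness $[M:K] < \infty$ is legitimately available to close the bijectivity argument; every other ingredient is already in hand.
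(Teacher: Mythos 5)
Your proposal is correct and follows essentially the same route as the paper's proof: both use the injection $\theta \mapsto \sigma_\theta$ from Remark~\ref{rmk:faith-indices} for one inclusion, and for the other construct $\theta$ from a good $\sigma$ via Corollary~\ref{cor:alg-ext-poly}, with well-definedness coming from conjugacy applied to $g - h$. The only (harmless) divergence is at the end: the paper gets injectivity from the two-way conjugacy equivalence and surjectivity from the fact that $\sigma$ permutes the roots, whereas you invoke Lemma~\ref{lemma:fd-homm} and finite-dimensionality of $M$ over $K$ — both closings are valid.
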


\begin{proof}
As above, each $\theta \in \Gal_K(f)$ gives rise to a permutation
$\sigma_\theta \in S_k$, defined by $\theta(\alpha_i) =
\alpha_{\sigma_\theta(i)}$. For the purposes of this proof, let us say that
a permutation $\sigma \in S_k$ is `good' if it belongs to the
set~\eqref{eq:gal-perm-subgp}. By Remark~\ref{rmk:faith-indices}, it
suffices to show that a permutation $\sigma$ is good if and only if $\sigma =
\sigma_\theta$ for some $\theta \in \Gal_K(f)$. 

First suppose that $\sigma = \sigma_\theta$ for some $\theta \in
\Gal_K(f)$. For every $p \in K[t_1, \ldots, t_k]$, 
\[
p(\alpha_{\sigma(1)}, \ldots, \alpha_{\sigma(k)})
=
p(\theta(\alpha_1), \ldots, \theta(\alpha_k))
=
\theta(p(\alpha_1, \ldots, \alpha_k)),
\]
where the first equality is by definition of $\sigma_\theta$ and
the second is because $\theta$ is a homomorphism over $K$. But $\theta$ is
an isomorphism, so it follows that
\[
p(\alpha_{\sigma(1)}, \ldots, \alpha_{\sigma(k)}) = 0
\iff
p(\alpha_1, \ldots, \alpha_k) = 0.
\]
Hence $\sigma$ is good.

Conversely, suppose that $\sigma$ is good. By
Corollary~\ref{cor:alg-ext-poly}, every element of $\SF_K(f)$ can be
expressed as $p(\alpha_1, \ldots, \alpha_k)$ for some $p \in K[t_1, \ldots,
t_k]$. Now for $p, q \in K[t_1, \ldots, t_k]$, we have
\[
p(\alpha_1, \ldots, \alpha_k) = q(\alpha_1, \ldots, \alpha_k)   
\iff
p(\alpha_{\sigma(1)}, \ldots, \alpha_{\sigma(k)})
=
q(\alpha_{\sigma(1)}, \ldots, \alpha_{\sigma(k)})
\]
(by applying Definition~\ref{defn:conj-tuples} of conjugacy with $p - q$ as
the `$p$'). So there is a well-defined, injective function $\theta \from
\SF_K(f) \to \SF_K(f)$ satisfying
\begin{align}
\label{eq:constr-theta}
\theta(p(\alpha_1, \ldots, \alpha_k)) 
=
p(\alpha_{\sigma(1)}, \ldots, \alpha_{\sigma(k)})
\end{align}
for all $p \in K[t_1, \ldots, t_k]$. Moreover, $\theta$ is surjective
because $\sigma$ is a permutation, and $\theta(a) = a$ for all $a \in K$ (by
taking $p = a$ in~\eqref{eq:constr-theta}), and $\theta(\alpha_i) =
\alpha_{\sigma(i)}$ for all $i$ (by taking $p = t_i$
in~\eqref{eq:constr-theta}). You can check that $\theta$ is a homomorphism
of fields. Hence $\theta \in \Gal_K(f)$ with $\sigma_\theta = \sigma$, as
required. 
\end{proof}

\begin{ex}{ex:two-gal}
I skipped two small bits in that proof: `$\theta$ is surjective because
$\sigma$ is a permutation' (why?), and `You can check that $\theta$ is a
homomorphism of fields'. Fill in the gaps.
\end{ex}

It's important in Galois theory to be able to move between
fields. For example, you might start with a polynomial whose coefficients
belong to one field $K$, but later decide to interpret the coefficients as
belonging to some larger field $L$. Here's what happens to the Galois group
when you do that.

\begin{cor}
\label{cor:gal-emb}
Let $L: K$ be a field extension and $0 \neq f \in K[t]$. Then $\Gal_L(f)$
is isomorphic to a subgroup of $\Gal_K(f)$.
\end{cor}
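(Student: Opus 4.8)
The plan is to exploit the uniqueness of splitting fields together with the already-established machinery on extending isomorphisms. Start with the splitting field $\SF_K(f)$ of $f$ over $K$, realized inside $\SF_L(f)$: concretely, $f$ splits in $\SF_L(f)$, so if $X$ denotes the set of roots of $f$ in $\SF_L(f)$, then $K(X)$ is a subfield of $\SF_L(f)$ in which $f$ splits and which is generated over $K$ by the roots of $f$. Hence $K(X)$ is \emph{a} splitting field of $f$ over $K$, and by Theorem~\ref{thm:sf}\bref{part:sf-unique} we may as well take $\SF_K(f) = K(X) \sub \SF_L(f)$. Also note $L(X) = \SF_L(f)$, since $f$ splits in $\SF_L(f)$ and $\SF_L(f)$ is generated over $L$ by the roots of $f$; and since $L \supseteq K$, we have $L(X) = L(K(X)) = L(\SF_K(f))$, so $\SF_L(f) = L(\SF_K(f))$.

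Now I would define the candidate homomorphism $\Gal_L(f) \to \Gal_K(f)$ by \emph{restriction}. Given $\theta \in \Gal_L(f)$, i.e.\ an automorphism of $\SF_L(f)$ fixing $L$ pointwise, I claim $\theta$ restricts to an automorphism of $\SF_K(f) = K(X)$ fixing $K$. The key point: $\theta$ permutes the roots of $f$ in $\SF_L(f)$ (by Lemma~\ref{lemma:gal-acts}, or directly from Lemma~\ref{lemma:ext-zeros} with $\psi = \id_L$), so $\theta(X) = X$; since $\theta$ is a homomorphism fixing $K$ and $K(X)$ is generated over $K$ by $X$, we get $\theta(K(X)) = K(\theta(X)) = K(X)$. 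Thus $\theta|_{K(X)}$ is an automorphism of $\SF_K(f)$ fixing $K$, so lies in $\Gal_K(f)$. Restriction is clearly compatible with composition, so $\rho \from \Gal_L(f) \to \Gal_K(f)$, $\theta \mapsto \theta|_{\SF_K(f)}$, is a group homomorphism.

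It remains to show $\rho$ is injective; then $\Gal_L(f) \iso \im\rho$, a subgroup of $\Gal_K(f)$, as required. This is where I expect the only real content to sit, and it is short: if $\rho(\theta) = \id$, then $\theta$ fixes $\SF_K(f) = K(X)$ pointwise, so in particular $\theta(x) = x$ for all roots $x \in X$ of $f$. But $\theta$ also fixes $L$ pointwise, and $\SF_L(f) = L(X)$ is generated over $L$ by $X$; so by Lemma~\ref{lemma:gen-epic} (applied to $\phi = \theta$, $\psi = \id$, $Y = X$), $\theta = \id$. Hence $\ker\rho$ is trivial and $\rho$ is injective. The main obstacle, such as it is, is the bookkeeping in identifying $\SF_K(f)$ as a subfield of $\SF_L(f)$ and checking $\theta(X) = X$ so that the restriction is well-defined as an automorphism; once that is set up, injectivity is immediate from Lemma~\ref{lemma:gen-epic}.
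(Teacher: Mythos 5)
Your proof is correct, but it takes a genuinely different route from the paper's. The paper deduces the result in one line from Proposition~\ref{propn:gal-defns-same}: both $\Gal_L(f)$ and $\Gal_K(f)$ are identified with the subgroups of $S_k$ consisting of permutations $\sigma$ for which $(\alpha_1,\ldots,\alpha_k)$ and $(\alpha_{\sigma(1)},\ldots,\alpha_{\sigma(k)})$ are conjugate over $L$ (respectively over $K$), and the first set of permutations is contained in the second simply because conjugacy over $L$ implies conjugacy over $K$ (a polynomial over $K$ is also a polynomial over $L$). Your approach instead works directly with the automorphism groups: you embed $\SF_K(f)$ into $\SF_L(f)$ as $K(X)$ where $X$ is the set of roots, check that any $\theta \in \Gal_L(f)$ stabilizes $K(X)$ setwise (since it permutes $X$ and fixes $K$), and show the restriction map $\theta \mapsto \theta|_{\SF_K(f)}$ is an injective group homomorphism, with injectivity coming from Lemma~\ref{lemma:gen-epic} and the fact that $\SF_L(f) = L(X)$. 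Your version is longer but has a real advantage: it exhibits the embedding concretely as restriction of automorphisms, which is the natural map and the one you'd actually want in applications, whereas the paper's argument gets the abstract conclusion quickly at the cost of routing everything through the combinatorial characterization of Proposition~\ref{propn:gal-defns-same}. (Your setup also makes explicit the identification of $\SF_K(f)$ as a subfield of $\SF_L(f)$, a point the paper's one-liner treats implicitly.)
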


\begin{proof}
This follows from Proposition~\ref{propn:gal-defns-same} together with the
observation that if two $k$-tuples are conjugate over $L$, they are
conjugate over $K$.
\end{proof}

\begin{example}
Let's find the Galois group of $f(t) = (t^2 + 1)(t^2 - 2)$ over $\Q$, $\R$
and $\C$ in turn. 

In Example~\ref{egs:gal-gp-first}\bref{eg:ggf-klein}, we saw that
$\Gal_\Q(f) \iso C_2 \times C_2$. 

Since both roots of $t^2 - 2$ are real, $\SF_\R(f) = \SF_\R(t^2 + 1) =
\C$. So $\Gal_\R(f) = \Gal(\C: \R) \iso C_2$, where the last step is by
Example~\ref{egs:gal-ext}\bref{eg:gal-ext-CR}. 

Finally, $\Gal_\C(f)$ is trivial since $\C$ is algebraically closed
(Example~\ref{egs:gal-gp-first}\bref{eg:ggf-triv}). 

So as Corollary~\ref{cor:gal-emb} predicts, $\Gal_\C(f)$ is isomorphic to a
subgroup of $\Gal_\R(f)$, which is isomorphic to a subgroup of
$\Gal_\Q(f)$. 
\end{example}

\begin{cor}
\label{cor:gal-poly-distinct}
Let $f$ be a nonzero polynomial over a field $K$, with $k$ distinct roots in
$\SF_K(f)$. Then $\left|\Gal_K(f)\right|$ divides $k!$.
\end{cor}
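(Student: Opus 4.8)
The statement is an immediate consequence of realizing the Galois group as a group of permutations of the root set together with Lagrange's theorem. The plan is to invoke the identification of $\Gal_K(f)$ with a subgroup of $S_k$ that has already been set up, and then note that the order of a subgroup divides the order of the ambient group.

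\textbf{Key steps.} First I would recall that by Lemmas~\ref{lemma:gal-acts} and~\ref{lemma:gal-acts-faithfully}, the action of $\Gal_K(f)$ on the set $X$ of roots of $f$ in $\SF_K(f)$ is well-defined and faithful, and $|X| = k$ by hypothesis. Then, by Remark~\ref{rmk:faith-indices} (equivalently, Lemma~\ref{lemma:faith-rep} with $\Sym(X) \iso S_k$), choosing an ordering $\alpha_1, \ldots, \alpha_k$ of the roots identifies $\Gal_K(f)$ with the subgroup $\{\sigma_\theta \such \theta \in \Gal_K(f)\}$ of $S_k$ via $\theta \mapsto \sigma_\theta$. (Proposition~\ref{propn:gal-defns-same} packages exactly this identification and could be cited instead.) Second, since $\Gal_K(f)$ is thereby isomorphic to a subgroup of the finite group $S_k$, Lagrange's theorem gives that $|\Gal_K(f)|$ divides $|S_k| = k!$, which is the claim.

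\textbf{Main obstacle.} There really isn't one: all the substantive work — that the action on roots is faithful, and that a faithful action embeds the group into the relevant symmetric group — is already done earlier in the chapter, so this is a genuine one-line corollary. The only thing worth a sentence of care is the degenerate case $k = 0$ (i.e.\ $f$ a nonzero constant, so $\SF_K(f) = K$ and $\Gal_K(f)$ trivial): then $|\Gal_K(f)| = 1$ divides $0! = 1$, consistent with $S_0$ being trivial, so no separate argument is needed.
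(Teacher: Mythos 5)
Your proposal is correct and matches the paper's own proof: the paper cites Proposition~\ref{propn:gal-defns-same} to embed $\Gal_K(f)$ as a subgroup of $S_k$ and then applies Lagrange's theorem, which is exactly your argument (you just additionally unpack the faithful-action lemmas behind that proposition). No gaps; the remark about $k = 0$ is a harmless extra.
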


\begin{proof}
By Proposition~\ref{propn:gal-defns-same}, $\Gal_K(f)$ is isomorphic to a
subgroup of $S_k$, which has $k!$ elements. The result follows from
Lagrange's theorem.
\end{proof}

The inequalities~\eqref{eq:gal-poly-bounds} already gave us $|\Gal_K(f)|
\leq \deg(f)!$. Corollary~\ref{cor:gal-poly-distinct} improves on this in
two respects. First, it implies that $|\Gal_K(f)| \leq k!$. It's always the
case that $k \leq \deg(f)$ in all cases, and $k < \deg(f)$ if $f$ has
repeated roots in its splitting field. A trivial example: if $f(t) = t^2$
then $k = 1$ and $\deg(f) = 2$. Second, it tells us that $|\Gal_K(f)|$ is
not only less than or equal to $k!$, but a factor of it.

Galois theory is about the interplay between field extensions and
groups. In the next chapter, we'll see that just as every field extension
gives rise to a group of automorphisms (its Galois group), every group of
automorphisms gives rise to a field extension. We'll also go deeper into
the different types of field extension: normal extensions (the mirror image
of normal subgroups) and separable extensions (which have to do with
repeated roots). All of that will lead us towards the fundamental theorem of
Galois theory.

\chapter{Preparation for the fundamental theorem}

Very roughly, the fundamental theorem of Galois theory says that you can
tell a lot about a field extension by looking at its Galois group. A bit
\video{Introduction to Week~7}%
more specifically, it says that the subgroups and quotients of $\Gal(M:
K)$, and their orders, give us information about the subfields of $M$
containing $K$, and their degrees. For example, one part of the fundamental
theorem is that
\[
[M: K] = |\Gal(M: K)|.
\]
The theorem doesn't hold for all extensions, just those that are `nice
enough'. Crucially, this includes splitting field extensions $\SF_\Q(f):
\Q$ of polynomials $f$ over $\Q$---the starting point of classical Galois
theory. 

Let's dip our toes into the water by thinking about why it might be true
that $[M: K] = |\Gal(M: K)|$, at least for extensions that are nice
enough. 

The easiest nontrivial extensions are the simple algebraic extensions, $M
= K(\alpha)$. Write $m$ for the minimal polynomial of $\alpha$ over $K$ and
$\alpha_1, \alpha_2, \ldots, \alpha_s$ for the distinct roots of $m$ in
$M$. For every element $\phi$ of $\Gal(M: K)$, we have $m(\phi(\alpha)) =
0$ by Example~\ref{eg:ext-zeros-id}, and so $\phi(\alpha) = \alpha_j$ for
some $j \in \{1, \ldots, s\}$.  On the other hand, for each $j \in \{1,
\ldots, s\}$, there is exactly one $\phi \in \Gal(M: K)$ such that
$\phi(\alpha) = \alpha_j$, by Proposition~\ref{propn:simp-unique}. So
$|\Gal(M: K)| = s$.

On the other hand, $[M: K] = \deg(m)$. So $[M: K] = |\Gal(M: K)|$ if and
only if $\deg(m)$ is equal to $s$, the number of distinct roots of $m$ in
$M$. Certainly $s \leq \deg(m)$. But are $s$ and $\deg(m)$ equal?

There are two reasons why they might not be. First, $m$ might not split in
$M$. For instance, if $K = \Q$ and $\alpha = \sqrt[3]{2}$ then $m(t) = t^3
- 2$, which has only one root in $\Q(\sqrt[3]{2})$, so
$|\Gal(\Q(\sqrt[3]{2}): \Q)| = 1 < 3 = \deg(m)$.  An algebraic extension is
called `normal' if this problem doesn't occur, that is, if the minimal
polynomial of every element does split. That's what Section~\ref{sec:nml}
is about.

Second, we might have $s < \deg(m)$ because some of the roots of $m$ in $M$
are repeated. If they are, the number $s$ of \emph{distinct} roots will be
less then $\deg(m)$. An algebraic extension is called `separable' if this
problem doesn't occur, that is, if the minimal polynomial of every element has
no repeated roots in its splitting field. That's what Section~\ref{sec:sep}
is about.

If we take any finite extension $M: K$ (not necessarily simple) that is both
normal and separable, then it is indeed true that
$|\Gal(M: K)| = [M: K]$.  And in fact, these conditions are enough to make
the whole fundamental theorem work, as we'll see next week.

I hesitated before putting normality and separability into the same
chapter, because you should think of them in quite different
ways:
\begin{itemize}
\item
Normality has a clear conceptual meaning, and its importance was recognized
by Galois himself. Despite the name, most field extensions aren't
normal. Normality isn't something to be taken for granted.

\item
In contrast, Galois never considered separability, because it holds
automatically over $\Q$ (his focus), and in fact over any field of
characteristic $0$, as well as any finite field. It takes some work to find
an extension that \emph{isn't} separable. You can view separability as more
of a technicality.
\end{itemize}

There's one more concept in this chapter: the `fixed field' of a group of
automorphisms (Section~\ref{sec:fixed}). Every Galois theory text I've seen
contains at least one proof that makes you ask `how did anyone think of
that?' I would argue that the proof of Theorem~\ref{thm:fixed} is the one
and only truly ingenious argument in this course: maybe not the hardest,
but the most ingenious. This is not a compliment.

\section{Normality}
\label{sec:nml}

\begin{defn}
An algebraic field extension $M: K$ is \demph{normal} if for all $\alpha
\in M$, the minimal polynomial of $\alpha$ splits in $M$.
\end{defn}

We also say \demph{$M$ is normal over $K$} to mean that $M: K$ is normal.

\begin{lemma}
\label{lemma:nml-irr}
Let $M: K$ be an algebraic extension. Then $M: K$ is normal if and only if
every irreducible polynomial over $K$ either has no roots in $M$ or
splits in~$M$.
\end{lemma}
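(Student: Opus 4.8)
The plan is to prove both implications directly from the definition of normality, using the correspondence between irreducible polynomials over $K$ and minimal polynomials of elements of $M$ that is packaged in Lemma~\ref{lemma:min-poly-tfae}. No auxiliary construction is needed; this is essentially a translation exercise.

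For the forward direction, I would assume $M : K$ is normal and take an irreducible $p \in K[t]$. If $p$ has no root in $M$ there is nothing to show, so suppose $\alpha \in M$ with $p(\alpha) = 0$. The minimal polynomial $m$ of $\alpha$ over $K$ divides $p$ (by Lemma~\ref{lemma:min-poly-tfae}\bref{part:mpt-exp}, since $m$ divides every annihilating polynomial of $\alpha$), and since $p$ is irreducible and $m$ is nonconstant, $p$ must be a nonzero constant multiple of $m$. By normality $m$ splits in $M$, and hence so does $p$.

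For the converse, I would assume every irreducible polynomial over $K$ either has no root in $M$ or splits in $M$, and take an arbitrary $\alpha \in M$. Since $M : K$ is algebraic, $\alpha$ has a minimal polynomial $m$ over $K$, which is irreducible over $K$ by Lemma~\ref{lemma:min-poly-tfae}\bref{part:mpt-irr}. Because $m(\alpha) = 0$, the polynomial $m$ has a root in $M$, so by hypothesis it splits in $M$. As $\alpha$ was arbitrary, $M : K$ is normal.

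There is no serious obstacle here; the one point deserving care is keeping straight the distinction between a general irreducible polynomial (which need not be monic) and the minimal polynomial (which is monic). Because of that I would phrase everything in terms of ``splits in $M$'', a condition unaffected by multiplying by a nonzero constant, and invoke the divisibility characterisation of the minimal polynomial rather than trying to identify $p$ with $m$ exactly.
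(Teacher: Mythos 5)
Your proposal is correct and follows essentially the same route as the paper: identify the irreducible polynomial with a constant multiple of the minimal polynomial of one of its roots (the paper does this by noting the minimal polynomial is $f/c$ with $c$ the leading coefficient, you via the divisibility characterisation in Lemma~\ref{lemma:min-poly-tfae}), and then apply the definition of normality in each direction. The converse direction is identical to the paper's.
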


Put another way, normality means that any irreducible polynomial over $K$
with at least \emph{one} root in $M$ has \emph{all} its roots in $M$.

\begin{proof}
Suppose that $M: K$ is normal, and let $f$ be an irreducible polynomial over
$K$. If $f$ has a root $\alpha$ in $M$ then the minimal polynomial of
$\alpha$ is $f/c$, where $c \in K$ is the leading coefficient of $f$. Since
$M: K$ is normal, $f/c$ splits in $M$, so $f$ does too.

Conversely, suppose that every irreducible polynomial over $K$ either has
no roots in $M$ or splits in $M$. Let $\alpha \in M$. Then the minimal
polynomial of $\alpha$ has at least one root in $M$ (namely, $\alpha$), so
it splits in $M$. 
\end{proof}

\begin{examples}
\label{egs:nml-ext}
\begin{enumerate}
\item 
\label{eg:ne-cbrt2}
Let $\xi = \sqrt[3]{2} \in \R$, and consider $\Q(\xi): \Q$. The minimal
polynomial of $\xi$ over $\Q$ is $t^3 - 2$, whose roots in $\C$ are $\xi
\in \R$ and $\omega\xi, \omega^2\xi \in \C \without \R$, where $\omega =
e^{2\pi i/3}$. Since $\Q(\xi) \sub \R$, the minimal polynomial $t^3 - 2$
does not split in $\Q(\xi)$. Hence $\Q(\xi)$ is not normal over $\Q$.

Alternatively, using the equivalent condition in
Lemma~\ref{lemma:nml-irr}, $\Q(\xi): \Q$ is not normal because
$t^3 - 2$ is an irreducible polynomial over $\Q$ that has a root in
$\Q(\xi)$ but does not split there.

One way to think about the non-normality of $\Q(\xi): \Q$ is as
follows.  The three roots of $t^3 - 2$ are conjugate (`indistinguishable')
over $\Q$, since they have the same minimal polynomial. But if they're
\video{What does it mean to be normal?}%
indistinguishable, it would be strange for an extension to contain some but
not all of them---that would be making a distinction between elements that
are supposed to be indistinguishable. In this sense, $\Q(\xi): \Q$
is `abnormal'.

\item
Let $f$ be a nonzero polynomial over a field $K$. Then $\SF_K(f): K$ is
always normal, as we shall see (Theorem~\ref{thm:sf-fin-nml}).

\item
Every extension of degree $2$ is normal (just as, in group theory, every
subgroup of index $2$ is normal). You'll be asked to show this in
\href{https://www.maths.ed.ac.uk/~tl/galois}{Workshop~4, question~4}, but
you also know enough to prove it now.
\end{enumerate}
\end{examples}

\begin{ex}{ex:nml-irr}
What happens if you drop the word `irreducible' from
Lemma~\ref{lemma:nml-irr}? Is it still true?
\end{ex}

Normality of field extensions is intimately related to normality of
subgroups, and conjugacy in field extensions is also related to conjugacy
in groups. (The video \href{https://www.maths.ed.ac.uk/~tl/galois}{`What
does it mean to be normal?}'\ explains both kinds of normality and
conjugacy in intuitive terms.)

Here's the first of our two theorems about normal extensions. It describes
which extensions arise as splitting field extensions.

\begin{bigthm}
\label{thm:sf-fin-nml}
Let $M: K$ be a field extension. Then
\[
M = \SF_K(f) \text{ for some nonzero } f \in K[t]      
\iff
M: K \text{ is finite and normal}.
\]
\end{bigthm}

\begin{proof}
For $\impliedby$, suppose that $M: K$ is finite and normal. By finiteness,
there is a basis $\alpha_1, \ldots, \alpha_n$ of $M$ over $K$, and each
$\alpha_i$ is algebraic over $K$ (by
Proposition~\ref{propn:three-fin}). For each $i$, let $m_i$ be the minimal
polynomial of $\alpha_i$ over $K$; then by normality, $m_i$ splits in
$M$. Hence $f = m_1 m_2 \cdots m_n \in K[t]$ splits in $M$. The set of
roots of $f$ in $M$ contains $\{\alpha_1, \ldots, \alpha_n\}$, and $M =
K(\alpha_1, \ldots, \alpha_n)$, so $M$ is generated over $K$ by the set of
roots of $f$ in $M$. Thus, $M$ is a splitting field of $f$ over $K$.

For $\implies$, take a nonzero $f \in K[t]$ such that $M = \SF_K(f)$.  Write
$\alpha_1, \ldots, \alpha_n$ for the roots of $f$ in $M$. Then $M =
K(\alpha_1, \ldots, \alpha_n)$. Each $\alpha_i$ is algebraic over $K$
(since $f \neq 0$), so by Proposition~\ref{propn:three-fin}, $M: K$ is
finite.

We now show that $M: K$ is normal, which is the most substantial part of the
proof (Figure~\ref{fig:sf-nml}).
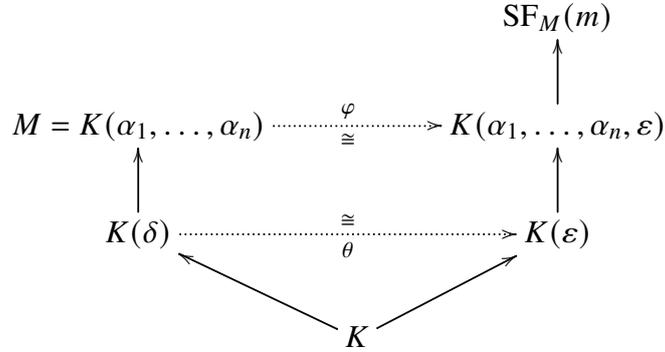
\begin{figure}
\[
\xymatrix{
        &
        &
\SF_M(m)        \\
M = K(\alpha_1, \ldots, \alpha_n)
\ar@{.>}[rr]^\phi_\iso  &
        &
K(\alpha_1, \ldots, \alpha_n, \epsln)
\ar[u]  \\
K(\delta) 
\ar@{.>}[rr]^\iso_\theta 
\ar[u]  &
        &
K(\epsln)
\ar[u]  \\
        &
K
\ar[lu] \ar[ru]
}
\]
\caption{Maps used in the proof that splitting field extensions are normal.}
\label{fig:sf-nml}
\end{figure}
Let $\delta \in M$, with minimal polynomial $m \in K[t]$. Certainly
$m$ splits in $\SF_M(m)$, so to show that $m$ splits in $M$, it is enough
to show that every root $\epsln$ of $m$ in $\SF_M(m)$ lies in $M$.%
\video{Splitting field extensions are normal}%

Since $m$ is a monic irreducible annihilating polynomial of $\epsln$ over
$K$, it is the minimal polynomial of $\epsln$ over $K$. Hence by
Theorem~\ref{thm:class-simp}, there is an isomorphism $\theta\from
K(\delta) \to K(\epsln)$ over $K$ such that $\theta(\delta) = \epsln$. Now
observe that:
\begin{itemize}
\item
$M = \SF_{K(\delta)}(f)$, by Lemma~\ref{lemma:sf-adj}\bref{part:sfa-fds};

\item
$K(\alpha_1, \ldots, \alpha_n, \epsln) = \SF_{K(\epsln)}(f)$, by
Lemma~\ref{lemma:sf-adj}\bref{part:sfa-gen};

\item
$\theta_* f = f$, since $f \in K[t]$ and $\theta$ is a homomorphism over
$K$. 
\end{itemize}
So we can apply Proposition~\ref{propn:ext-iso}, which implies that there
is an isomorphism $\phi \from M \to K(\alpha_1, \ldots, \alpha_n,
\epsln)$ extending $\theta$. It is an isomorphism over $K$, since
$\theta$ is.

Since $\delta \in K(\alpha_1, \ldots, \alpha_n)$ and $\phi$ is a
homomorphism over $K$, we have $\phi(\delta) \in K(\phi(\alpha_1), \ldots,
\phi(\alpha_n))$. Now $\phi(\delta) = \theta(\delta) = \epsln$, so $\epsln
\in K(\phi(\alpha_1), \ldots, \phi(\alpha_n))$. Moreover, for each $i$ we
have $f(\phi(\alpha_i)) = 0$ (by Example~\ref{eg:ext-zeros-id}) and so
$\phi(\alpha_i) \in \{\alpha_1, \ldots, \alpha_n\}$. Hence $\epsln \in
K(\alpha_1, \ldots, \alpha_n) = M$, as required.
\end{proof}

\begin{cor}
\label{cor:top-nml}
Let $M: L: K$ be field extensions. If $M: K$ is finite and normal then so
is $M: L$.
\end{cor}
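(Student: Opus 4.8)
The plan is to deduce this immediately from the characterization of finite normal extensions as splitting field extensions (Theorem~\ref{thm:sf-fin-nml}), together with the fact that splitting fields behave well under enlarging the base field (Lemma~\ref{lemma:sf-adj}\bref{part:sfa-fds}).

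First I would use the hypothesis. Since $M: K$ is finite and normal, Theorem~\ref{thm:sf-fin-nml} supplies a nonzero polynomial $f \in K[t]$ with $M = \SF_K(f)$. Next, observe that $L$ is a subfield of $M = \SF_K(f)$ containing $K$, since $M: L: K$. The coefficients of $f$ lie in $K \sub L$, so $f$ may equally be regarded as a nonzero element of $L[t]$, and the notation $\SF_L(f)$ makes sense. Then Lemma~\ref{lemma:sf-adj}\bref{part:sfa-fds}, applied to $f$ over $K$ with the intermediate field $L$, tells us that $M$ is the splitting field of $f$ over $L$; that is, $M = \SF_L(f)$.

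Finally I would apply Theorem~\ref{thm:sf-fin-nml} a second time, now to the extension $M: L$: because $M = \SF_L(f)$ is a splitting field extension of $L$, the extension $M: L$ is finite and normal. In particular it is normal, which is exactly the assertion of the corollary. (The finiteness of $M:L$ also drops out here, though that already follows from the tower law.)

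I do not expect any genuine obstacle: the entire content is packaged inside the two cited results. The only point needing a moment's care is the bookkeeping observation that a polynomial over $K$ is also a polynomial over $L$, so that $\SF_K(f)$ and $\SF_L(f)$ both make sense and Lemma~\ref{lemma:sf-adj}\bref{part:sfa-fds} is applicable; everything else is routine.
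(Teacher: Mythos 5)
Your argument is correct and is precisely the paper's own: the proof given there is just ``Follows from Theorem~\ref{thm:sf-fin-nml} and Lemma~\ref{lemma:sf-adj}\bref{part:sfa-fds}.'' You have simply spelled out the two applications of those results, which is exactly how they are meant to be combined.
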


\begin{proof}
Follows from Theorem~\ref{thm:sf-fin-nml} and
Lemma~\ref{lemma:sf-adj}\bref{part:sfa-fds}. 
\end{proof}

\begin{warning}{wg:btm-not-nml}
It does \emph{not} follow that $L: K$ is normal. For
instance, consider $\Q(\sqrt[3]{2}, e^{2\pi i/3}): \Q(\sqrt[3]{2}):
\Q$. The first field is the splitting field of $t^3 - 2$ over $\Q$, and
therefore normal over $\Q$, but $\Q(\sqrt[3]{2})$ is not
(Example~\ref{egs:nml-ext}\bref{eg:ne-cbrt2}). 
\end{warning}

Theorem~\ref{thm:sf-fin-nml} is the first of two theorems about
normality. The second is to do with the action of the Galois group of an
extension. 

\begin{warning}{wg:two-actions}
By definition, the Galois group $\Gal(M: K)$ of an extension $M: K$ acts on
$M$. But if $M$ is the splitting field of some polynomial $f$ over $K$ then
the action of $\Gal(M: K)$ on $M$ restricts to an action on the roots of
$f$ (a finite set), as we saw in Section~\ref{sec:gal-gp}. So there are
\emph{two} actions of the Galois group in play, one the
restriction of the other. Both are important.
\end{warning}

When a group acts on a set, a basic question is: what are the orbits? For
$\Gal(M: K)$ acting on $M$, the answer is: the conjugacy classes of $M$
over $K$. Or at least, that's the case when $M: K$ is finite and normal:

\begin{propn}
\label{propn:orbit-cc}
Let $M: K$ be a finite normal extension and $\alpha, \alpha' \in M$. Then
\[
\alpha \text{ and } \alpha' \text{ are conjugate over } K 
\iff    
\alpha' = \phi(\alpha) \text{ for some } \phi \in \Gal(M: K).
\]
\end{propn}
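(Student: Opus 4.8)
The plan is to prove both implications, using Proposition~\ref{propn:gal-defns-same}-style ideas but now in the cleaner setting of a finite normal extension.

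For the direction $\alpha' = \phi(\alpha)$ for some $\phi \in \Gal(M: K)$ $\implies$ $\alpha, \alpha'$ conjugate over $K$: this is the easy half and doesn't even need normality. Given such a $\phi$, for any $p \in K[t]$ (we only need one variable here, since $k = 1$), we have $p(\alpha') = p(\phi(\alpha)) = \phi(p(\alpha))$ because $\phi$ is a homomorphism over $K$. Since $\phi$ is injective (indeed an isomorphism), $p(\alpha) = 0 \iff p(\alpha') = 0$. Hence $\alpha$ and $\alpha'$ are conjugate over $K$. (This is essentially Example~\ref{eg:ext-zeros-id} applied with $\psi = \id_K$.)

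For the direction $\alpha, \alpha'$ conjugate over $K$ $\implies$ $\alpha' = \phi(\alpha)$ for some $\phi \in \Gal(M: K)$: first I would observe that conjugacy of $\alpha$ and $\alpha'$ over $K$ means exactly that they have the same annihilating polynomials over $K$, hence the same minimal polynomial $m$ over $K$. By Theorem~\ref{thm:class-simp}\bref{part:cs-alg} (or Proposition~\ref{propn:simp-unique} with $\psi = \id_K$), there is an isomorphism $\theta \from K(\alpha) \to K(\alpha')$ over $K$ with $\theta(\alpha) = \alpha'$. The goal is to extend $\theta$ to an automorphism $\phi$ of $M$ over $K$; then $\phi(\alpha) = \theta(\alpha) = \alpha'$ as desired. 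This is where normality does the work: since $M: K$ is finite and normal, Theorem~\ref{thm:sf-fin-nml} gives a nonzero $f \in K[t]$ with $M = \SF_K(f)$. Then by Lemma~\ref{lemma:sf-adj}\bref{part:sfa-fds}, $M$ is the splitting field of $f$ over $K(\alpha)$, and likewise $M$ is the splitting field of $f$ over $K(\alpha')$ (both are subfields of $M$ containing $K$). Since $\theta$ fixes $K$ and $f \in K[t]$, we have $\theta_* f = f$. So Proposition~\ref{propn:ext-iso}\bref{part:eh-exists} applied to the isomorphism $\theta \from K(\alpha) \to K(\alpha')$ yields an isomorphism $\phi \from M \to M$ extending $\theta$. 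Since $\theta$ is a homomorphism over $K$, so is $\phi$, and $\phi$ is a bijection $M \to M$, hence $\phi \in \Gal(M: K)$. This completes the proof.

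The main obstacle — really the only subtlety — is making sure the splitting-field bookkeeping lines up so that Proposition~\ref{propn:ext-iso} applies: one must check that $M$ genuinely is a splitting field of the \emph{same} $f$ over both $K(\alpha)$ and $K(\alpha')$, which follows from Lemma~\ref{lemma:sf-adj}\bref{part:sfa-fds} since both $K(\alpha)$ and $K(\alpha')$ sit between $K$ and $M = \SF_K(f)$, together with the fact that $\theta$ being over $K$ forces $\theta_* f = f$. Everything else is routine.
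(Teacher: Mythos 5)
Your proof is correct and takes essentially the same route as the paper's: the easy direction via Example~\ref{eg:ext-zeros-id}, and the hard direction by passing to the common minimal polynomial, building $\theta \from K(\alpha) \to K(\alpha')$ via Theorem~\ref{thm:class-simp}, and extending it to an automorphism of $M$ using Theorem~\ref{thm:sf-fin-nml}, Lemma~\ref{lemma:sf-adj}\bref{part:sfa-fds} and Proposition~\ref{propn:ext-iso}\bref{part:eh-exists}. The only point worth making explicit is that the finiteness of $M: K$ is what guarantees $\alpha$ and $\alpha'$ are algebraic over $K$, so that minimal polynomials exist at all.
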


\begin{proof}
For $\impliedby$, let $\phi \in \Gal(M: K)$ with $\alpha' =
\phi(\alpha)$. Then $\alpha$ and $\alpha'$ are conjugate over $K$, by
Example~\ref{eg:ext-zeros-id}.

For $\implies$, suppose that $\alpha$ and $\alpha'$ are conjugate over
$K$. Since $M: K$ is finite, both are algebraic over $K$, and since they
are conjugate over $K$, they have the same minimal polynomial $m \in
K[t]$. By Theorem~\ref{thm:class-simp}, there is an isomorphism $\theta
\from K(\alpha) \to K(\alpha')$ over $K$ such that $\theta(\alpha) =
\alpha'$ (see diagram below). 

By Theorem~\ref{thm:sf-fin-nml}, $M$ is the splitting field of some
polynomial $f$ over $K$. Hence $M$ is also the splitting field of $f$ over
both $K(\alpha)$ and $K(\alpha')$, by
Lemma~\ref{lemma:sf-adj}\bref{part:sfa-fds}. Moreover, $\theta_* f = f$
since $\theta$ is a homomorphism over $K$ and $f$ is a polynomial over
$K$. So by Proposition~\ref{propn:ext-iso}\bref{part:eh-exists}, there is
an automorphism $\phi$ of $M$ extending $\theta$:
\[
\xymatrix{
M
\ar@{.>}[rr]^\phi_\iso  &
        &
M       \\
K(\alpha) 
\ar@{.>}[rr]^\iso_\theta 
\ar[u]  &
        &
K(\alpha')
\ar[u]  \\
        &
K
\ar[lu] \ar[ru]
}
\]
Then $\phi \in \Gal(M: K)$ with $\phi(\alpha) =
\theta(\alpha) = \alpha'$, as required.
\end{proof}

\begin{example}
Consider the finite normal extension $\C: \R$. Let $\alpha, \alpha' \in
\C$. Lemma~\ref{lemma:indist-R} states that $\alpha$ and $\alpha'$ are
conjugate over $\R$ if and only if $\alpha'$ is either $\alpha$ or
$\ovln{\alpha}$. Example~\ref{egs:gal-ext}\bref{eg:gal-ext-CR} states that
$\Gal(\C: \R) = \{\id, \kappa\}$, where $\kappa$ is complex
conjugation. This confirms Proposition~\ref{propn:orbit-cc} in the case
$\C: \R$.
\end{example}

Proposition~\ref{propn:orbit-cc} is about the action of $\Gal(M: K)$ on the
whole field $M$, but it has a powerful corollary involving the action of
the Galois group on just the roots of an irreducible polynomial $f$, in the
case $M = \SF_K(f)$:

\begin{cor}
\label{cor:act-trans}
Let $f$ be an irreducible polynomial over a field $K$. Then the action of
$\Gal_K(f)$ on the roots of $f$ in $\SF_K(f)$ is transitive.  
\end{cor}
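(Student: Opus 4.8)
The plan is to reduce transitivity to two results already in hand: that $\SF_K(f)$ is a \emph{finite normal} extension of $K$, and that for such extensions the orbits of the Galois group acting on the whole field are exactly the conjugacy classes over $K$. So the real content is all packaged into Theorem~\ref{thm:sf-fin-nml} and Proposition~\ref{propn:orbit-cc}; what remains is to identify the roots of an irreducible $f$ with a single conjugacy class.

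First I would set $M = \SF_K(f)$. Since $f$ is irreducible it is in particular nonzero (by the definition of irreducible), so $M$ is the splitting field of a nonzero polynomial over $K$, and hence $M : K$ is finite and normal by Theorem~\ref{thm:sf-fin-nml}. Note also $\Gal_K(f) = \Gal(M : K)$ by Definition~\ref{defn:gal-gp-poly}, and this action on $M$ restricts to an action on the (nonempty, since $\deg f \geq 1$) set of roots of $f$ in $M$ by Lemma~\ref{lemma:gal-acts}. To prove transitivity I would take two arbitrary roots $\alpha, \alpha'$ of $f$ in $M$ and produce $\phi \in \Gal(M:K)$ with $\phi(\alpha) = \alpha'$.

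The key step is checking that $\alpha$ and $\alpha'$ are conjugate over $K$. Because $f$ is irreducible, some nonzero scalar multiple $m$ of $f$ is monic and irreducible, and $m(\alpha) = 0 = m(\alpha')$; by Lemma~\ref{lemma:min-poly-tfae}\bref{part:mpt-irr}, $m$ is the minimal polynomial of $\alpha$ and equally of $\alpha'$. Two elements with the same minimal polynomial have the same set of annihilating polynomials over $K$ — each set equals $\idlgen{m}$ — so for every $p \in K[t]$ we get $p(\alpha) = 0 \iff p(\alpha') = 0$, which is exactly the definition of $\alpha$ and $\alpha'$ being conjugate over $K$ (the $k = 1$ case of Definition~\ref{defn:conj-tuples}).

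With conjugacy established, Proposition~\ref{propn:orbit-cc} applied to the finite normal extension $M : K$ yields $\phi \in \Gal(M:K) = \Gal_K(f)$ with $\phi(\alpha) = \alpha'$, completing the argument. I do not anticipate a genuine obstacle; the one point to be careful about is that it is irreducibility of $f$, not merely "$f$ has a root in $M$", that lets us conclude $m$ is the minimal polynomial of \emph{both} $\alpha$ and $\alpha'$ — without irreducibility, distinct roots of $f$ could have different minimal polynomials and need not be conjugate.
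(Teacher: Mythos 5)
Your proposal is correct and follows essentially the same route as the paper: observe that all roots of the irreducible $f$ share the minimal polynomial $f$ divided by its leading coefficient, hence are conjugate over $K$, and then invoke Theorem~\ref{thm:sf-fin-nml} together with Proposition~\ref{propn:orbit-cc} to produce the required automorphism. Your extra remark on why irreducibility (and not merely having a root) is needed is a fair point, but the core argument matches the paper's.
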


Recall what \demph{transitive} means, for an action of a group $G$ on a set
$X$: for all $x, x' \in X$, there exists $g \in G$ such that $gx = x'$.

\begin{proof}
Since $f$ is irreducible, the roots of $f$ in $\SF_K(f)$ all have the same
minimal polynomial, namely, $f$ divided by its leading coefficient. So they
are all conjugate over $K$. Since $\SF_K(f): K$ is finite and normal (by
Theorem~\ref{thm:sf-fin-nml}), the result follows from
Proposition~\ref{propn:orbit-cc}.
\end{proof}

\begin{ex}{ex:trans-irr}
Show by example that Corollary~\ref{cor:act-trans} becomes false if you
drop the word `irreducible'.
\end{ex}

\begin{example}
\label{eg:spectacular}
Let $f(t) = 1 + t + \cdots + t^{p - 1} \in \Q[t]$, where $p$ is
prime. Since $(1 - t)f(t) = 1 - t^p$, the roots of $f$ in $\C$ are $\omega,
\omega^2, \ldots, \omega^{p - 1}$, where $\omega = e^{2\pi i/p}$. By
Example~\ref{eg:cyclo-irr}, $f$ is irreducible over $\Q$. Hence by
Corollary~\ref{cor:act-trans}, for each $i \in \{1, \ldots, p - 1\}$, there
is some $\phi \in \Gal_\Q(f)$ such that $\phi(\omega) = \omega^i$.

\textcolor{red}{\emph{This is spectacular!}} Until now, we've been unable
to prove such things without a huge amount of explicit checking, which,
moreover, only works on a case-by-case basis. For example, if you watched
the video \href{https://www.maths.ed.ac.uk/~tl/galois}{`Calculating Galois groups with bare hands, part~2'}, you'll have
seen how much tedious calculation went into the single case $p =
5$, $i = 2$:
\[
\hspace*{-32mm}\includegraphics[height=50mm,width=200mm]{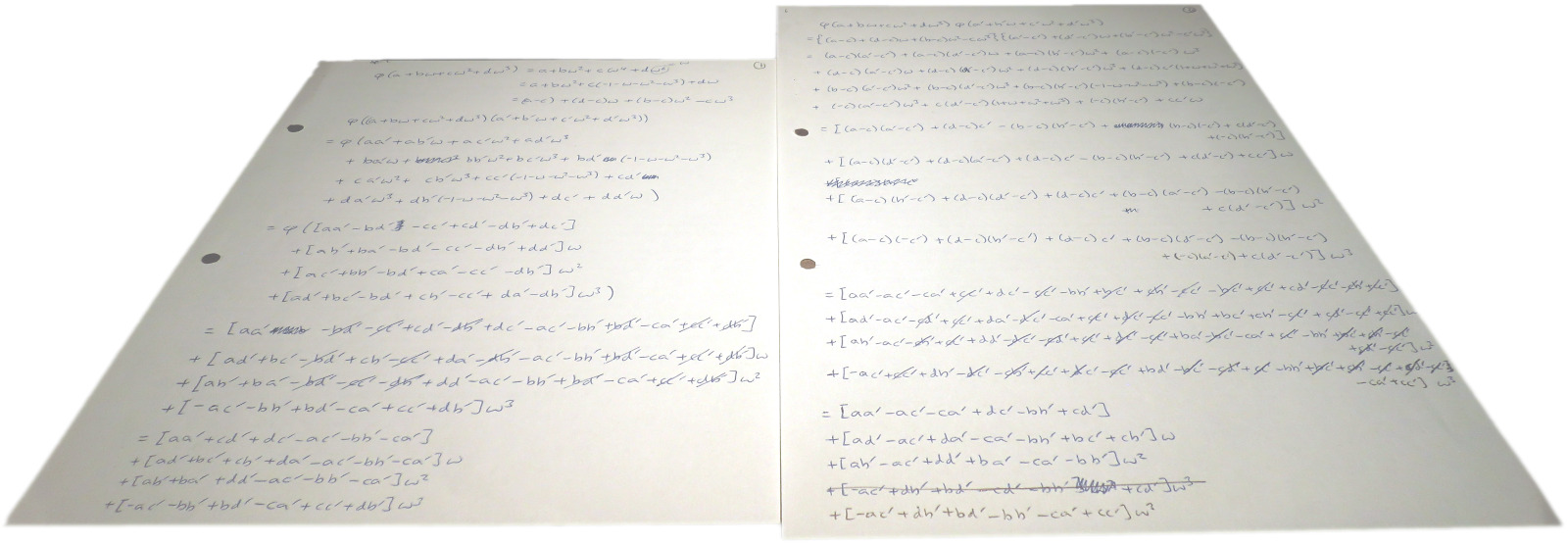}
\]
But the theorems we've proved make all this unnecessary.

In fact, for each $i \in \{1, \ldots, p - 1\}$, there's exactly \emph{one}
element $\phi_i$ of $\Gal_\Q(f)$ such that $\phi_i(\omega) = \omega^i$. For
since $\SF_\Q(f) = \Q(\omega)$, two elements of $\Gal_\Q(f)$ that take the
same value on $\omega$ must be equal. Hence
\[
\Gal_\Q(f) = \{\phi_1, \ldots, \phi_{p - 1}\}.
\]
In fact, $\Gal_\Q(f) \iso C_{p - 1}$
(\href{https://www.maths.ed.ac.uk/~tl/galois}{Workshop~4, question~13}).
\end{example}

\begin{example}
\label{eg:gal-cbrt2}
Let's calculate $G = \Gal_\Q(t^3 - 2)$. Since $t^3 - 2$ has $3$ distinct
roots in $\C$, it has $3$ distinct roots in its splitting field. By
Proposition~\ref{propn:gal-defns-same}, $G$ is isomorphic to a subgroup of
$S_3$. Now $G$ acts transitively on the $3$ roots, so it has at least $3$
elements, so it is isomorphic to either $A_3$ or $S_3$. Since two of the
roots are non-real complex conjugates, one of the elements of $G$ is
complex conjugation, which has order $2$
(Example~\ref{egs:gal-gp-first}\bref{eg:ggf-conj}). Hence $2$ divides
$|G|$, forcing $G \iso S_3$.
\end{example}

We now show how a normal field extension gives rise to a normal
subgroup. Whenever in life you meet a normal subgroup, you should
immediately want to form the quotient, so we do that too.

\begin{bigthm}
\label{thm:nml-ftgt}
Let $M: L: K$ be field extensions with $M: K$ finite and normal. 
\begin{enumerate}
\item 
\label{part:nf-fix}
$L: K$ is a normal extension $\iff$ $\phi L = L$ for all $\phi \in
\Gal(M: K)$.

\item
\label{part:nf-main}
If $L: K$ is a normal extension then $\Gal(M: L)$ is a normal subgroup of
$\Gal(M: K)$ and
\[
\frac{\Gal(M: K)}{\Gal(M: L)} \iso \Gal(L: K).
\]
\end{enumerate}
\end{bigthm}

Before the proof, here's some context and explanation. 

Part~\bref{part:nf-fix} answers the question implicit in
Warning~\ref{wg:btm-not-nml}: we know from Corollary~\ref{cor:top-nml} that
$M: L$ is normal, but when is $L: K$ normal? The notation
\demph{$\phi L$} means $\{\phi(\alpha) \such \alpha \in L\}$. For $\phi L$ to be
equal to $L$ means that $\phi$ fixes $L$ \emph{as a set} (in other words,
permutes it within itself), not that $\phi$ fixes each element of $L$.

In part~\bref{part:nf-main}, it's true for all $M: L: K$ that
$\Gal(M: L)$ is a \emph{subset} of $\Gal(M: K)$, since
\begin{align*}
\Gal(M: L)      &
=
\{ \text{automorphisms $\phi$ of $M$ such that $\phi(\alpha) = \alpha$ 
for all $\alpha \in L$} \}  \\
&
\sub
\{ \text{automorphisms $\phi$ of $M$ such that $\phi(\alpha) = \alpha$ 
for all $\alpha \in K$} \}  \\
&
=
\Gal(M: K).
\end{align*}
And it's always true that $\Gal(M: L)$ is a \emph{subgroup} of $\Gal(M:
K)$, as you can easily check. But part~\bref{part:nf-main} tells us
something much more substantial: it's a \emph{normal} subgroup when $L: K$
is a normal extension.

\begin{pfof}{Theorem~\ref{thm:nml-ftgt}}
For~\bref{part:nf-fix}, first suppose that $L$ is normal over $K$, and let
$\phi \in \Gal(M: K)$. For all $\alpha \in L$,
Proposition~\ref{propn:orbit-cc} implies that $\alpha$ and $\phi(\alpha)$
are conjugate over $K$, so they have the same minimal polynomial, so
$\phi(\alpha) \in L$ by normality. Hence $\phi L \sub L$. The same argument
with $\phi^{-1}$ in place of $\phi$ gives $\phi^{-1} L \sub L$, and
applying $\phi$ to each side then gives $L \sub \phi L$. So $\phi L = L$.

Conversely, suppose that $\phi L = L$ for all $\phi \in \Gal(M: K)$. Let
$\alpha \in L$ with minimal polynomial $m$. Since $M: K$ is normal, $m$
splits in $M$. Each root $\alpha'$ of $m$ in $M$ is conjugate to $\alpha$
over $K$, so by Proposition~\ref{propn:orbit-cc}, $\alpha' = \phi(\alpha)$
for some $\phi \in \Gal(M: K)$, giving $\alpha' \in \phi L = L$. Hence $m$
splits in $L$ and $L: K$ is normal.

For~\bref{part:nf-main}, suppose that $L: K$ is normal. To
prove that $\Gal(M: L)$ is a normal subgroup of $\Gal(M: K)$, let $\phi \in
\Gal(M: K)$ and $\theta \in \Gal(M: L)$. We show that $\phi^{-1} \theta
\phi \in \Gal(M: L)$, or equivalently, 
\[
\phi^{-1} \theta \phi(\alpha) = \alpha \text{ for all } \alpha \in L,
\]
or equivalently,
\[
\theta \phi(\alpha) = \phi(\alpha) \text{ for all } \alpha \in L.
\]
But by~\bref{part:nf-fix}, $\phi(\alpha) \in L$ for all $\alpha
\in L$, so $\theta(\phi(\alpha)) = \phi(\alpha)$ since $\theta \in \Gal(M:
L)$. This completes the proof that $\Gal(M: L) \nsub \Gal(M: K)$.

Finally, we prove the statement on quotients (still supposing that $L: K$
is a normal extension). Every automorphism $\phi$ of $M$ over $K$ satisfies
$\phi L = L$ (by~\bref{part:nf-fix}), and therefore restricts to an
automorphism $\hat{\phi}$ of $L$. The function
\[
\begin{array}{cccc}
\nu\from        &\Gal(M: K)     &\to            &\Gal(L: K)     \\
                &\phi           &\mapsto        &\hat{\phi}
\end{array}
\]
is a group homomorphism, since it preserves composition. Its kernel is
$\Gal(M: L)$, by definition. If we can prove that $\nu$ is surjective then
the last part of the theorem will follow from the first isomorphism theorem.

To prove that $\nu$ is surjective, we must show that each automorphism
$\psi$ of $L$ over $K$ extends to an automorphism $\phi$ of $M$:
\[
\xymatrix{
M
\ar@{.>}[rr]^\phi_\iso  &
        &
M       \\
L
\ar[rr]^\iso_\psi 
\ar[u]  &
        &
L
\ar[u]  \\
        &
K
\ar[lu] \ar[ru]
}
\]
The argument is similar to the second half of the proof of
Proposition~\ref{propn:orbit-cc}. By Theorem~\ref{thm:sf-fin-nml}, $M$ is
the splitting field of some $f \in K[t]$. Then $M$ is also the splittting
field of $f$ over $L$. Also, $\psi_* f = f$ since $\psi$ is a homomorphism
over $K$ and $f$ is a polynomial over $K$. So by
Proposition~\ref{propn:ext-iso}\bref{part:eh-exists}, there is an
automorphism $\phi$ of $M$ extending $\psi$, as required.
\end{pfof}

\begin{example}
\label{eg:nf-cbrt2}
Take $M: L: K$ to be 
\[
\Q\bigl(\xi, \omega\bigr): \Q(\omega): \Q,
\]
where $\xi = \sqrt[3]{2}$ and $\omega = e^{2\pi i/3}$. As you will
recognize by now, $\Q(\xi, \omega)$ is the splitting field of $t^3 - 2$
over $\Q$, so it is a finite normal extension of $\Q$ by
Theorem~\ref{thm:sf-fin-nml}.

Also, $\Q(\omega)$ is the splitting field of $t^2 + t + 1$ over $\Q$, so it
too is a normal extension of $\Q$. Part~\bref{part:nf-fix} of
Theorem~\ref{thm:nml-ftgt} implies that every element of $\Gal_\Q(t^3 - 2)$
restricts to an automorphism of $\Q(\omega)$. 

Part~\bref{part:nf-main} implies that
\[
\Gal\bigl(\Q\bigl(\xi, \omega\bigr): \Q(\omega)\bigr)
\nsub
\Gal\bigl(\Q\bigl(\xi, \omega\bigr): \Q\bigr)
\]
and that
\begin{align}
\label{eq:cbrt2-qt-iso}
\frac{\Gal\bigl(\Q\bigl(\xi, \omega\bigr): \Q\bigr)}
{\Gal\bigl(\Q\bigl(\xi, \omega\bigr): \Q(\omega)\bigr)}
\iso
\Gal(\Q(\omega): \Q).
\end{align}
What does this say explicitly? We showed in Example~\ref{eg:gal-cbrt2} that
$\Gal(\Q(\xi, \omega): \Q) \iso S_3$. That is, each element of the
Galois group permutes the three roots
\[
\xi, \, \omega\xi, \, \omega^2 \xi
\]
of $t^3 - 2$, and all six permutations are realized by some element of the
Galois group. An element of $\Gal(\Q(\xi, \omega): \Q)$ that fixes
$\omega$ is determined by which of the three roots $\xi$ is mapped
to, so $\Gal(\Q(\xi, \omega): \Q(\omega)) \iso A_3$. Finally,
$\Gal(\Q(\omega): \Q) \iso C_2$ by Example~\ref{eg:spectacular}. So in this
case, the isomorphism~\eqref{eq:cbrt2-qt-iso} states that
\[
\frac{S_3}{A_3}
\iso
C_2.
\]
\end{example}

\begin{ex}{ex:nf-cbrt2-pic}
Draw a diagram showing the three roots of $t^3 - 2$ and the elements of
$H = \Gal(\Q(\xi, \omega): \Q(\omega))$ acting on them. There is a
simple geometric description of the elements of $\Gal(\Q(\xi,
\omega): \Q)$ that belong to the subgroup $H$. What is it?
\end{ex}

\section{Separability}
\label{sec:sep}

Theorem~\ref{thm:sf} implies that $|\Gal(M: K)| \leq [M: K]$ whenever $M:
K$ is a splitting field extension. Why is this an inequality, not an
equality? The answer can be traced back to the proof of
Proposition~\ref{propn:ext-iso} on extension of isomorphisms. There, we had
an irreducible polynomial called $\psi_* m$, and we wrote $s$ for the
number of distinct roots of $\psi_* m$ in its splitting field.
Ultimately, the source of the inequality was the
fact that $s \leq \deg(\psi_* m)$.

But is this last inequality actually an equality? That is, does an
irreducible polynomial of degree $d$ always have $d$ distinct roots in its
splitting field?  Certainly it has $d$ roots when counted \emph{with
multiplicity}. But there will be fewer than $d$ \emph{distinct} roots if
any of the roots are repeated (have multiplicity $\geq 2$). The
question is whether this can ever happen.

Formally, for a polynomial $f(t) \in K[t]$ and a root $\alpha$ of $f$ in
some extension $M$ of $K$, we say that $\alpha$ is a \demph{repeated} root
if $(t - \alpha)^2 \dvd f(t)$ in $M[t]$.

\begin{ex}{ex:sep-futile}
Try to find an example of an irreducible polynomial of degree $d$ with
fewer than $d$ distinct roots in its splitting field. Or if you can't, see
if you can prove that this is impossible over $\Q$: that is, an
irreducible over $\Q$ has no repeated roots in $\C$. Both are quite hard,
but ten minutes spent trying may help you to appreciate what's to come.
\end{ex}

\begin{defn}
\label{defn:irr-sep}
An irreducible polynomial over a field is \demph{separable} if it has no
repeated roots in its splitting field.
\end{defn}

Equivalently, an irreducible polynomial $f \in K[t]$ is separable if it
splits into \emph{distinct} linear factors in $\SF_K(f)$:
\[
f(t) = a(t - \alpha_1) \cdots (t - \alpha_n)
\]
for some $a \in K$ and \emph{distinct} $\alpha_1, \ldots, \alpha_n \in
\SF_K(f)$. Put another way, an irreducible $f$ is separable if and only if
it has $\deg(f)$ distinct roots in its splitting field.

\begin{example}
$t^3 - 2 \in \Q[t]$ is separable, since it has $3$ distinct roots in $\C$, 
hence in its splitting field.
\end{example}

\begin{example}
\label{eg:insep-poly}
This is an example of an irreducible polynomial that's
\emph{in}separable. It's a little bit complicated, but it's
the simplest example there is.

Let $p$ be a prime number. We will consider the field $K = \F_p(u)$ of rational
expressions over $\F_p$ in an indeterminate (variable symbol)
$u$. Put $f(t) = t^p - u \in K[t]$. We will show that $f$ is an inseparable
irreducible polynomial.

By definition, $f$ has at least one root $\alpha$ in its splitting
field. But the roots of $f$ are the $p$th roots of $u$, and in fields of
characteristic $p$, each element has at most one $p$th root
(Corollary~\ref{cor:pth-roots}\bref{part:pr-fd}). So $\alpha$ is the
\emph{only} root of $f$ in $\SF_K(f)$, despite $f$ having degree $p >
1$. Alternatively, we can argue like this:
\[
f(t)
=
t^p - u
=
t^p - \alpha^p
=
(t - \alpha)^p,
\]
where the last step comes from the Frobenius map of $\SF_K(f)$ being a
homomorphism (Proposition~\ref{propn:frob}\bref{part:frob-ring}). 

We now show that $f$ is irreducible over $K$. Suppose it is reducible. The
unique factorization of $f$ into irreducible polynomials over $\SF_K(f)$ is
$f(t) = (t - \alpha)^p$, so any nontrivial factorization of $f$ in $K[t]$
is of the form
\[
f(t) = (t - \alpha)^i (t - \alpha)^{p - i}
\]
where $0 < i < p$ and both factors belong to $K[t]$. The coefficient of
$t^{i - 1}$ in $(t - \alpha)^i$ is $-i\alpha$, so $-i\alpha \in K$. But $i$
is invertible in $K$, so $\alpha \in K$. Hence $u$ has a $p$th root in
$K = \F_p(u)$, contradicting Exercise~\ref{ex:pth-root-trans}.
\end{example}

\begin{warning}{wg:sep-irred}
Definition~\ref{defn:irr-sep} is only a definition of separability for
\emph{irreducible} polynomials. There \emph{is} a definition of
separability for arbitrary polynomials, but it's not simply
Definition~\ref{defn:irr-sep} with the word `irreducible' deleted. We won't
need it, but here it is: an arbitrary polynomial is called separable if
each of its irreducible factors is separable. So $t^2$ is separable, even
though it has a repeated root.
\end{warning}

In real analysis, we can test whether a root is repeated by asking whether
the derivative is $0$ there:
\[
\includegraphics[height=30mm]{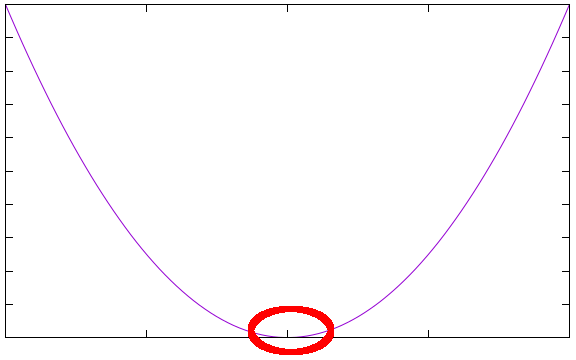}
\]
Over an arbitrary field, there's no general definition of the derivative of
a function, as there's no meaningful notion of limit. But even
without limits, we can differentiate polynomials in the following sense.

\begin{defn}
Let $K$ be a field and let $f(t) = \sum_{i = 0}^n a_i t^i \in K[t]$. The
\demph{formal derivative} of $f$ is 
\[
\dem{(Df)(t)} = \sum_{i = 1}^n i a_i t^{i - 1} \in K[t].
\]
\end{defn}

We use $Df$ rather than $f'$ to remind ourselves not to take the familiar
properties of differentiation for granted. Nevertheless, the usual basic
laws hold:

\begin{lemma}
\label{lemma:fdiff-elem}
Let $K$ be a field. Then 
\[
D(f + g) = Df + Dg,
\qquad
D(fg) = f\cdot Dg + Df \cdot g,
\qquad
Da = 0
\]
for all $f, g \in K[t]$ and $a \in K$.
\qed
\end{lemma}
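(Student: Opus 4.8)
The plan is to dispatch the three identities in order of increasing difficulty, using the first two to streamline the third.

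First I would handle $Da = 0$: a constant $a$ is the sequence $(a, 0, 0, \ldots)$, so in the notation $\sum_{i=0}^n a_i t^i$ one may take $n = 0$, and then the defining sum $\sum_{i=1}^n i a_i t^{i-1}$ is empty. For additivity, write $f = \sum_i a_i t^i$ and $g = \sum_i b_i t^i$ over a common range of indices (padding with zero coefficients); then $f + g = \sum_i (a_i + b_i) t^i$ by~\eqref{eq:poly-add}, and comparing coefficients after applying $D$ and using distributivity in $K$ gives $D(f+g) = Df + Dg$. The same coefficient bookkeeping gives $D(af) = a\,Df$ for $a \in K$, so $D$ is $K$-linear. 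Here one should keep in mind that the numerical coefficients $i$ appearing in $Df$ are really the elements $i \cdot 1_K \in K$ of Example~\ref{eg:Z-initial}, so these manipulations are honest identities in $K[t]$.

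For the product rule the plan is to avoid wrestling with the convolution formula~\eqref{eq:poly-mult-coeffs} directly and instead argue by bilinearity. Both sides of $D(fg) = f\cdot Dg + Df\cdot g$ are $K$-bilinear in the pair $(f,g)$: the left-hand side because $D$ is linear and multiplication in $K[t]$ is bilinear, the right-hand side by the same facts together with distributivity. Since every polynomial is a finite $K$-linear combination of monomials, it then suffices to verify the identity for $f = t^m$ and $g = t^n$ with $m, n \geq 0$ arbitrary; there $D(fg) = D(t^{m+n}) = (m+n) t^{m+n-1}$ while $f\cdot Dg + Df\cdot g = n\,t^{m+n-1} + m\,t^{m+n-1}$, and these agree — with the degenerate cases $m = 0$ or $n = 0$ covered by $D1 = 0$.

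I do not expect any real obstacle: the whole lemma is routine, and the only points needing care are the interpretation of the numerical coefficients as elements of $K$ and the handling of the degenerate cases (constants, degree-zero monomials), which the bilinearity argument isolates cleanly. A more pedestrian alternative for the product rule would expand $fg = \sum_k \bigl(\sum_{i+j=k} a_i b_j\bigr) t^k$, apply $D$ termwise, and reindex the resulting double sum to recover $f\,Dg + Df\,g$; this works but is messier and less illuminating than the bilinearity route.
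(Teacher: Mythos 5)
Your proof is correct. Note that the paper itself supplies no proof of this lemma — the $\square$ at the end of the statement signals that the verification is omitted as routine, and it is delegated to the reader in Exercise~\ref{ex:fdiff-check} — so there is no argument in the text to compare against. Your handling of the first two identities by direct coefficient comparison is exactly the expected computation, and the bilinearity reduction for the product rule is a clean way to avoid reindexing the double sum from~\eqref{eq:poly-mult-coeffs}: both sides are $K$-bilinear (by the linearity of $D$ you established and the bilinearity of multiplication), the monomials $1, t, t^2, \ldots$ span $K[t]$ over $K$, and the check on $t^m, t^n$ is a one-liner, with the $m = 0$ or $n = 0$ cases isolated as you note. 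Your remark that the coefficients $i$ in $Df$ are really $i \cdot 1_K \in K$ is the right thing to keep in mind; it is also what makes the lemma hold uniformly in positive characteristic, where $Df$ can vanish even for nonconstant $f$.
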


\begin{ex}{ex:fdiff-check}
Check one or two of the properties in Lemma~\ref{lemma:fdiff-elem}.\\
\end{ex}

The real analysis test for repetition of roots has an algebraic analogue:

\begin{lemma}
\label{lemma:rep-tfae}
Let $f$ be a nonzero polynomial over a field $K$. The following are
equivalent: 
\begin{enumerate}
\item 
\label{part:rt-rep}
$f$ has a repeated root in $\SF_K(f)$;

\item
\label{part:rt-shared}
$f$ and $Df$ have a common root in $\SF_K(f)$;

\item
\label{part:rt-factor}
$f$ and $Df$ have a nonconstant common factor in $K[t]$.
\end{enumerate}
\end{lemma}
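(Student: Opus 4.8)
The plan is to prove $\bref{part:rt-rep}\Leftrightarrow\bref{part:rt-shared}$ by a local computation inside the splitting field using the formal derivative, and then $\bref{part:rt-shared}\Leftrightarrow\bref{part:rt-factor}$ by passing back and forth between roots of $f$ in $\SF_K(f)$ and factors of $f$ in $K[t]$. Throughout I would write $M = \SF_K(f)$, the field in which $f$ splits.

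For $\bref{part:rt-rep}\Rightarrow\bref{part:rt-shared}$: if $\alpha\in M$ is a repeated root, write $f(t) = (t-\alpha)^2 g(t)$ in $M[t]$ and apply the product rule from Lemma~\ref{lemma:fdiff-elem}, which gives $(Df)(t) = (t-\alpha)\bigl(2g(t) + (t-\alpha)Dg(t)\bigr)$; hence $(Df)(\alpha)=0=f(\alpha)$, so $\alpha$ is a common root of $f$ and $Df$. For the converse $\bref{part:rt-shared}\Rightarrow\bref{part:rt-rep}$: if $\alpha\in M$ is a common root, Lemma~\ref{lemma:root-irr} gives $f(t) = (t-\alpha)h(t)$ in $M[t]$; differentiating, $(Df)(\alpha) = h(\alpha)$, and since $(Df)(\alpha)=0$ we get $h(\alpha)=0$, so $(t-\alpha)\dvd h$ by Lemma~\ref{lemma:root-irr} again, whence $(t-\alpha)^2\dvd f$ and $\alpha$ is repeated. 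The two directions together give the first equivalence.

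Next, $\bref{part:rt-shared}\Rightarrow\bref{part:rt-factor}$: a common root $\alpha\in M$ of $f$ and $Df$ is algebraic over $K$ (since $f\neq 0$ annihilates it), so it has a minimal polynomial $m\in K[t]$, which is nonconstant. Then $m\dvd f$, and also $m\dvd Df$, because $Df\in K[t]$ is an annihilating polynomial of $\alpha$ over $K$ and the minimal polynomial divides every such (Lemma~\ref{lemma:min-poly-tfae}); this argument covers the degenerate case $Df=0$ as well, since everything divides $0$. So $m$ is a nonconstant common factor of $f$ and $Df$ in $K[t]$. Finally, $\bref{part:rt-factor}\Rightarrow\bref{part:rt-shared}$: let $d\in K[t]$ be a nonconstant common factor. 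Since $f$ splits in $M$ and $d\dvd f$ in $K[t]\sub M[t]$, unique factorization of polynomials over $M$ (Theorem~\ref{thm:fact-polys}) forces $d$ to be a constant times a product of some of the linear factors of $f$ in $M[t]$; as $\deg(d)\geq 1$, $d$ therefore has a root $\alpha\in M$. Then $d(\alpha)=0$ together with $d\dvd f$ and $d\dvd Df$ yields $f(\alpha)=(Df)(\alpha)=0$, so $\alpha$ is a common root of $f$ and $Df$ in $M$.

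All the computations here are routine applications of the product rule and of Lemma~\ref{lemma:root-irr}. The one step that genuinely needs attention is $\bref{part:rt-factor}\Rightarrow\bref{part:rt-shared}$: I must be sure that a $K[t]$-divisor of $f$ acquires an actual root inside $\SF_K(f)$, and not merely in some further extension — this is exactly where unique factorization over the splitting field $M$ is doing the work. I would also take care to note explicitly that $\alpha$ is algebraic over $K$ before invoking its minimal polynomial in $\bref{part:rt-shared}\Rightarrow\bref{part:rt-factor}$, and to confirm the $Df=0$ case causes no trouble there.
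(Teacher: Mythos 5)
Your proof is correct and takes essentially the same route as the paper's: the same cycle of implications, the same product-rule computation for (i)$\iff$(ii), the minimal polynomial of a common root for (ii)$\implies$(iii), and the fact that a $K[t]$-divisor of $f$ splits in $\SF_K(f)$ for (iii)$\implies$(ii). You spell out two points the paper leaves implicit — that unique factorization over $\SF_K(f)$ is what guarantees the common factor has a root there, and that the $Df=0$ case is harmless — which is a sensible addition but not a change of approach.
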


\begin{proof}
\bref{part:rt-rep}$\implies$\bref{part:rt-shared}: suppose that $f$ has a
repeated root $\alpha$ in $\SF_K(f)$. Then $f(t) = (t -
\alpha)^2 g(t)$ for some $g(t) \in (\SF_K(f))[t]$. Hence
\[
(Df)(t) 
= 
(t - \alpha)
\bigl\{
2g(t) + (t - \alpha) \cdot (Dg)(t)
\bigr\},
\]
so $\alpha$ is a common root of $f$ and $Df$ in $\SF_K(f)$.

\bref{part:rt-shared}$\implies$\bref{part:rt-factor}: suppose that $f$ and
$Df$ have a common root $\alpha$ in $\SF_K(f)$. Then $\alpha$ is algebraic
over $K$ (since $f \neq 0$), and the minimal polynomial of $\alpha$ over
$K$ is then a nonconstant common factor of $f$ and $Df$ in $K[t]$.

\bref{part:rt-factor}$\implies$\bref{part:rt-shared}: if $f$ and $Df$ have
a nonconstant common factor $g$ then $g$ splits in $\SF_K(f)$, and any root
of $g$ in $\SF_K(f)$ is a common root of $f$ and $Df$.

\bref{part:rt-shared}$\implies$\bref{part:rt-rep}: suppose that $f$ and $Df$
have a common root $\alpha \in \SF_K(f)$. Then $f(t) = (t - \alpha)g(t)$
for some $g \in (\SF_K(f))[t]$, giving
\[
(Df)(t) = g(t) + (t - \alpha)\cdot (Dg)(t).
\]
But $(Df)(\alpha) = 0$, so $g(\alpha) = 0$, so $g(t) = (t - \alpha) h(t)$
for some $h \in (\SF_K(f))[t]$. Hence $f(t) = (t - \alpha)^2 h(t)$, and
$\alpha$ is a repeated root of $f$ in its splitting field.
\end{proof}

The point of Lemma~\ref{lemma:rep-tfae} is that
condition~\bref{part:rt-factor} allows us to test for repetition of roots
in $\SF_K(f)$ without ever leaving $K[t]$, or even
knowing what $\SF_K(f)$ is.

\begin{propn}
\label{propn:sep-deriv-zero}
Let $f$ be an irreducible polynomial over a field. Then $f$ is
inseparable if and only if $Df = 0$.
\end{propn}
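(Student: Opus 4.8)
The plan is to obtain this as an essentially immediate consequence of Lemma~\ref{lemma:rep-tfae} together with irreducibility. Since $f$ is irreducible it is nonconstant, so Definition~\ref{defn:irr-sep} says that $f$ is inseparable precisely when $f$ has a repeated root in $\SF_K(f)$. By the equivalence \bref{part:rt-rep}$\iff$\bref{part:rt-factor} of Lemma~\ref{lemma:rep-tfae}, this happens if and only if $f$ and $Df$ have a nonconstant common factor in $K[t]$. So the whole proof reduces to showing that, for irreducible $f$, this last condition is equivalent to $Df = 0$.

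For that, I would use irreducibility to unwind "nonconstant common factor". By Theorem~\ref{thm:fact-polys} (or Lemma~\ref{lemma:fdt}\bref{part:fdt-irred}), every nonconstant polynomial dividing $f$ in $K[t]$ is a nonzero constant multiple of $f$. Hence $f$ and $Df$ have a nonconstant common factor in $K[t]$ if and only if $f \dvd Df$.

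Finally I would bring in degrees. Writing $n = \deg(f) \geq 1$ and $f(t) = \sum_{i=0}^n a_i t^i$, the formal derivative $(Df)(t) = \sum_{i=1}^n i a_i t^{i-1}$ has degree at most $n - 1$, so $\deg(Df) < \deg(f)$ regardless of whether $Df$ vanishes. Since $K[t]$ is an integral domain (Lemma~\ref{lemma:idt}), a \emph{nonzero} polynomial of degree $< n$ cannot be divisible by one of degree $n$; therefore $f \dvd Df$ forces $Df = 0$. Conversely, if $Df = 0$ then $f \dvd Df$ trivially, so $f$ and $Df$ share the nonconstant common factor $f$, making $f$ inseparable. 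Chaining these equivalences completes the proof.

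There is no genuinely hard step here; the only point that needs a moment's care is the degree bookkeeping in characteristic $p$, where the leading coefficient $n a_n$ of $Df$ may itself be zero when $p \dvd n$. But this merely decreases $\deg(Df)$ further and does not disturb the inequality $\deg(Df) < \deg(f)$ on which the argument rests, so it is harmless.
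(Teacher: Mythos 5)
Your proof is correct and follows essentially the same route as the paper's: both invoke the equivalence \bref{part:rt-rep}$\iff$\bref{part:rt-factor} of Lemma~\ref{lemma:rep-tfae}, then use irreducibility to reduce ``nonconstant common factor'' to $f \dvd Df$, and finish with the degree comparison $\deg(Df) < \deg(f)$. You spell out the converse direction and the characteristic-$p$ degree bookkeeping explicitly, but these are just filled-in details of the same argument.
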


\begin{proof}
This follows from \bref{part:rt-rep}$\iff$\bref{part:rt-factor} in
Lemma~\ref{lemma:rep-tfae}. Since $f$ is
irreducible, $f$ and $Df$ have a nonconstant
common factor if and only if $f$ divides $Df$; but $\deg(Df) < \deg(f)$, so
$f \dvd Df$ if and only if $Df = 0$.
\end{proof}

\begin{cor}
\label{cor:sep-chars}
Let $K$ be a field.
\begin{enumerate}
\item 
If $\chr K = 0$ then every irreducible polynomial over $K$ is separable.

\item
If $\chr K = p > 0$ then an irreducible polynomial $f \in K[t]$ is
inseparable if and only if
\[
f(t) = b_0 + b_1 t^p + \cdots + b_r t^{rp}
\]
for some $b_0, \ldots, b_r \in K$.
\end{enumerate}
\end{cor}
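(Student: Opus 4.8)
The plan is to deduce both parts immediately from Proposition~\ref{propn:sep-deriv-zero}, which tells us that an irreducible polynomial $f$ is inseparable if and only if its formal derivative $Df$ is zero. So the entire task reduces to analysing when $Df = 0$, and this is where the characteristic of $K$ enters.

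First I would dispose of part~(i). Let $f \in K[t]$ be irreducible; then $f$ is nonconstant by Lemma~\ref{lemma:irr-triv}\bref{part:it-0}, so we may write $f(t) = \sum_{i=0}^n a_i t^i$ with $n = \deg(f) \geq 1$ and $a_n \neq 0$. The coefficient of $t^{n-1}$ in $Df$ is $n a_n$. Since $\chr K = 0$, the integer $n$ maps to a nonzero element $n \cdot 1_K$ of $K$, and $K$ is an integral domain, so $n a_n \neq 0$; hence $Df \neq 0$. By Proposition~\ref{propn:sep-deriv-zero}, $f$ is separable.

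For part~(ii), assume $\chr K = p > 0$ and let $f(t) = \sum_{i=0}^n a_i t^i$ be irreducible. By Proposition~\ref{propn:sep-deriv-zero}, $f$ is inseparable if and only if $Df = \sum_{i=1}^n i a_i t^{i-1}$ is the zero polynomial, i.e.\ if and only if $i a_i = 0$ in $K$ for every $i \geq 1$. Now $K$ is an integral domain and $i \cdot 1_K = 0$ precisely when $p \mid i$, so $i a_i = 0$ holds exactly when $p \mid i$ or $a_i = 0$. Therefore $Df = 0$ if and only if $a_i = 0$ for every index $i$ not divisible by $p$, which is precisely the assertion that $f$ is a polynomial in $t^p$, namely $f(t) = b_0 + b_1 t^p + \cdots + b_r t^{rp}$ with $b_j \in K$. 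This establishes the claimed equivalence.

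I do not expect any genuine obstacle: the statement is a direct corollary of the preceding proposition together with the elementary description of when $i \cdot 1_K$ vanishes in characteristic $0$ versus characteristic $p$. The only points that need a moment's care are recalling that an irreducible polynomial is necessarily nonconstant (so that in part~(i) the derivative really does have a nonzero leading term), and using that $K$ is an integral domain to conclude $n a_n \neq 0$ and to read off the condition $p \mid i$ or $a_i = 0$ — both facts are already available from earlier in the notes.
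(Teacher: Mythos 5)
Your proof is correct and follows essentially the same route as the paper: both parts are deduced from Proposition~\ref{propn:sep-deriv-zero} by analysing when $i a_i = 0$ for all $i \geq 1$, using the characteristic. The only cosmetic difference is that in part~(i) you argue directly via the nonvanishing of the leading term $n a_n$ of $Df$, whereas the paper argues that $Df = 0$ would force $f$ to be constant, contradicting irreducibility.
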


In other words, the only irreducible polynomials that are inseparable are
the polynomials in $t^p$ in characteristic $p$. Inevitably,
Example~\ref{eg:insep-poly} is of this form.

\begin{proof}
Let $f(t) = \sum a_i t^i$ be an irreducible polynomial. Then $f$ is
inseparable if and only if $Df = 0$, if and only if $i a_i = 0$ for all
$i \geq 1$. If $\chr K = 0$, this implies that $a_i = 0$ for all $i \geq
1$, so $f$ is constant, which contradicts $f$ being irreducible. If $\chr K
= p$, then $i a_i = 0$ for all $i \geq 1$ is equivalent to $a_i = 0$
whenever $p \ndvd i$. 
\end{proof}

\begin{remark}
\label{rmk:sep-ff}
In the final chapter we will show that every irreducible polynomial over a
finite field is separable. So, it is only over infinite fields of
characteristic $p$ that you have to worry about inseparability.
\end{remark}

We now build up to showing that $|\Gal(M: K)| = [M: K]$ whenever $M: K$ is
a finite normal extension in which the minimal polynomial of every element
of $M$ is separable. First, some terminology:

\begin{defn}
Let $M: K$ be an algebraic extension. An element of $M$ is
\demph{separable} over $K$ if its minimal polynomial over $K$ is
separable. The extension $M: K$ is \demph{separable} if every element of
$M$ is separable over $K$.
\end{defn}

\begin{examples}
\label{egs:sep-ext}
\begin{enumerate}
\item 
\label{eg:se-zero}
Every algebraic extension of fields of characteristic $0$ is separable, by
Corollary~\ref{cor:sep-chars}. 

\item
\label{eg:se-fin}
Every algebraic extension of a finite field is separable, by
Remark~\ref{rmk:sep-ff}. 

\item
\label{eg:se-insep}
The splitting field of $t^p - u$ over $\F_p(u)$ is inseparable. Indeed, the
element denoted by $\alpha$ in Example~\ref{eg:insep-poly} is inseparable
over $\F_p(u)$, since its minimal polynomial is the inseparable polynomial
$t^p - u$.
\end{enumerate}
\end{examples}

\begin{ex}{ex:alg-int}
Let $M: L: K$ be field extensions. Show that if $M: K$ is algebraic then so
are $M: L$ and $L: K$.
\end{ex}

\begin{lemma}
\label{lemma:sep-int}
Let $M: L: K$ be field extensions, with $M: K$ algebraic. If $M: K$ is
separable then so are $M: L$ and $L: K$.
\end{lemma}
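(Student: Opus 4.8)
The plan is to prove the two assertions separately, the statement about $L: K$ being essentially immediate and the one about $M: L$ carrying all the content. For $L: K$: given $\alpha \in L$, we also have $\alpha \in M$, and the minimal polynomial of $\alpha$ over $K$ is the same whether $\alpha$ is regarded as an element of $L$ or of $M$, since it is determined by the ideal of annihilating polynomials of $\alpha$ in $K[t]$, which does not reference the ambient field. As $M: K$ is separable, that minimal polynomial is separable, so $\alpha$ is separable over $K$. Since $\alpha \in L$ was arbitrary, $L: K$ is separable.

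For $M: L$, fix $\alpha \in M$ and let $m_K, m_L \in L[t]$ be the minimal polynomials of $\alpha$ over $K$ and over $L$ respectively (both exist: $\alpha$ is algebraic over $K$, hence over $L$, being a root of $m_K \in K[t] \subseteq L[t]$). Because $m_K$ is an annihilating polynomial of $\alpha$ over $L$, we have $m_L \mid m_K$ in $L[t]$, say $m_K = m_L g$ with $g \in L[t]$. We know $m_K$ is separable, since it is the minimal polynomial over $K$ of an element of $M$ and $M: K$ is separable; the goal is to deduce that $m_L$ is separable.

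I would extract this from the derivative criterion. By Proposition~\ref{propn:sep-deriv-zero}, $m_L$ is inseparable if and only if $D m_L = 0$, so suppose for contradiction that $D m_L = 0$. Differentiating $m_K = m_L g$ via Lemma~\ref{lemma:fdiff-elem} gives $D m_K = m_L \cdot Dg$, so $m_L \mid D m_K$ in $L[t]$, and clearly $m_L \mid m_K$ as well. On the other hand, $m_K$ is separable, so $D m_K \neq 0$ by Proposition~\ref{propn:sep-deriv-zero}, and since $m_K$ is irreducible over $K$ with $\deg(D m_K) < \deg(m_K)$, the polynomials $m_K$ and $D m_K$ are coprime in $K[t]$. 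By Proposition~\ref{propn:bezout} there are $a, b \in K[t]$ with $a m_K + b D m_K = 1$, and this identity holds in $L[t]$ too, so $m_K$ and $D m_K$ remain coprime in $L[t]$. But $m_L$ is a nonconstant common divisor of $m_K$ and $D m_K$ in $L[t]$, a contradiction. Hence $D m_L \neq 0$, so $m_L$ is separable, $\alpha$ is separable over $L$, and $M: L$ is separable.

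The one genuinely load-bearing point is the transfer of coprimality of $m_K$ and $D m_K$ from $K[t]$ to $L[t]$, which is precisely what the Bézout identity of Proposition~\ref{propn:bezout} delivers; everything else is routine manipulation of minimal polynomials and formal derivatives. An alternative route for $M: L$ that bypasses derivatives: take a splitting field $N$ of $m_K$ over $L$; the subfield of $N$ generated over $K$ by the roots of $m_K$ is a splitting field of $m_K$ over $K$, hence by uniqueness (Theorem~\ref{thm:sf}) contains $\deg(m_K)$ distinct roots of $m_K$, so $m_K$ splits into distinct linear factors in $N$, and therefore so does its divisor $m_L$, forcing $m_L$ to be separable. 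I would present the derivative argument as the main proof since it stays inside polynomial rings.
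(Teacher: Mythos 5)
Your proof is correct, but your main argument takes a genuinely different route from the paper's. The paper, after establishing $m_L \dvd m_K$ in $L[t]$, argues directly: $m_K$ splits into distinct linear factors in its splitting field by separability, and since $m_L \dvd m_K$, so does $m_L$; hence $m_L$ is separable. Your primary proof instead runs through the formal derivative criterion (Proposition~\ref{propn:sep-deriv-zero}): supposing $Dm_L = 0$, you differentiate $m_K = m_L g$ to get $m_L \dvd Dm_K$, observe that $m_K$ and $Dm_K$ are coprime in $K[t]$ (irreducibility of $m_K$ plus $Dm_K \neq 0$ of lower degree), transport that coprimality to $L[t]$ via the B\'ezout identity from Proposition~\ref{propn:bezout}, and derive a contradiction since $m_L$ is a nonconstant common divisor. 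Both arguments are sound. What yours buys is that the entire computation stays inside $K[t]$ and $L[t]$: the B\'ezout identity makes the ``coprimality persists under field extension'' point completely explicit, whereas the paper's phrasing is terser and silently relies on comparing root multiplicities across different splitting fields ($\SF_K(m_K)$ vs.\ a splitting field over $L$), a comparison your ``alternative route'' paragraph actually handles more carefully than the paper does. The cost of your main argument is invoking the derivative machinery, which the paper's version avoids.
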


\begin{proof}
Both $M: L$ and $L: K$ are algebraic by Exercise~\ref{ex:alg-int}, so it
does make sense to ask whether they are separable. (We only defined what it
means for an \emph{algebraic} extension to be separable.) That $L: K$ is
separable is immediate from the definition. To show that $M: L$ is
separable, let $\alpha \in M$. Write $m_L$ and $m_K$ for the minimal
polynomials of $\alpha$ over $L$ and $K$, respectively. Then $m_K$ is an
annihilating polynomial of $\alpha$ over $L$, so $m_L \dvd m_K$ in
$L[t]$. Since $M: K$ is separable, $m_K$ splits into distinct linear
factors in $\SF_K(m_K)$. Since $m_L \dvd m_K$, so does $m_L$. Hence $m_L
\in L[t]$ is separable, so $\alpha$ is separable over $L$.
\end{proof}

As hinted in the introduction to this section, we will prove that $|\Gal(M:
K)| = [M: K]$ by refining Proposition~\ref{propn:ext-iso}.

\begin{propn}
\label{propn:ext-sep}
Let $\psi \from K \to K'$ be an isomorphism of fields, let $0 \neq f \in
K[t]$, let $M$ be a splitting field of $f$ over $K$, and let $M'$ be a
\marginpar{%
$\xymatrix@C=5mm@R=4mm{%
\scriptstyle \ \ \ \ M \ar@{.>}[r]^-\phi        &  
\scriptstyle M' \ \ \ \phantom{\psi_* f}                       \\
\scriptstyle f \ \ \ K \ar[r]_-\psi \ar@<-2.5mm>[u] &
\scriptstyle K' \ar@<5mm>[u] \ \ \ \psi_* f
}$%
}%
splitting field of $\psi_* f$ over $K'$. Suppose that the extension $M':
K'$ is separable. Then there are exactly $[M: K]$ isomorphisms $\phi \from
M \to M'$ extending $\psi$.
\end{propn}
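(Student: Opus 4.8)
The plan is to mimic the proof of Proposition~\ref{propn:ext-iso} exactly, tracking the inductive count of extensions and showing that under the separability hypothesis every inequality in that earlier argument becomes an equality. I would argue by induction on $\deg(f)$, for all triples $(\psi, K, K')$ simultaneously, just as before. The base case $\deg(f) = 0$ is unchanged: both extensions are trivial and there is exactly one $\phi$ extending $\psi$, and $[M:K] = 1$.

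For the inductive step, I would reuse the whole setup from the proof of Proposition~\ref{propn:ext-iso}: pick a monic irreducible factor $m$ of $f$, pick a root $\alpha \in M$ of $m$, set $g(t) = f(t)/(t - \alpha)$ over $K(\alpha)$, so that $M$ is a splitting field of $g$ over $K(\alpha)$ and $\deg(g) = \deg(f) - 1$; let $\alpha'_1, \ldots, \alpha'_s$ be the distinct roots of $\psi_* m$ in $M'$. Proposition~\ref{propn:simp-unique} gives a unique isomorphism $\theta_j \from K(\alpha) \to K'(\alpha'_j)$ extending $\psi$ with $\theta_j(\alpha) = \alpha'_j$, and as there, every $\phi$ extending $\psi$ restricts to some $\theta_j$, so the count of extensions $\phi$ of $\psi$ equals $\sum_{j=1}^s(\text{number of extensions of }\theta_j)$. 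Now the key new point: since $M': K'$ is separable, every element of $M'$ is separable over $K'$, and in particular $\psi_* m$ — which is irreducible over $K'$ and has a root in $M'$ — is a constant multiple of the minimal polynomial of such a root, hence separable, hence has exactly $\deg(\psi_* m) = \deg(m)$ distinct roots in $M'$; that is, $s = \deg(m)$. Moreover $M': K'$ separable implies $M': K'(\alpha'_j)$ separable (Lemma~\ref{lemma:sep-int} applied to $M': K'(\alpha'_j): K'$), so the inductive hypothesis applies to $g$ and each $\theta_j$ and gives \emph{exactly} $[M : K(\alpha)]$ extensions of $\theta_j$. Therefore the total is
\[
\sum_{j=1}^{s} [M: K(\alpha)] = \deg(m) \cdot [M: K(\alpha)] = [K(\alpha):K]\,[M:K(\alpha)] = [M:K]
\]
by the tower law, completing the induction.

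The one genuine obstacle to watch is the claim $s = \deg(m)$, i.e.\ that $\psi_* m$ has no repeated roots in $M'$: this is where separability is really used, and it requires knowing that $\psi_* m$ (being monic irreducible over $K'$ with at least one root $\alpha'_1$ in $M'$) is the minimal polynomial of $\alpha'_1$ over $K'$ up to the leading coefficient, so that separability of the extension $M':K'$ forces $\psi_* m$ to split into distinct linear factors. The rest is bookkeeping identical to Proposition~\ref{propn:ext-iso}; I would also remark, as the earlier exercise Exercise~\ref{ex:ei-eq} anticipates, that $M':K'$ separable is automatic when $K$ (equivalently $K'$) has characteristic $0$ or is finite, so in those cases the conclusion $|\Gal(M:K)| = [M:K]$ for splitting field extensions drops out by taking $K' = K$, $M' = M$, $\psi = \id_K$.
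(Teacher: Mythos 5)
Your proof is correct and follows essentially the same route as the paper's: the paper likewise reruns the induction of Proposition~\ref{propn:ext-iso}, replacing the inequality $s \leq \deg(\psi_* m)$ by the equality $s = \deg(\psi_* m)$ via separability, and invokes Lemma~\ref{lemma:sep-int} to see that $M': K'(\alpha'_j)$ is separable so the inductive hypothesis applies. Your write-up just makes explicit the bookkeeping (the tower-law computation $s\,[M:K(\alpha)] = [M:K]$) that the paper leaves implicit.
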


\begin{proof}
This is almost the same as the proof of Proposition~\ref{propn:ext-iso},
but with the inequality $s \leq \deg(\psi_* m)$ replaced by an equality,
which holds by separability. For the inductive hypothesis to
go through, we need the extension $M': K'(\alpha'_j)$ to be separable, and
this follows from the separability of $M': K'$ by
Lemma~\ref{lemma:sep-int}.
\end{proof}

\begin{bigthm}
\label{thm:gal-size}
$|\Gal(M: K)| = [M: K]$ for every finite normal separable extension $M: K$.
\end{bigthm}

\begin{proof}
By Theorem~\ref{thm:sf-fin-nml}, $M = \SF_K(f)$ for some $f \in K[t]$. The
result follows from Proposition~\ref{propn:ext-sep}, taking $M' = M$, $K' =
K$, and $\psi = \id_K$.
\end{proof}

\begin{examples}
\label{egs:order-degree}
\begin{enumerate}
\item 
$|\Gal_K(f)| = [\SF_K(f): K]$ for any nonzero polynomial $f$ over a field
$K$ of characteristic $0$.

For instance, if $f(t) = t^3 -2$ then $[\SF_\Q(f): \Q] = 6$ by a similar
argument to Example~\ref{eg:tower-126}, using that $\SF_\Q(f)$ contains
elements of degree $2$ and $3$ over $\Q$. Hence $|\Gal_\Q(f)| = 6$. But
$\Gal_\Q(f)$ embeds into $S_3$ by Proposition~\ref{propn:gal-defns-same},
so $\Gal_\Q(f) \iso S_3$. We already proved this in
Example~\ref{eg:nf-cbrt2}, by a different argument.

\item
\label{eg:od-insep}
Consider $K = \F_p(u)$ and $M = \SF_K(t^p - u)$. With notation as in
Example~\ref{eg:insep-poly}, we have $M = K(\alpha)$, so $[M: K] =
\deg_K(\alpha) = p$. On the other hand, $|\Gal(M: K)| = 1$ by
Corollary~\ref{cor:gal-poly-distinct}. So Theorem~\ref{thm:gal-size} fails
if we drop the separability hypothesis.
\end{enumerate}
\end{examples}

\begin{digression}{dig:sep-sep}
With some effort, one can show that in any algebraic extension $M: K$, the
separable elements form a subfield of $M$. (See Stewart, Theorem~17.22.)
It follows that a finite extension $K(\alpha_1, \ldots, \alpha_n): K$ is
separable if and only if each $\alpha_i$ is. Hence a splitting field
extension $\SF_K(f): K$ is separable if and only if every root of $f$ is
separable in $\SF_K(f)$, which itself is equivalent to $f$ being separable
in the sense of Warning~\ref{wg:sep-irred}.

So: $\SF_K(f)$ is separable over $K$ if and only if $f$ is separable over
$K$. Thus, the different meanings of `separable' interact nicely.
\end{digression}

\begin{digression}{dig:primitive}
It's a stunning fact that every finite separable extension is simple. This
is called the theorem of the primitive element. For instance, whenever
$\alpha_1, \ldots, \alpha_n$ are complex numbers algebraic over $\Q$, there
is some $\alpha \in \C$ (a `primitive element') such that $\Q(\alpha_1,
\ldots, \alpha_n) = \Q(\alpha)$. We saw one case of this in
Example~\ref{egs:simple}\bref{eg:simple-Q23}: $\Q(\sqrt{2}, \sqrt{3}) =
\Q(\sqrt{2} + \sqrt{3})$.

The theorem of the primitive element was at the heart of most early
accounts of Galois theory, and is used in many modern treatments too, but
not this one.
\end{digression}

\section{Fixed fields}
\label{sec:fixed}

Write \demph{$\Aut(M)$} for the group of automorphisms of a field $M$. Then
$\Aut(M)$ acts naturally on $M$
(Example~\ref{egs:gp-actions}\bref{eg:ga-aut}). Given a subset $S$ of
$\Aut(M)$, we can consider the set $\Fix(S)$ of elements of $M$ fixed by
$S$ (Definition~\ref{defn:fixed-set}).

\begin{lemma}
\label{lemma:fix-subfd}
$\Fix(S)$ is a subfield of $M$, for any $S \sub \Aut(M)$.
\end{lemma}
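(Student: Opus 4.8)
The cleanest route is to recognise $\Fix(S)$ as an equalizer and quote Lemma~\ref{lemma:eq-subfd}. Indeed, for $x \in M$ we have $x \in \Fix(S)$ iff $s(x) = x$ for all $s \in S$, which is exactly the condition that $s(x) = \id_M(x)$ for all $s \in S \cup \{\id_M\}$. So $\Fix(S) = \Eq(S \cup \{\id_M\})$, viewing the elements of $\Aut(M)$ as homomorphisms $M \to M$. Since every element of $S \cup \{\id_M\}$ is in particular a field homomorphism $M \to M$, Lemma~\ref{lemma:eq-subfd} (with $K = L = M$) immediately gives that this set is a subfield of $M$.

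Alternatively, if one prefers a self-contained argument, it is a routine direct check, essentially identical to the proof of Lemma~\ref{lemma:eq-subfd}. First, $0, 1 \in \Fix(S)$ because every $s \in S$ is a ring homomorphism, hence fixes $0$ and $1$. Next, if $a, b \in \Fix(S)$ then for every $s \in S$,
\[
s(a + b) = s(a) + s(b) = a + b,
\qquad
s(ab) = s(a)s(b) = ab,
\qquad
s(-a) = -s(a) = -a,
\]
so $a + b$, $ab$ and $-a$ all lie in $\Fix(S)$; and if moreover $a \neq 0$, then using Exercise~\ref{ex:homm-inv} we get $s(a^{-1}) = s(a)^{-1} = a^{-1}$ (here $s(a) \neq 0$ since $s$ is injective and $a \neq 0$), so $a^{-1} \in \Fix(S)$. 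Hence $\Fix(S)$ is closed under all the field operations and contains $0$ and $1$, so it is a subfield of $M$.

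There is really no obstacle here: the only thing to be mildly careful about is the reciprocal case, where one must note that $s(a)$ is nonzero before writing $s(a)^{-1}$, but that is immediate from injectivity of field homomorphisms (Lemma~\ref{lemma:fd-homm}). I would present the equalizer argument as the main proof for brevity, since Lemma~\ref{lemma:eq-subfd} has already done the work.
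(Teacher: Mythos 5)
Your main argument is exactly the paper's proof: the paper also observes that $\Fix(S) = \Eq(S \cup \{\id_M\})$ and quotes Lemma~\ref{lemma:eq-subfd}. Your proposal is correct, and the additional direct verification is fine but unnecessary.
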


\begin{proof}
$\Fix(S)$ is the equalizer $\Eq(S \cup \{\id_M\})$, which is a
subfield of $M$ by Lemma~\ref{lemma:eq-subfd}. 
\end{proof}

For this reason, we call $\Fix(S)$ the \demph{fixed field} of
$S$. 

\begin{ex}{ex:}
Using Lemma~\ref{lemma:fix-subfd}, show that every automorphism of a field
is an automorphism over its prime subfield. In other words, $\Aut(M) =
\Gal(M: K)$ whenever $M$ is a field with prime subfield $K$.
\end{ex}

Here's the big, ingenious, result about fixed fields. It will play a
crucial part in the proof of the fundamental theorem of Galois theory.

\begin{bigthm}
\label{thm:fixed}
Let $M$ be a field and $H$ a finite subgroup of $\Aut(M)$. Then $[M:
\Fix(H)] \leq |H|$.
\end{bigthm}

So the smaller $|H|$ is, the smaller $[M: \Fix(H)]$ must be, which means
that $\Fix(H)$ must be bigger. In other words, the smaller $|H|$ is, the
more of $M$ must be fixed by $H$. 

\begin{proof}
Write $n = |H|$. It is enough to prove that any $n + 1$ elements $\alpha_0,
\ldots, \alpha_n$ of $M$ are linearly dependent over $\Fix(H)$.

Write
\[
W =
\bigl\{ (x_0, \ldots, x_n) \in M^{n + 1} \such
x_0 \theta(\alpha_0) + \cdots + x_n \theta(\alpha_n) = 0
\text{ for all } \theta \in H \bigr\}.
\]
\video{The size of fixed fields}%
Then $W$ is defined by $n$ homogeneous linear equations in $n + 1$
variables, so it is a nontrivial $M$-linear subspace of $M^{n + 1}$.

\emph{Claim:} Let $(x_0, \ldots, x_n) \in W$ and $\phi \in H$. Then $(\phi(x_0),
\ldots, \phi(x_n)) \in W$.

\emph{Proof:} For all $\theta \in H$, we have 
\[
x_0 (\phi^{-1} \of \theta)(\alpha_0) + \cdots + x_n (\phi^{-1} \of
\theta)(\alpha_n) = 0,
\]
since $\phi^{-1} \of \theta \in H$. Applying $\phi$ to both sides gives
that for all $\theta \in H$,
\[
\phi(x_0) \theta(\alpha_0) + \cdots + \phi(x_n)\theta(\alpha_n) = 0,
\]
proving the claim.

Define the \emph{length} of a nonzero vector $\vc{x} = (x_0, \ldots, x_n)$
to be the unique number $\ell \in \{0, \ldots, n\}$ such that $x_\ell \neq
0$ but $x_{\ell + 1} = \cdots = x_n = 0$. Since $W$ is nontrivial, we can
choose an element $\vc{x}$ of $W$ of minimal length, $\ell$. Since $W$ is
closed under scalar multiplication by $M$, we may assume that $x_\ell =
1$. Thus, $\vc{x}$ is of the form $(x_0, \ldots, x_{\ell - 1}, 1, 0,
\ldots, 0)$. Since $\vc{x}$ is a nonzero element of $W$ of \emph{minimal}
length, the only element of $W$ of the form $(y_0, \ldots, y_{\ell - 1}, 0,
0, \ldots, 0)$ is $\vc{0}$.

We now show that $x_i \in \Fix(H)$ for all $i$. Let $\phi \in H$. By the
claim, $(\phi(x_0), \ldots, \phi(x_n)) \in W$. Put
\[
\vc{y} = (\phi(x_0) - x_0, \ldots, \phi(x_n) - x_n).
\]
Since $W$ is a linear subspace, $\vc{y} \in W$. Now $\phi(x_i) - x_i =
\phi(0) - 0 = 0$ for all $i > \ell$ and $\phi(x_\ell) - x_\ell = \phi(1) -
1 = 0$, so by the last sentence of the previous paragraph, $\vc{y} =
\vc{0}$. In other words, $\phi(x_i) = x_i$ for all $i$. This holds for all
$\phi \in H$, so $x_i \in \Fix(H)$ for all $i$.

We have shown that $W$ contains a nonzero vector $\vc{x} \in
\Fix(H)^{n + 1}$. But taking $\theta = \id$ in the definition of $W$ gives
$\sum x_i \alpha_i = 0$. Hence $\alpha_0, \ldots, \alpha_n$ are linearly
dependent over $\Fix(H)$.
\end{proof}

\begin{example}
Write $\kappa \from \C \to \C$ for complex conjugation. Then $H = \{\id,
\kappa\}$ is a subgroup of $\Aut(\C)$, and Theorem~\ref{thm:fixed} predicts
that $[\C: \Fix(H)] \leq 2$. Since $\Fix(H) = \R$, this is true.
\end{example}

\begin{ex}{ex:fix-size-eg}
Find another example of Theorem~\ref{thm:fixed}.\\
\end{ex}

\begin{digression}{dig:chars-li}
In fact, Theorem~\ref{thm:fixed} is an equality: $[M: \Fix(H)] = |H|$. This
is proved directly in many Galois theory books (e.g.\ Stewart,
Theorem~10.5). In our approach, it will be a \emph{consequence} of the
fundamental theorem of Galois theory rather than a step on the way to
proving it.

The reverse inequality, $[M: \Fix(H)] \geq |H|$, is closely related to the
result called `linear independence of characters'. (A good reference is
Lang, \emph{Algebra}, 3rd edition, Theorem 4.1.) Another instance of linear
independence of characters is that the functions $x \mapsto e^{2\pi i n x}$
($n \in \Z$) on $\R$ are linearly independent, a fundamental fact in the
theory of Fourier series.
\end{digression}

We finish by adding a further connecting strand between the concepts of
normal extension and normal subgroup, complementary to the strands in
Theorem~\ref{thm:nml-ftgt}. 

\begin{propn}
\label{propn:fix-nml}
Let $M: K$ be a finite normal extension and $H$ a normal subgroup of
$\Gal(M: K)$.  Then $\Fix(H)$ is a normal extension of $K$.
\end{propn}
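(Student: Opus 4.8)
The plan is to reduce the statement to the normality criterion already established in Theorem~\ref{thm:nml-ftgt}\bref{part:nf-fix}, combined with the conjugation formula for fixed sets from Lemma~\ref{lemma:act-fix-conj}.

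First I would set $L = \Fix(H)$ and check that the tower $M: L: K$ is in the right shape. By Lemma~\ref{lemma:fix-subfd}, $L$ is a subfield of $M$; and since every element of $H \sub \Gal(M: K)$ fixes $K$ pointwise, we have $K \sub L$. Thus $M: L: K$ is a tower of field extensions with $M: K$ finite and normal, which is exactly the setting of Theorem~\ref{thm:nml-ftgt}. Part~\bref{part:nf-fix} of that theorem says that $L: K$ is normal if and only if $\phi L = L$ for every $\phi \in \Gal(M: K)$, so it is enough to verify this last condition.

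Now I would apply Lemma~\ref{lemma:act-fix-conj} to the natural action of the group $G = \Gal(M: K)$ on the set $X = M$, taking $S = H$ and $g = \phi$ for an arbitrary $\phi \in \Gal(M: K)$. The lemma gives $\Fix(\phi H \phi^{-1}) = \phi \Fix(H)$, that is, $\phi L = \Fix(\phi H \phi^{-1})$. But $H$ is a \emph{normal} subgroup of $\Gal(M: K)$, so $\phi H \phi^{-1} = H$, and hence $\phi L = \Fix(H) = L$. Since this holds for every $\phi \in \Gal(M: K)$, Theorem~\ref{thm:nml-ftgt}\bref{part:nf-fix} shows that $L: K = \Fix(H): K$ is normal, completing the proof.

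There is essentially no hard obstacle here: the entire content is the observation that the group-theoretic fact ``$H$ normal means conjugation by $\Gal(M: K)$ preserves $H$'' translates, via Lemma~\ref{lemma:act-fix-conj}, into the field-theoretic fact ``$\Gal(M: K)$ preserves $\Fix(H)$ setwise'', which is precisely the criterion for normality of $\Fix(H): K$. The only points needing a little care are the bookkeeping that $\Fix(H)$ is genuinely an intermediate field between $K$ and $M$ (so that Theorem~\ref{thm:nml-ftgt} is applicable) and the observation that the $\Fix(H)$ appearing in the statement---the set of points of $M$ fixed by $H$ acting on $M$---is the same object to which Lemma~\ref{lemma:act-fix-conj} refers.
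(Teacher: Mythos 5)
Your proof is correct and follows essentially the same route as the paper's: establish $K \sub \Fix(H) \sub M$, use Lemma~\ref{lemma:act-fix-conj} together with normality of $H$ to get $\phi\Fix(H) = \Fix(\phi H \phi^{-1}) = \Fix(H)$ for all $\phi \in \Gal(M: K)$, and conclude by Theorem~\ref{thm:nml-ftgt}\bref{part:nf-fix}. No gaps; your extra bookkeeping about $\Fix(H)$ being an intermediate field is exactly the same observation the paper makes in its opening sentence.
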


\begin{proof}
Since every element of $H$ is an automorphism over $K$, the subfield
$\Fix(H)$ of $M$ contains $K$. For each $\phi \in \Gal(M: K)$, we have 
\[
\phi\Fix(H) = \Fix(\phi H \phi^{-1}) = \Fix(H),
\]
where the first equality holds by Lemma~\ref{lemma:act-fix-conj} and the second
because $H$ is normal in $\Gal(M: K)$. Hence by
Theorem~\ref{thm:nml-ftgt}\bref{part:nf-fix}, $\Fix(H): K$ is a normal
extension. 
\end{proof}

The stage is now set for the central result of the course: the fundamental
theorem of Galois theory.

\chapter{The fundamental theorem of Galois theory}
\label{ch:ftgt}

We've been building up to this moment all semester. Let's do it!%
\video{Introduction to Week~8}

\section{Introducing the Galois correspondence}
\label{sec:gal-corr}

Let $M: K$ be a field extension, with $K$ viewed as a subfield of $M$, as
usual. 

An \demph{intermediate field} of $M: K$ is a subfield of $M$
containing $K$. Write
\[
\dem{\FF} = \{ \text{intermediate fields of } M: K \}.
\]
For $L \in \FF$, we draw diagrams like this:
\[
\xymatrix@R-1ex{
M \ar@{-}[d] \\
L \ar@{-}[d] \\
K,
}
\]
with the bigger fields higher up. 

Also write
\[
\dem{\GG} = \{ \text{subgroups of } \Gal(M: K) \}.
\]
For $H \in \GG$, we draw diagrams like this:
\[
\xymatrix@R-1ex{
1 \ar@{-}[d] \\
H \ar@{-}[d] \\
\Gal(M: K).
}
\]
Here $1$ denotes the trivial subgroup and the bigger groups are
\emph{lower down}. It will become clear soon why we're using opposite
conventions.

For $L \in \FF$, the group $\Gal(M: L)$ consists of all automorphisms $\phi$
of $M$ that fix each element of $L$. Since $K \sub L$, any
such $\phi$ certainly fixes each element of $K$. Hence $\Gal(M: L)$ is a
subgroup of $\Gal(M: K)$. This process defines a function
\[
\begin{array}{cccc}
\dem{\Gal(M: -)}\from   &
\FF                     &\to            &\GG    \\
                &L      &\mapsto        &\Gal(M: L).
\end{array}
\]
In the expression $\Gal(M: -)$, the symbol $-$ should be seen as a blank
space into which arguments can be inserted.

\begin{warning}{wg:which-gal}
The group we're associating with $L$ is $\Gal(M: L)$, not $\Gal(L: K)$!
Both groups matter, but only one is a subgroup of $\Gal(M: K)$, which is
what we're interested in here.

We showed just now that $\Gal(M: L)$ is a subgroup of $\Gal(M: K)$. If you
wanted to show that $\Gal(L: K)$ is (isomorphic to) a subgroup of $\Gal(M:
K)$---which it isn't---then you'd probably do it by trying to prove that
every automorphism of $L$ over $K$ extends uniquely to $M$. And that's
false. For instance, when $L = K$, the identity on $L$ typically has
\emph{many} extensions to $M$: they're the elements of $\Gal(M: K)$.

Although $\Gal(L: K)$ isn't a subgroup of $\Gal(M: K)$, it is a
\emph{quotient} of it, at least when both extensions are finite and
normal. We saw this in Theorem~\ref{thm:nml-ftgt}, and we'll come back to
it in Section~\ref{sec:ftgt}.
\end{warning}

In the other direction, for $H \in \GG$, the subfield $\Fix(H)$ of $M$
contains $K$. Indeed, $H \sub \Gal(M: K)$, and by definition, every element
of $\Gal(M: K)$ fixes every element of $K$, so $\Fix(H) \supseteq K$. Hence
$\Fix(H)$ is an intermediate field of $M: K$. This process defines a
function
\[
\begin{array}{cccc}
\dem{\Fix}\from &\GG    &\to            &\FF    \\
                &H      &\mapsto        &\Fix(H).
\end{array}
\]
We have now defined functions
\[
\xymatrix@C+2em{
\FF \ar@<.5ex>[r]^{\Gal(M: -)}  &
\GG. \ar@<.5ex>[l]^{\Fix}
}
\]
The fundamental theorem of Galois theory tells us how these functions
behave: how the concepts of Galois group and fixed field interact. It
will bring together most of the big results we've proved so far, and
will assume that the extension is finite, normal and separable. But first,
let's say the simple things that are true for all extensions:

\begin{lemma}
\label{lemma:gc-simple}
Let $M: K$ be a field extension, and define $\FF$ and $\GG$ as above.
\begin{enumerate}
\item 
\label{part:gcs-rev}
For $L_1, L_2 \in \FF$,
\[
L_1 \sub L_2
\implies
\Gal(M: L_1) \supseteq \Gal(M: L_2)
\]
(Figure~\ref{fig:gcs-rev}). For $H_1, H_2 \in \GG$,
\[
H_1 \sub H_2
\implies
\Fix(H_1) \supseteq \Fix(H_2).
\]

\item
\label{part:gcs-adj}
For $L \in \FF$ and $H \in \GG$,
\[
L \sub \Fix(H) \iff H \sub \Gal(M: L).
\]

\item
\label{part:gcs-units}
For all $L \in \FF$,
\[
L \sub \Fix(\Gal(M: L)).
\]
For all $H \in \GG$,
\[
H \sub \Gal(M: \Fix(H)).
\]
\end{enumerate}
\end{lemma}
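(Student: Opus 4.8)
The plan is to prove all three parts directly from the definitions; none of them needs the deeper theory, and the role of this lemma is simply to record the formal ``Galois connection'' between the maps $\Gal(M: -)$ and $\Fix$.

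For part~\bref{part:gcs-rev} I would argue by chasing elements. If $L_1 \sub L_2$ and $\phi \in \Gal(M: L_2)$, then $\phi$ is an automorphism of $M$ fixing every element of $L_2$, hence every element of the subset $L_1$, so $\phi \in \Gal(M: L_1)$; thus $\Gal(M: L_2) \sub \Gal(M: L_1)$. The subgroup statement is the mirror image: if $H_1 \sub H_2$ and $a \in \Fix(H_2)$, then $\theta(a) = a$ for all $\theta \in H_2$, in particular for all $\theta \in H_1$, so $a \in \Fix(H_1)$.

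The heart of the lemma is part~\bref{part:gcs-adj}, and here the key observation is that both sides of the biconditional unwind to exactly the same assertion, namely that $\theta(a) = a$ for all $\theta \in H$ and all $a \in L$. Indeed, $L \sub \Fix(H)$ says that every $a \in L$ is fixed by every $\theta \in H$, while $H \sub \Gal(M: L)$ says that every $\theta \in H$ fixes every $a \in L$; so the equivalence holds with nothing to do beyond swapping the order of the two quantifiers. Part~\bref{part:gcs-units} then follows immediately by specializing part~\bref{part:gcs-adj}: taking $H = \Gal(M: L)$ turns the trivial inclusion $\Gal(M: L) \sub \Gal(M: L)$ into $L \sub \Fix(\Gal(M: L))$, and taking $L = \Fix(H)$ turns $\Fix(H) \sub \Fix(H)$ into $H \sub \Gal(M: \Fix(H))$.

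I expect no real obstacle here: every step is a matter of unwinding definitions. The one thing to stay alert to---bookkeeping rather than mathematics---is to keep $\Gal(M: L)$ distinct from $\Gal(L: K)$, as flagged in Warning~\ref{wg:which-gal}; throughout, the relevant group is $\Gal(M: L)$, which genuinely is a subgroup of $\Gal(M: K)$. The substantive work comes only in the later parts of the fundamental theorem that build on this lemma.
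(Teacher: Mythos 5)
Your proposal is correct and follows essentially the same route as the paper's proof: an element chase for part~\bref{part:gcs-rev}, the observation that both sides of part~\bref{part:gcs-adj} unwind to ``$\theta(a) = a$ for all $\theta \in H$ and $a \in L$'', and part~\bref{part:gcs-units} obtained by specializing part~\bref{part:gcs-adj} at $H = \Gal(M: L)$ and $L = \Fix(H)$. The only difference is cosmetic: the paper leaves the first half of part~\bref{part:gcs-rev} as an exercise, whereas you write it out.
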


\begin{figure}
\[
\xymatrix@R-1ex{
M \vphantom{1}
\ar@{-}[d] \\
L_2 \vphantom{{}}\vphantom{\Gal(M: L_2)}
\ar@{-}[d] \\
L_1 \vphantom{{}}\vphantom{\Gal(M: L_1)}
\ar@{-}[d] \\
K \vphantom{{}}\vphantom{\Gal(M: K)}
}
\qquad\qquad\qquad
\xymatrix@R-1ex{
1 \ar@{-}[d] \\
\Gal(M: L_2) \ar@{-}[d] \\
\Gal(M: L_1) \ar@{-}[d] \\
\Gal(M: K)
}
\]
\caption{The function $L \mapsto \Gal(M: L)$ is order-reversing
(Lemma~\ref{lemma:gc-simple}\bref{part:gcs-rev}).} 
\label{fig:gcs-rev}
\end{figure}

\begin{warning}{wg:rev-pres}
In part~\bref{part:gcs-rev}, the functions $\Gal(M: -)$ and
$\Fix$ \emph{reverse} inclusions. The bigger you make $L$, the smaller you
make $\Gal(M: L)$, because it gets harder for an automorphism to fix
everything in $L$. And the bigger you make $H$, the smaller you make $\Fix(H)$,
because it gets harder for an element of $M$ to be fixed by everything in
$H$. That's why the field and group diagrams are opposite ways up.
\end{warning}

\begin{proof}
\bref{part:gcs-rev}: I leave the first half as an exercise. For the second,
suppose that $H_1 \sub H_2$, and let $\alpha \in \Fix(H_2)$. Then
$\theta(\alpha) = \alpha$ for all $\theta \in H_2$, so $\theta(\alpha) =
\alpha$ for all $\theta \in H_1$, so $\alpha \in \Fix(H_1)$.

\bref{part:gcs-adj}: both sides are equivalent to the
statement that $\theta(\alpha) = \alpha$ for all $\theta \in H$ and $\alpha
\in L$.

\bref{part:gcs-units}: the first statement follows from the $\impliedby$
direction of~\bref{part:gcs-adj} by taking $H = \Gal(M: L)$, and the second
follows from the $\implies$ direction of~\bref{part:gcs-adj} by taking $L =
\Fix(H)$. (Or, they can be proved directly.)
\end{proof}

\begin{ex}{ex:gcs-rev-pf}
Prove the first half of Lemma~\ref{lemma:gc-simple}\bref{part:gcs-rev}.\\
\end{ex}

\begin{ex}{ex:gcs-rev-diag}
Draw a diagram like Figure~\ref{fig:gcs-rev} for the second half of
Lemma~\ref{lemma:gc-simple}\bref{part:gcs-rev}. 
\end{ex}

\begin{digression}{dig:gal-conn}
If you've done some algebraic geometry, the formal structure of
Lemma~\ref{lemma:gc-simple} might seem familiar. Given a field $K$ and a
natural number $n$, we can form the set $\FF$ of subsets of $K^n$ and the
set $\GG$ of ideals of $K[t_1, \ldots, t_n]$, and there are functions $\FF
\oppairu \GG$ defined by taking the annihilating ideal of a subset of $K^n$
and the zero-set of an ideal of $K[t_1, \ldots, t_n]$. The analogue of 
Lemma~\ref{lemma:gc-simple} holds.

In general, a pair of ordered sets $\FF$ and $\GG$ equipped with functions
$\FF \oppairu \GG$ satisfying the properties in Lemma~\ref{lemma:gc-simple}
is called a \demph{Galois connection}. This in turn is a special case of
the category-theoretic notion of adjoint functors.
\end{digression}

The functions
\[
\xymatrix@C+2em{
\FF \ar@<.5ex>[r]^{\Gal(M: -)}  &
\GG. \ar@<.5ex>[l]^{\Fix}
}
\]
are called the \demph{Galois correspondence} for $M: K$. This terminology
is mostly used in the case where the functions are \demph{mutually inverse},
meaning that
\[
L = \Fix(\Gal(M: L)),
\qquad
H = \Gal(M: \Fix(H))
\]
for all $L \in \FF$ and $H \in \GG$. We saw in
Lemma~\ref{lemma:gc-simple}\bref{part:gcs-units} that in both cases, the
left-hand side is a subset of the right-hand side. But they are not always
equal: 

\begin{example}
Let $M: K$ be $\Q(\sqrt[3]{2}): \Q$. Since $[M: K]$ is $3$, which is a
prime number, the tower law implies that there are no nontrivial
intermediate fields: $\FF = \{M, K\}$. We saw in
Example~\ref{egs:gal-ext}\bref{eg:gal-ext-cbrt2} that $\Gal(M: K)$ is
trivial, so $\GG = \{\Gal(M: K)\}$. Hence $\FF$ has two elements and $\GG$
has only one. This makes it impossible for there to be mutually inverse
functions between $\FF$ and $\GG$. Specifically, what goes wrong is that
\[
\Fix\bigl(\Gal\bigl(\Q\bigl(\sqrt[3]{2}\bigr): \Q\bigr)\bigr) 
= 
\Fix\bigl(\bigl\{\id_{\Q(\sqrt[3]{2})}\bigr\}\bigr) 
=
\Q\bigl(\sqrt[3]{2}\bigr) 
\neq 
\Q.
\]
\end{example}

\begin{ex}{ex:not-inv-sep}
Let $p$ be a prime number, let $K = \F_p(u)$, and let $M$ be the splitting
field of $t^p - u$ over $K$, as in Examples~\ref{eg:insep-poly}
and~\ref{egs:order-degree}\bref{eg:od-insep}. Prove that $\Gal(M: -)$ and
$\Fix$ are not mutually inverse.
\end{ex}

If $\Gal(M: -)$ and $\Fix$ \emph{are} mutually inverse then they set up a
one-to-one correspondence between the set $\FF$ of intermediate fields of
$M: K$ and the set $\GG$ of subgroups of $\Gal(M: K)$. The fundamental
theorem of Galois theory tells us that this dream comes true when $M: K$ is
finite, normal and separable. And it tells us more besides.

\section{The theorem}
\label{sec:ftgt}

The moment has come.

\begin{megathm}[Fundamental theorem of Galois theory]
\label{thm:ftgt}
Let $M: K$ be a finite normal separable extension. Write
\begin{align*}
\FF     &
=
\{ \text{intermediate fields of } M: K \},      \\
\GG     &
=
\{ \text{subgroups of } \Gal(M: K) \}.
\end{align*}
\begin{enumerate}
\item
\label{part:ftgt-inv}
The functions $\xymatrix@1@C+1.5em{
\FF \ar@<.5ex>[r]^{\Gal(M: -)}  &
\GG \ar@<.5ex>[l]^{\Fix}
}$ are mutually inverse.

\item
\label{part:ftgt-od}
$|\Gal(M: L)| = [M: L]$ for all $L \in \FF$, and $[M: \Fix(H)] = |H|$ for
all $H \in \GG$.

\item
\label{part:ftgt-nml}
Let $L \in \FF$. Then 
\begin{align*}
&
L \text{ is a normal extension of } K   \\
\iff    &
\Gal(M: L) \text{ is a normal subgroup of } \Gal(M: K),
\end{align*}
and in that case,
\[
\frac{\Gal(M: K)}{\Gal(M: L)} \iso \Gal(L: K).
\]
\end{enumerate}
\end{megathm}

\begin{proof}
First note that for each $L \in \FF$, the extension $M: L$ is finite and
normal (by Corollary~\ref{cor:top-nml}) and separable (by
Lemma~\ref{lemma:sep-int}). Also, the group $\Gal(M: K)$ is finite (by
Theorem~\ref{thm:gal-size}), so every subgroup is finite too.

We prove~\bref{part:ftgt-inv} and~\bref{part:ftgt-od} together. First let
$H \in \GG$. We have
\begin{align}
\label{eq:ftgt-ineqs}
|H|
\leq 
|\Gal(M: \Fix(H))|
=
[M: \Fix(H)]
\leq
|H|,
\end{align}
where the first inequality holds because $H \sub \Gal(M: \Fix(H))$
(Lemma~\ref{lemma:gc-simple}\bref{part:gcs-units}), the equality follows from
Theorem~\ref{thm:gal-size} (since
$M: \Fix(H)$ is finite, normal and separable), and the second inequality
follows from Theorem~\ref{thm:fixed} (since $H$ is finite). So equality
holds throughout~\eqref{eq:ftgt-ineqs}, giving
\[
H = \Gal(M: \Fix(H)),
\qquad
[M: \Fix(H)] = |H|.
\]

Now let $L \in \FF$. We have
\[
[M: \Fix(\Gal(M: L))] 
=
|\Gal(M: L)| 
=
[M: L],
\]
where the first equality follows from the previous paragraph by taking $H =
\Gal(M: L)$, and the second follows from Theorem~\ref{thm:gal-size}. But $L
\sub \Fix(\Gal(M:L))$ by Lemma~\ref{lemma:gc-simple}\bref{part:gcs-units},
so $L = \Fix(\Gal(M:L))$ by
\href{https://www.maths.ed.ac.uk/~tl/galois}{Workshop~3, question~3}. This
completes the proof of~\bref{part:ftgt-inv} and~\bref{part:ftgt-od}.

We have already proved most of~\bref{part:ftgt-nml} as
Theorem~\ref{thm:nml-ftgt}\bref{part:nf-main}. It only remains to show that
whenever $L$ is an intermediate field such that $\Gal(M: L)$ is a normal
subgroup of $\Gal(M: K)$, then $L$ is a normal extension of $K$. By
Proposition~\ref{propn:fix-nml}, $\Fix(\Gal(M: L)): K$ is normal. But
by~\bref{part:ftgt-inv}, $\Fix(\Gal(M: L)) = L$, so $L: K$ is normal, as
required.
\end{proof}

The fundamental theorem of Galois theory is about field extensions that are
finite, normal and separable. Let's take a moment to think about what those
conditions mean.

An extension $M: K$ is finite and normal if and only if $M$ is the
splitting field of some polynomial over $K$
(Theorem~\ref{thm:sf-fin-nml}). So, the theorem can be understood as a
result about splitting fields of polynomials.

Not every splitting field extension is separable
(Example~\ref{egs:sep-ext}\bref{eg:se-insep}). However, we know of two
settings where separability is guaranteed. The first is fields of
characteristic zero (Example~\ref{egs:sep-ext}\bref{eg:se-zero}). The most
important of these is $\Q$, which is our focus in this chapter: we'll
consider examples in which $M: K$ is the splitting field extension of a
polynomial over $K = \Q$. The second is where the fields are finite
(Example~\ref{egs:sep-ext}\bref{eg:se-fin}). We'll come to finite fields in
the final chapter.

\begin{digression}{dig:ftgt-fin}
Normality and separability are core requirements of Galois theory, but
there are extensions of the fundamental theorem (well beyond this course)
in which the finiteness condition on $M: K$ is relaxed.

The first level of relaxation replaces `finite' by `algebraic'. Then
$\Gal(M: K)$ is no longer a finite group, but it does acquire an
interesting topology. One example is where $M$ is the algebraic closure
$\ovln{K}$ of $K$, and $\Gal(\ovln{K}: K)$ is called the \demph{absolute
Galois group} of $K$ (at least when $\chr K = 0$). It contains \emph{all}
splitting fields of polynomials over $K$, so to study it is to study all
polynomials over $K$ at once.

Going further, we can even drop the condition that the extension is
algebraic. In this realm, we need the notion of `transcendence degree',
which counts how many algebraically independent elements can be found in
the extension. You may have met this if you're taking Algebraic Geometry.
\end{digression}

You'll want to see some examples!  Section~\ref{sec:ftgt-eg} is devoted to
a single example of the fundamental theorem, showing every aspect of the
theorem in all its glory. I'll give a couple of simpler examples in
a moment, but before that, it's helpful to review some of what we
did earlier:

\begin{remark}
When working out the details of the Galois correspondence for a polynomial
$f \in K[t]$, it's not only the fundamental theorem that's useful. Some of
our earlier results also come in handy, such as the following.
\begin{enumerate}
\item 
Lemmas~\ref{lemma:gal-acts} and~\ref{lemma:gal-acts-faithfully} state that
$\Gal_K(f)$ acts faithfully on the set of roots
of $f$ in $\SF_K(f)$. That is, an element of the Galois group can be
understood as a permutation of the roots. 

\item
Corollary~\ref{cor:gal-poly-distinct} states that $|\Gal_K(f)|$ divides
$k!$, where $k$ is the number of distinct roots of $f$ in its splitting
field.

\item
Let $\alpha$ and $\beta$ be roots of $f$ in $\SF_K(f)$. Then there is an
element of the Galois group mapping $\alpha$ to $\beta$ if and only if
$\alpha$ and $\beta$ are conjugate over $K$ (have the same minimal
polynomial). This follows from Proposition~\ref{propn:orbit-cc}.

\item
In particular, when $f$ is irreducible, the action of the Galois group on
the roots is transitive (Corollary~\ref{cor:act-trans}). See
Example~\ref{eg:spectacular} for an illustration of the power of this
principle.
\end{enumerate}
\end{remark}

\begin{example}
\label{eg:ftgt-prime}
Let $M: K$ be a normal separable extension of prime degree~$p$. 
By the fundamental theorem, $|\Gal(M: K)| = [M: K] = p$. Every
group of prime order is cyclic, so $\Gal(M: K) \iso C_p$. By the
tower law, $M: K$ has no nontrivial intermediate fields, and by Lagrange's
theorem, $\Gal(M: K)$ has no nontrivial subgroups. So $\FF = \{M, K\}$ and
$\GG = \{1, \Gal(M: K)\}$:
\[
\xymatrix@R-1ex@C+1em{
M \ar@{-}[d]    &       &1 \ar@{-}[d]   \\
K               &       &\Gal(M: K)
}
\]
Both $M$ and $K$ are normal extensions of $K$, and both $1$ and $\Gal(M:
K)$ are normal subgroups of $\Gal(M: K)$.
\end{example}

\begin{example}
\label{eg:ftgt-2i}
Let $f(t) = (t^2 + 1)(t^2 - 2) \in \Q[t]$. Put $M = \SF_\Q(f) =
\Q(\sqrt{2}, i)$ and $G = \Gal(M: K) = \Gal_\Q(f)$. Then $M: K$ is a finite
normal separable extension, so the fundamental theorem applies.  We already
calculated $G$ in a sketchy way in
Example~\ref{egs:gal-gp-first}\bref{eg:ggf-klein}. Let's do it again in
full, using what we now know.

First,
\[
[M: K] 
=
\bigl[\Q\bigl(\sqrt{2}, i\bigr): \Q\bigl(\sqrt{2}\bigr)\bigr]
\bigl[\Q\bigl(\sqrt{2}\bigr): \Q\bigr] = 2 \times 2 = 4
\]
(much as in Example~\ref{eg:deg-23}). 

Now consider how $G$ acts on the set $\{\pm \sqrt{2}, \pm i\}$ of
roots of $f$. The conjugacy class of $\sqrt{2}$ is $\{\sqrt{2},
-\sqrt{2}\}$, so for each $\phi \in G$ we have $\phi(\sqrt{2}) = \pm
\sqrt{2}$. Similarly, $\phi(i) = \pm i$ for each $\phi \in G$. The two
choices of sign determine $\phi$ entirely, so $|G| \leq 4$. But by the
fundamental theorem, $|G| = [M: K] = 4$, so each of the four possibilities
does in fact occur. So $G = \{\id, \phi_{+-}, \phi_{-+}, \phi_{--}\}$,
where
\begin{align*}
\phi_{+-}\bigl(\sqrt{2}\bigr)     &= \sqrt{2},    &
\phi_{-+}\bigl(\sqrt{2}\bigr)     &= -\sqrt{2},   &
\phi_{--}\bigl(\sqrt{2}\bigr)     &= -\sqrt{2},   \\
\phi_{+-}(i)            &= -i,          &
\phi_{-+}(i)            &= i,           &
\phi_{--}(i)            &= -i.
\end{align*}
The only two groups of order $4$ are $C_4$ and $C_2 \times C_2$, and 
each element of $G$ has order $1$ or $2$, so $G \iso C_2 \times C_2$. 

The subgroups of $G$ are
\begin{align}
\label{eq:2i-gps}
\begin{array}{c}
\xymatrix@R-1ex{
        &1      &       \\
\gpgen{\phi_{+-}} \ar@{-}[ru] \ar@{-}[rd]       &
\gpgen{\phi_{-+}} \ar@{-}[u] \ar@{-}[d]         &
\gpgen{\phi_{--}} \ar@{-}[lu] \ar@{-}[ld]       \\
        &G
}
\end{array}
\end{align}
where lines indicate inclusions.  Here $\gpgen{\phi_{+-}}$ is the subgroup
generated by $\phi_{+-}$, which is $\{\id, \phi_{+-}\}$, and similarly for
$\phi_{-+}$ and $\phi_{--}$.

What are the fixed fields of these subgroups? The fundamental theorem
implies that $\Fix(G) = \Q$. Also, $\phi_{+-}(\sqrt{2}) = \sqrt{2}$, so
$\Q(\sqrt{2}) \sub \Fix(\gpgen{\phi_{+-}})$. But
\[
\bigl[\Q\bigl(\sqrt{2}, i\bigr): \Q\bigl(\sqrt{2}\bigr)\bigr]
=
2
=
|\gpgen{\phi_{+-}}|
=
\bigl[\Q\bigl(\sqrt{2}, i\bigr): \Fix(\gpgen{\phi_{+-}})\bigr]
\]
(where the last step is by the fundamental theorem), so
$\Q(\sqrt{2}) = \Fix(\gpgen{\phi_{+-}})$.

Let's reflect for a moment on the argument in the last paragraph, because
it's one you'll need to master. We have a subgroup
$H$ of $\Gal(M: K)$ (here, $H = \gpgen{\phi_{+-}}$) and we want to find its
fixed field. We do this in three steps:
\begin{itemize}
\item
First, we spot some elements $\alpha_1, \ldots,
\alpha_r$ fixed by $H$. (Here, $r = 1$ and $\alpha_1 = \sqrt{2}$.) It
follows that $K(\alpha_1, \ldots, \alpha_r) \sub \Fix(H)$.

\item
Next, we check that $[M: K(\alpha_1, \ldots, \alpha_r)] = |H|$. If they're
not equal, that means we didn't spot enough elements fixed by $H$.

\item
Finally, we apply a standard argument:
\[
[M: K(\alpha_1, \ldots, \alpha_r)]
=
|H|
=
[M: \Fix(H)]
\]
(using the fundamental theorem), so by the tower law,
\[
[\Fix(H): K(\alpha_1, \ldots, \alpha_r)]
=
\frac{[M: K(\alpha_1, \ldots, \alpha_r)]}{[M: \Fix(H)]}
=
1,
\]
giving $\Fix(H) = K(\alpha_1, \ldots, \alpha_r)$.
\end{itemize}
The strategy is similar to one you've met in linear algebra: to prove that
two subspaces of a vector space are equal, show that one is a subspace of
the other and that they have the same dimension.

Similar arguments apply to $\phi_{-+}$ and
$\phi_{--}$, so the fixed fields of the groups in diagram~\eqref{eq:2i-gps}
are
\begin{align}
\label{eq:2i-fds}
\begin{array}{c}
\xymatrix@R-1ex{
        &\Q\bigl(\sqrt{2}, i\bigr)      &       \\
\Q\bigl(\sqrt{2}\bigr) \ar@{-}[ru] \ar@{-}[rd]       &
\Q(i) \ar@{-}[u] \ar@{-}[d]         &
\Q\bigl(\sqrt{2}i\bigr) \ar@{-}[lu] \ar@{-}[ld]       \\
        &\Q
}
\end{array}
\end{align}
Equivalently, the groups in~\eqref{eq:2i-gps} are the Galois groups
of $\Q(\sqrt{2}, i)$ over the fields in~\eqref{eq:2i-fds}. For
instance, $\Gal(\Q(\sqrt{2}, i): \Q(i)) = \gpgen{\phi_{-+}}$. 

Since the overall Galois group $G \iso C_2 \times C_2$ is abelian, every
subgroup is normal. Hence all the extensions in diagram~\eqref{eq:2i-fds}
are normal too.
\end{example}

\begin{ex}{ex:ptc-nml}
In this particular example, one can also see more directly that all the
extensions in~\eqref{eq:2i-fds} are normal. How?
\end{ex}

Like any big theorem, the fundamental theorem of Galois theory has some
important corollaries. Here's one.

\begin{cor}
Let $M: K$ be a finite normal separable extension. Then for every $\alpha
\in M \without K$, there is some automorphism $\phi$ of $M$ over $K$ such
that $\phi(\alpha) \neq \alpha$. 
\end{cor}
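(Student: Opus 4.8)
The plan is to argue by contraposition. Suppose that $\alpha \in M$ is fixed by every automorphism of $M$ over $K$; I will show that $\alpha \in K$. Consider the full Galois group $G = \Gal(M: K)$. By hypothesis, $\alpha \in \Fix(G)$. But $G$ is itself a subgroup of $G$, so it belongs to the set $\GG$ of subgroups appearing in the fundamental theorem. Applying part~\bref{part:ftgt-inv} of Theorem~\ref{thm:ftgt} (the mutual-inverseness of the Galois correspondence) with $H = G$, we get
\[
\Fix(G) = \Fix(\Gal(M: K)) = K,
\]
where the second equality holds because $\Gal(M: \Fix(\Gal(M: K))) = \Gal(M: K)$ forces $\Fix(\Gal(M: K)) = K$ via the correspondence being a bijection (concretely, $K \in \FF$ and $\Gal(M: -)$ is injective on $\FF$, so $\Gal(M: L) = \Gal(M: K)$ implies $L = K$). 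Hence $\alpha \in K$, which is the contrapositive of the desired statement.

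A cleaner way to phrase the same idea, avoiding any fuss about which direction of the bijection to invoke: the fundamental theorem gives $\Fix(\Gal(M: L)) = L$ for every $L \in \FF$ (this is exactly part~\bref{part:ftgt-inv} applied in the $\FF \to \GG \to \FF$ direction). Taking $L = K$ — which lies in $\FF$ since $K$ is trivially an intermediate field of $M: K$ — yields $\Fix(\Gal(M: K)) = K$ immediately. So the set of elements of $M$ fixed by all of $\Gal(M: K)$ is precisely $K$, and therefore any $\alpha \in M \without K$ must be moved by some element of $\Gal(M: K)$, i.e.\ by some automorphism $\phi$ of $M$ over $K$ with $\phi(\alpha) \neq \alpha$.

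There is essentially no obstacle here: the corollary is a direct unwinding of the definition of $\Fix$ combined with the single equation $\Fix(\Gal(M: K)) = K$, which is the $L = K$ instance of the fundamental theorem's correspondence statement. The only thing to be slightly careful about is citing the correct half of part~\bref{part:ftgt-inv} — namely $L = \Fix(\Gal(M: L))$ rather than $H = \Gal(M: \Fix(H))$ — and noting that the hypotheses (finite, normal, separable) are exactly those under which Theorem~\ref{thm:ftgt} applies, so they carry over verbatim. I would write the proof in two or three sentences: introduce $G = \Gal(M: K)$, invoke the fundamental theorem to get $\Fix(G) = K$, and conclude that $\alpha \notin K$ means $\alpha \notin \Fix(G)$, i.e.\ some $\phi \in G$ has $\phi(\alpha) \neq \alpha$.
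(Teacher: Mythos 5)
Your proof is correct and follows exactly the paper's argument: both invoke part~\bref{part:ftgt-inv} of the fundamental theorem (with $L = K$) to obtain $\Fix(\Gal(M: K)) = K$, and then read off the conclusion from the definition of $\Fix$. The paper states this in two sentences, which matches your own suggested final write-up.
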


\begin{proof}
Theorem~\ref{thm:ftgt}\bref{part:ftgt-inv} implies that $\Fix(\Gal(M: K)) =
K$. Now $\alpha \not\in K$, so $\alpha \not\in \Fix(\Gal(M: K))$, which is what
had to be proved.
\end{proof}

\begin{example}
For any $f \in \Q[t]$ and irrational $\alpha \in \SF_\Q(f)$,
there is some $\phi \in \Gal_\Q(f)$ that does not fix $\alpha$. This is
clear if $\alpha \not\in \R$, as we can take $\phi$ to be complex
conjugation restricted to $\SF_\Q(f)$. But it is not so obvious otherwise.
\end{example}

\section{A specific example}
\label{sec:ftgt-eg}

Chapter~13 of Stewart's book opens with these words:
\begin{quote}
The extension that we discuss is a favourite with writers on Galois theory,
because of its archetypal quality. A simpler example would be too small to
illustrate the theory adequately, and anything more complicated would be
unwieldy. The example is the Galois group of the splitting field of $t^4 -
2$ over $\Q$.
\end{quote}
We go through the same example here. My presentation of it is different
from Stewart's, so you can consult his book if anything that follows is
unclear.

Write $f(t) = t^4 - 2 \in \Q[t]$, which is irreducible by Eisenstein's
criterion. Write $G = \Gal_\Q(f)$.

\paragraph*{Splitting field} Write $\xi$ for the unique real positive root
of $f$. Then the roots of $f$ are $\pm \xi$ and $\pm \xi i$
(Figure~\ref{fig:big-eg-roots}). So $\SF_\Q(f) = \Q(\xi, \xi i) = \Q(\xi,
i)$. We have
\[
[\Q(\xi, i): \Q]
=
[\Q(\xi, i): \Q(\xi)] [\Q(\xi): \Q]
=
2 \times 4 
=
8,
\]
where the first factor is $2$ because $\Q(\xi) \sub \R$ and the second
factor is $4$ because $f$ is the minimal polynomial of $\xi$ over $\Q$ (being
irreducible). By the fundamental theorem, $|G| = 8$.

\begin{figure}
\centering
\setlength{\unitlength}{1mm}
\setlength{\fboxsep}{0mm}
\begin{picture}(100,40)
\cell{50}{20}{c}{\includegraphics[height=40\unitlength]{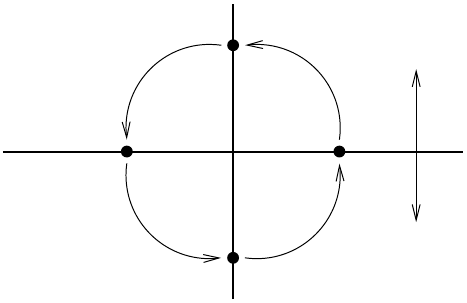}}
\cell{38}{33}{c}{$\rho$}
\cell{76}{22}{c}{$\kappa$}
\cell{62}{22}{c}{$\xi$}
\cell{39}{22}{c}{$-\xi$}
\cell{52}{31}{c}{$\xi i$}
\cell{53}{8.5}{c}{$-\xi i$}
\end{picture}%
\caption{The roots of $f$, and the effects on them of $\rho, \kappa \in
\Gal_\Q(f)$.}
\label{fig:big-eg-roots}
\end{figure}

\paragraph*{Galois group} We now look for the $8$ elements of the Galois
group. We'll use the principle that if $\phi, \theta \in G$ with $\phi(\xi)
= \theta(\xi)$ and $\phi(i) = \theta(i)$ then $\phi = \theta$ (by
Lemma~\ref{lemma:gen-epic}). 

Complex conjugation on $\C$ restricts to a nontrivial automorphism $\kappa$ of
$\Q(\xi, i)$ over $\Q$, giving an element $\kappa \in G$ of order $2$.

I now claim that $G$ has an element $\rho$ satisfying $\rho(\xi) = \xi i$
and $\rho(i) = i$. In that case, $\rho$ will act on the roots of $f$ as
follows:
\[
\xi \mapsto \xi i \mapsto -\xi \mapsto -\xi i \mapsto \xi
\]
(Figure~\ref{fig:big-eg-roots}). This element $\rho$ will have order $4$.

Proof of claim: since $f$ is irreducible, $G$ acts transitively on the
roots of $f$ in $\SF_\Q(f)$, so there is some $\phi \in G$ such that
$\phi(\xi) = \xi i$. The conjugacy class of $i$ over $\Q$ is $\{\pm i\}$,
so $\phi(i) = \pm i$. If $\phi(i) = i$ then we can take $\rho = \phi$. If
$\phi(i) = -i$ then
\[
(\phi \of \kappa)(\xi) = \phi(\xi) = \xi i,
\qquad
(\phi \of \kappa)(i) = \phi(-i) = -\phi(i) = i,
\]
so we can take $\rho = \phi \of \kappa$. 

(From now on, I will usually omit the $\of$ sign and write things like
$\phi\kappa$ instead. Of course, juxtaposition is also used to mean
multiplication, as in $\xi i$. But confusion shouldn't arise: automorphisms
are composed and numbers are multiplied.)

Figure~\ref{fig:big-eg-roots} might make us wonder if $G$ is the
dihedral group $D_4$, the symmetry group of the square. We will see that
it is.

\begin{warning}{wg:dih}
The symmetry group of a regular $n$-sided polygon has $2n$ elements: $n$
rotations and $n$ reflections. Some authors call it $D_n$ and others call it
$D_{2n}$. I will call it $D_n$, as in the Group Theory course.
\end{warning}

If $G \iso D_4$, we should have $\kappa\rho = \rho^{-1}\kappa$. (This is
one of the defining equations of the dihedral group; you saw it in
Example~3.2.12 of Group Theory.)  Let's check this. We have
\begin{align*}
\kappa\rho(\xi)         &
= \ovln{\xi i} = -\xi i,        &
\rho^{-1}\kappa(\xi)    &
= \rho^{-1}(\xi) = -\xi i,      \\
\kappa\rho(i)   &
= \kappa(i) = -i,       &
\rho^{-1}\kappa(i)      &
= \rho^{-1}(-i) = -i,
\end{align*}
so $\kappa\rho$ and $\rho^{-1}\kappa$ are equal on $\xi$ and $i$, so
$\kappa\rho = \rho^{-1}\kappa$. It follows that $\kappa\rho^r =
\rho^{-r}\kappa$ for all $r \in \Z$.

Figure~\ref{fig:big-eg-table} shows the effect of $8$ elements of $G$ on
$\xi$, $i$ and $\xi i$. Since no two of them have the same effect on both
$\xi$ and $i$, they are all \emph{distinct} elements of $G$. Since $|G| =
8$, they are the only elements of $G$. So $G \iso D_4$.

\begin{figure}
\begin{tabular}{c|ccc|c|c}
$\phi \in G$    &$\phi(\xi)$    &$\phi(i)$      &$\phi(\xi i)$  &
order   &geometric description     \\
                &               &               &               &
        &(see Warning~\ref{wg:geom-descr})      \\
\hline
$\id$           &$\xi$          &$i$            &$\xi i$        &
$1$     &identity       \\
$\rho$          &$\xi i$        &$i$            &$-\xi$         &
$4$     &rotation by $\pi/2$    \\
$\rho^2$        &$-\xi$         &$i$            &$-\xi i$       &
$2$     &rotation by $\pi$      \\
$\rho^3 = \rho^{-1}$&$-\xi i$   &$i$            &$\xi$          &
$4$     &rotation by $-\pi/2$   \\
$\kappa$        &$\xi$          &$-i$           &$-\xi i$       &
$2$     &reflection in real axis\\
$\kappa\rho = \rho^{-1}\kappa$  
                &$-\xi i$       &$-i$           &$-\xi$         &
$2$     &reflection in axis through $1 - i$     \\
$\kappa\rho^2 = \rho^2\kappa$  
                &$-\xi$         &$-i$           &$\xi i$         &
$2$     &reflection in imaginary axis   \\
$\kappa\rho^{-1} = \rho\kappa$  
                &$\xi i$        &$-i$           &$\xi$          &
$2$     &reflection in axis through $1 + i$     
\end{tabular}
\caption{The Galois group of $t^4 - 2$ over $\Q$.}
\label{fig:big-eg-table}
\end{figure}

\begin{warning}{wg:geom-descr}
The `geometric description' in Figure~\ref{fig:big-eg-table} applies only
to the roots, not the whole of the splitting field $\Q(\xi, i)$. For
example, $\rho^2$ is rotation by $\pi$ \emph{on the set of roots}, but it
is not rotation by $\pi$ on the rest of $\Q(\xi, i)$: it fixes each element
of $\Q$, for instance.
\end{warning}

\paragraph*{Subgroups of the Galois group} 
Since $|G| = 8$, any nontrivial proper subgroup of $G$ has order $2$ or
$4$. Let's look in turn at subgroups of order $2$ and $4$, also determining
which ones are normal. This is pure group theory, with no mention of
fields. 

\begin{itemize}
\item
The subgroups of order $2$ are of the form $\gpgen{\phi} = \{\id, \phi\}$
where $\phi \in G$ has order $2$. So, they are 
\[
\gpgen{\rho^2},\ 
\gpgen{\kappa},\
\gpgen{\kappa\rho},\
\gpgen{\kappa\rho^2},\
\gpgen{\kappa\rho^{-1}}.
\]
If you watched the video \href{https://www.maths.ed.ac.uk/~tl/galois}{`What
does it mean to be normal?'}, you may be able to guess which of these
subgroups are normal in $G$, the symmetry group of the square. It should be
those that can be specified without referring to particular vertices or
edges of the square. So, just the first should be normal. Let's check.

We know that $\kappa\rho^2 = \rho^2\kappa$, so $\rho^2$ commutes with both
$\kappa$ and $\rho$, which generate $G$. Hence $\rho^2$ is in the centre
of $G$ (commutes with everything in $G$). It follows that
$\gpgen{\rho^2}$ is a normal subgroup of $G$. On the other hand, for each
$r \in \Z$, the subgroup $\gpgen{\kappa\rho^r}$ is not normal, since
\[
\rho(\kappa\rho^r)\rho^{-1} = (\rho\kappa)\rho^{r - 1} =
(\kappa\rho^{-1})\rho^{r - 1} = \kappa\rho^{r - 2} 
\not\in \gpgen{\kappa\rho^r}.
\]
 
\item
The subgroups of $G$ of order $4$ are isomorphic to either $C_4$ or $C_2
\times C_2$, since these are the only groups of order $4$.

The only elements of $G$ of order $4$ are $\rho^{\pm 1}$, so the only
subgroup of $G$ isomorphic to $C_4$ is $\gpgen{\rho} = \{\id, \rho, \rho^2,
\rho^3 = \rho^{-1}\}$.

Now consider subgroups $H$ of $G$ isomorphic to $C_2 \times C_2$.

\begin{ex}{ex:not-all-refls}
Show that every such $H$ must contain $\rho^2$. (Hint: think
geometrically.)
\end{ex}

We have $\rho^2 \in H$, and both other nonidentity elements of $H$ have
order $2$, so they are of the form $\kappa\rho^r$ for some $r \in \Z$. The
two such subgroups $H$ are
\begin{align*}
\gpgen{\kappa, \rho^2}  &
=
\{ \id, \kappa, \rho^2, \kappa\rho^2\}, \\
\gpgen{\kappa\rho, \rho^2}      &
=
\{ \id, \kappa\rho, \rho^2, \kappa\rho^{-1}\}.
\end{align*}

Finally, any subgroup of index $2$ of any group is normal, so all the
subgroups of $G$ of order $4$ are normal.
\end{itemize}

Hence the subgroup structure of $G \iso D_4$ is as follows, where a box
around a subgroup means that it is normal in $G$.
\[
\xymatrix@R-1ex@C-2em{
        &       &       
*+[F]{1} 
\ar@{-}[lld] \ar@{-}[ld] \ar@{-}[d] \ar@{-}[rd] \ar@{-}[rrd]
                        &       &      &       &
\text{order }1                                          \\
\gpgen{\kappa}
        &
\gpgen{\kappa\rho^2}    
                &
*+[F]{\gpgen{\rho^2}}   &
\gpgen{\kappa\rho}              &
\gpgen{\kappa\rho^{-1}}                 &       &
\text{order }2                                          \\
*+[F]{\gpgen{\kappa, \rho^2} \iso C_2 \times C_2}
\ar@{-}[u] \ar@{-}[ru] \ar@{-}[rru]
        &       &
*+[F]{\gpgen{\rho} \iso C_4}
\ar@{-}[u]
                        &       &
*+[F]{\gpgen{\kappa\rho, \rho^2} \iso C_2 \times C_2}
\ar@{-}[u] \ar@{-}[lu] \ar@{-}[llu]
                                       &       &
\text{order }4                                          \\
        &       & 
*+[F]{G = \gpgen{\kappa, \rho} \iso D_4}
\ar@{-}[llu] \ar@{-}[u] \ar@{-}[rru]
                        &       &       &       &
\text{order }8
}
\]

\paragraph*{Fixed fields} We now find $\Fix(H)$ for each $H \in \GG$, again
considering the subgroups of orders $2$ and $4$ in turn. We'll use the same
three-step strategy as in 
Example~\ref{eg:ftgt-2i}.

\begin{itemize}
\item 
Order 2: take $\Fix\gpgen{\kappa}$ (officially
$\Fix(\gpgen{\kappa})$, but let's drop the brackets). We have $\kappa(\xi) =
\xi$, so $\xi \in \Fix\gpgen{\kappa}$, so $\Q(\xi) \sub
\Fix\gpgen{\kappa}$. But $[\Q(\xi, i): \Q(\xi)] = 2$, and by the
fundamental theorem, $[\Q(\xi, i): \Fix\gpgen{\kappa}] = |\gpgen{\kappa}| =
2$, so
$\Fix\gpgen{\kappa} = \Q(\xi)$. 

The same argument shows that for any $\phi \in G$ of order $2$, if we can
spot some $\alpha \in \Q(\xi, i)$ such that $\phi(\alpha) = \alpha$ and
\video{Finding fixed fields}%
$[\Q(\xi, i): \Q(\alpha)] \leq 2$, then $\Fix\gpgen{\phi} = \Q(\alpha)$. For
$\phi = \kappa\rho^2$, we can take $\alpha = \xi i$ (by
Figure~\ref{fig:big-eg-table}). We have $\deg_\Q(\xi i) = 4$ since $\xi i$
is a root of $f$, so $[\Q(\xi i): \Q] = 4$, or equivalently, $[\Q(\xi, i):
\Q(\xi i)] = 2$. Hence $\Fix\gpgen{\kappa\rho^2} = \Q(\xi i)$.

\begin{ex}{ex:same-arg}
I took a small liberty in the sentence beginning `The same argument',
because it included an inequality but the previous argument didn't. Prove
the statement made in that sentence.
\end{ex}

It is maybe not so easy to spot an $\alpha$ for $\kappa\rho$, but the
geometric description in Figure~\ref{fig:big-eg-table} suggests taking
$\alpha = \xi(1 - i)$. And indeed, one can check that $\kappa\rho$ fixes
$\xi(1 - i)$. One can also check that $\xi(1 - i)$ is not the root of any
nonzero quadratic over $\Q$, so $\deg_\Q(\xi(1 - i))$ is $\geq 4$ (since it
divides $8$), so $[\Q(\xi, i): \Q(\xi(1 - i))] \leq 8/4 = 2$. Hence
$\Fix\gpgen{\kappa\rho} = \Q(\xi(1 - i))$. Similarly,
$\Fix\gpgen{\kappa\rho^{-1}} = \Q(\xi(1 + i))$.

Finally, 
\[
\rho^2(\xi^2) = (\rho^2(\xi))^2 = (-\xi)^2 = \xi^2, 
\quad
\rho^2(i) = i,
\]
so $\Q(\xi^2, i) \sub \Fix\gpgen{\rho^2}$. But $[\Q(\xi, i): \Q(\xi^2,
i)] = 2$, so $\Fix\gpgen{\rho^2} = \Q(\xi^2, i)$.

\item
Order $4$: for $H = \gpgen{\kappa, \rho^2}$, note that $\xi^2$ is fixed by
both $\kappa$ and $\rho^2$, so $\xi^2 \in \Fix(H)$, so $\Q(\xi^2) \sub
\Fix(H)$. But $\xi^2 \not\in \Q$, so $[\Q(\xi^2): \Q] \geq 2$, so $[\Q(\xi,
i): \Q(\xi^2)] \leq 4$. The fundamental theorem guarantees that
\[
[\Q(\xi, i): \Fix(H)] = |H| = 4,
\]
so $\Fix(H) = \Q(\xi^2)$.

The same argument applies to the other two subgroups $H$ of order $4$: if
we can spot an element $\alpha \in \Q(\xi, i) \without \Q$ fixed by the
generators of $H$, then $\Fix(H) = \Q(\alpha)$. This gives
$\Fix\gpgen{\rho} = \Q(i)$ and $\Fix\gpgen{\kappa\rho, \rho^2} = \Q(\xi^2
i)$.
\end{itemize}

In summary, the fixed fields of the subgroups of $G$ are as follows. 
\[
\xymatrix@R-1ex@C-.5em{
        &       &       
*+[F]{\Q(\xi, i)} 
\ar@{-}[lld] \ar@{-}[ld] \ar@{-}[d] \ar@{-}[rd] \ar@{-}[rrd]
                        &       &      &       &
\text{degree }1                                         \\
\Q(\xi) &
\Q(\xi i)       &
*+[F]{\Q(\xi^2, i)}      &
\Q(\xi(1 - i))                  &
\Q(\xi(1 + i))                          &       &
\text{degree }2                                         \\
*+[F]{\Q(\xi^2)}
\ar@{-}[u] \ar@{-}[ru] \ar@{-}[rru]
        &       &
*+[F]{\Q(i)}
\ar@{-}[u]
                        &       &
*+[F]{\Q(\xi^2 i)}
\ar@{-}[u] \ar@{-}[lu] \ar@{-}[llu]
                                       &       &
\text{degree }4                                         \\
        &       & 
*+[F]{\Q}
\ar@{-}[llu] \ar@{-}[u] \ar@{-}[rru]
                        &       &       &       &
\text{degree }8
}
\]
On the right, `degree' means the degree of $\Q(\xi, i)$ over
the subfield concerned, \emph{not} the degree over $\Q$. The fundamental
theorem implies that the Galois group of $\Q(\xi, i)$ over each
intermediate field is the subgroup of $G$ in the same position in the earlier
diagram. For example, $\Gal(\Q(\xi, i): \Q(\xi^2, i)) = \gpgen{\rho^2}$.
It also implies that the intermediate fields that are
normal over $\Q$ are the boxed ones.

\paragraph*{Quotients} 
Finally, the fundamental theorem tells us that
\[
\frac{\Gal(\Q(\xi, i): \Q)}{\Gal(\Q(\xi, i): L)}
\iso
\Gal(L: \Q)
\]
whenever $L$ is an intermediate field normal over $\Q$.

For $L = \Q(\xi^2, i)$, this gives
\begin{align}
\label{eq:iso-qt-4}
G/\gpgen{\rho^2} \iso \Gal(\Q(\xi^2, i): \Q).
\end{align}
The left-hand side is the quotient of $D_4$ by a subgroup isomorphic to
$C_2$. It has order $4$, but it has no element of order $4$: for the only
elements of $G$ of order $4$ are $\rho^{\pm 1}$, whose images in
$G/\gpgen{\rho^2}$ have order $2$. Hence $G/\gpgen{\rho^2} \iso C_2 \times
C_2$. On the other hand, $\Q(\xi^2, i)$ is the splitting field over $\Q$ of
$(t^2 - 2)(t^2 + 1)$, which by Example~\ref{eg:ftgt-2i}
has Galois group $C_2 \times C_2$. This confirms the
isomorphism~\bref{eq:iso-qt-4}.

The other three intermediate fields normal over $\Q$, I leave to you:%
\video{Normal subgroups and normal extensions}

\begin{ex}{ex:qts-2}
Choose one of $\Q(\xi^2)$, $\Q(i)$ or $\Q(\xi^2 i)$, and do the same for it
as I just did for $\Q(\xi^2, i)$. 
\end{ex}

As you've now seen, it can take quite some time to work through a
particular example of the Galois correspondence. You'll get practice at
doing this in workshops. 

Beyond examples, there are at least two other uses of the
fundamental theorem. The first is to resolve the old
question on solvability of polynomials by radicals, which we met back in
Chapter~\ref{ch:overview}. The second is to work out the structure of
finite fields. We will carry out these two missions in the remaining two
weeks.  

\chapter{Solvability by radicals}
\label{ch:sol}

We began this course with a notorious old problem: can every polynomial be
solved by radicals? Theorem~\ref{thm:solv-conc} gave the answer and more:
not only is it impossible to find a \emph{general formula} that does it,
but we can tell which \emph{specific} polynomials can be solved by
radicals.%
\video{Introduction to Week~9}

Theorem~\ref{thm:solv-conc} states that a polynomial over $\Q$ is solvable
by radicals if and only if it has the right kind of Galois group---a
solvable one. In degree $5$ and higher, there are polynomials that have the
wrong kind of group. These polynomials are not, therefore, solvable by
radicals. 

We'll prove one half of this `if and only if' statement: if $f$ is solvable
by radicals then $\Gal_\Q(f)$ is solvable. This is the half that's needed
to show that some polynomials are \emph{not} solvable by radicals. The
proof of the other direction is in Chapter~18 of Stewart's book,
but we won't do it.

If you're taking Algebraic Topology, you'll already be familiar with the
idea that groups can be used to solve problems that seem to have nothing to
do with groups. You have a problem about some objects (such as topological
spaces or field extensions), you associate groups with those objects (maybe
their fundamental groups or their Galois groups), you translate your
original problem into a problem about groups, and you solve that
instead. For example, the question of whether $\R^2$ and $\R^3$ are
homeomorphic is quite difficult using only general topology; but using
algebraic topology, we can answer `no' by noticing that the fundamental
group of $\R^2$ with a point removed is not isomorphic to the fundamental
group of $\R^3$ with a point removed. In much the same way, we'll answer a
difficult question about field extensions by converting it into a question
about groups.

For this chapter, you'll need what you know about solvable
groups. At a minimum, you'll need the definition, the fact that any
quotient of a solvable group is solvable, and the fact that $S_5$ is not
solvable.

\section{Radicals}
\label{sec:rad}

We speak of square roots, cube roots, and so on, but we also speak about
roots of polynomials. To distinguish between these two related usages, we
will use the word \demph{radical} for square roots etc. (\emph{Radical}
comes from the Latin for root. A radish is a root, and a change is radical
if it gets to the root of the matter.)

Back in Chapter~\ref{ch:overview}, I said that a complex number is called
radical if `it can be obtained from the rationals using only the usual
arithmetic operations [addition, subtraction, multiplication and division]
and $k$th roots [for $k \geq 1$]'. As an example, I said that
\begin{align}
\label{eq:random-rad}
\frac{
\frac{1}{2} + \sqrt[3]{\sqrt[7]{2} - \sqrt[2]{7}}
}
{
\sqrt[4]{6 + \sqrt[5]{\frac{2}{3}}}
}
\end{align}
is radical, whichever square root, cube root, etc., we choose
(p.~\pageref{p:radical}). Let's now make this definition precise.

The first point is that the notation $\sqrt[n]{z}$ or $z^{1/n}$ is
\textcolor{red}{highly dangerous}: 

\begin{warning}{wg:bad-rad}
Let $z$ be a complex number and $n \geq 2$. Then there is \textbf{no single
number called $\sqrt[n]{z}$ or $z^{1/n}$}. There are $n$ elements $\alpha$ of
$\C$ such that $\alpha^n = z$. So, the notation $\sqrt[n]{z}$ or $z^{1/n}$
makes no sense if it is intended to denote a single complex number. It is
simply invalid.

When $z$ belongs to the set $\R^+$ of nonnegative reals, the convention is
that $\sqrt[n]{z}$ or $z^{1/n}$ denotes the unique $\alpha \in \R^+$ such
that $\alpha^n = z$. There is also a widespread convention that when $z$ is
a negative real and $n$ is odd, $\sqrt[n]{z}$ or $z^{1/n}$ denotes the
unique real $\alpha$ such that $\alpha^n = z$. In these cases, there is a
sensible and systematic way of choosing one of the $n$th roots of $z$. But
for a general $z$ and $n$, there is not. 

Complex analysis has a lot to say about different choices of $n$th
roots. But we don't need to go into that. We simply treat all the $n$th
roots of $z$ on an equal footing, not attempting to pick out any of them as
special.
\end{warning}

With this warning in mind, we define the radical numbers without using
notation like $\sqrt[n]{z}$ or $z^{1/n}$. It is a `top-down' definition, in
the sense of Section~\ref{sec:rings}. Loosely, it says that the radical
numbers form the smallest subfield of $\C$ closed under taking square
roots, cube roots, etc.

\begin{defn}
Let \demph{$\Q^\rad$} be the smallest subfield of $\C$ such that for
$\alpha \in \C$,%
\video{The definition of radical number}%
\begin{align}
\label{eq:defn-rad}
\alpha^n \in \Q^\rad \text{ for some } n \geq 1
\ \implies\ 
\alpha \in \Q^\rad.
\end{align}
A complex number is \demph{radical} if it belongs to $\Q^\rad$.
\end{defn}

So any rational number is radical; any $n$th root of a radical number is
radical; the sum, product, difference or quotient of radical numbers is
radical; and there are no more radical numbers than can be obtained by
those rules.

For the definition of $\Q^\rad$ to make sense, we need there to \emph{be} a
smallest subfield of $\C$ with the property~\eqref{eq:defn-rad}. This will
be true as long as the intersection of any family of subfields of $\C$
satisfying~\eqref{eq:defn-rad} is again a subfield of $\C$
satisfying~\eqref{eq:defn-rad}: for then $\Q^\rad$ is the intersection of
\emph{all} subfields of $\C$ satisfying~\eqref{eq:defn-rad}.

\begin{ex}{ex:defn-rad}
Check that the intersection of any family of subfields of $\C$
satisfying~\eqref{eq:defn-rad} is again a subfield of $\C$
satisfying~\eqref{eq:defn-rad}. (That any intersection of subfields is a
subfield is a fact we met back on p.~\pageref{p:int-subfds}; the new
aspect is~\eqref{eq:defn-rad}.)
\end{ex}

\begin{example}
Consider again the expression~\eqref{eq:random-rad}. It's not quite as random
as it looks. I chose it so that the various radicals are covered by one of
the two conventions mentioned in Warning~\ref{wg:bad-rad}: they're all
$n$th roots of positive reals except for $\sqrt[3]{\sqrt[7]{2} -
\sqrt[2]{7}}$, which is an odd root of a negative real. Let $z$ be the
number~\eqref{eq:random-rad}, choosing the radicals according to those
conventions. 

I claim that $z$ is radical, or equivalently that $z$ belongs to every
subfield $K$ of $\C$ satisfying~\eqref{eq:defn-rad}. 

First, $\Q \sub K$ since $\Q$ is the prime subfield of $\C$. So $2/3 \in
K$, and so $\sqrt[5]{2/3} \in K$ by~\eqref{eq:defn-rad}. Also, $6 \in K$ and
$K$ is a field, so $6 + \sqrt[5]{2/3} \in K$. But then
by~\eqref{eq:defn-rad} again, the denominator of~\eqref{eq:random-rad} is
in $K$. A similar argument shows that the numerator is in $K$. Hence $z \in
K$. 
\end{example}

\begin{defn}
A nonzero polynomial over $\Q$ is \demph{solvable by radicals} if all of its
complex roots are radical.
\end{defn}

The simplest nontrivial example of a polynomial solvable by radicals is
something of the form $t^n - a$, where $a \in \Q$. The theorem we're
heading for is that \emph{any} polynomial solvable by radicals has solvable
Galois group, and if that's true then the group $\Gal_\Q(t^n - a)$ must be
solvable. Let's consider that group now. The results we prove about it will
form part of the proof of the big theorem.

We begin with the case $a = 1$.

\begin{lemma}
\label{lemma:kummer-unity}
For all $n \geq 1$, the group $\Gal_\Q(t^n - 1)$ is abelian.
\end{lemma}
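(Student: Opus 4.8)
The plan is to realise $\Gal_\Q(t^n-1)$ concretely as a subgroup of the abelian group $(\Z/n\Z)^\times$, from which abelianness is immediate.

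First I would identify the splitting field. The complex roots of $t^n-1$ are the $n$th roots of unity; they form a cyclic group $\mu_n$ of order $n$ under multiplication, generated by $\omega = e^{2\pi i/n}$. Hence $\SF_\Q(t^n-1) = \Q(\omega)$, and $\Gal_\Q(t^n-1) = \Gal(\Q(\omega):\Q)$ by Definition~\ref{defn:gal-gp-poly}. Now take any $\phi \in \Gal_\Q(t^n-1)$. By Lemma~\ref{lemma:gal-acts}, $\phi$ maps the set of roots of $t^n-1$ to itself, i.e.\ it maps $\mu_n$ into $\mu_n$; since $\phi$ is a field homomorphism it also preserves products, so the restriction $\phi|_{\mu_n}$ is a group homomorphism $\mu_n \to \mu_n$, and being injective (Lemma~\ref{lemma:fd-homm}) it is an automorphism of the finite cyclic group $\mu_n$. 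Therefore $\phi(\omega)$ is again a generator of $\mu_n$, say $\phi(\omega) = \omega^{a}$ for some integer $a$ coprime to $n$ and well-defined modulo $n$; write $[a] \in (\Z/n\Z)^\times$ for its class. (Alternatively, Exercise~\ref{ex:homm-minpoly} shows $\phi(\omega)$ has the same minimal polynomial as $\omega$, hence the same multiplicative order, giving the same conclusion.)

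Next I would show that $\phi \mapsto [a]$ is an injective group homomorphism $\Gal_\Q(t^n-1) \to (\Z/n\Z)^\times$. Injectivity holds because $\Q(\omega)$ is generated by $\omega$ over $\Q$, so an element of the Galois group is determined by its value on $\omega$ (Lemma~\ref{lemma:gen-epic}). It is a homomorphism because if $\phi(\omega) = \omega^a$ and $\psi(\omega) = \omega^b$, then
\[
(\phi \of \psi)(\omega) = \phi(\omega^b) = \phi(\omega)^b = \omega^{ab},
\]
so composition of automorphisms corresponds to $[ab] = [a][b]$ in $(\Z/n\Z)^\times$. Since $(\Z/n\Z)^\times$ is abelian and any subgroup of an abelian group is abelian, it follows that $\Gal_\Q(t^n-1)$ is abelian, as claimed.

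There is no serious obstacle here; the only step needing a moment's care is the extraction of the exponent $a$ — that is, the fact that a field automorphism of $\Q(\omega)$ restricts to a group automorphism of the cyclic group $\mu_n$, so that $\phi(\omega) = \omega^a$ with $a$ a unit mod $n$. Everything after that is routine bookkeeping with the tools already available.
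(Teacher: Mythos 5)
Your proof is correct and rests on the same key computation as the paper's: every element of the Galois group sends $\omega$ to some power $\omega^a$, composition multiplies the exponents, and since $\Q(\omega)$ is generated over $\Q$ by $\omega$, an automorphism is determined by its value there (Lemma~\ref{lemma:gen-epic}). The paper simply checks directly that two arbitrary elements agree on $\omega$ and hence commute, whereas you package the same facts as an injective homomorphism into $(\Z/n\Z)^\times$ — a mild strengthening (pinning the exponent down as a unit mod $n$, which the paper never needs) rather than a genuinely different argument.
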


\begin{proof}
Write $\omega = e^{2\pi i/n}$. The complex roots of $t^n - 1$ are $1,
\omega, \ldots, \omega^{n - 1}$, so $\SF_\Q(t^n - 1) = \Q(\omega)$.

Let $\phi, \theta \in \Gal_\Q(t^n - 1)$. Since $\phi$ permutes the roots of
$t^n - 1$, we have $\phi(\omega) = \omega^i$ for some $i \in
\Z$. Similarly, $\theta(\omega) = \omega^j$ for some $j \in \Z$. Hence
\[
(\phi \of \theta)(\omega)
=
\phi(\omega^j)
=
\phi(\omega)^j
=
\omega^{ij},
\]
and similarly $(\theta \of \phi)(\omega) = \omega^{ij}$. So $(\phi \of
\theta)(\omega) = (\theta \of \phi)(\omega)$. Since $\SF_\Q(t^n
- 1) = \Q(\omega)$, it follows that $\theta \of \phi = \phi \of \theta$.
\end{proof}

\begin{ex}{ex:rad-follow}
In the last sentence of that proof, how exactly does it `follow'?
\end{ex}

Much more can be said about the Galois group of $t^n - 1$, and you'll see a
bit more in workshops. But this is all we need for our purposes.

Now that we've considered $t^n - 1$, let's do $t^n - a$ for an arbitrary
$a$. 

\begin{lemma}
\label{lemma:kummer-gen}
Let $K$ be a field and $n \geq 1$. Suppose that $t^n - 1$ splits in
$K$. Then $\Gal_K(t^n - a)$ is abelian for all $a \in K$.
\end{lemma}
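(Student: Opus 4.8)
The plan is to mimic the proof of Lemma~\ref{lemma:kummer-unity}, but now working over the field $K$ instead of $\Q$ and exploiting the hypothesis that $t^n - 1$ splits in $K$. First I would set up notation: let $M = \SF_K(t^n - a)$ be the splitting field, and fix a root $\beta \in M$ of $t^n - a$, so $\beta^n = a$. Since $t^n - 1$ splits in $K$, all $n$th roots of unity already lie in $K \sub M$; let $\omega$ be one of them that is primitive (or, if one wants to avoid discussing primitivity, simply note that the full set $\mu_n$ of $n$th roots of unity is a subgroup of $K^\times$). The key observation is that the complete set of roots of $t^n - a$ in $M$ is $\{\zeta\beta : \zeta^n = 1\}$, i.e.\ $\{\beta, \omega\beta, \ldots\}$, so $M = K(\beta)$.

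Next I would analyse how an element $\phi \in \Gal_K(t^n - a)$ acts. Since $\phi$ permutes the roots and fixes $K$ (hence fixes every $n$th root of unity), we have $\phi(\beta) = \zeta_\phi \beta$ for a unique $\zeta_\phi \in \mu_n$. Moreover $\phi$ is determined by $\zeta_\phi$, because $M = K(\beta)$ and $\phi$ fixes $K$ (using Lemma~\ref{lemma:gen-epic}). The crucial computation is then that for $\phi, \theta \in \Gal_K(t^n - a)$,
\[
(\phi \circ \theta)(\beta)
= \phi(\zeta_\theta \beta)
= \zeta_\theta \, \phi(\beta)
= \zeta_\theta \zeta_\phi \beta,
\]
where the second equality uses that $\zeta_\theta \in K$ is fixed by $\phi$. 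Symmetrically $(\theta \circ \phi)(\beta) = \zeta_\phi \zeta_\theta \beta$, and since $\mu_n \sub K^\times$ is abelian, $\zeta_\theta \zeta_\phi = \zeta_\phi \zeta_\theta$. Hence $(\phi \circ \theta)(\beta) = (\theta \circ \phi)(\beta)$, and since $M = K(\beta)$, this forces $\phi \circ \theta = \theta \circ \phi$ by Lemma~\ref{lemma:gen-epic}.

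The one genuinely load-bearing point is the hypothesis that $t^n - 1$ splits in $K$: it is what guarantees both that the roots of $t^n - a$ are exactly the $\zeta\beta$ with $\zeta \in K$ (so that $M$ is the \emph{simple} extension $K(\beta)$), and that $\phi$ fixes each $\zeta_\phi$ — which is precisely the step that makes the commutator vanish. Without it, $\phi$ could move the roots of unity around and the group need not be abelian (indeed $\Gal_\Q(t^n - a)$ generally isn't). So I expect no real obstacle here; the proof is a short direct computation. The only minor care needed is the degenerate edge case where $a = 0$ (then $t^n - a = t^n$, $M = K$, and the Galois group is trivial, hence abelian), and the trivial observation that $a = 0$ is allowed by the statement "for all $a \in K$"; I would dispose of it in one line, or simply note that the argument above goes through with $\beta = 0$ if one is willing to read $K(0) = K$.
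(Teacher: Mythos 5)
Your proof is correct and is essentially the same as the paper's: the paper also disposes of $a = 0$ first, picks a root $\xi$, observes that every other root is $\zeta\xi$ with $\zeta \in K$ (so $\SF_K(t^n-a) = K(\xi)$), and computes $(\theta\circ\phi)(\xi) = \phi(\xi)\theta(\xi)/\xi = (\phi\circ\theta)(\xi)$. Your $\zeta_\phi$ is just the paper's $\phi(\xi)/\xi$ given a name, and the final appeal to Lemma~\ref{lemma:gen-epic} matches the paper's ``Since $\SF_K(t^n-a)=K(\xi)$, it follows that $\phi\of\theta=\theta\of\phi$.''
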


The hypothesis that $t^n - 1$ splits in $K$ might seem so restrictive as to
make this lemma useless. For instance, it doesn't hold in $\Q$ or even $\R$
(for $n > 2$). Nevertheless, this turns out to be the key lemma
in the whole story of solvability by radicals.

\begin{proof}
If $a = 0$ then $\Gal_K(t^n - a)$ is trivial; suppose otherwise.

Choose a root $\xi$ of $t^n - a$ in $\SF_K(t^n - a)$. For any other root
$\nu$, we have $(\nu/\xi)^n = a/a = 1$ (valid since $a \neq 0$), and $t^n -
1$ splits in $K$, so $\nu/\xi \in K$.

It follows that $\SF_K(t^n - a) = K(\xi)$. Moreover, given $\phi, \theta
\in \Gal_K(t^n - a)$, we have $\phi(\xi)/\xi \in K$ (since $\phi(\xi)$ is a
root of $t^n - a$), so
\[
(\theta \of \phi)(\xi)
=
\theta \biggl( \frac{\phi(\xi)}{\xi} \cdot \xi \biggr)
=
\frac{\phi(\xi)}{\xi} \cdot \theta(\xi)
=
\frac{\phi(\xi)\theta(\xi)}{\xi}.
\]
Similarly, $(\phi \of \theta)(\xi) = \phi(\xi)\theta(\xi)/\xi$, so $(\theta
\of \phi)(\xi) = (\phi \of \theta)(\xi)$. Since $\SF_K(t^n - a) = K(\xi)$,
it follows that $\phi \of \theta = \theta \of \phi$.
\end{proof}

\begin{warning}{wg:kummer-not-ab}
For $a \in \Q$, the Galois group of $t^n - a$ over $\Q$ is \emph{not}
usually abelian. For instance, we saw in Example~\ref{eg:gal-cbrt2} that
$\Gal_\Q(t^3 - 2)$ is the nonabelian group $S_3$.
\end{warning}

\begin{ex}{ex:kummer-eig}
What does the proof of Lemma~\ref{lemma:kummer-gen} tell you about the
eigenvectors and eigenvalues of the elements of $\Gal_K(t^n - a)$?
\end{ex}

\begin{ex}{ex:kummer-sol}
Use Lemmas~\ref{lemma:kummer-unity} and~\ref{lemma:kummer-gen} to show that
$\Gal_\Q(t^n - a)$ is solvable for all $a \in \Q$. 

This is harder than most of these exercises, but I recommend it as a way
of getting into the right frame of mind for the theory that's coming in
Section~\ref{sec:sol-sol}.
\end{ex}

\begin{digression}{dig:s-by-r-fields}
We're only going to do the theory of solvability by radicals over $\Q$. It
can be done over any field, but $\Q$ has two special features. First, $\Q$
can be embedded in an algebraically closed field that we know very well:~$\C$. This makes some things easier. Second, $\chr\Q = 0$. For
fields of characteristic $p$, there are extra complications (Stewart,
Section~17.6).
\end{digression}

\section{Solvable polynomials have solvable groups}
\label{sec:sol-sol}

Here we'll prove that every polynomial over $\Q$ that is solvable by
radicals has solvable Galois group. 

You know by now that in Galois theory, we tend not to jump straight from
polynomials to groups. We go via the intermediate stage of field
extensions, as in the diagram
\[
\text{polynomial } 
\longmapsto 
\text{ field extension } 
\longmapsto 
\text{ group}
\]
that I first drew after the definition of $\Gal_K(f)$
(p.~\pageref{p:peg}). That is, we understand polynomials through their
splitting field extensions. 

So it shouldn't be a surprise that we do the same here, defining a notion
of `solvable extension' and showing (roughly speaking) that
\[
\text{solvable polynomial } 
\longmapsto
\text{ solvable extension }
\longmapsto
\text{ solvable group}.
\]
In other words, we'll define `solvable extension' in such a way that (i)~if $f
\in \Q[t]$ is a polynomial solvable by radicals then $\SF_\Q(f): \Q$ is a
solvable extension, and (ii)~if $M: K$ is a solvable extension then $\Gal(M: K)$
is a solvable group. Hence if $f$ is solvable by radicals
then $\Gal_\Q(f)$ is solvable---the result we're aiming for.%
\video{Solvable polynomials have solvable groups:\\ a map}%

\begin{defn}
\label{defn:sol-ext}
Let $M: K$ be a finite normal separable extension. Then $M: K$ is
\demph{solvable} (or $M$ is \demph{solvable over} $K$) if there exist $r
\geq 0$ and intermediate fields
\[
K = L_0 \sub L_1 \sub \cdots \sub L_r = M
\]
such that $L_i: L_{i - 1}$ is normal and $\Gal(L_i: L_{i - 1})$ is abelian
for each $i \in \{1, \ldots, r\}$.
\end{defn}

\begin{ex}{ex:comp-sol}
Let $N: M: K$ be extensions, with $N: M$, $M: K$ and $N: K$ all finite,
normal and separable. Show that if $N: M$ and $M: K$ are solvable then so
is $N: K$.
\end{ex}

We will focus on subfields of $\C$, where separability is automatic
(Example~\ref{egs:sep-ext}\bref{eg:se-zero}).

\begin{example}
\label{eg:kummer-sol}
Let $a \in \Q$ and $n \geq 1$. Then $\SF_\Q(t^n - a): \Q$ is a finite
normal separable extension, being a splitting field extension over $\Q$. I
claim that it is solvable. 

Proof: if $a = 0$ then $\SF_\Q(t^n - a) = \Q$, and $\Q: \Q$ is solvable
(taking $r = 0$ and $L_0 = \Q$ in Definition~\ref{defn:sol-ext}). Now
assume that $a \neq 0$. Choose a complex root $\xi$ of $t^n - a$ and
write $\omega = e^{2\pi i/n}$. Then the complex roots of $t^n - a$ are
\[
\xi, \omega\xi, \ldots, \omega^{n - 1}\xi.
\]
So $\SF_\Q(t^n - a)$ contains $(\omega^i\xi)/\xi = \omega^i$ for all $i$,
and so $t^n - 1$ splits in $\SF_\Q(t^n - a)$. Hence
\[
\Q \sub \SF_\Q(t^n - 1) \sub \SF_\Q(t^n - a).
\]
Now $\SF_\Q(t^n - 1): \Q$ is normal (being a splitting field extension) and
has abelian Galois group by Lemma~\ref{lemma:kummer-unity}. Also
$\SF_\Q(t^n - a): \SF_\Q(t^n - 1)$ is normal (being the splitting
field extension of $t^n - a$ over $\SF_\Q(t^n - 1)$, by
Lemma~\ref{lemma:sf-adj}\bref{part:sfa-fds}), and has abelian Galois group
by Lemma~\ref{lemma:kummer-gen}. So $\SF_\Q(t^n - a): \Q$ is a solvable
extension, as claimed.
\end{example}

The definition of solvable extension bears a striking resemblance to the
definition of solvable group. Indeed:

\begin{lemma}
\label{lemma:sole-solg}
Let $M: K$ be a finite normal separable extension. Then
\[
M: K \text{ is solvable}
\iff
\Gal(M: K) \text{ is solvable.}
\]
\end{lemma}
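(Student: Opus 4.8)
The plan is to prove both directions using the fundamental theorem of Galois theory (Theorem~\ref{thm:ftgt}), which converts statements about intermediate fields and their normality/degrees into statements about subgroups and quotients of $\Gal(M: K)$. Recall that the definition of a solvable extension is in terms of a chain $K = L_0 \sub L_1 \sub \cdots \sub L_r = M$ with each $L_i: L_{i - 1}$ normal and $\Gal(L_i: L_{i - 1})$ abelian, while a solvable group is one with a chain of subgroups $1 = H_r \sub H_{r - 1} \sub \cdots \sub H_0 = G$ with each $H_i$ normal in $H_{i - 1}$ and $H_{i - 1}/H_i$ abelian. The two notions are set up to mirror each other, so the proof is essentially a matter of translating one chain into the other via the Galois correspondence, and the translation is order-reversing (Lemma~\ref{lemma:gc-simple}\bref{part:gcs-rev}), which is why inclusions flip.

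First I would handle $\Rightarrow$. Suppose $M: K$ is solvable, with a chain $K = L_0 \sub L_1 \sub \cdots \sub L_r = M$ as in Definition~\ref{defn:sol-ext}. Apply $\Gal(M: -)$ to get subgroups $H_i = \Gal(M: L_i)$ of $G = \Gal(M: K)$, so that $1 = H_r \sub H_{r - 1} \sub \cdots \sub H_0 = G$ (using $\Gal(M: M) = 1$ and $\Gal(M: K) = G$, and order-reversal). The key point is that each $L_i: L_{i - 1}$ being a \emph{normal} extension forces $\Gal(M: L_i)$ to be a \emph{normal subgroup} of $\Gal(M: L_{i - 1})$, with quotient isomorphic to $\Gal(L_i: L_{i - 1})$; this is exactly Theorem~\ref{thm:ftgt}\bref{part:ftgt-nml} applied to the tower $M: L_{i - 1}: L_i$ (noting $M: L_{i - 1}$ is itself finite normal separable by Corollary~\ref{cor:top-nml} and Lemma~\ref{lemma:sep-int}, so the fundamental theorem is available at each stage with base field $L_{i - 1}$). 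Since $\Gal(L_i: L_{i - 1})$ is assumed abelian, each quotient $H_{i - 1}/H_i$ is abelian, so the chain $H_r \sub \cdots \sub H_0$ witnesses that $G$ is solvable.

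For $\Leftarrow$, run the argument backwards: suppose $G = \Gal(M: K)$ is solvable, with a chain $1 = H_r \sub H_{r - 1} \sub \cdots \sub H_0 = G$ of subgroups, each normal in the next with abelian quotient. Apply $\Fix$ to get intermediate fields $L_i = \Fix(H_i)$, and since $\Fix$ is order-reversing and mutually inverse to $\Gal(M: -)$ (Theorem~\ref{thm:ftgt}\bref{part:ftgt-inv}), we obtain $K = \Fix(G) = L_0 \sub L_1 \sub \cdots \sub L_r = \Fix(1) = M$. Now I need each $L_i: L_{i - 1}$ to be normal with abelian Galois group. Work inside the finite normal separable extension $M: L_{i - 1}$, whose Galois group is $\Gal(M: L_{i - 1}) = H_{i - 1}$ (by mutual-inverseness, since $L_{i - 1} = \Fix(H_{i - 1})$). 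The subgroup $H_i$ is normal in $H_{i - 1}$, so Theorem~\ref{thm:ftgt}\bref{part:ftgt-nml} (with base field $L_{i - 1}$, ambient field $M$, intermediate field $\Fix_M(H_i) = L_i$) tells us $L_i: L_{i - 1}$ is a normal extension and $\Gal(L_i: L_{i - 1}) \iso H_{i - 1}/H_i$, which is abelian by hypothesis. That gives the chain required by Definition~\ref{defn:sol-ext}, so $M: K$ is solvable.

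\textbf{Main obstacle.} The routine part is writing down the two chains; the part needing care is checking that the fundamental theorem genuinely applies at \emph{every} level of the tower, not just at the top. Specifically, for each $i$ one must invoke the fundamental theorem for the extension $M: L_{i - 1}$, which requires knowing $M: L_{i - 1}$ is finite, normal, and separable. Finiteness and separability are inherited automatically (Lemma~\ref{lemma:sep-int}), and normality of $M: L_{i - 1}$ follows from normality of $M: K$ by Corollary~\ref{cor:top-nml}; one also needs $\Gal(M: L_{i - 1})$ to be correctly identified as $H_{i - 1}$ in the $\Leftarrow$ direction, which is where mutual-inverseness of the Galois correspondence is doing real work. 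Getting the normal-subgroup-to-normal-extension correspondence oriented correctly (it is an "if and only if" in Theorem~\ref{thm:ftgt}\bref{part:ftgt-nml}, used in one direction for each half of the proof) is the only genuinely substantive step, and it is already done for us in the fundamental theorem.
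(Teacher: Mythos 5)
Your proof is correct: for $\implies$ it matches the paper's argument exactly, and the paper leaves $\impliedby$ as Exercise~\ref{ex:solg-sole}, of which your argument is precisely the intended mirror image via the Galois correspondence. (One tiny citation slip: Lemma~\ref{lemma:sep-int} gives only separability of $M: L_{i-1}$; finiteness comes, like normality, from Corollary~\ref{cor:top-nml}.)
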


\begin{proof}
We will only need the $\implies$ direction, and that is all I prove
here. For the converse, see the workshop questions.

Suppose that $M: K$ is solvable. Then there are intermediate fields
\[
K = L_0 \sub L_1 \sub \cdots \sub L_r = M
\]
such that each extension $L_i: L_{i - 1}$ is normal with abelian Galois
group. For each $i \in \{1, \ldots, r\}$, the extension $M: L_{i - 1}$ is
finite, normal and separable (by Corollary~\ref{cor:top-nml} and
Lemma~\ref{lemma:sep-int}), so we can apply the fundamental theorem of
Galois theory to it. Since $L_i: L_{i - 1}$ is a normal extension, $\Gal(M:
L_i)$ is a normal subgroup of $\Gal(M: L_{i - 1})$ and
\[
\frac{\Gal(M: L_{i - 1})}{\Gal(M: L_i)}
\iso
\Gal(L_i: L_{i - 1}).
\]
By hypothesis, the right-hand side is abelian, so the left-hand side is
too. So the sequence of subgroups
\[
\Gal(M: K) = \Gal(M: L_0) \supseteq \Gal(M: L_1) \supseteq \cdots
\supseteq \Gal(M: L_r) = 1
\]
exhibits $\Gal(M: K)$ as a solvable group. 
\end{proof}

\begin{ex}{ex:solg-sole}
Prove the $\impliedby$ direction of Lemma~\ref{lemma:sole-solg}. It's a
very similar argument to the proof of $\implies$.
\end{ex}

According to the story I'm telling, solvability by radicals of a polynomial
should correspond to solvability of its splitting field extension. Thus,
the subfields of $\C$ that are solvable over $\Q$ should be exactly the
splitting fields $\SF_\Q(f)$ of polynomials $f$ that are solvable by
radicals. (This is indeed true, though we won't entirely prove it.) Now if
$f, g \in \Q[t]$ are both solvable by radicals then so is $fg$, and
$\SF_\Q(fg)$ is a solvable extension of $\Q$ containing both $\SF_\Q(f)$
and $\SF_\Q(g)$. So it should be the case that for any two subfields of
$\C$ solvable over $\Q$, there is some larger subfield, also solvable over
$\Q$, containing both. 

The following pair of lemmas proves this. They use the notion of
compositum (Definition~\ref{defn:compm}).

\begin{lemma}
\label{lemma:compm-condns}
Let $M: K$ be a field extension and let $L$ and $L'$ be intermediate
fields. 
\marginpar{\quad
$\xymatrix@C=3mm@R=3mm{%
        &
\scriptstyle
M \ar@{-}[d]   &       \\
        &
\scriptstyle
LL' \ar@{-}[ld] \ar@{-}[rd]    &       \\
\scriptstyle
L \ar@{-}[rd]   &       &
\scriptstyle
L' \ar@{-}[ld] \\
        &
\scriptstyle
K      &       
}$%
}%
\begin{enumerate}
\item 
\label{part:cc-nn}
If $L: K$ and $L': K$ are finite and normal, then so is $LL': K$.

\item
\label{part:cc-n}
If $L: K$ is finite and normal, then so is $LL': L'$.

\item
\label{part:cc-a}
If $L: K$ is finite and normal with abelian Galois group, then so is $LL': L'$.
\end{enumerate}
\end{lemma}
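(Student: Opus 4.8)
\textbf{Plan of proof for Lemma~\ref{lemma:compm-condns}.}

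The three parts are naturally nested, so the plan is to prove them in order, with each feeding into the next. For~\bref{part:cc-nn}: since $L: K$ and $L': K$ are finite and normal, Theorem~\ref{thm:sf-fin-nml} gives polynomials $f, g \in K[t]$ with $L = \SF_K(f)$ and $L' = \SF_K(g)$, viewing all fields inside the fixed ambient $M$. I would then check that $LL'$ is a splitting field of $fg$ over $K$: the polynomial $fg$ splits in $LL'$ because $f$ splits in $L \sub LL'$ and $g$ splits in $L' \sub LL'$; and $LL'$ is generated over $K$ by the roots of $f$ together with the roots of $g$, since $L$ is generated over $K$ by the roots of $f$, $L'$ by the roots of $g$, and $LL' = L(L') = K(\text{roots of }f)(\text{roots of }g)$. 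Hence $LL' = \SF_K(fg)$, so by Theorem~\ref{thm:sf-fin-nml} again, $LL': K$ is finite and normal. (Finiteness also follows directly from Corollary~\ref{cor:tower-prod} applied to the generators, but piggybacking on normality is cleaner.)

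For~\bref{part:cc-n}: write $L = \SF_K(f)$ for some $f \in K[t]$, as above. Then $f \in L'[t]$ too, and I claim $LL' = \SF_{L'}(f)$. Indeed, this is exactly Lemma~\ref{lemma:sf-adj}\bref{part:sfa-gen} with $S = L'$ and $Y = L'$ (or more simply: $f$ splits in $L \sub LL'$, and $LL' = K(X)(L') = L'(X)$ where $X$ is the set of roots of $f$ in $L$, so $LL'$ is generated over $L'$ by the roots of $f$). Being a splitting field extension, $LL': L'$ is finite and normal by Theorem~\ref{thm:sf-fin-nml}.

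For~\bref{part:cc-a}: we already know from~\bref{part:cc-n} that $LL': L'$ is finite and normal, so it remains to show its Galois group is abelian. The key construction is the restriction homomorphism: since $LL' = L(L')$ and $L: K$ is normal, Theorem~\ref{thm:nml-ftgt}\bref{part:nf-fix} (applied to the tower $LL': L': K$ if needed, or directly) tells us that every $\phi \in \Gal(LL': L')$ satisfies $\phi L = L$, so $\phi$ restricts to an automorphism $\hat\phi$ of $L$ fixing $K$ pointwise, i.e.\ $\hat\phi \in \Gal(L: K)$. This gives a group homomorphism $\nu\from \Gal(LL': L') \to \Gal(L: K)$, $\phi \mapsto \hat\phi$, and it is \emph{injective}: if $\hat\phi = \id_L$ then $\phi$ fixes $L$ pointwise and also $L'$ pointwise (being in $\Gal(LL': L')$), hence fixes $L(L') = LL'$ pointwise by Lemma~\ref{lemma:gen-epic}, so $\phi = \id$. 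Therefore $\Gal(LL': L')$ embeds in $\Gal(L: K)$, which is abelian by hypothesis; a subgroup of an abelian group is abelian, so $\Gal(LL': L')$ is abelian. The main obstacle is getting the hypotheses right to invoke Theorem~\ref{thm:nml-ftgt}\bref{part:nf-fix}: that theorem is stated for $M: L: K$ with $M: K$ finite normal, whereas here the "big field" is $LL'$ and $L$ need not contain $L'$; so I would instead argue that $L: K$ normal forces any automorphism of \emph{any} overfield that fixes $K$ to map $L$ into itself (because it permutes the roots of $f$, which lie in $L$ and generate $L$ over $K$), which is really just Proposition~\ref{propn:orbit-cc} or the "permutes the roots" principle applied inside $LL'$—and that is clean and self-contained.
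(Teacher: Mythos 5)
Your proof is correct. Parts \bref{part:cc-nn} and \bref{part:cc-n} follow the paper exactly: write $L = \SF_K(f)$, $L' = \SF_K(f')$ via Theorem~\ref{thm:sf-fin-nml}, identify $LL' = \SF_K(ff')$ and $LL' = \SF_{L'}(f)$, and conclude by Theorem~\ref{thm:sf-fin-nml} again. (One small slip: the citation of Lemma~\ref{lemma:sf-adj}\bref{part:sfa-gen} should be with $S = L$ and $Y = L'$, not $S = L' $ and $Y = L'$; your parenthetical direct argument $LL' = K(X)(L') = L'(X)$ is the correct content, so nothing is lost.) Where you genuinely diverge is part \bref{part:cc-a}. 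The paper simply notes $\Gal(LL': L') = \Gal_{L'}(f)$ and invokes Corollary~\ref{cor:gal-emb}, whose proof runs through the conjugacy description of the Galois group (Proposition~\ref{propn:gal-defns-same}). You instead build the embedding by hand: each $\phi \in \Gal(LL': L')$ fixes $K$ pointwise and permutes the roots of $f$, which lie in and generate $L$ over $K$, so $\phi L = L$ and restriction gives a homomorphism $\nu \from \Gal(LL': L') \to \Gal(L: K)$; injectivity via Lemma~\ref{lemma:gen-epic} (an automorphism fixing $L$ and $L'$ pointwise fixes $L'(L) = LL'$) then lets you transport abelianness back. Your caution about Theorem~\ref{thm:nml-ftgt}\bref{part:nf-fix} is well placed---its hypotheses don't fit this tower---and your replacement argument is sound. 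The trade-off: the paper's route is shorter because the embedding is already packaged as Corollary~\ref{cor:gal-emb}, while yours is more self-contained and makes the restriction map explicit, which is the map one actually wants to see; both are perfectly valid.
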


\begin{proof}
For~\bref{part:cc-nn}, we have $L = \SF_K(f)$ and $L' = \SF_K(f')$ for some
$f, f' \in K[t]$. Now $LL'$ is the subfield of $M$ generated by $L \cup
L'$, or equivalently by the roots of $f$ and $f'$. So $LL' = \SF_K(ff')$,
which is finite and normal over $K$.

For~\bref{part:cc-n}, we have $L = \SF_K(f)$ for some $f \in K[t]$.  Then
$LL' = \SF_{L'}(f)$ by Lemma~\ref{lemma:sf-adj}\bref{part:sfa-gen} (with $S
= L$ and $Y = L'$), so $LL'$ is finite and normal over $L'$. Now
$\Gal(LL': L') = \Gal_{L'}(f)$, which by
Corollary~\ref{cor:gal-emb} is isomorphic to a subgroup of $\Gal_K(f) =
\Gal(L: K)$. So if $\Gal(L: K)$ is abelian then so is $\Gal(LL': L')$,
proving~\bref{part:cc-a}. 
\end{proof}

\begin{lemma}
\label{lemma:sol-filt}
Let $L$ and $M$ be subfields of $\C$ such that the extensions $L: \Q$ and
$M: \Q$ are finite, normal and solvable. Then there is some subfield $N$ of
$\C$ such that $N: \Q$ is finite, normal and solvable and $L, M \sub N$.
\end{lemma}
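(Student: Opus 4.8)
The plan is to take $N = LM$, the compositum of $L$ and $M$ inside $\C$, and verify it has all three required properties. This mirrors the structure of Lemma~\ref{lemma:iq-join}, where iterated quadratic extensions were merged, except now the building blocks are abelian (rather than degree-$2$) pieces, and we have Lemma~\ref{lemma:compm-condns} available to handle how normality and abelian-ness of Galois groups transfer along a compositum.

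First I would observe that $LM: \Q$ is finite and normal by Lemma~\ref{lemma:compm-condns}\bref{part:cc-nn} (applied with $K = \Q$ and $M$ of that lemma taken to be $\C$), and that clearly $L, M \sub LM$. So the only real content is showing that $LM: \Q$ is \emph{solvable}. Since $L: \Q$ is solvable, fix a chain $\Q = L_0 \sub L_1 \sub \cdots \sub L_r = L$ with each $L_i: L_{i-1}$ normal with abelian Galois group. Similarly fix $\Q = M_0 \sub M_1 \sub \cdots \sub M_s = M$ with each step normal and abelian. Then consider the chain
\[
\Q = L_0 \sub L_1 \sub \cdots \sub L_r = L = LM_0 \sub LM_1 \sub \cdots \sub LM_s = LM.
\]
The first half consists of the given normal abelian steps of $L: \Q$. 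For the second half, I would apply Lemma~\ref{lemma:compm-condns}\bref{part:cc-a} to each step: taking the ``$K$'' of that lemma to be $M_{j-1}$, the ``$L$'' to be $M_j$, and the ``$L'$'' to be $LM_{j-1}$, we have that $M_j: M_{j-1}$ is finite and normal with abelian Galois group, so $M_j (LM_{j-1}) : LM_{j-1}$ is too --- and $M_j (LM_{j-1}) = LM_j$ since the compositum of $LM_{j-1}$ and $M_j$ is the subfield generated by $L$, $M_{j-1}$, and $M_j$, which is just $LM_j$. (Here I am using the associativity/commutativity of compositum, which follows from Exercise~\ref{ex:adjn-two} applied inside $\C$.) Hence $[LM_j : LM_{j-1}]$ is finite and $LM_j : LM_{j-1}$ is normal with abelian Galois group. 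Concatenating, the displayed chain witnesses that $LM: \Q$ is solvable, using Exercise~\ref{ex:comp-sol} to splice the two solvable pieces $L: \Q$ and $LM: L$ into solvability of $LM: \Q$ (or simply noting the whole chain directly satisfies Definition~\ref{defn:sol-ext}).

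The main obstacle, such as it is, is bookkeeping: one must be careful that the hypotheses of Lemma~\ref{lemma:compm-condns}\bref{part:cc-a} are genuinely met at each stage --- in particular that $M_j: M_{j-1}$ is normal (given) and that we are taking compositum inside the common overfield $\C$ --- and that the identity $M_j(LM_{j-1}) = LM_j$ is correctly justified. There is also a minor point that separability is free throughout since everything lives in $\C$, which has characteristic $0$ (Example~\ref{egs:sep-ext}\bref{eg:se-zero}), so all the extensions in sight are automatically separable and Definition~\ref{defn:sol-ext} applies without extra checking. Beyond that, the argument is a routine ``interleave the two filtrations'' construction.
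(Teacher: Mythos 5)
Your proposal is correct and follows essentially the same route as the paper: take $N = LM$, get finiteness and normality from Lemma~\ref{lemma:compm-condns}\bref{part:cc-nn}, and show solvability by interleaving the two chains and applying Lemma~\ref{lemma:compm-condns}\bref{part:cc-a} to each step $LM_j : LM_{j-1}$ on the right-hand portion. Your explicit justification of the identification $M_j(LM_{j-1}) = LM_j$ is a detail the paper leaves implicit, but the argument is the same.
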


Both the statement and the proof of this lemma should remind you of
Lemma~\ref{lemma:iq-join} on ruler and compass constructions.

\begin{proof}
Take subfields
\[
\Q = L_0 \sub \cdots \sub L_r = L,
\qquad
\Q = M_0 \sub \cdots \sub M_s = M
\]
such that $L_i: L_{i - 1}$ is normal with abelian Galois group for each
$i$, and similarly for $M_j: M_{j - 1}$. There is a chain of subfields
\begin{align}
\label{eq:filt-big}
\Q = L_0 \sub \cdots \sub L_r = L 
= LM_0 \sub \cdots \sub LM_s = LM
\end{align}
of $\C$. Put $N = LM$.
Certainly $L, M \sub N$. We show that $N: \Q$ is finite, normal
and solvable.

That $N: \Q$ is finite and normal follows from
Lemma~\ref{lemma:compm-condns}\bref{part:cc-nn}. 

To see that $N: \Q$ is solvable, we show that each successive extension
in~\eqref{eq:filt-big} is normal with abelian Galois group. For those to
the left of $L$, this is immediate. For those to the right, let $j \in \{1,
\ldots, s\}$. Since $M_j: M_{j - 1}$ is finite and normal with abelian
Galois group, so is $LM_j: LM_{j - 1}$ by
Lemma~\ref{lemma:compm-condns}\bref{part:cc-a}. 
\end{proof}

The set of radical numbers is a subfield of $\C$ closed under taking $n$th
roots. So if the story I'm telling is right, the set of complex numbers
that can be reached from $\Q$ by solvable extensions should also be a
subfield of $\C$ closed under
taking $n$th roots. That's an informal description of our next two
results. Write
\begin{align*}
\dem{\Q^\sol} &
=
\{ \alpha \in \C \such \alpha \in L \text{ for some subfield } L \sub \C
\text{ that is finite, normal and solvable}      \\ 
&
\phantom{= \{} \text{ over } \Q\}.
\end{align*}

\begin{lemma}
\label{lemma:Qsol-subfd}
$\Q^\sol$ is a subfield of $\C$. 
\end{lemma}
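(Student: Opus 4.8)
The plan is to show that $\Q^\sol$ is a subfield of $\C$ by checking directly that it contains $0$ and $1$ and is closed under the field operations, using Lemma~\ref{lemma:sol-filt} to combine the two solvable extensions that witness membership of any two given elements. First I would note that $0, 1 \in \Q \sub \Q^\sol$, since $\Q$ itself is a finite normal solvable extension of $\Q$ (take $r = 0$ in Definition~\ref{defn:sol-ext}). This disposes of the trivial part.

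Next, take $\alpha, \beta \in \Q^\sol$. By definition there are subfields $L, M \sub \C$, each finite, normal and solvable over $\Q$, with $\alpha \in L$ and $\beta \in M$. By Lemma~\ref{lemma:sol-filt}, there is a subfield $N \sub \C$ that is finite, normal and solvable over $\Q$ and contains both $L$ and $M$, hence both $\alpha$ and $\beta$. Since $N$ is a field, it contains $\alpha + \beta$, $\alpha - \beta$, $\alpha\beta$, and (if $\alpha \neq 0$) $\alpha^{-1}$. All of these therefore lie in a single finite normal solvable extension of $\Q$, so they belong to $\Q^\sol$. This gives closure under addition, subtraction, multiplication, and reciprocals, which is exactly what is needed for $\Q^\sol$ to be a subfield of $\C$.

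There is no real obstacle here: the lemma on merging solvable extensions does all the work, exactly as Lemma~\ref{lemma:iq-join} did for iterated quadratic extensions in the ruler-and-compass section. The only point that deserves a word of care is that the \emph{definition} of $\Q^\sol$ quantifies over all such subfields $L$, so to verify a closure property I must produce a single $L$ containing the relevant combination of elements — and that is precisely what Lemma~\ref{lemma:sol-filt} provides. The argument is a direct parallel of the proof of Proposition~\ref{propn:Qbar-subfd}, with ``finite'' replaced by ``finite, normal and solvable'' and Corollary~\ref{cor:tower-prod} replaced by Lemma~\ref{lemma:sol-filt}.
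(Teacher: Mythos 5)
Your proof is correct and follows essentially the same argument as the paper's: both reduce to Lemma~\ref{lemma:sol-filt} to find a single finite normal solvable extension containing both elements, exactly as Proposition~\ref{propn:Qbar-subfd} used Corollary~\ref{cor:tower-prod}. The only difference is that you spell out the trivial closure checks (negatives, reciprocals, $0$, $1$) that the paper leaves as ``straightforward''.
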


\begin{proof}
This is similar to the proof that the algebraic numbers form a
subfield of $\C$ (Proposition~\ref{propn:Qbar-subfd}). Let $\alpha, \beta \in
\Q^\sol$. Then $\alpha \in L$ and $\beta \in M$ for some $L, M$ that are
finite, normal and solvable over $\Q$. By Lemma~\ref{lemma:sol-filt},
$\alpha, \beta \in N$ for some $N$ that is finite, normal and solvable over
$\Q$. Then $\alpha + \beta \in N$, so $\alpha + \beta \in \Q^\sol$, and
similarly $\alpha \cdot \beta \in \Q^\sol$. This shows that $\Q^\sol$ is
closed under addition and multiplication. The other parts of the proof
(negatives, reciprocals, $0$ and $1$) are straightforward.
\end{proof}

\begin{lemma}
\label{lemma:Qsol-rad}
Let $\alpha \in \C$ and $n \geq 1$. If $\alpha^n \in \Q^\sol$ then $\alpha
\in \Q^\sol$. 
\end{lemma}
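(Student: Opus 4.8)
The plan is to take one finite normal solvable extension $L:\Q$ inside $\C$ that already contains $\alpha^n$, enlarge it to a finite normal solvable extension $N:\Q$ that \emph{also} contains $\alpha$, and then read off $\alpha\in\Q^\sol$ from the definition. Write $a=\alpha^n$. If $a=0$ then $\alpha=0\in\Q\sub\Q^\sol$, so I may assume $a\ne 0$. By definition of $\Q^\sol$ there is a subfield $L\sub\C$, finite, normal and solvable over $\Q$, with $a\in L$. Let $p\in\Q[t]$ be the minimal polynomial of $a$ over $\Q$, with complex roots $a=a_1,\dots,a_m$. Since $L:\Q$ is normal, $p$ splits in $L$, so in fact $a_1,\dots,a_m\in L$. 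Now set
\[
h(t)=p(t^n)=\prod_{i=1}^m (t^n-a_i)\in\Q[t],
\qquad
N=\SF_L(h).
\]
Because $L$ is itself a splitting field over $\Q$ (Theorem~\ref{thm:sf-fin-nml}), say $L=\SF_\Q(g)$, the field $N$ is generated over $\Q$ by the roots of $g$ and of $h$, so $N=\SF_\Q(gh)$; hence $N:\Q$ is finite and normal, and it is separable since $\chr\Q=0$ (Examples~\ref{egs:sep-ext}). Moreover $\alpha$ is a root of $t^n-a_1$, which divides $h$, so $\alpha\in N$.

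The substantive step is to show $N:\Q$ is solvable, and I would do this via the tower $\Q\sub L\sub N$. First I would show $N:L$ is solvable by exhibiting a chain with abelian successive Galois groups. Put $\omega=e^{2\pi i/n}$, $L_0=L$, $L_1=\SF_{L}(t^n-1)$, and inductively $L_{i+1}=\SF_{L_i}(t^n-a_i)$ for $i=1,\dots,m$. Each $L_{i+1}:L_i$ is a splitting field extension, hence finite and normal. For the groups: $\Gal(L_1:L_0)=\Gal_L(t^n-1)$ is isomorphic to a subgroup of the abelian group $\Gal_\Q(t^n-1)$ by Corollary~\ref{cor:gal-emb} and Lemma~\ref{lemma:kummer-unity}, so it is abelian; and for each $i\ge 1$, since $t^n-1$ splits in $L_1\sub L_i$ and $a_i\in L\sub L_i$, Lemma~\ref{lemma:kummer-gen} shows $\Gal(L_{i+1}:L_i)=\Gal_{L_i}(t^n-a_i)$ is abelian. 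One checks $L_{m+1}=N$: both are generated over $L$ by the set of all $n$th roots of all the $a_i$ (these automatically produce $\omega$, as a ratio of two roots of $t^n-a_1$). So the chain $L_0\sub L_1\sub\cdots\sub L_{m+1}=N$ witnesses that $N:L$ is solvable in the sense of Definition~\ref{defn:sol-ext}.

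Finally I would combine the pieces. The extensions $N:\Q$, $L:\Q$ and $N:L$ are all finite, normal and separable — for $N:L$ using Corollary~\ref{cor:top-nml} for normality and Lemma~\ref{lemma:sep-int} for separability — and $L:\Q$ is solvable by hypothesis while $N:L$ is solvable by the previous paragraph. Hence Exercise~\ref{ex:comp-sol} gives that $N:\Q$ is solvable. Since $\alpha\in N$ and $N\sub\C$ is finite, normal and solvable over $\Q$, we conclude $\alpha\in\Q^\sol$, as required. The one point that needs care — and the reason the proof is not completely routine — is the replacement of $t^n-a$ by $h=p(t^n)$: working directly with $\SF_L(t^n-a)$ would produce an extension of $L$ that need not be normal over $\Q$, so we must adjoin $n$th roots of \emph{all} the conjugates $a_i$ of $a$ simultaneously, which is exactly where normality of $L:\Q$ is used.
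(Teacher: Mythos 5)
Your proof is correct and follows essentially the same route as the paper's: adjoin $n$th roots of \emph{all} the $\Q$-conjugates $a_1,\ldots,a_m$ of $a$ (not just of $a$ itself), using normality of $L:\Q$ to locate these conjugates in $L$, and build a chain of abelian steps first adjoining the $n$th roots of unity (Lemma~\ref{lemma:kummer-unity}) and then the roots of each $t^n-a_i$ (Lemma~\ref{lemma:kummer-gen}). The only cosmetic difference is organizational: the paper first replaces $L$ by the enlarged field $\SF_L(t^n-1)$ and then runs the chain over that, whereas you fold the $t^n-1$ step into the chain for $N:L$ itself and then observe that $L_{m+1}=N$ because the $n$th roots of unity arise automatically as ratios of roots of $t^n-a_1$ — a nice, correct touch, but the same proof.
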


The proof (below) is slightly subtle. Here's why. 

Let $L$ be a subfield of $\C$ that's finite, normal and solvable over $\Q$,
and take $\alpha \in \C$ and $n \geq 1$ such that $\alpha^n \in L$. To find
some larger $M$ that contains $\alpha$ itself and is also solvable over
$\Q$, we could try putting $M = \SF_L(t^n - \alpha^n)$. But the problem is
that $M: \Q$ is not in general normal. And normality is part of the
definition of $\Q^\sol$, ultimately because it's essential if we want to
use the fundamental theorem of Galois theory. The basic problem is this:

\begin{warning}{wg:nml-nml}
A normal extension of a normal extension is not in general normal, just as
a normal subgroup of a normal subgroup is not in general normal. 
\end{warning}

An example should clarify.

\begin{example}
Put $\alpha = \sqrt[4]{2}$ and $n = 2$. We have $\alpha^2 = \sqrt{2}
\in \Q(\sqrt{2})$, and $\Q(\sqrt{2}): \Q$ is finite, normal and solvable
(since its Galois group is the abelian group $C_2$), so $\alpha^2 \in
\Q^\sol$. Hence, according to Lemma~\ref{lemma:Qsol-rad}, $\alpha = \sqrt[4]{2}$
should be contained in some finite normal solvable extension $M$ of $\Q$.

How can we find such an $M$? We \emph{can't} take $M =
\SF_{\Q(\sqrt{2})}(t^2 - \sqrt{2})$, since this is $\Q(\sqrt[4]{2})$, which
is not normal over $\Q$ (for the same reason that $\Q(\sqrt[3]{2})$
isn't). 

To find a bigger $M$, still finite and solvable over $\Q$ but also normal,
we have to adjoin a square root not just of $\sqrt{2}$ \emph{but also of
its conjugate}, $-\sqrt{2}$. This is the crucial point: the whole idea of
normality is that conjugates are treated equally. (Normal behaviour means
that anything you do for one element, you do for all its conjugates.) The
result is $\Q(\sqrt[4]{2}, i) = \SF_\Q(t^4 - 2)$, which is indeed a
finite, solvable and normal extension of $\Q$ containing $\sqrt[4]{2}$.
\end{example}

\begin{pfof}{Lemma~\ref{lemma:Qsol-rad}}
Write $a = \alpha^n \in \Q^\sol$.  Choose a subfield $K$ of $\C$ such that
$a \in K$ and $K: \Q$ is finite, normal and solvable.

\paragraph*{Step 1: enlarge $K$ to a field in which $t^n - 1$ splits.}
Put $L = \SF_K(t^n - 1) \sub \C$.  

Since $K: \Q$ is finite and normal, $K = \SF_\Q(f)$ for some nonzero $f \in
\Q[t]$, and then $L = \SF_\Q\bigl((t^n - 1)f(t)\bigr)$. Hence $L: \Q$ is
finite and normal. The Galois group of $L: K$ is $\Gal_K(t^n - 1)$, which
is isomorphic to a subgroup of $\Gal_\Q(t^n - 1)$ (by
Corollary~\ref{cor:gal-emb}), which is abelian (by
Lemma~\ref{lemma:kummer-unity}). Hence $L: K$ is a normal extension with
abelian Galois group. Also, $K: \Q$ is solvable. It follows from the
definition of solvable extension that $L: \Q$ is solvable.

In summary, $L$ is a subfield of $\C$ such that $a \in L$ and $L: \Q$ is
finite, normal and solvable, \emph{and}, moreover, $t^n - 1$
splits in $L$. We now forget about $K$.

\paragraph*{Step 2: adjoin the $n$th roots of the conjugates of $a$.}
Write $m \in \Q[t]$ for the minimal polynomial of $a$ over $\Q$, and put $M
= \SF_L(m(t^n)) \sub \C$. Then $\alpha \in M$, as $m(\alpha^n) = m(a) =
0$. We show that $M: \Q$ is finite, normal and solvable.

Since $L: \Q$ is finite and normal, $L = \SF_\Q(g)$ for some nonzero $g \in
\Q[t]$. Then $M = \SF_\Q(g(t)m(t^n))$, so $M: \Q$ is finite and normal.
Moreover, $M: L$ is finite and normal, being a splitting field extension.

Now to show that $M: \Q$ is solvable, it is enough to show that $M: L$ is
solvable, by Exercise~\ref{ex:comp-sol}. Since $L: \Q$ is normal and $m \in
\Q[t]$ is the minimal polynomial of $a \in L$, it follows by definition of
normality that $m$ splits in $L$, say
\[
m(t) = \prod_{i = 1}^r (t - a_i)
\]
($a_i \in L$).  Define subfields $L_0 \sub \cdots \sub L_r$ of $\C$ by
\begin{align*}
L_0     &= L    \\
L_1     &= \SF_{L_0}(t^n - a_1)         \\
L_2     &= \SF_{L_1}(t^n - a_2)         \\
        &\vdots \\
L_r     &= \SF_{L_{r - 1}}(t^n - a_r).
\end{align*}
Then
\[
L_i = 
L\bigl(\bigl\{ 
\beta \in M \such \beta^n \in \{a_1, \ldots, a_i\}
\bigr\}\bigr).
\]
In particular, $L_r = M$. For each $i \in \{1, \ldots, s\}$, the extension
$L_i: L_{i - 1}$ is finite and normal (being a splitting field extension),
and its Galois group is abelian (by Lemma~\ref{lemma:kummer-gen} and the
fact that $t^n - 1$ splits in $L \sub L_{i - 1}$). So $M: L$ is solvable.
\end{pfof}

Now we can relate the set $\Q^\rad$ of radical numbers, defined in terms of
basic arithmetic operations, to the set $\Q^\sol$, defined in terms of
field extensions.

\begin{propn}
\label{propn:Qrad-Qsol}
$\Q^\rad \sub \Q^\sol$. That is, every radical number is contained in some
subfield of $\C$ that is a finite, normal, solvable extension of $\Q$.
\end{propn}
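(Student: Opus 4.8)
The plan is to show that $\Q^\rad$ is a subfield of $\C$ with the closure property~\eqref{eq:defn-rad}, and since $\Q^\rad$ is \emph{defined} as the smallest such subfield, it suffices instead to show that $\Q^\sol$ is a subfield of $\C$ satisfying~\eqref{eq:defn-rad}; then the minimality of $\Q^\rad$ forces $\Q^\rad \sub \Q^\sol$. In other words, I would turn the problem around: rather than chasing individual radical numbers through towers of extensions, I invoke the top-down characterization of $\Q^\rad$.

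First I would observe that $\Q^\sol$ is a subfield of $\C$ --- but this is exactly Lemma~\ref{lemma:Qsol-subfd}, so nothing new is needed there. Second, I would observe that $\Q^\sol$ satisfies property~\eqref{eq:defn-rad}: if $\alpha \in \C$ and $\alpha^n \in \Q^\sol$ for some $n \geq 1$, then $\alpha \in \Q^\sol$. But this is precisely the content of Lemma~\ref{lemma:Qsol-rad}. So $\Q^\sol$ is a subfield of $\C$ satisfying~\eqref{eq:defn-rad}.

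Finally, since $\Q^\rad$ is by definition the smallest subfield of $\C$ satisfying~\eqref{eq:defn-rad}, and $\Q^\sol$ is \emph{a} subfield of $\C$ satisfying~\eqref{eq:defn-rad}, we conclude $\Q^\rad \sub \Q^\sol$. This is the whole proof: the two substantive inputs (Lemmas~\ref{lemma:Qsol-subfd} and~\ref{lemma:Qsol-rad}) have already been established, and the proposition is just the assembly step. There is essentially no obstacle remaining --- all the real work (especially the delicate normality argument in Lemma~\ref{lemma:Qsol-rad}, where one must adjoin $n$th roots of \emph{all} the conjugates of an element, not just the element itself) was done in the preceding lemmas. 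The only thing to be careful about is making explicit that we are using the characterization of $\Q^\rad$ as an \emph{intersection} of subfields with property~\eqref{eq:defn-rad}, which is what makes "smallest" meaningful and which is justified by Exercise~\ref{ex:defn-rad}.
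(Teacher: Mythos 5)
Your proposal is correct and is essentially identical to the paper's own proof: it combines Lemmas~\ref{lemma:Qsol-subfd} and~\ref{lemma:Qsol-rad} to show that $\Q^\sol$ is a subfield of $\C$ satisfying~\eqref{eq:defn-rad}, and then concludes $\Q^\rad \sub \Q^\sol$ from the minimality in the definition of $\Q^\rad$. No gaps; the remark about why ``smallest'' makes sense (Exercise~\ref{ex:defn-rad}) is a reasonable extra note but not needed beyond what the paper already assumes.
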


In fact, $\Q^\rad$ and $\Q^\sol$ are equal, but we won't prove this.

\begin{proof}
By Lemmas~\ref{lemma:Qsol-subfd} and~\ref{lemma:Qsol-rad}, $\Q^\sol$ is a
subfield of $\C$ such that $\alpha^n \in \Q^\sol \implies
\alpha \in \Q^\sol$. The result follows from the definition of $\Q^\rad$.
\end{proof}

This brings us to the main result of this chapter. Notice that it doesn't
mention field extensions: it goes straight from polynomials to groups.

\begin{bigthm}
\label{thm:sol-rad}
Let $0 \neq f \in \Q[t]$. If the polynomial $f$ is solvable by radicals
then the group $\Gal_\Q(f)$ is solvable.
\end{bigthm}

\begin{proof}
Suppose that $f$ is solvable by radicals. Write $\alpha_1, \ldots, \alpha_n
\in \C$ for its roots. For each $i$, we have $\alpha_i \in \Q^\rad$ (by
definition of solvability by radicals), hence $\alpha_i \in \Q^\sol$ (by
Proposition~\ref{propn:Qrad-Qsol}). So each of $\alpha_1, \ldots, \alpha_n$
is contained in some subfield of $\C$ that is finite, normal and solvable
over $\Q$. By Lemma~\ref{lemma:sol-filt}, there is some subfield $M$ of
$\C$ that is finite, normal and solvable over $\Q$ and contains all of
$\alpha_1, \ldots, \alpha_n$. Then $\Q(\alpha_1, \ldots, \alpha_n) \sub
M$; that is, $\SF_\Q(f) \sub M$.

By Lemma~\ref{lemma:sole-solg}, $\Gal(M: \Q)$ is solvable. Now $\SF_\Q(f):
\Q$ is normal, so by the fundamental theorem of Galois theory,
$\Gal(\SF_\Q(f): \Q)$ is a quotient of $\Gal(M: \Q)$. But $\Gal(\SF_\Q(f):
\Q) = \Gal_\Q(f)$, and a quotient of a solvable group is solvable, so
$\Gal_\Q(f)$ is solvable.
\end{proof}

\begin{examples}
\begin{enumerate}
\item 
For $a \in \Q$ and $n \geq 1$, the polynomial $t^n - a$ is solvable by
radicals, so the group $\Gal_\Q(t^n - a)$ is solvable. You may already have
proved this in Exercise~\ref{ex:kummer-sol}. It also follows from
Example~\ref{eg:kummer-sol} and Lemma~\ref{lemma:sole-solg}.

\item
Let $a_1, \ldots, a_k \in \Q$ and $n_1, \ldots, n_k \geq 1$. Each of
the polynomials $t^{n_i} - a_i$ is solvable by radicals, so their product
is too. Hence $\Gal_\Q\bigl( \prod_i (t^{n_i} - a_i) \bigr)$
is a solvable group.
\end{enumerate}
\end{examples}

Theorem~\ref{thm:sol-rad} is most sensational in its contrapositive form:
if $\Gal_\Q(f)$ is \emph{not} solvable then $f$ is \emph{not} solvable by
radicals. That's the subject of the next section.

\begin{digression}{dig:sr-converse}
The converse of Theorem~\ref{thm:sol-rad} is also true: if $\Gal_\Q(f)$ is
solvable then $f$ is solvable by radicals. You can even unwind the proof to
obtain an explicit formula for the solving the quartic by radicals
(Stewart, Chapter~18). 

For this, we have to deduce properties of a field extension from
assumptions about its Galois group. A solvable group is built up from
abelian groups, and every finite abelian group is a direct sum of cyclic
groups. The key step in proving the converse of Theorem~\ref{thm:sol-rad}
has come to be known as `Hilbert's Theorem~90' (Stewart's Theorem~18.18),
which gives information about field extensions whose Galois groups are
cyclic.
\end{digression}

\begin{digression}{dig:sr-cheated}
The proof of Theorem~\ref{thm:sol-rad} might not have ended quite how you
expected. Given my explanations earlier in the chapter, you might
justifiably have imagined we were going to show that when the polynomial
$f$ is solvable by radicals, the extension $\SF_\Q(f): \Q$ is
solvable. That's not what we did. We showed that $\SF_\Q(f)$ is contained
in some larger subfield $M$ such that $M: \Q$ is solvable, then used that
to prove the solvability of the group $\Gal_\Q(f)$.

But all is right with the world: $\SF_\Q(f): \Q$ \emph{is} a solvable
extension. Indeed, its Galois group $\Gal_\Q(f)$ is
solvable, so Lemma~\ref{lemma:sole-solg} implies that $\SF_\Q(f): \Q$
is solvable too.
\end{digression}

\section{An unsolvable polynomial}
\label{sec:unsol-poly}

Here we give a specific example of a polynomial over $\Q$ that is not
solvable by radicals. By Theorem~\ref{thm:sol-rad}, our task is to
construct a polynomial whose Galois group is not solvable. The smallest
non-solvable group is $A_5$ (of order $60$). Our polynomial has
Galois group $S_5$ (of order $120$), which is also non-solvable.

Finding Galois groups is hard, and we will use a whole box of tools and
tricks, from Cauchy's theorem on groups to Rolle's theorem on
differentiable functions.

First we prove a useful general fact on the order of Galois groups.

\begin{lemma}
\label{lemma:irr-div}
Let $f$ be an irreducible polynomial over a field $K$, with $\SF_K(f): K$ 
separable. Then $\deg(f)$ divides $|\Gal_K(f)|$.
\end{lemma}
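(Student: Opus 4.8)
The plan is to exploit the transitivity of the Galois action on the roots of an irreducible polynomial. Let $\alpha$ be a root of $f$ in $\SF_K(f)$, and let $G = \Gal_K(f)$, acting on the set $X$ of roots of $f$ in $\SF_K(f)$. Since $f$ is irreducible, Corollary~\ref{cor:act-trans} tells us this action is transitive, so the orbit of $\alpha$ is all of $X$. The orbit--stabilizer theorem then gives $|G| = |X| \cdot |\Stab_G(\alpha)|$, so $|X|$ divides $|G|$. Hence it suffices to show $\deg(f) = |X|$, i.e.\ that $f$ has $\deg(f)$ \emph{distinct} roots in its splitting field.

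First I would observe that $f$, being irreducible, is a constant multiple of the minimal polynomial of $\alpha$ over $K$; so by separability of $\SF_K(f): K$, the element $\alpha$ is separable over $K$, meaning $f$ has no repeated roots in $\SF_K(f)$. Since $f$ splits in $\SF_K(f)$ by definition of splitting field, it factors as $f(t) = c(t - \alpha_1)\cdots(t - \alpha_{\deg(f)})$ with the $\alpha_i$ distinct, so $|X| = \deg(f)$. (Strictly, I should note that the hypothesis that $\SF_K(f): K$ is separable makes every element of the splitting field separable over $K$; in particular $\alpha$ is, which is all we need. Alternatively, one could invoke Definition~\ref{defn:irr-sep} directly once we know $f$ is essentially the minimal polynomial of $\alpha$.) Combining this with the divisibility $|X| \mid |G|$ from the previous paragraph gives $\deg(f) \mid |\Gal_K(f)|$, as required.

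The orbit--stabilizer step is entirely routine group theory; the only point requiring care is the translation between "$\SF_K(f): K$ is separable" and "$f$ has $\deg(f)$ distinct roots", which hinges on recognizing that an irreducible $f$ is, up to a scalar, the minimal polynomial of any of its roots. That is the main (mild) obstacle — everything else follows immediately from Corollary~\ref{cor:act-trans} and Proposition~\ref{propn:gal-defns-same} (which identifies $\Gal_K(f)$ with a subgroup of $S_{|X|}$, though here the cleaner orbit--stabilizer argument suffices and avoids even needing that identification).
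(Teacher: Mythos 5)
Your proof is correct, but it takes a genuinely different route from the paper's. The paper's argument is a three-step degree chase: $\deg(f) = [K(\alpha):K]$ because $f$ is (up to a scalar) the minimal polynomial of any root $\alpha$; this divides $[\SF_K(f):K]$ by the tower law; and $[\SF_K(f):K] = |\Gal_K(f)|$ by Theorem~\ref{thm:gal-size}, which is where separability enters. Your argument instead uses Corollary~\ref{cor:act-trans} (transitivity of the Galois action on the roots of an irreducible polynomial) together with orbit--stabilizer to get that the number of distinct roots divides $|\Gal_K(f)|$, and then uses separability to identify that number with $\deg(f)$. The two proofs deploy separability in different places: the paper hides it inside the ``degree $=$ order'' theorem, while you use it directly to count roots, keeping the transitivity step separability-free (as indeed Corollary~\ref{cor:act-trans} is). Your version is a touch longer but more structurally informative --- the orbit--stabilizer step actually exhibits a subgroup of index $\deg(f)$, namely $\Gal(\SF_K(f):K(\alpha))$, rather than just asserting divisibility of integers. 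The paper's version is shorter and leans more heavily on the machinery already built; either is a legitimate proof given what precedes the lemma.
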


\begin{proof}
Let $\alpha$ be a root of $f$ in $\SF_K(f)$. By irreducibility, $\deg(f) =
[K(\alpha): K]$, which divides $[\SF_K(f): K]$ by the tower law, which is
equal to $|\Gal_K(f)|$ by Theorem~\ref{thm:gal-size} (using separability).
\end{proof}

Next, we need some results about the symmetric group $S_n$. I assume you
know that $S_n$ is generated by the `adjacent transpositions' $(12), (23),
\ldots, (n - 1 \ n)$. This may have been proved in Fundamentals of Pure
Mathematics, and as the Group Theory notes say (p.~58):
\begin{quote}
This is intuitively clear: suppose you have $n$ people lined up and you
want them to switch into a different order. To put them in the order you
want them, it's clearly enough to have people move up and down the line;
and each time a person moves one place, they switch places with the person
next to them.
\end{quote}
Here's a different way of generating $S_n$. 

\begin{lemma}
\label{lemma:Sn-gen}
For $n \geq 2$, the symmetric group $S_n$ is generated by $(12)$ and
$(12 \ldots n)$.
\end{lemma}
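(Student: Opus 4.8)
The strategy is the standard conjugation trick: let $\sigma = (12\ldots n)$ and $\tau = (12)$, write $H = \langle \sigma, \tau \rangle$, and show that $H$ contains all the adjacent transpositions $(i \ i{+}1)$, since those generate $S_n$ (a fact we are told we may assume). The key computational observation is that conjugating a transposition by the $n$-cycle shifts its entries: concretely, $\sigma \tau \sigma^{-1} = (23)$, and more generally $\sigma^k \tau \sigma^{-k} = (k{+}1 \ k{+}2)$ for $0 \leq k \leq n-2$, with indices read in $\{1, \ldots, n\}$. This is because conjugation by a permutation $\sigma$ sends a cycle $(a_1 \ldots a_r)$ to $(\sigma(a_1) \ldots \sigma(a_r))$, and $\sigma$ sends $j \mapsto j+1$ (cyclically).

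\textbf{Steps.} First I would record the conjugation formula $\sigma(a_1 \ldots a_r)\sigma^{-1} = (\sigma(a_1) \ldots \sigma(a_r))$, which is purely a fact about symmetric groups. Second, apply it inductively: starting from $(12) \in H$, conjugate by $\sigma$ to get $(23) \in H$, then again to get $(34) \in H$, and so on, obtaining $(i \ i{+}1) \in H$ for every $i \in \{1, \ldots, n-1\}$; since $\sigma, \tau \in H$ and $H$ is closed under products and inverses, all these conjugates lie in $H$. Third, invoke the known fact that the adjacent transpositions $(12), (23), \ldots, (n{-}1 \ n)$ generate $S_n$, so $H \supseteq S_n$, hence $H = S_n$. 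That completes the proof.

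\textbf{Main obstacle.} There is no real obstacle here --- this is a short, routine group-theory lemma whose only content is the conjugation identity and a one-line induction. The only mild care needed is the indexing: when I conjugate $(n{-}1\ n)$ by $\sigma$ I get $(n\ 1) = (1\ n)$, which is fine but shouldn't be mislabelled as an ``adjacent'' transposition; I only need the $n-1$ genuinely adjacent ones, and those all appear after at most $n-2$ conjugations, so I should stop the induction at $k = n-2$. I would state the induction cleanly to avoid confusion about the cyclic wrap-around.
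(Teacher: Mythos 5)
Your proof is correct and is essentially the same as the paper's: both use the conjugation identity $\sigma(a_1\ldots a_r)\sigma^{-1} = (\sigma(a_1)\ldots\sigma(a_r))$ to produce the adjacent transpositions $(23), (34), \ldots, (n{-}1\ n)$ from $(12)$ and then invoke the known fact that adjacent transpositions generate $S_n$. Your remark about stopping the induction at $k = n-2$ to avoid the cyclic wrap-around is a nice explicit touch that the paper leaves implicit.
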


\begin{proof}
We have
\[
(12 \ldots n) (12) (12 \ldots n)^{-1} = (23),
\]
either by direct calculation or the general fact that $\sigma (a_1 \ldots
a_k) \sigma^{-1} = (\sigma(a_1) \ldots \sigma(a_k))$ for any $\sigma \in
S_n$ and cycle $(a_1 \ldots a_k)$. So any subgroup $H$ of $S_n$ containing
$(12)$ and $(12 \ldots n)$ also contains $(23)$. By the same argument, $H$
also contains $(34), \ldots, (n-1 \ n)$. But the adjacent transpositions
generate $S_n$, so $H = S_n$.
\end{proof}

\begin{lemma}
\label{lemma:Sp}
Let $p$ be a prime number, and let $f \in \Q[t]$ be an irreducible polynomial
of degree $p$ with exactly $p - 2$ real roots. Then
$\Gal_\Q(f) \iso S_p$.
\end{lemma}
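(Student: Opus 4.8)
The plan is to exhibit $\Gal_\Q(f)$ as a transitive subgroup of $S_p$ that contains both a $p$-cycle and a transposition, and then invoke Lemma~\ref{lemma:Sn-gen}. The identification of the Galois group with a subgroup of $S_p$ comes from Proposition~\ref{propn:gal-defns-same}: since $f$ has degree $p$ and is irreducible (hence separable, as $\chr\Q = 0$, by Example~\ref{egs:sep-ext}\bref{eg:se-zero}), it has $p$ distinct complex roots, so $\Gal_\Q(f)$ embeds as a subgroup $G \leq S_p$. I will abuse notation and write $G$ for this subgroup throughout.

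First I would produce the $p$-cycle. By Lemma~\ref{lemma:irr-div}, $p = \deg(f)$ divides $|G|$. By Cauchy's theorem, $G$ therefore contains an element of order $p$. In $S_p$, an element of order $p$ must be a $p$-cycle (its cycle type is a partition of $p$ into parts summing to $p$ with lcm $p$, and the only such partition is $(p)$ when $p$ is prime). So $G$ contains a $p$-cycle.

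Next I would produce the transposition, and this is where the hypothesis on the number of real roots is used. Since $f$ has exactly $p - 2$ real roots and $\deg(f) = p$, it has exactly two non-real roots, which must be a complex-conjugate pair $\alpha, \ovln{\alpha}$. Complex conjugation restricts to an element $\kappa$ of $\Gal_\Q(f) = \Gal(\SF_\Q(f): \Q)$, as in Example~\ref{egs:gal-gp-first}\bref{eg:ggf-conj}, and as a permutation of the roots $\kappa$ fixes the $p - 2$ real roots and swaps $\alpha$ with $\ovln{\alpha}$. Hence $\kappa$ corresponds to a transposition in $G$.

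Finally I would combine these. $G$ contains a $p$-cycle $\sigma$ and a transposition $\tau = (a\ b)$. Because $\sigma$ is a $p$-cycle with $p$ prime, some power $\sigma^k$ is a $p$-cycle sending $a$ to $b$; relabelling the roots (i.e.\ conjugating inside $S_p$, which does not change the isomorphism type), I may assume $\tau = (1\ 2)$ and $\sigma = (1\ 2\ \ldots\ p)$. Then Lemma~\ref{lemma:Sn-gen} gives $\langle \sigma, \tau\rangle = S_p$, so $G = S_p$, i.e.\ $\Gal_\Q(f) \iso S_p$. The main obstacle is the bookkeeping in this last step---arranging that the transposition and a suitable power of the $p$-cycle are a consecutive transposition and the standard $p$-cycle relative to a common labelling---but this is elementary permutation-group manipulation relying only on primality of $p$. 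The only external inputs beyond the excerpt are Cauchy's theorem for groups and the classification of order-$p$ elements of $S_p$, both standard.
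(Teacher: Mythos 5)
Your proposal is correct and follows essentially the same route as the paper's: embed $\Gal_\Q(f)$ into $S_p$ via Proposition~\ref{propn:gal-defns-same}, obtain a $p$-cycle from Lemma~\ref{lemma:irr-div} and Cauchy's theorem, obtain a transposition from complex conjugation, and conclude by Lemma~\ref{lemma:Sn-gen}. One small notational slip in the final step: after conjugating so that $\tau = (1\,2)$, it is the power $\sigma^k$ (not $\sigma$ itself) that can simultaneously be normalized to $(1\,2\,\ldots\,p)$, which is exactly why the paper replaces $\sigma$ by $\sigma^r$ before applying Lemma~\ref{lemma:Sn-gen}.
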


\begin{proof}
Since $\chr\Q = 0$ and $f$ is irreducible, $f$ is separable and therefore
has $p$ distinct roots in $\C$.  By Proposition~\ref{propn:gal-defns-same},
the action of $\Gal_\Q(f)$ on the roots of $f$ in $\C$ defines an
isomorphism between $\Gal_\Q(f)$ and a subgroup $H$ of $S_p$. Since $f$ is
irreducible, $p$ divides $|\Gal_\Q(f)| = |H|$ (by
Lemma~\ref{lemma:irr-div}). So by Cauchy's theorem, $H$ has an element
$\sigma$ of order $p$. Then $\sigma$ is a $p$-cycle, since these are the
only elements of $S_p$ of order $p$.

The complex conjugate of any root of $f$ is also a root of $f$, so complex
conjugation restricts to an automorphism of $\SF_\Q(f)$ over $\Q$. Exactly
two of the roots of $f$ are non-real; complex conjugation transposes them
and fixes the rest. So $H$ contains a transposition $\tau$.

Without loss of generality, $\tau = (12)$. Since $\sigma$ is a $p$-cycle,
$\sigma^r(1) = 2$ for some $r \in \{1, \ldots, p - 1\}$. Since $p$ is
prime, $\sigma^r$ also has order $p$, so it is a $p$-cycle. Now without
loss of generality, $\sigma^r = (123 \ldots p)$. So $(12), (12\ldots p) \in
H$, forcing $H = S_p$ by Lemma~\ref{lemma:Sn-gen}. Hence $\Gal_\Q(f) \iso
S_p$.
\end{proof}

\begin{ex}{ex:power-order}
Explain why, in the last paragraph, $\sigma^r$ has order $p$.\\
\end{ex}

\begin{bigthm}
\label{thm:not-rad}
Not every polynomial over $\Q$ of degree $5$ is solvable by radicals.
\end{bigthm}

\begin{proof}
We show that $f(t) = t^5 - 6t + 3$ satisfies the conditions of
Lemma~\ref{lemma:Sp}. Then $\Gal_\Q(f)$ is $S_5$, which is not solvable, so
by Theorem~\ref{thm:sol-rad}, $f$ is not solvable by radicals.

Evidently $\deg(f)$ is the prime number $5$, and $f$ is irreducible by
Eisenstein's criterion with prime $3$. It remains to prove that $f$ has
exactly $3$ real roots. This is where we use some analysis,
considering $f$ as a function $\R \to \R$ (Figure~\ref{fig:quintic}).

\begin{figure}
\setlength{\fboxsep}{0mm}%
\centering\includegraphics[height=40mm]{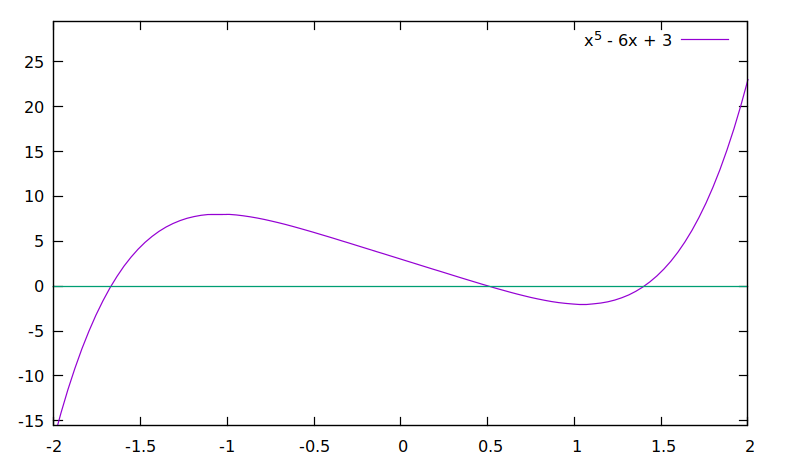}
\caption{The function $x \mapsto x^5 - 6x + 3$.}
\label{fig:quintic}
\end{figure}

We have
\[
\lim_{x \to -\infty} f(x) = -\infty,
\qquad
f(0) > 0,
\qquad
f(1) < 0,
\qquad
\lim_{x \to \infty} f(x) = \infty,
\]
and $f$ is continuous on $\R$, so by the intermediate value theorem, $f$
has at least $3$ real roots. On the other hand, $f'(x) = 5x^4 - 6$ has only
$2$ real roots ($\pm \sqrt[4]{6/5}$), so by Rolle's theorem, $f$ has at
most $3$ real roots. Hence $f$ has exactly $3$ real roots, as required.
\end{proof}

\begin{ex}{ex:more-than-5}
Prove that for every $n \geq 5$, there is some polynomial of degree $n$
that is not solvable by radicals.
\end{ex}

\begin{example}
There are also quintics with Galois group $A_5$. These are not solvable by
radicals, since $A_5$ is not a solvable group. One example, although we
won't prove it, is $t^5 + 20t + 16$.
\end{example}

\begin{digression}{dig:nml-rad}
We now know that some polynomials $f$ over $\Q$ are not solvable by radicals,
which means that not \emph{all} their complex roots are radical. 

Could it be that some of the roots are radical and others are not? Yes:
simply take a polynomial $g$ that is not solvable by radicals and put $f(t)
= tg(t)$. Then the roots of $f$ are $0$ (which is radical) together with
the roots of $g$ (which are not all radical).

But what if $f$ is irreducible? In that case, either all the roots of $f$
are radical or none of them are. This follows from the fact that the
extension $\Q^\rad: \Q$ is normal, which we will not prove.
\end{digression}

\begin{digression}{dig:rad-ruler}
There are many similarities between the theory of constructibility of
points by ruler and compass and the theory of solvability of polynomials by
radicals. In both cases, the challenge is to construct some things (points
in the plane or roots of polynomials) using only certain tools
(ruler and compass or a machine for taking $n$th roots). In both cases,
there were difficult questions of constructibility that remained open for a
very long time, and in both cases, they were solved by field theory.

The solutions have something in common too. For the geometry problem, we
used iterated quadratic extensions, and for the polynomial problem, we used
solvable extensions, which could reasonably be called iterated abelian
extensions. For the geometry problem, we showed that the coordinates of any
point constructible by ruler and compass satisfy a certain condition on
their degree over $\Q$ (Theorem~\ref{thm:rc-deg}); for the polynomial
problem, we showed that any polynomial solvable by radicals satisfies a
certain condition on its Galois group over $\Q$. There are other similarities:
compare Lemmas~\ref{lemma:iq-join} and~\ref{lemma:sol-filt}, for example,
and maybe you can find more similarities still.
\end{digression}

We have now used the fundamental theorem of Galois theory to solve a major
problem about $\Q$. What else can we do with it?

The fundamental theorem is about \emph{separable} extensions. Our two main
sources of separable extensions are:
\begin{itemize}
\item 
fields of characteristic $0$ such as $\Q$
(Example~\ref{egs:sep-ext}\bref{eg:se-zero}), which we've explored
extensively already; 

\item
finite fields (Example~\ref{egs:sep-ext}\bref{eg:se-fin}), which we've
barely touched.
\end{itemize}
In the next and final chapter, we'll use the fundamental theorem and other
results we've proved to explore the world of finite fields. In contrast to
the intricately complicated world of finite groups, finite fields are almost
shockingly simple.

\chapter{Finite fields}
\label{ch:fin}

This chapter is dessert. Through this semester, we've developed a lot of
sophisticated theory for general fields. All of it works for finite fields,
but becomes much simpler there. It's a miniature world in which life is
sweet.
\video{Introduction to Week 10}%
For example:
\begin{itemize}
\item
If we want to apply the fundamental theorem of Galois theory to a field
extension $M: K$, we first have to ask whether it is finite, and whether it
is normal, and whether it is separable. When $M$ and $K$ are finite fields, all
three conditions are automatic.

\item
There are many fields of different kinds, and to classify them all
would be a near-impossible task. But for finite fields, the classification
is very simple. We know exactly what finite fields there are.

\item
The Galois correspondence for arbitrary field extensions can also be
complicated. But again, it's simple when the fields are finite. Their
Galois groups are very easy (they're all cyclic), we know what their
subgroups are, and it's easy to describe all the subfields of any given
finite field.
\end{itemize}
So although the world of finite fields is not trivial, there's a lot about
it that's surprisingly straightforward.

We've already encountered two aspects of finite fields that may seem
counterintuitive. First, they always have positive characteristic, which
means they satisfy some equation like $1 + \cdots + 1 = 0$
(Lemma~\ref{lemma:ff-p}). Second, any element of a finite field of
characteristic $p$ has precisely one $p$th root
(Corollary~\ref{cor:pth-roots}\bref{part:pr-ff}), making finite fields
quite unlike $\C$, $\R$ or $\Q$. But the behaviour of $p$th roots and $p$th
powers is fundamental to all of finite fields' nice properties.

\section{Classification of finite fields}
\label{sec:fin-class}

If you try to write down a formula for the number of
\href{https://oeis.org/A000001}{groups} or
\href{https://oeis.org/A027623}{rings} with a given number of elements,
you'll find that it's hard and the results are quite strange. For instance,
more than 99\% of the first 50 billion groups
\href{https://doi.org/10.1090/S1079-6762-01-00087-7}{have order $1024$}.

But fields turn out to be much, much easier. We'll obtain a complete
classification of finite fields in the next two pages.

The \demph{order} of a finite field $M$ is its cardinality, or number of
elements, $|M|$.

\begin{lemma}
\label{lemma:ff-pp}
Let $M$ be a finite field. Then $\chr M$ is a prime number $p$, and $|M| =
p^n$ where $n = [M: \F_p] \geq 1$.
\end{lemma}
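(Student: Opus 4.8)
The plan is to establish the two assertions in order: first that $\chr M$ is prime, then that $|M| = p^n$ with $n = [M:\F_p]$.

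For the first part, I would argue that $M$, being finite, cannot have characteristic $0$: by Lemma~\ref{lemma:ff-p} (or directly, since a field of characteristic $0$ contains a copy of $\Q$ by Lemma~\ref{lemma:char-prime-subfd}\bref{part:cps-0} and hence is infinite), we must have $\chr M = p > 0$. Then Lemma~\ref{lemma:char-0p} tells us that the characteristic of an integral domain---and in particular of a field---is $0$ or prime, so $p$ is prime.

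For the second part, I would invoke Lemma~\ref{lemma:char-prime-subfd}\bref{part:cps-p}: since $\chr M = p$, the prime subfield of $M$ is $\F_p$. So $M : \F_p$ is a field extension, and $M$ is a vector space over $\F_p$. Write $n = [M : \F_p] = \dim_{\F_p} M$. Since $M$ is finite, this dimension is finite, and $n \geq 1$ by Example~\ref{egs:deg-ext}\bref{eg:de-triv} (degrees of extensions are never $0$). Choosing a basis $v_1, \ldots, v_n$ of $M$ over $\F_p$, every element of $M$ can be written uniquely as $a_1 v_1 + \cdots + a_n v_n$ with each $a_i \in \F_p$; since $|\F_p| = p$, there are exactly $p^n$ such expressions, so $|M| = p^n$.

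There is no real obstacle here---the whole statement is a quick corollary of results already proved. The only point requiring the smallest care is making sure we cite the right lemma for ``prime subfield is $\F_p$'' and noting that the extension $M : \F_p$ is automatically finite precisely because $M$ is a finite set, so that the notion $[M : \F_p]$ makes sense as an ordinary integer.
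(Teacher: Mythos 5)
Your proposal is correct and follows essentially the same route as the paper: cite Lemmas~\ref{lemma:ff-p} and~\ref{lemma:char-0p} to conclude $\chr M = p$ is prime, cite Lemma~\ref{lemma:char-prime-subfd} to identify the prime subfield as $\F_p$, and count $|M|$ by viewing $M$ as an $n$-dimensional $\F_p$-vector space. The only cosmetic difference is that the paper phrases the counting step as ``$M \iso \F_p^n$ so $|M| = p^n$'' whereas you expand this by choosing an explicit basis, but these are the same argument.
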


In particular, the order of a finite field is a prime power.

\begin{proof}
By Lemmas~\ref{lemma:char-0p} and~\ref{lemma:ff-p}, $\chr M$ is a prime
number $p$. By Lemma~\ref{lemma:char-prime-subfd}, $M$ has prime subfield
$\F_p$. Since $M$ is finite, $1 \leq [M: \F_p] < \infty$; write $n = [M:
\F_p]$. As a vector space over $\F_p$, then, $M$ is $n$-dimensional and
so isomorphic to $\F_p^n$. But $|\F_p^n| = |\F_p|^n = p^n$, so $|M|
= p^n$.
\end{proof}

\begin{example}
There is no field of order $6$, since $6$ is not a prime power.
\end{example}

\begin{warning}{wg:deg-ord}
\emph{Order} and \emph{degree} mean different things. For instance, if the
order of a field is $9$, then its degree over its prime subfield $\F_3$
is $2$.
\end{warning}

Lemma~\ref{lemma:ff-pp} prompts two questions:
\begin{itemize}
\item
Given a prime power $p^n$, \emph{is} there some field of order $p^n$?

\item
If so, how many are there?
\end{itemize}
To answer them, we need to use the Frobenius automorphism $\theta$ of a
finite field (Proposition~\ref{propn:frob}).

\begin{ex}{ex:frob-9}
Work out the values of the Frobenius
automorphism on the field $\F_3(\sqrt{2})$, which you first met in
Exercise~\ref{ex:F3_2}. 
\end{ex}

The answer to the first of these two questions is yes:

\begin{lemma}
Let $p$ be a prime number and $n \geq 1$. Then the splitting field of
$t^{p^n} - t$ over $\F_p$ has order $p^n$.
\end{lemma}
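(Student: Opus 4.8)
The plan is to show that the splitting field $M = \SF_{\F_p}(t^{p^n} - t)$ has exactly $p^n$ elements by identifying $M$ with the \emph{set of roots} of this polynomial. The key observation is that the roots of $g(t) = t^{p^n} - t$ in any extension of $\F_p$ form a subfield. Indeed, writing $\theta$ for the $n$-fold composite of the Frobenius map $r \mapsto r^p$ (Proposition~\ref{propn:frob}\bref{part:frob-ring}), we have $\theta(r) = r^{p^n}$, and $\theta$ is a homomorphism since it is a composite of homomorphisms. So the set of roots of $g$ is exactly $\Fix(\{\theta\})$ in the sense of the equalizer, hence a subfield of $M$ by Lemma~\ref{lemma:fix-subfd} (or directly: it is closed under $+$, $\times$, negation, and inverses because $\theta$ respects all of these, and it contains $0$ and $1$). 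Since $M$ is generated over $\F_p$ by the roots of $g$, and the roots already form a subfield containing $\F_p$, that subfield \emph{is} $M$. Therefore $|M|$ equals the number of roots of $g$.

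Next I would count the roots. The polynomial $g(t) = t^{p^n} - t$ has degree $p^n$, so it has at most $p^n$ roots in $M$; the content is that it has no repeated roots, so it has exactly $p^n$. This follows from the formal derivative criterion: $Dg(t) = p^n t^{p^n - 1} - 1 = -1$ in $\F_p[t]$ since $\chr \F_p = p$ divides $p^n$. As $Dg$ is a nonzero constant, $g$ and $Dg$ have no common factor of positive degree, so by Lemma~\ref{lemma:rep-tfae} (equivalence of \bref{part:rt-rep} and \bref{part:rt-factor}), $g$ has no repeated root in its splitting field. Hence $g$ splits into $p^n$ distinct linear factors over $M$, giving exactly $p^n$ roots, and so $|M| = p^n$.

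I do not expect a serious obstacle here — both halves are short applications of results already proved. The one point deserving a little care is the claim that the roots of $g$ form a subfield: the closure under multiplication and addition uses that $\theta$ is a homomorphism, closure under negation is automatic (or note $-1$ is a root when $p$ is odd, and when $p = 2$ negation is the identity), and closure under inverses needs the remark that a nonzero root $r$ satisfies $r^{p^n - 1} = 1$ so $r^{-1} = r^{p^n - 2}$ is again a root. Alternatively one invokes Lemma~\ref{lemma:fix-subfd} directly with $H = \{\id_M, \theta\}$, noting $\theta \in \Aut(M)$ because $M$ is finite (Proposition~\ref{propn:frob}\bref{part:frob-ff}). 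Assembling these two counts — at most $p^n$ roots by degree, and all of them distinct by the derivative argument, together with the fact that $M$ equals the set of roots — completes the proof.
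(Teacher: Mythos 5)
Your proof is correct and follows essentially the same route as the paper's: the formal-derivative argument ($Dg = -1$) to get $p^n$ distinct roots, and the observation that the roots are the fixed set of the $n$-fold Frobenius map, hence a subfield, hence all of $M$. The only differences are organizational (the paper phrases it as ``at least $p^n$'' plus ``at most $p^n$''), and your extra care about why the fixed set of $\theta$ is a subfield is a welcome, if optional, refinement.
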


\begin{proof}
Put $f(t) = t^{p^n} - t \in \F_p[t]$ and $M = \SF_{\F_p}(f)$. Then $Df =
-1$ (since $n \geq 1$), so by
\bref{part:rt-rep}$\implies$\bref{part:rt-shared} of
Lemma~\ref{lemma:rep-tfae}, $f$ has no repeated roots in $M$. Hence $M$ has
at least $p^n$ elements.

Write $\theta$ for the Frobenius map of $M$ and $\theta^n = \theta \of
\cdots \of \theta$. Then $\theta^n(\alpha) = \alpha^{p^n}$ for all
$\alpha$, so the set $L$ of roots of $f$ in $M$ is
$\Fix\{\theta^n\}$. Since $\theta$ is a homomorphism,
Lemma~\ref{lemma:fix-subfd} implies that $L$ is a subfield of $M$. Hence by
definition of splitting field, $L = M$; that is, every element of $M$ is a
root of $f$. And since $\deg(f) = p^n$, it follows that $M$ has at most
$p^n$ elements.
\end{proof}

As for the second question, there is exactly \emph{one} field of each prime
power order. To show this, we need a lemma.

\begin{lemma}
\label{lemma:ff-thetanid}
Let $M$ be a finite field of order
$q$. Then $\alpha^q = \alpha$ for all $\alpha \in M$.
\end{lemma}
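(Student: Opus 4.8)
The plan is to show that every element of a finite field $M$ of order $q$ satisfies $\alpha^q = \alpha$, by splitting into the cases $\alpha = 0$ and $\alpha \neq 0$. The case $\alpha = 0$ is trivial, since $0^q = 0$ (using $q \geq 1$). For the nonzero case, the key observation is that the nonzero elements of $M$ form a group under multiplication, namely $M^\times = M \without \{0\}$, which has order $q - 1$.

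Next I would invoke Lagrange's theorem on the finite group $M^\times$: for any element $\alpha \in M^\times$, the order of $\alpha$ divides $|M^\times| = q - 1$, and hence $\alpha^{q - 1} = 1$. This is exactly the reasoning already used in Example~\ref{eg:frob-Fp} for the special case $M = \F_p$, so the argument is a direct generalization. Multiplying both sides of $\alpha^{q-1} = 1$ by $\alpha$ gives $\alpha^q = \alpha$, which completes the nonzero case.

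Combining the two cases, $\alpha^q = \alpha$ holds for all $\alpha \in M$, as required. There is no real obstacle here: the only things needed are that $M^\times$ is a group of order $q - 1$ (immediate from $M$ being a field, as noted on p.~\pageref{p:mult-fd}) and Lagrange's theorem from group theory. The mild point to be careful about is that the statement of Lagrange's theorem yields $\alpha^{q-1} = 1$ only for $\alpha \neq 0$, which is why the $\alpha = 0$ case must be handled separately, but that case is immediate.
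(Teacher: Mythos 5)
Your proof is correct and is essentially identical to the paper's: both apply Lagrange's theorem to the multiplicative group $M^\times$ of order $q-1$ to get $\alpha^{q-1} = 1$ for nonzero $\alpha$, then handle $\alpha = 0$ separately. No differences worth noting.
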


The proof uses the same argument as in Example~\ref{eg:frob-Fp}.

\begin{proof}
The multiplicative group $M^\times = M\without\{0\}$ has order $q - 1$, so
Lagrange's theorem implies that $\alpha^{q - 1} = 1$ for all $\alpha \in
M^\times$. Hence $\alpha^q = \alpha$ whenever $0 \neq \alpha \in M$, and
clearly the equation holds for $\alpha = 0$ too.
\end{proof}

\begin{ex}{ex:4th-powers}
Verify directly that $\beta^4 = \beta$ for all $\beta$ in the $4$-element
field $\F_2(\alpha)$ of Example~\ref{eg:ff-4}.
\end{ex}

\begin{lemma}
Every finite field of order $q$ is a splitting field of $t^q - t$ over
$\F_p$.
\end{lemma}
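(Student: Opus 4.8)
The plan is to take an arbitrary finite field $M$ of order $q = p^n$, where $p = \chr M$ (Lemma~\ref{lemma:ff-pp}), and to exhibit $M$ as the splitting field of $f(t) = t^q - t$ over its prime subfield $\F_p$. Recall that $\SF_{\F_p}(f)$ is, by definition, an extension in which $f$ splits and which is generated over $\F_p$ by the roots of $f$. So there are two things to check: that $f$ splits in $M$, and that $M$ is generated over $\F_p$ by the roots of $f$ in $M$. Both will come out of the single observation---which is really the whole content---that \emph{every} element of $M$ is a root of $f$.

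First I would invoke Lemma~\ref{lemma:ff-thetanid}: since $|M| = q$, we have $\alpha^q = \alpha$, i.e.\ $f(\alpha) = 0$, for all $\alpha \in M$. Thus every one of the $q$ elements of $M$ is a root of $f$. Since $\deg(f) = q$, a polynomial of degree $q$ over a field has at most $q$ roots in any extension (this is Lemma~\ref{lemma:ac-splits} in spirit, or just Lemma~\ref{lemma:root-irr} applied repeatedly), so the elements of $M$ are \emph{all} the roots of $f$, and $f$ has exactly $q$ distinct roots in $M$. Consequently $f(t) = \prod_{\alpha \in M}(t - \alpha)$ in $M[t]$, so $f$ splits in $M$.

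Next, I would check the generation condition. The set of roots of $f$ in $M$ is all of $M$, so in particular $\F_p(\{\text{roots of } f \text{ in } M\}) = \F_p(M) = M$. Hence $M$ is generated over $\F_p$ by the roots of $f$, which is exactly condition~\bref{condn:sf-gen} in the definition of splitting field. Therefore $M$ is a splitting field of $t^q - t$ over $\F_p$, and by Theorem~\ref{thm:sf}\bref{part:sf-unique} it is isomorphic over $\F_p$ to $\SF_{\F_p}(t^q - t)$; since `is' conventionally means `is isomorphic to', we are done.

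I do not expect any serious obstacle here---the proof is a one-line application of Lagrange's theorem (packaged as Lemma~\ref{lemma:ff-thetanid}) together with the degree bound on the number of roots. The only point requiring a moment's care is making sure both defining properties of a splitting field are verified, rather than just the splitting part; the generation property is almost trivial precisely because $M$ itself is the root set, but it should be stated explicitly. Combined with the previous lemma (every $t^{p^n} - t$ over $\F_p$ has a splitting field of order $p^n$) and Theorem~\ref{thm:sf}, this yields the classification: for each prime power $p^n$ there is exactly one field of that order up to isomorphism.
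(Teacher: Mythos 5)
Your proof is correct and follows essentially the same route as the paper: use Lemma~\ref{lemma:ff-pp} to get the prime subfield $\F_p$, use Lemma~\ref{lemma:ff-thetanid} to see every element of $M$ is a root of $t^q - t$, conclude splitting from the degree count, and note generation is automatic because the root set is all of $M$. No gaps; your extra care in stating the generation condition explicitly matches what the paper does in one sentence.
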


\begin{proof}
Let $M$ be a field of order $q$.  By Lemma~\ref{lemma:ff-pp}, $q = p^n$ for
some prime $p$ and $n \geq 1$, and $\chr M = p$. Hence $M$ has prime
subfield $\F_p$.  By Lemma~\ref{lemma:ff-thetanid}, every element of $M$ is
a root of $f(t) = t^{p^n} - t$. So $f$ has $|M|$ distinct roots in $M$; but
$|M| = p^n = \deg(f)$, so $f$ splits in $M$. The set of roots of $f$ in $M$
generates $M$, since it is \emph{equal} to $M$. Hence $M$ is a splitting
field of $f$.
\end{proof}

Together, these results completely classify the finite fields.

\begin{bigthm}[Classification of finite fields]
\label{thm:ff-class}
\begin{enumerate}
\item
Every finite field has order $p^n$ for some prime $p$ and integer $n \geq
1$. 

\item
For each prime $p$ and integer $n \geq 1$, there is exactly one field of
order $p^n$, up to isomorphism. It has characteristic $p$ and is a
splitting field for $t^{p^n} - t$ over $\F_p$.
\end{enumerate}
\end{bigthm}

\begin{proof}
This is immediate from the results above together with the uniqueness of
splitting fields (Theorem~\ref{thm:sf}\bref{part:sf-unique}). 
\end{proof}

When $q > 1$ is a prime power, we write \demph{$\F_q$} for the one and only
field of order~$q$.

\begin{warning}{wg:Fq}
$\F_q$ is \emph{not}\, $\Z/\idlgen{q}$ unless $q$ is a prime. It can't be,
because $\Z/\idlgen{q}$ is not a field (Example~\ref{eg:Z-fd}). To my 
knowledge, there is no description of $\F_q$ simpler than the splitting
field description.
\end{warning}

We now know exactly how many finite fields there are of each order.  But
generally in algebra, it's important to think not just about the
\emph{objects} (such as vector spaces, groups, modules, rings, fields,
\ldots), but also the \emph{maps} (homomorphisms) between objects. So now
that we've counted the finite fields, it's natural to try to count the
homomorphisms between finite fields. Field homomorphisms are injective, so
this boils down to counting subfields and automorphisms. Galois theory is
very well equipped to do that! We'll come to this in the final section. But
first, we look at another way in which finite fields are very simple.

\section{Multiplicative structure}
\label{sec:fin-mult}

The multiplicative group $K^\times$ of a finite field $K$ is as easy as can
be:

\begin{propn}
\label{propn:mult-cyc}
For an arbitrary field $K$, every finite subgroup of $K^\times$ is cyclic. In
particular, if $K$ is finite then $K^\times$ is cyclic.
\end{propn}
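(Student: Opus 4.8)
The plan is to prove that any finite subgroup $H$ of $K^\times$ is cyclic by a counting argument based on the number of solutions to polynomial equations. The key structural input is that $K[t]$ is a principal ideal domain, or more directly, that a polynomial of degree $d$ over a field has at most $d$ roots in that field (which follows from unique factorization, Theorem~\ref{thm:fact-polys}, together with Lemma~\ref{lemma:root-irr}).

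First I would let $H$ be a finite subgroup of $K^\times$, say $|H| = n$, and for each divisor $d$ of $n$ write $\psi(d)$ for the number of elements of $H$ of order exactly $d$. Since every element of $H$ has order dividing $n$ by Lagrange's theorem, we get $\sum_{d \mid n} \psi(d) = n$. The strategy is to show that $\psi(d) \leq \phi(d)$ for every $d \mid n$, where $\phi$ is Euler's totient function; combined with the classical identity $\sum_{d \mid n} \phi(d) = n$, this forces $\psi(d) = \phi(d)$ for all $d \mid n$, and in particular $\psi(n) = \phi(n) \geq 1$, so $H$ contains an element of order $n$ and is therefore cyclic.

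The main step, then, is the inequality $\psi(d) \leq \phi(d)$. I would argue as follows: if $\psi(d) = 0$ the inequality is trivial, so suppose there is some $a \in H$ of order $d$. Then the cyclic subgroup $\langle a \rangle$ has exactly $d$ elements, all of which satisfy $t^d = 1$. But the polynomial $t^d - 1 \in K[t]$ has at most $d$ roots in $K$, so $\langle a \rangle$ consists of \emph{all} solutions in $K$ (hence in $H$) of $t^d = 1$. Therefore every element of $H$ of order $d$ already lies in $\langle a \rangle$, and the number of generators of the cyclic group $\langle a \rangle$ of order $d$ is exactly $\phi(d)$; hence $\psi(d) \leq \phi(d)$ (in fact equality holds here, but we only need the inequality for the global counting argument).

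The hard part is really just making sure the bound "a polynomial of degree $d$ over a field has at most $d$ roots" is invoked cleanly — this is the one place where we genuinely use that $K$ is a field rather than just a group. Everything else is elementary group theory and the two totient identities. For the second sentence of the statement, I would simply note that when $K$ is finite, $K^\times = K \without \{0\}$ is itself a finite subgroup of $K^\times$, so it is cyclic by the first part.
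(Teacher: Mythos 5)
Your argument is correct. The paper itself gives no proof here---it simply cites Theorem~5.1.13 and Corollary~5.1.14 from the prerequisite Group Theory course---so there is nothing in the text to compare against; your totient-counting argument is the standard proof of this fact and is almost certainly the one given in that reference. The only place a reader might want slightly more detail is the bound ``a polynomial of degree $d$ over a field has at most $d$ roots,'' but you flag exactly this and correctly identify it as the unique point where the field structure of $K$ (as opposed to mere group structure of $K^\times$) is used; it does follow from Lemma~\ref{lemma:root-irr} together with the fact that $K[t]$ is an integral domain, or from Theorem~\ref{thm:fact-polys}, as you say. The derivation of the second sentence from the first is also fine, noting the identification $K^\times = K \without \{0\}$ recorded on p.~\pageref{p:mult-fd}.
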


\begin{proof}
\video{The multiplicative group of a finite field is cyclic}%
This was Theorem~5.1.13 and Corollary~5.1.14 of Group Theory. 
\end{proof}

\begin{example}
In examples earlier in the course, we frequently used the $n$th
root of unity $\omega = e^{2\pi i/n} \in \C$, which has the property that
every other $n$th root of unity is a power of $\omega$.

Can we find an analogue of $\omega$ in an arbitrary field $K$? It's not
obvious how to generalize the formula $e^{2\pi i/n}$, since the exponential
is a concept from complex analysis. But
Proposition~\ref{propn:mult-cyc} solves our problem. For $n \geq 1$, put
\[
U_n(K) = \{ \alpha \in K \such \alpha^n = 1\}.
\]
Then $U_n(K)$ is a subgroup of $K^\times$, and is finite since
its elements are roots of $t^n - 1$. So by
Proposition~\ref{propn:mult-cyc}, $U_n(K)$ is cyclic. Let $\omega$ be a
generator of $U_n(K)$. Then every $n$th root of unity in $K$ is a power of
$\omega$, which is what we were aiming for.

Note, however, that $U_n(K)$ may have fewer than $n$ elements, or
equivalently, the order of $\omega$ may be less than $n$. For instance, if
$\chr K = p$ then $U_p(K)$ is trivial and $\omega = 1$, by
Example~\ref{egs:ff-roots}\bref{eg:ffr-unity}.
\end{example}

\begin{ex}{ex:fin-unity}
Let $K$ be a field and let $H$ be a finite subgroup of $K^\times$ of order
$n$. Prove that $H \sub U_n(K)$.
\end{ex}

\begin{example}
The group $\F_p^\times$ is cyclic, for any prime $p$.  This
means that there is some $\omega \in \{1, \ldots, p - 1\}$ such that
$\omega, \omega^2, \ldots$ runs through all elements of $\{1, \ldots, p -
1\}$ when taken mod $p$. In number theory, such an $\omega$ is called a
\demph{primitive root} mod $p$ (another usage of the word `primitive'). For
instance, you can check that $3$ is a primitive root mod $7$, but $2$ is
not, since $2^3 \equiv 1 \pmod{7}$. 

Finding the primitive roots mod $p$ is one aspect of finite fields that is
\emph{not} trivial. 
\end{example}

\begin{cor}
\label{cor:prim-elt-ff}
Every extension of one finite field over another is simple.
\end{cor}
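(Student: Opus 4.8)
The plan is to reduce the corollary to a statement about finite fields that we already understand, namely that the whole multiplicative group of a finite field is cyclic (Proposition~\ref{propn:mult-cyc}). Suppose $L \sub M$ is an extension of finite fields, with $M$ finite (hence $L$ is too). First I would invoke Proposition~\ref{propn:mult-cyc} to pick a generator $\omega$ of the cyclic group $M^\times$. I then claim $M = L(\omega)$, which immediately shows the extension is simple.

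To prove $M = L(\omega)$: the field $L(\omega)$ is a subfield of $M$ containing $\omega$, so it contains every power of $\omega$, i.e.\ every nonzero element of $M$ (since $\omega$ generates $M^\times$). It also contains $0$. Hence $L(\omega) = M$. That is essentially the whole argument.

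The step that does the real work is the appeal to Proposition~\ref{propn:mult-cyc} (which in turn rests on the Group Theory results quoted there); everything after that is a one-line observation. So there is no genuine obstacle here beyond having the cyclicity of $M^\times$ in hand — this corollary is really just a packaging of that fact, and it is worth noting that it also follows from the theorem of the primitive element via separability of finite extensions of finite fields (Example~\ref{egs:sep-ext}\bref{eg:se-fin}), but the direct argument via a generator of $M^\times$ is cleaner and self-contained. I would present the direct argument.
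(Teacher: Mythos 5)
Your proof is correct and is essentially the paper's own argument: the paper also takes a generator $\alpha$ of the cyclic group $M^\times$ (via Proposition~\ref{propn:mult-cyc}) and concludes $M = K(\alpha)$, leaving exactly the one-line verification you spell out to Exercise~\ref{ex:ff-simp}. Nothing further is needed.
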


\begin{proof}
Let $M: K$ be an extension with $M$ finite. By
Proposition~\ref{propn:mult-cyc}, the group $M^\times$ is generated by 
some element $\alpha \in M^\times$. Then $M = K(\alpha)$.
\end{proof}

This is yet another pleasant aspect of finite fields.

\begin{ex}{ex:ff-simp}
In the proof of Corollary~\ref{cor:prim-elt-ff}, once we know that the
group $M^\times$ is generated by $\alpha$, how does it follow that $M =
K(\alpha)$?
\end{ex}

\begin{digression}{dig:prim-ff}
In Digression~\ref{dig:primitive}, I mentioned the theorem of the primitive
element: every finite separable extension $M: K$ is simple. One of the
standard proofs involves splitting into two cases, according to whether $M$
is finite or infinite. We've just done the finite case.
\end{digression}

\begin{cor}
For every prime number $p$ and integer $n \geq 1$, there exists an
irreducible polynomial over $\F_p$ of degree $n$.
\end{cor}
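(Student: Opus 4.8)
The plan is to exhibit an irreducible polynomial of degree $n$ over $\F_p$ by producing a field extension of $\F_p$ of degree exactly $n$ and then invoking the primitive element result we just proved for finite fields. Concretely, the field $\F_{p^n}$ exists by the classification of finite fields (Theorem~\ref{thm:ff-class}), and by Lemma~\ref{lemma:ff-pp} we have $[\F_{p^n} : \F_p] = n$.

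First I would apply Corollary~\ref{cor:prim-elt-ff} to the extension $\F_{p^n} : \F_p$: since every extension of one finite field over another is simple, there is some $\alpha \in \F_{p^n}$ with $\F_{p^n} = \F_p(\alpha)$. Next, because $\F_{p^n}$ is finite, it is algebraic over $\F_p$, so $\alpha$ is algebraic over $\F_p$ and has a minimal polynomial $m \in \F_p[t]$, which is monic and irreducible. By Theorem~\ref{thm:simple-basis}\bref{part:sba-alg}, $\deg(m) = [\F_p(\alpha) : \F_p] = [\F_{p^n} : \F_p] = n$. So $m$ is an irreducible polynomial over $\F_p$ of degree $n$, as required.

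There is essentially no obstacle here; the statement is a straightforward corollary, assembled from three earlier results: the existence part of the classification of finite fields, the degree formula $[\F_{p^n}:\F_p]=n$, and the fact that finite-over-finite extensions are simple. The only thing to be mildly careful about is the edge case $n = 1$, where $\F_{p^n} = \F_p$ and one may take $\alpha = 0$ (or any element), with minimal polynomial $t - \alpha$, which is indeed irreducible of degree $1$ by Lemma~\ref{lemma:irr-triv}\bref{part:it-1}; the general argument above already covers this.
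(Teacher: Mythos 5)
Your argument is correct and follows the paper's proof essentially verbatim: invoke the classification to get $\F_{p^n}$, use Corollary~\ref{cor:prim-elt-ff} to write $\F_{p^n} = \F_p(\alpha)$, and take the minimal polynomial of $\alpha$, whose degree is $[\F_{p^n}:\F_p] = n$. The $n=1$ remark is harmless but unnecessary, since the general argument already covers it.
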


\begin{proof}
The field $\F_{p^n}$ has prime subfield $\F_p$. By
Corollary~\ref{cor:prim-elt-ff}, the extension $\F_{p^n}: \F_p$ is simple,
say $\F_{p^n} = \F_p(\alpha)$. The minimal polynomial of $\alpha$ over
$\F_p$ is irreducible of degree $[\F_p(\alpha): \F_p] = [\F_{p^n}: \F_p] =
n$. 
\end{proof}

This is not obvious. For example, can you find an irreducible
polynomial of degree $100$ over $\F_{31}$?

\section{Galois groups for finite fields}
\label{sec:fin-gal}

Here we work out the Galois correspondence for $\F_{p^n}: \F_p$. 

\begin{warning}{wg:ffe}
The term `finite field extension' means an extension $M: K$ that's
finite in the sense defined on p.~\pageref{p:fin-deg}: $M$ is
finite-\emph{dimensional} as a vector space over $K$. It doesn't mean that
$M$ and $K$ are finite fields. But the safest policy is to avoid this
term entirely. 
\end{warning}

The three hypotheses of the fundamental theorem of Galois
theory are always satisfied when both fields in the extension are finite:

\begin{lemma}
\label{lemma:ff-fns}
Let $M: K$ be a field extension.
\begin{enumerate}
\item 
\label{part:fff-sep}
If $K$ is finite then $M: K$ is separable.

\item
\label{part:fff-fn}
If $M$ is also finite then $M: K$ is finite and normal.
\end{enumerate}
\end{lemma}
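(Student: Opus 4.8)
The plan is to prove the two parts in order, drawing on the structure theory of finite fields already established. For part~\bref{part:fff-sep}, I would first reduce to the case where $K$ is a finite field of characteristic $p$, hence (by Lemma~\ref{lemma:ff-pp}) of order $q = p^r$ for some $r \geq 1$. Separability of $M: K$ means every $\alpha \in M$ has separable minimal polynomial over $K$. The cleanest route is via Corollary~\ref{cor:sep-chars}: an irreducible polynomial over $K$ fails to be separable only if it lies in $K[t^p]$, i.e.\ $f(t) = \sum b_i t^{ip}$ with $b_i \in K$. The key observation is that the Frobenius map $\theta\from r \mapsto r^p$ is an \emph{automorphism} of $K$ when $K$ is finite (Proposition~\ref{propn:frob}\bref{part:frob-ff}), so each $b_i$ has a $p$th root $c_i \in K$; then $\sum b_i t^{ip} = \sum c_i^p t^{ip} = \left(\sum c_i t^i\right)^p$ using that the Frobenius map of $K[t]$ (or more precisely the fact that $(x+y)^p = x^p + y^p$ in characteristic $p$, Proposition~\ref{propn:frob}\bref{part:frob-ring}) is a homomorphism. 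This shows any such $f$ is reducible, contradicting irreducibility. Hence every irreducible over $K$ is separable, so $M: K$ is separable.

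For part~\bref{part:fff-fn}, suppose $M$ is also finite. Finiteness of the extension $M: K$ is immediate: $K$ is a subfield of the finite set $M$, so $M$ is certainly a finite-dimensional $K$-vector space — concretely, $[M: K] \leq |M|$. For normality, I would invoke Theorem~\ref{thm:sf-fin-nml}, which says a finite extension is normal if and only if $M$ is the splitting field of some polynomial over $K$. So it suffices to exhibit such a polynomial. By Lemma~\ref{lemma:ff-pp}, $|M| = q$ for some prime power $q$, and by Lemma~\ref{lemma:ff-thetanid} every element of $M$ satisfies $\alpha^q = \alpha$, i.e.\ is a root of $f(t) = t^q - t \in K[t]$ (noting $K$ has the same characteristic, so $t^q - t$ genuinely has coefficients in $K$, in fact in the prime subfield $\F_p \sub K$). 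Since $f$ has $q = |M|$ distinct roots in $M$ (they are \emph{all} of $M$) and $\deg f = q$, the polynomial $f$ splits in $M$; moreover its set of roots is all of $M$, which certainly generates $M$ over $K$. So $M$ is a splitting field of $t^q - t$ over $K$, and therefore $M: K$ is normal.

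I do not anticipate a serious obstacle here — both parts are essentially bookkeeping on top of results already proved. The one place requiring a little care is making sure in part~\bref{part:fff-sep} that the Frobenius map is being used on $K$ itself (where it is surjective because $K$ is finite) rather than on some larger field; this surjectivity is exactly what lets us extract the $p$th roots $c_i$ of the coefficients, and it is the crux of why inseparability cannot occur over a finite field. In part~\bref{part:fff-fn} one should simply be careful that $t^q - t$ is viewed as a polynomial over $K$ (equivalently over $\F_p$), so that ``splitting field over $K$'' makes sense; this is fine since $\chr K = \chr M = p$ forces $\F_p \sub K$.
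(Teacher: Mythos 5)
Your proof is correct and essentially matches the paper's. Part~\bref{part:fff-sep} is the same argument to the letter: assume an irreducible $f$ is inseparable, use Corollary~\ref{cor:sep-chars} to write it as a polynomial in $t^p$, extract $p$th roots of the coefficients via surjectivity of Frobenius on the finite field $K$, and express $f$ as a $p$th power, contradicting irreducibility. For part~\bref{part:fff-fn} the paper takes a very slightly shorter route: it observes that $M$ is a splitting field over $\F_p$ (Theorem~\ref{thm:ff-class}) and then invokes Lemma~\ref{lemma:sf-adj}\bref{part:sfa-fds} to conclude it is a splitting field over the intermediate field $K$, whereas you re-verify directly that $M = \SF_K(t^q - t)$ using Lemma~\ref{lemma:ff-thetanid}. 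The two are the same proof in substance; yours just inlines what Lemma~\ref{lemma:sf-adj} packages up.
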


\begin{proof}
For~\bref{part:fff-sep}, we show that every irreducible polynomial $f$ over $K$
is separable. Write $p = \chr K > 0$, and suppose for a contradiction that
$f$ is inseparable. By Corollary~\ref{cor:sep-chars}, 
\[
f(t) = b_0 + b_1 t^p + \cdots + b_r t^{rp}
\]
for some $b_0, \ldots, b_r \in K$. For each $i$, there is a (unique) $p$th
root $c_i$ of $b_i$ in $K$, by
Corollary~\ref{cor:pth-roots}\bref{part:pr-ff}. Then
\[
f(t) 
= 
c_0^p + c_1^p t^p + \cdots + c_r^p t^{rp}.
\]
But by Proposition~\ref{propn:frob}\bref{part:frob-ring}, the function $g
\mapsto g^p$ is a homomorphism $K[t] \to K[t]$, so 
\[
f(t) = 
(c_0 + c_1 t + \cdots + c_r t^r)^p.
\]
This contradicts $f$ being irreducible.

For~\bref{part:fff-fn}, suppose that $M$ is finite. Write $p = \chr M >
0$. By Theorem~\ref{thm:ff-class}, $M$ is a splitting field over $\F_p$, so
by Lemma~\ref{lemma:sf-adj}\bref{part:sfa-fds}, it is also a splitting
field over $K$. Hence $M: K$ is finite and normal, by
Theorem~\ref{thm:sf-fin-nml}.
\end{proof}

Part~\bref{part:fff-sep} fulfils the promise made in
Remark~\ref{rmk:sep-ff} and Example~\ref{egs:sep-ext}\bref{eg:se-fin}, and
the lemma as a whole lets us use the fundamental theorem freely in the
world of finite fields. We now work out the Galois correspondence for the
extension $\F_{p^n}: \F_p$ of an arbitrary finite field over its prime
subfield. 

\begin{propn}
\label{propn:ff-gal}
Let $p$ be a prime and $n \geq 1$. Then $\Gal(\F_{p^n}: \F_p)$ is
cyclic of order $n$, generated by the Frobenius automorphism of $\F_{p^n}$.
\end{propn}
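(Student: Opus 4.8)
The plan is to show three things: that $\Gal(\F_{p^n}:\F_p)$ has order $n$, that the Frobenius map $\theta\colon \alpha \mapsto \alpha^p$ is an element of this group, and that $\theta$ has order exactly $n$ in the group. The first point is essentially free: by Lemma~\ref{lemma:ff-fns}, the extension $\F_{p^n}:\F_p$ is finite, normal and separable, so Theorem~\ref{thm:gal-size} gives $|\Gal(\F_{p^n}:\F_p)| = [\F_{p^n}:\F_p] = n$ (the degree being $n$ by Lemma~\ref{lemma:ff-pp}). For the second point, Proposition~\ref{propn:frob}\bref{part:frob-ff} tells us $\theta$ is an automorphism of $\F_{p^n}$, and it fixes $\F_p$ pointwise because $a^p = a$ for all $a \in \F_p$ (Example~\ref{eg:frob-Fp}, or equivalently Fermat's little theorem). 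Hence $\theta \in \Gal(\F_{p^n}:\F_p)$.

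The real content is the third point: $\theta$ has order $n$. First I would observe that $\theta^r$ is the map $\alpha \mapsto \alpha^{p^r}$, so the order of $\theta$ is the least $r \geq 1$ such that $\alpha^{p^r} = \alpha$ for all $\alpha \in \F_{p^n}$. Certainly $\theta^n = \id$ by Lemma~\ref{lemma:ff-thetanid} (every element of a field of order $p^n$ satisfies $\alpha^{p^n} = \alpha$), so the order of $\theta$ divides $n$. To see it is not smaller, suppose $\theta^r = \id$ for some $1 \leq r < n$. Then every element of $\F_{p^n}$ is a root of the polynomial $t^{p^r} - t \in \F_p[t]$, which has degree $p^r$. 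But $\F_{p^n}$ has $p^n > p^r$ elements, so a polynomial of degree $p^r$ cannot have all of them as roots---contradiction. Hence $\theta$ has order exactly $n$.

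Combining these: $\langle \theta \rangle$ is a cyclic subgroup of $\Gal(\F_{p^n}:\F_p)$ of order $n$, and the whole group has order $n$, so $\Gal(\F_{p^n}:\F_p) = \langle \theta \rangle$ is cyclic of order $n$ generated by $\theta$. I do not expect any serious obstacle here; the only step requiring a moment's care is ruling out a smaller order for $\theta$, and the degree-counting argument above handles it cleanly. An alternative for that step would be to use Proposition~\ref{propn:mult-cyc}: pick a generator $\omega$ of the cyclic group $\F_{p^n}^\times$, which has order $p^n - 1$; then $\theta^r(\omega) = \omega^{p^r} = \omega$ forces $p^n - 1 \mid p^r - 1$, hence $n \mid r$. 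Either argument works, but the polynomial-degree version is shorter and uses only facts already in hand.
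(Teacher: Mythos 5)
Your proof is correct and follows essentially the same route as the paper's: use Lemma~\ref{lemma:ff-thetanid} to show $\theta^n = \id$, rule out a smaller order by counting roots of $t^{p^m} - t$, and then appeal to $|\Gal(\F_{p^n}:\F_p)| = [\F_{p^n}:\F_p] = n$ (the paper cites this as the fundamental theorem, you cite Theorem~\ref{thm:gal-size}, but it is the same fact). The alternative argument you sketch via a generator of $\F_{p^n}^\times$ is a valid variant but not needed.
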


By an earlier workshop question, $\Gal(\F_{p^n}: \F_p)$ is the group of
\emph{all} automorphisms of $\F_{p^n}$.

\begin{proof}
Write $\theta$ for the Frobenius automorphism of $\F_{p^n}$; then $\theta
\in \Gal(\F_{p^n}: \F_p)$. First we calculate the order of $\theta$. By
Lemma~\ref{lemma:ff-thetanid}, $\alpha^{p^n} = \alpha$ for all $\alpha \in
\F_{p^n}$, or equivalently, $\theta^n = \id$. If $m$ is a positive integer
such that $\theta^m = \id$ then $\alpha^{p^m} = \alpha$ for all $\alpha \in
\F_{p^n}$, so the polynomial $t^{p^m} - t$ has $p^n$ roots in $\F_{p^n}$,
so $p^n \leq p^m$, so $n \leq m$. Hence $\theta$ has order $n$.

On the other hand, $[\F_{p^n}: \F_p] = n$, so by the fundamental theorem of
Galois theory, $|\Gal(\F_{p^n}: \F_p)| = n$. The result follows.
\end{proof}

\begin{ex}{ex:ff-ff}
What is the fixed field of $\gpgen{\theta} \sub \Gal(\F_{p^n}: \F_p)$? \\
\end{ex}

In Fundamentals of Pure Mathematics or Group Theory, you presumably saw
that the cyclic group of order $n$ has exactly one subgroup of order $k$
for each divisor $k$ of $n$. (And by Lagrange's theorem, there are no
subgroups of other orders.) 

\begin{ex}{ex:cyc-sub}
Refresh your memory by proving this fact about subgroups of cyclic groups.
\end{ex}

In the case at hand, $\Gal(\F_{p^n}: \F_p) = \gpgen{\theta} \iso C_n$, and
when $k \dvd n$, the unique subgroup of order $k$ is
$\gpgen{\theta^{n/k}}$.

\begin{propn}
\label{propn:ff-sub}
Let $p$ be a prime and $n \geq 1$. Then $\F_{p^n}$ has exactly one subfield
of order $p^m$ for each divisor $m$ of $n$, and no others. It is 
\[
\bigl\{ \alpha \in \F_{p^n} \such \alpha^{p^m} = \alpha \bigr\}.
\]
\end{propn}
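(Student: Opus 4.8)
The plan is to use the Galois correspondence for the extension $\F_{p^n} : \F_p$, which is legitimate because Lemma~\ref{lemma:ff-fns} guarantees this extension is finite, normal and separable, and Proposition~\ref{propn:ff-gal} tells us $\Gal(\F_{p^n}: \F_p) = \gpgen{\theta} \iso C_n$, where $\theta$ is the Frobenius automorphism. By Theorem~\ref{thm:ftgt}\bref{part:ftgt-inv}, the maps $\Gal(\F_{p^n}: -)$ and $\Fix$ are mutually inverse bijections between the set of intermediate fields of $\F_{p^n}: \F_p$ and the set of subgroups of $C_n$. Every subfield $L$ of $\F_{p^n}$ automatically contains the prime subfield $\F_p$, so \emph{every} subfield of $\F_{p^n}$ is an intermediate field. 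Hence subfields of $\F_{p^n}$ correspond bijectively with subgroups of $C_n$.

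Next I would invoke the structure of cyclic groups (as in Exercise~\ref{ex:cyc-sub}): $C_n = \gpgen{\theta}$ has exactly one subgroup of order $k$ for each divisor $k$ of $n$, namely $\gpgen{\theta^{n/k}}$, and no subgroups of any other order. Under the correspondence, the subfield matched with a subgroup $H$ is $\Fix(H)$, and by Theorem~\ref{thm:ftgt}\bref{part:ftgt-od} we have $[\F_{p^n} : \Fix(H)] = |H|$. So if $|H| = k$ with $k \dvd n$, then by the tower law $[\Fix(H) : \F_p] = [\F_{p^n}:\F_p]/[\F_{p^n}:\Fix(H)] = n/k$, which gives $|\Fix(H)| = p^{n/k}$ by Lemma~\ref{lemma:ff-pp} (a field of degree $n/k$ over $\F_p$ has $p^{n/k}$ elements). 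Writing $m = n/k$, as $k$ ranges over divisors of $n$ so does $m$, and we conclude that $\F_{p^n}$ has exactly one subfield of order $p^m$ for each divisor $m$ of $n$, and no subfields of any other order.

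It remains to identify this subfield explicitly as $\{\alpha \in \F_{p^n} : \alpha^{p^m} = \alpha\}$. The subfield of order $p^m$ is $\Fix(\gpgen{\theta^{n/m}})$, where $\theta^{n/m}$ is the automorphism $\alpha \mapsto \alpha^{p^{n/m}}$. But $\Fix(\gpgen{\theta^{n/m}}) = \Fix(\{\theta^{n/m}\})$ (fixing the generator is the same as fixing the whole cyclic subgroup), and $\theta^{n/m}(\alpha) = \alpha$ precisely means $\alpha^{p^{n/m}} = \alpha$. Wait --- I need the exponent to be $p^m$, not $p^{n/m}$; so I should instead realize the order-$p^m$ subfield as $\Fix$ of the subgroup $H$ with $|H| = n/m$, i.e.\ $H = \gpgen{\theta^{m}}$, since $|\gpgen{\theta^m}| = n/\gcd(m,n) = n/m$ as $m \dvd n$. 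Then $\Fix(\gpgen{\theta^m}) = \{\alpha : \theta^m(\alpha) = \alpha\} = \{\alpha \in \F_{p^n} : \alpha^{p^m} = \alpha\}$, which is the claimed description and has degree $m$ over $\F_p$ hence order $p^m$ by the degree computation above.

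The main obstacle is simply bookkeeping: keeping straight the inverse relationship between the order of a subgroup and the degree of the corresponding fixed field, and correctly matching a subfield of order $p^m$ (degree $m$) with the subgroup $\gpgen{\theta^m}$ of order $n/m$ rather than with $\gpgen{\theta^{n/m}}$. Once the indices are tracked carefully, everything follows mechanically from the fundamental theorem, the cyclic subgroup structure, and the tower law; no genuinely hard step remains. A minor check worth noting is that each such $\Fix(\gpgen{\theta^m})$ really is a field (it is, by Lemma~\ref{lemma:fix-subfd}, being a fixed set of automorphisms), though this is already subsumed in the statement that it is an intermediate field of the extension.
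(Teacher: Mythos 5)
Your proposal is correct and follows essentially the same route as the paper: subfields of $\F_{p^n}$ are exactly the intermediate fields of $\F_{p^n}:\F_p$, which by the fundamental theorem correspond to subgroups of the cyclic group $\gpgen{\theta}\iso C_n$, and the degree/order bookkeeping via the tower law identifies the subfield of order $p^m$ as $\Fix\gpgen{\theta^m}=\{\alpha\such\alpha^{p^m}=\alpha\}$. The only difference is cosmetic indexing (the paper parametrizes subgroups as $\gpgen{\theta^{n/k}}$ and substitutes $m=n/k$ at the end, while you work with $\gpgen{\theta^m}$ directly after a brief self-correction), which does not change the argument.
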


\begin{proof}
The subfields of $\F_{p^n}$ are the intermediate fields of $\F_{p^n}:
\F_p$, which by the fundamental theorem of Galois theory are precisely the
fixed fields $\Fix(H)$ of subgroups $H$ of $\Gal(\F_{p^n}: \F_p)$. Any such
$H$ is of the form $\gpgen{\theta^{n/k}}$ with $k \dvd n$, and 
\[
\Fix\gpgen{\theta^{n/k}} 
=
\bigl\{ \alpha \in \F_{p^n} \such \alpha^{p^{n/k}} = \alpha \bigr\}.
\]
The tower law and the fundamental theorem give
\[
[\Fix\gpgen{\theta^{n/k}}: \F_p]
=
\frac{[\F_{p^n}: \F_p]}{[\F_{p^n}: \Fix\gpgen{\theta^{n/k}}]}
=
\frac{n}{|\gpgen{\theta^{n/k}}|}
=
\frac{n}{k},
\]
so $|\Fix\gpgen{\theta^{n/k}}| = p^{n/k}$.  As $k$ runs through the
divisors of $n$, the quotient $n/k$ also runs through the divisors of $n$, so
putting $m = n/k$ gives the result.
\end{proof}

\begin{warning}{wg:subfd-div}
The subfields of $\F_{p^n}$ are of the form $\F_{p^m}$ where $m$
\emph{divides} $n$, not $m \leq n$. For instance, $\F_8$ has no subfield
isomorphic to $\F_4$ (that is, no 4-element subfield), since $8 = 2^3$, $4
= 2^2$, and $2 \ndvd 3$.
\end{warning}

Let $m$ be a divisor of $n$. By Proposition~\ref{propn:ff-sub}, $\F_{p^n}$
has exactly one subfield isomorphic to $\F_{p^m}$.  We can therefore speak
of the extension $\F_{p^n}: \F_{p^m}$ without ambiguity. Since $\F_{p^m} =
\Fix\gpgen{\theta^m}$ (by Proposition~\ref{propn:ff-sub}) and
$\gpgen{\theta^m} \iso C_{n/m}$, it follows from the fundamental theorem
that
\begin{align}
\label{eq:gal-gp-ff-gen}
\Gal(\F_{p^n}: \F_{p^m}) \iso C_{n/m}.
\end{align}
So in working out the Galois correspondence for $\F_{p^n}: \F_p$, we have
accidentally derived the Galois group of a completely arbitrary extension
of finite fields. Another way to phrase~\eqref{eq:gal-gp-ff-gen} is:

\begin{propn}
Let $M: K$ be a field extension with $M$ finite. Then $\Gal(M: K)$ is
cyclic of order $[M: K]$.
\qed
\end{propn}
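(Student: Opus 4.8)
The plan is to reduce this to the Galois correspondence for $\F_{p^n}: \F_p$ that was just worked out, using the elementary fact that every subgroup of a cyclic group is cyclic. So this should come out as a short corollary rather than a substantial proof.

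First I would record the basic structure. Since $M$ is a finite field, Lemma~\ref{lemma:ff-pp} gives $\chr M = p$ for some prime $p$, with $|M| = p^n$ and $n = [M:\F_p]$; moreover $\F_p$ is the prime subfield of $M$ (Lemma~\ref{lemma:char-prime-subfd}), so that $\F_p \sub K \sub M$, and $K$ is itself a finite field. By Lemma~\ref{lemma:ff-fns}, the extension $M: K$ is finite, normal and separable, so the fundamental theorem of Galois theory (Theorem~\ref{thm:ftgt}) applies to it; in particular $|\Gal(M:K)| = [M:K]$.

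Next I would observe that $\Gal(M:K)$ is a subgroup of $\Gal(M:\F_p)$: any automorphism of $M$ fixing $K$ pointwise certainly fixes the subfield $\F_p$ pointwise. By Proposition~\ref{propn:ff-gal}, the group $\Gal(M:\F_p)$ is cyclic of order $n$, generated by the Frobenius automorphism. Since subgroups of cyclic groups are cyclic, $\Gal(M:K)$ is cyclic, and by the previous paragraph it has order $[M:K]$. That finishes the argument.

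The main point to be careful about is not a genuine obstacle but a bookkeeping one: one must check that the three hypotheses of the fundamental theorem actually hold for $M:K$, which is exactly what Lemma~\ref{lemma:ff-fns} provides, and that $K$ contains the prime subfield so that the subgroup inclusion $\Gal(M:K) \sub \Gal(M:\F_p)$ is legitimate. If one wants to be fully explicit, one can go via Proposition~\ref{propn:ff-sub}: writing $[K:\F_p] = m$ (so $m \dvd n$ by the tower law), $K$ is the unique subfield $\Fix\gpgen{\theta^m}$ of $M$ of order $p^m$, whence $\Gal(M:K) = \gpgen{\theta^m} \iso C_{n/m}$ directly, which is precisely~\eqref{eq:gal-gp-ff-gen}.
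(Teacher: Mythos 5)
Your proposal is correct and follows essentially the paper's own route: the paper also deduces this proposition from the Galois correspondence for $\F_{p^n}:\F_p$, identifying $K$ as $\Fix\gpgen{\theta^m}$ (Proposition~\ref{propn:ff-sub}) so that $\Gal(M:K) = \gpgen{\theta^m} \iso C_{n/m}$, which is exactly the explicit version you give in your final paragraph. Your main variant (quoting $|\Gal(M:K)| = [M:K]$ from the fundamental theorem via Lemma~\ref{lemma:ff-fns}, and cyclicity from the fact that subgroups of the cyclic group $\Gal(M:\F_p)$ are cyclic) is only a cosmetic repackaging of the same ingredients, and it is sound.
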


In the Galois correspondence for $\F_{p^n}: \F_p$, all the extensions and
subgroups involved are normal, either by Lemma~\ref{lemma:ff-fns} or
because cyclic groups are abelian. For $m \dvd n$, the isomorphism
\[
\frac{\Gal(\F_{p^n}: \F_p)}{\Gal(\F_{p^n}: \F_{p^m})}
\iso
\Gal(\F_{p^m}: \F_p)
\]
supplied by the fundamental theorem amounts to
\[
\frac{C_n}{C_{n/m}} 
\iso
C_m.
\]
Alternatively, substituting $k = n/m$, this is $C_n/C_k \iso C_{n/k}$.

\begin{example}
\label{eg:ff-gc-12}
Consider the Galois correspondence for $\F_{p^{12}}: \F_p$, where $p$ is
any prime. Writing $\theta$ for the Frobenius automorphism of
$\F_{p^{12}}$, the subgroups of $G = \Gal(\F_{p^{12}}: \F_p)$ are
\[
\xymatrix@!0@R-1ex@C+1.8em{
        &
\gpgen{\theta^{12}} \iso C_1 \iso 1
\ar@{-}[lddd] \ar@{-}[rdd]
                &       &       &       &
\text{order } 1                                 \\
        &       &       &       &       &       \\
        &       &
\gpgen{\theta^6} \iso C_2
\ar@{-}[lddd] \ar@{-}[rdd]
                        &       &       &
\text{order } 2                                 \\
\gpgen{\theta^4} \iso C_3 
\ar@{-}[rdd]    
        &       &       &       &       &
\text{order } 3                                 \\
        &       &       &       
\gpgen{\theta^3} \iso C_4              
\ar@{-}[lddd]                   &       &
\text{order } 4                                 \\
        &
\gpgen{\theta^2} \iso C_6
\ar@{-}[rdd]    &       &       &       &
\text{order } 6                                 \\
        &       &       &       &       &       \\
        &       &
G = \gpgen{\theta} \iso C_{12}
                        &       &       &
\text{order } 12
}
\]
Their fixed fields are
\[
\xymatrix@!0@R-1ex@C+1.8em{
        &
\F_{p^{12}}
\ar@{-}[lddd] \ar@{-}[rdd]
                &       &       &       &
\text{degree } 1                                \\
        &       &       &       &       &       \\
        &       &
\F_{p^6}
\ar@{-}[lddd] \ar@{-}[rdd]
                        &       &       &
\text{degree } 2                                \\
\F_{p^4}
\ar@{-}[rdd]    
        &       &       &       &       &
\text{degree } 3                                \\
        &       &       &       
\F_{p^3}
\ar@{-}[lddd]                   &       &
\text{degree } 4                                \\
        &
\F_{p^2}
\ar@{-}[rdd]    &       &       &       &
\text{degree } 6                                \\
        &       &       &       &       &       \\
        &       &
\F_p
                        &       &       &
\text{degree } 12                               \\
}
\]
Here, `degree' means the degree of $\F_{p^{12}}$ over the subfield, and (for
instance) the subfield of $\F_{p^{12}}$ called $\F_{p^4}$ is
\[
\bigl\{ \alpha \in \F_{p^{12}} \such \alpha^{p^4} = \alpha \bigr\}.
\]
The Galois group $\Gal(\F_{p^{12}}: \F_{p^4})$ is
$\gpgen{\theta^4} \iso C_3$, and similarly for the other subfields. 
\end{example}

\begin{ex}{ex:ff-gc-8}
What do the diagrams of Example~\ref{eg:ff-gc-12} look like for
$p^8$ in place of $p^{12}$? What about $p^{432}$? (Be systematic!)
\end{ex}
\video{Ordered sets}

In the workshop, you'll be asked to work through the Galois correspondence
for an arbitrary extension $\F_{p^n}: \F_{p^m}$ of finite fields, but
there's not much more to do: almost all the work is contained in the case
$m = 1$ that we have just done.

\bigskip

\rowofstars

\end{document}